\DeclareMathAlphabet{\dutchcal}{U}{dutchcal}{m}{n}
\newcolumntype{L}[1]{>{\raggedright\let\newline\\\arraybackslash\hspace{0pt}}m{#1}}
\newcolumntype{C}[1]{>{\centering\let\newline\\\arraybackslash\hspace{0pt}}m{#1}}
\newcolumntype{R}[1]{>{\raggedleft\let\newline\\\arraybackslash\hspace{0pt}}m{#1}}
\newtheoremstyle{theoremstyle}
{10pt}      %
{5pt}       %
{\itshape}  %
{}          %
{\bfseries} %
{}         %
{ }      %
{}          %
\newtheoremstyle{algorithmstyle}
{10pt}      %
{5pt}       %
{}  %
{}          %
{\bfseries} %
{}         %
{ }      %
{}          %
\newtheoremstyle{examplestyle}
{10pt}      %
{5pt}       %
{}          %
{}          %
{\bfseries} %
{}         %
{ }      %
{}          %
\newtheorem*{rep@theorem}{\rep@title}
\newcommand{\newreptheorem}[2]{%
\newenvironment{rep#1}[1]{%
 \def\rep@title{#2 \ref{##1}}%
 \begin{rep@theorem}}%
 {\end{rep@theorem}}}
\newcommand{\subalign}[1]{%
  \vcenter{%
    \Let@ \restore@math@cr \default@tag
    \baselineskip\fontdimen10 \scriptfont\tw@
    \advance\baselineskip\fontdimen12 \scriptfont\tw@
    \lineskip\thr@@\fontdimen8 \scriptfont\thr@@
    \lineskiplimit\lineskip
    \ialign{\hfil$\m@th\scriptstyle##$&$\m@th\scriptstyle{}##$\hfil\crcr
      #1\crcr
    }%
  }%
}
\xpatchcmd{\@todo}{\setkeys{todonotes}{#1}}{\setkeys{todonotes}{inline,#1}}{}{}
\theoremstyle{theoremstyle}
\newtheorem{theorem}{Theorem}[section]
\newtheorem{lemma}[theorem]{Lemma}
\newtheorem{proposition}[theorem]{Proposition}
\newtheorem{corollary}[theorem]{Corollary}
\theoremstyle{examplestyle}
\newtheorem{example}[theorem]{Example}
\newtheorem{definition}[theorem]{Definition}
\newtheorem*{notation*}{Notation}
\newtheorem{remark}[theorem]{Remark}
\newcommand{\C}{\mathbb{C}}
\newcommand{\R}{\mathbb{R}}
\newcommand{\Q}{\mathbb{Q}}
\newcommand{\Z}{\mathbb{Z}}
\newcommand{\F}{\mathbb{F}}
\mathchardef\mhyphen="2D
\newcommand{\suchthat}{\;\ifnum\currentgrouptype=16 \middle\fi|\;}
\newcommand{\an}{{\mathrm{an}}}
\newcommand{\unr}{\mathrm{unr}}
\newcommand{\SL}{\mathrm{SL}}
\newcommand{\PSL}{\mathrm{PSL}}
\newcommand{\GL}{\mathrm{GL}}
\newcommand{\un}{\mathrm{un}}
\newcommand{\bs}{\backslash}
\newcommand{\pr}{\mathrm{pr}}
\DeclareMathOperator{\Spec}{Spec}
\DeclareMathOperator{\Spf}{Spf}
\definecolor{ududff}{rgb}{0.30196078431372547,0.30196078431372547,1}
\definecolor{qqqqff}{rgb}{0,0,1}
\definecolor{qqccqq}{rgb}{0,0.8,0}
\definecolor{ffqqtt}{rgb}{1,0,0.2}
\definecolor{wwzzff}{rgb}{0.4,0.6,1}
\definecolor{ffxfqq}{rgb}{1,0.4980392156862745,0}
\definecolor{ffqqqq}{rgb}{1,0,0}
\definecolor{ududff}{rgb}{0.30196078431372547,0.30196078431372547,1}
\definecolor{zzttqq}{rgb}{0.6,0.2,0}
\definecolor{aureolin}{rgb}{0.99, 0.93, 0.0}
\definecolor{ffffff}{rgb}{1,1,1}%
\newcommand\restr[2]{{\left.\kern-\nulldelimiterspace #1 \right|_{#2}}}
\begin{document}

\title[Toric ranks and component groups of modular curves]
{{Toric ranks and component groups\\ 
 of modular curves}}

\author{Paul Alexander Helminck}
\address{Department of Mathematics, Tohoku University, Japan.
}

\subjclass[2020]{11G10, 11G18, 11G20, 14G22, 14G35.}

\date{\today}

\keywords{Modular curves, Jacobians, N\'{e}ron models, component groups, toric ranks, Tamagawa numbers, Berkovich skeleta.}

\begin{abstract}
Let $p\neq{2,3}$ be a prime number and let $\Gamma \subset \SL_{2}(\Z)$ be a congruence subgroup with modular curve $X_{\Gamma}/K$ and Jacobian $J(X_{\Gamma})$. In this paper we give an explicit group-theoretic description of the semistable toric rank and component group of $J(X_{\Gamma})$ at the finite places of $K$ lying over $p$. We first produce a suitable deformation retract of the minimal Berkovich skeleton of $X_{\Gamma}$ in terms of Hecke-Iwahori double coset spaces. We call this deformation retract the pruned skeleton of the curve. 
Our description of this skeleton includes a group-theoretic formula for the edge lengths, allowing us to give the component group of the modular curve as the quotient of a lattice using the monodromy pairing. For $X_{0}(N)$, $X_{1}(N)$, $X_{sp}(N)$ and $X_{sp}^{+}(N)$, we explicitly determine the pruned skeleta using a set of coset schemes over $\Z$. This in particular recovers results by Deligne-Rapoport, Edixhoven, Coleman-McMurdy and Tsushima on the semistable reduction type of $X_{0}(p^{n})$ for $n\leq{4}$. Finally, we determine the geometric Tamagawa number and the prime-to-$2$ structure of the component group of $X_{0}(N)$ over the extension given by Krir's theorem.

\end{abstract}

\maketitle

\vspace{-1cm}
\section{Introduction}\label{sec:Introduction}

Let $\Gamma\subset \SL_{2}(\Z)$ be a congruence subgroup with associated modular curve $X_{\Gamma}$ defined over a number field $K$. Let $\mathfrak{p}$ be a finite place of $K$ not lying over $2$ or $3$ and let $K_{\mathfrak{p}}$ be the associated completion. %
In this paper, we give an explicit group-theoretic description of the semistable toric rank and component group associated to the Jacobian of the curve $X_{\Gamma}/K_{\mathfrak{p}}$. That is, let $K_{\mathfrak{p}}\subset L_{\mathfrak{p}}$ be a finite extension over which $X_{\Gamma}$ attains semistable reduction and let $\mathcal{J}$ be the N\'{e}ron model of the Jacobian of $X_{\Gamma}$ over the ring of integers of $L_{\mathfrak{p}}$. We then express the toric rank of the special fiber of $\mathcal{J}$ and the geometric component group $\Psi:=\mathcal{J}_{s}/\mathcal{J}^{0}_{s}$ in terms of glued double coset spaces over a finite metric tree. We explicitly find these semistable invariants for various modular curves, including $X_{0}(N)$, $X_{1}(N)$, $X_{sp}(N)$ and $X^{+}_{sp}(N)$. For $X_{0}(N)$, we give a complete description of the prime-to-$2$ structure of $\Psi(\overline{\F}_{p})$ over the extension given by a theorem by Krir. We moreover give a formula for the geometric Tamagawa number $|\Psi(\overline{\F}_{p})|$, which includes the $2$-adic factors. 

To explain our method for finding these invariants, %
let $\C_{p}$ be the completion of the algebraic closure of $K_{\mathfrak{p}}$ and let $X^{\an}_{\Gamma}$ be the Berkovich analytification of 
$X_{\Gamma}$ over $\C_{p}$. The minimal skeleton $\Sigma(\Gamma)$ of $X^{\an}_{\Gamma}$ is a finite metric graph that can be seen as a topological representation of the dual intersection graph of a semistable model of $X_{\Gamma}$. It   %
completely determines the tropical invariants mentioned above, as the toric rank is the first Betti number of $\Sigma(\Gamma)$, and the geometric component group is the discrete tropical Jacobian associated to the $\Lambda_{L_{\mathfrak{p}}}$-valued points of $\Sigma(\Gamma)$, where $\Lambda_{L_{\mathfrak{p}}}$ is the value group of $L_{\mathfrak{p}}$. One does not need the full minimal skeleton for these, as it suffices to find a \emph{pruned} version of the skeleton. That is, one only needs the metric graph obtained by 
contracting all maximal $1$-connected trees of $\Sigma(\Gamma)$. 

To reconstruct this pruned skeleton for a modular curve, we introduce the notion of a \emph{monodromy labeling} on a finite metric tree $\mathcal{T}$. Let $\mathcal{P}_{g}(G)$ be the set of subgroups of a profinite group $G$. A $G$-monodromy labeling of $\mathcal{T}$ is a function $D: \mathcal{T}\to \mathcal{P}_{g}(G)$ such that for any normal open subgroup $H\subset G$, the induced quotients of the $D_{x}$ are upper-semicontinuous and locally constant on the complement of finitely many points, see \cref{def:MonodromyLabeling2}. 
Each monodromy labeling of a finite metric tree automatically corresponds to a tower of coverings $\mathcal{T}_{H}\to \mathcal{T}$ of finite metric graphs indexed by the open subgroups $H$ of $G$. To reconstruct $\mathcal{T}_{H}$ from the data, one takes the associated double coset spaces $D_{x}\bs G/H$ and glues these over line segments where $D_{x}$ is constant. We note here that the group $D_{x}$ up to conjugacy would only determine the fiberwise behavior of the covering, see \cref{rem:FiberwiseBehaviorCovering} and \cite[Figure 1]{Helminck2023}.

For modular curves, we take the metric tree $\mathcal{T}_{can}$ to be the \emph{canonical supersingular tree} in $X(1)^{\an}=\mathbb{P}^{1,\an}$. This consists of the Gauss vertex $\zeta_{G}$ with respect to the $j$-coordinate, together with an edge for each %
supersingular $j$-invariant over $\overline{\F}_{p}$. The tree for $p=37$ can be found in %
\cref{fig:CanonicalSupersingularTree2}. %
\vspace{-0.8cm}
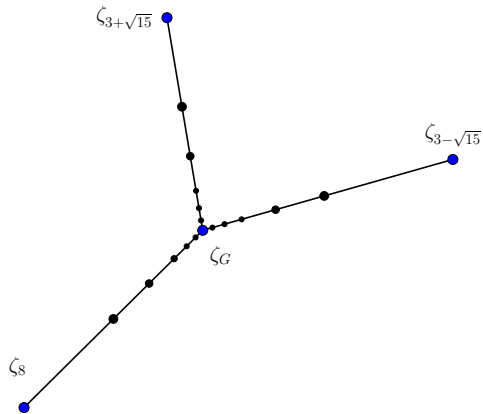
\begin{figure}[h]
\scalebox{0.47}{
\begin{tikzpicture}[line cap=round,line join=round,>=triangle 45,x=1cm,y=1cm]
\clip(-1,-1) rectangle (13,12);
\draw [line width=1.3pt] (5,5)-- (4,11);
\draw [line width=1.3pt] (5,5)-- (12,7);
\draw [line width=1.3pt] (5,5)-- (0,0);
\begin{scriptsize}
\draw [fill=qqqqff] (4,11) circle (4pt);
\draw [fill=qqqqff] (12,7) circle (4pt);
\draw [fill=qqqqff] (0,0) circle (4pt);
\draw [fill=black] (2.5,2.5) circle (3.5pt);
\draw [fill=black] (3.5,3.5) circle (3pt);
\draw [fill=black] (4.2,4.2) circle (2.5pt);
\draw [fill=black] (4.55,4.55) circle (2pt);
\draw [fill=black] (4.8,4.8) circle (2pt);
\draw [fill=black] (4.418761806280903,8.487429162314578) circle (3.5pt);
\draw [fill=black] (4.651257083768542,7.092457497388747) circle (3pt);
\draw [fill=black] (4.814003778009889,6.115977331940666) circle (2pt);
\draw [fill=black] (4.895377125130563,5.627737249216624) circle (2pt);
\draw [fill=black] (4.953500944502473,5.278994332985162) circle (2pt);
\draw [fill=black] (4.94,4.94) circle (0.05pt);
\draw [fill=black] (4.986050283350745,5.083698299895528) circle (0.05pt);
\draw [fill=black] (5.081588012438052,5.023310860696586) circle (0.05pt);
\draw [fill=black] (5.27196004146005,5.0777028689885855) circle (2pt);
\draw [fill=black] (5.611910093285086,5.174831455224311) circle (2pt);
\draw [fill=black] (6.0878401658401415,5.310811475954326) circle (2pt);
\draw [fill=black] (7.039700310950257,5.5827715174143595) circle (3pt);
\draw [fill=black] (8.399500518250425,5.971285862357265) circle (3.5pt);
\draw [fill=qqqqff] (5,5) circle (4pt);
\draw[color=black] (5.5,4.3) node {\Large{$\zeta_{G}$}};
\draw[color=black] (-0.1840040207491881,1.1510481922414615) node {\Large{$\zeta_{8}$}};
\draw[color=black] (2.8,11) node {\Large{$\zeta_{3+\sqrt{15}}$}};
\draw[color=black] (12,7.696978298682645) node {\Large{$\zeta_{3-\sqrt{15}}$}};
\end{scriptsize}
\end{tikzpicture}
}
\caption{\label{fig:CanonicalSupersingularTree2}
The canonical supersingular tree for $p=37$. The three line segments correspond to the supersingular $j$-invariants over $\overline{\F}_{37}$. %
The central point is the Gauss point $\zeta_{G}$, and the other $\zeta_{j}$ %
lie at distance $p/(p+1)$ from $\zeta_{G}$. The lengths of the intermediate line segments are given by the theory of canonical subgroups. %
}
\end{figure}

 The canonical supersingular tree inherits a $\PSL_{2}(\hat{\Z})$-monodromy labeling from the theory of canonical subgroups and the tame structure of the modular tower. %
 We illustrate the induced monodromy labeling for the group $\PSL_{2}(\Z_{p})$ here. We assign the group $\PSL_{2}(\Z_{p})$ to the outside vertices and the Borel subgroup $B$ of upper triangular matrices to the central Gauss point. For the intermediate points, consider the inverse image $I_{n}$ of the subgroup of upper-triangular matrices in $\PSL_{2}(\Z/p^{n}\Z)$ under the map $\PSL_{2}(\Z_{p})\to \PSL_{2}(\Z/p^{n}\Z)$. We monotonically decrease $D_{x}$ along the line segment using the inclusions 
 \begin{equation*}
 \PSL_{2}(\Z_{p})\supset I_{1}\supset I_{2} \supset ... \supset B.
 \end{equation*}
 The jumps are exactly at the vertices in \cref{fig:CanonicalSupersingularTree2}. For $G=\PSL_{2}(\hat{\Z})$, one modifies the $D_{x}$ using the ramification data coming from the tame part of the tower of modular curves.  We show that this monodromy labeling reconstructs the pruned skeleton of any modular curve. 

\vspace{1cm}
\begin{reptheorem}{thm:MainThm3}%
Let $\mathcal{T}_{can}\subset X(1)^{\an}$ be the canonical supersingular tree and let $\phi_{H}: X^{\an}_{H}\to X(1)^{\an}$ be the morphism of modular curves corresponding to %
an open subgroup $H$ of $\PSL_{2}(\hat{\Z})$. Let $\mathcal{T}_{can,H}$ be the metric graph induced by the monodromy labeling in \cref{def:PSL2Labeling}.   %
Then $\phi^{-1}_{H}(\mathcal{T}_{can})\simeq \mathcal{T}_{can,H}$. Moreover, $\mathcal{T}_{can,H}$ deformation retracts onto the pruned skeleton of $X^{\an}_{H}$. 
\end{reptheorem}
To prove this, we first show that the coverings $X(M)^{\an}\to X(1)^{\an}$ for $(M,p)=1$ and $p>3$ are residually tame, so that their structure can easily be deduced from the geometry of modular curves over $\C$. We then consider a covering %
$X(p^{n}M)^{\an}\to X(M)^{\an}$ for $M$ suitably large with $(M,p)=1$. The results in \cite{WS16} give a semistable covering of the local supersingular part of $X(p^{n}M)^{\an}$ in terms of a quotient of the infinite Lubin-Tate tower. We use this description %
to calculate the stabilizers of the points in the corresponding Bruhat-Tits tree over $\SL_{2}(\Z_{p})$, which allows us to pinpoint their images in $X(M)^{\an}$ and $X(1)^{\an}$. By a group-theoretic argument on normalizers of Borel subgroups, these local pictures glue uniquely. We then combine the tame and wild pictures to obtain the final global monodromy labeling. To show that this gives the pruned skeleton, we complete the labeling to a monodromy labeling on a larger tree $\mathcal{T}_{can,mod}$ which recovers the full skeleton. Over the newly attached parts, the corresponding groups are increasing towards the tree $\mathcal{T}_{can}$, so that the inverse images are disjoint unions of trees. This gives the desired statement.  

We apply \cref{thm:MainThm3} to various classical modular curves to obtain explicit formulas for their tropical invariants. For instance, we give a general formula (see \cref{thm:MainThm2v2}) for the toric rank of the Jacobian of a modular curve associated to a \emph{decomposable subgroup} of $\PSL_{2}(\hat{\Z})$, which is a subgroup that can be suitably decomposed into a $p$-part and a prime-to-$p$ part in $\SL_{2}(\hat{\Z})$. This immediately gives a group-theoretic criterion for the potential good reduction of modular abelian varieties, see \cref{pro:PotentialGoodReduction}. %
We also explicitly determine the pruned skeleta for the modular curves $X_{0}(N)$, $X_{1}(N)$, $X_{sp}(N)$ and $X^{+}_{sp}(N)$ %
using coset schemes over $\Z$. The result for $X_{0}(N)$ can be found in \cref{fig:X0(pn)}. We define a suitable ladder-like basis of $H_{1}(\Sigma^{pr}(X_{0}(N)))$ whose associated monodromy matrix is almost tridiagonal. We then use continuants to determine the group-theoretic structure of the prime-to-$2$ part of the component group of $J_{0}(N)$ over the finite extension of $\Q_{p}$ given by Krir's theorem and local class field theory, see \cref{thm:MainThmComponentGroup} for the final result.

\begin{figure}[h]
\scalebox{.32}{
\begin{tikzpicture}[line cap=round,line join=round,>=triangle 45,x=1cm,y=1cm]
\clip(-12.133952880447442,-1) rectangle (37.5356197917536,17.9);
\draw [line width=3pt] (0,2)-- (10,2);
\draw [line width=3pt] (10,2)-- (16,2);
\draw [line width=3pt] (16,2)-- (22,2);
\draw [line width=3pt] (22,2)-- (28,2);
\draw [line width=1.2pt,loosely dashed] (28,2)-- (34,2);
\draw [shift={(0.6016091062078582,18.02630510355656)},line width=3pt]  plot[domain=3.9898696955643285:4.674867745634293,variable=\t]({1*16.032034496116204*cos(\t r)+0*16.032034496116204*sin(\t r)},{0*16.032034496116204*cos(\t r)+1*16.032034496116204*sin(\t r)});
\draw [line width=3pt] (10,6)-- (16,6);
\draw [line width=3pt] (16,6)-- (22,6);
\draw [line width=3pt] (22,6)-- (28,6);
\draw [line width=3pt] (10,10)-- (16,10);
\draw [line width=3pt] (16,10)-- (22,10);
\draw [line width=3pt] (22,10)-- (28,10);
\draw [line width=1.2pt,loosely dashed] (28,10)-- (34,10);
\draw [line width=1.2pt,loosely dashed] (28,6)-- (34,6);
\draw [line width=3pt] (22,12)-- (28,12);
\draw [line width=1.2pt,loosely dashed] (28,12)-- (34,12);
\draw [line width=1.2pt,loosely dashed] (28,14)-- (34,14);
\draw [line width=1.2pt,loosely dashed] (28,16)-- (34,16);
\draw [line width=3pt] (22,16)-- (28,16);
\draw [line width=3pt] (22,14)-- (28,14);
\draw [line width=3pt] (16,14)-- (22,14);
\draw [line width=3pt] (16,12)-- (22,12);
\draw [line width=3pt] (16,16)-- (22,16);
\draw [line width=3pt] (22,17)-- (28,17);
\draw [line width=3pt] (22,15)-- (28,15);
\draw [line width=1.1pt,loosely dashed] (28,17)-- (34,17);
\draw [line width=1.5pt,loosely dashed] (28,15)-- (34,15);
\draw [line width=1.1pt,loosely dashed] (28,17.5)-- (34,17.5);
\draw [line width=1.1pt,loosely dashed] (28,16.5)-- (34,16.5);
\draw [shift={(0.48131731964547825,-6.010331488915593)},line width=3pt]  plot[domain=1.6008501952631267:2.2883175079627254,variable=\t]({1*16.017564757076926*cos(\t r)+0*16.017564757076926*sin(\t r)},{0*16.017564757076926*cos(\t r)+1*16.017564757076926*sin(\t r)});
\draw [shift={(10.487486772902743,21.67140675901655)},line width=3pt]  plot[domain=3.9803687242420236:4.681292242492923,variable=\t]({1*15.691052053518503*cos(\t r)+0*15.691052053518503*sin(\t r)},{0*15.691052053518503*cos(\t r)+1*15.691052053518503*sin(\t r)});
\draw [shift={(10.607321137612411,-2.2954661829160425)},line width=3pt]  plot[domain=1.608048417320959:2.282618392375518,variable=\t]({1*16.306779482250665*cos(\t r)+0*16.306779482250665*sin(\t r)},{0*16.306779482250665*cos(\t r)+1*16.306779482250665*sin(\t r)});
\draw [shift={(15.580447273063626,7.351200176211828)},line width=3pt]  plot[domain=1.522324388461345:2.26905526688955,variable=\t]({1*8.658970082096259*cos(\t r)+0*8.658970082096259*sin(\t r)},{0*8.658970082096259*cos(\t r)+1*8.658970082096259*sin(\t r)});
\draw [shift={(15.880033184837796,21.67140675901655)},line width=3pt]  plot[domain=4.058419452030125:4.724792621941399,variable=\t]({1*9.665674933344205*cos(\t r)+0*9.665674933344205*sin(\t r)},{0*9.665674933344205*cos(\t r)+1*9.665674933344205*sin(\t r)});
\draw [line width=3pt] (10,14)-- (16,14);
\draw [shift={(21.74588908011716,31.6832529242532)},line width=3pt]  plot[domain=4.361204836827085:4.72761929983459,variable=\t]({1*16.70268432339862*cos(\t r)+0*16.70268432339862*sin(\t r)},{0*16.70268432339862*cos(\t r)+1*16.70268432339862*sin(\t r)});
\draw [shift={(21.871751420321186,-0.7376194416904277)},line width=3pt]  plot[domain=1.5635661375728356:1.9081940114385858,variable=\t]({1*17.73808307445942*cos(\t r)+0*17.73808307445942*sin(\t r)},{0*17.73808307445942*cos(\t r)+1*17.73808307445942*sin(\t r)});
\draw [line width=3pt] (0,10)-- (10,10);
\draw [shift={(27.84704,-17.34934)},line width=3pt]  plot[domain=1.566407175730383:1.7394029943393858,variable=\t]({1*34.84967568281232*cos(\t r)+0*34.84967568281232*sin(\t r)},{0*34.84967568281232*cos(\t r)+1*34.84967568281232*sin(\t r)});
\draw [shift={(27.970544440327117,49.923046841223986)},line width=3pt]  plot[domain=4.5329901913130985:4.71327027502509,variable=\t]({1*33.46004205352034*cos(\t r)+0*33.46004205352034*sin(\t r)},{0*33.46004205352034*cos(\t r)+1*33.46004205352034*sin(\t r)});
\draw [line width=1.5pt,loosely dashed] (28,17.5)-- (34,17.8);
\draw [line width=1.5pt,loosely dashed] (28,17.5)-- (34,17.2);
\begin{scriptsize}
\draw [fill=ududff] (0,2) circle (8pt);
\draw [fill=ududff] (10,2) circle (8pt);
\draw [fill=ududff] (16,2) circle (8pt);
\draw [fill=ududff] (22,2) circle (6.5pt);
\draw [fill=ududff] (28,2) circle (5.5pt);
\draw [fill=ududff] (-10,6) circle (10pt);
\draw [fill=ududff] (0,10) circle (8pt);
\draw [fill=ududff] (10,10) circle (8pt);
\draw [fill=ududff] (10,6) circle (8pt);
\draw [fill=ududff] (10,14) circle (8pt);
\draw [fill=ududff] (16,10) circle (8pt);
\draw [fill=ududff] (16,6) circle (8pt);
\draw [fill=ududff] (22,10) circle (6.5pt);
\draw [fill=ududff] (22,6) circle (6.5pt);
\draw [fill=ududff] (28,10) circle (5.5pt);
\draw [fill=ududff] (28,6) circle (5.5pt);
\draw [fill=ududff] (16,14) circle (8pt);
\draw [fill=ududff] (22,14) circle (6.5pt);
\draw [fill=ududff] (28,14) circle (5.5pt);
\draw [fill=ududff] (16,12) circle (8pt);
\draw [fill=ududff] (22,12) circle (6.5pt);
\draw [fill=ududff] (28,12) circle (5.5pt);
\draw [fill=ududff] (16,16) circle (8pt);
\draw [fill=ududff] (22,16) circle (6.5pt);
\draw [fill=ududff] (28,16) circle (5.5pt);
\draw [fill=ududff] (22,15) circle (6.5pt);
\draw [fill=ududff] (22,17) circle (6.5pt);
\draw [fill=ududff] (28,17) circle (5.5pt);
\draw [fill=ududff] (28,15) circle (5.5pt);
\draw [fill=ududff] (28,16.5) circle (5.5pt);
\draw [fill=ududff] (28,17.5) circle (5.5pt);
\draw [fill=ududff] (21.74588908011716,31.6832529242532) circle (6.5pt);
\draw [fill=ududff] (27.84704,-17.34934) circle (6.5pt);
\draw[color=ududff] (8.079113128531233,-7.682178973904463) node {$B_2$};
\draw [fill=ududff] (27.970544440327117,49.923046841223986) circle (6.5pt);
\draw [fill=ududff] (32.75172,53.55781) circle (6.5pt);
\draw[color=ududff] (10.195081471068946,23.890296031854913) node {$F_2$};
\draw [fill=ududff] (32.71283,-17) circle (6.5pt);
\draw[color=ududff] (11.253065642337802,-7.904912483645271) node {$G_2$};
\end{scriptsize}
\end{tikzpicture}
}
\caption{\label{fig:X0(pn)}The local picture of the pruned skeleton of $X_{0}(p^{n})$ over $\mathbb{C}_{p}$, which is a quotient of the Bruhat-Tits tree over $\mathbb{Q}_{p}$. If we remove the dashed lines, then this picture gives the local pruned skeleton of $X_{0}(p^{5})$. The global skeleton is obtained by gluing $|\mathcal{S}|$ copies of this graph at the endpoints, where $|\mathcal{S}|$ is the number of supersingular $j$-invariants over $\overline{\F}_{p}$. %
By further truncating this graph, one finds the graphs in \cite{Tsushima2015}, \cite{CM2010}, \cite{Edixhoven90} and finally \cite{DR73}.  }%
\end{figure}
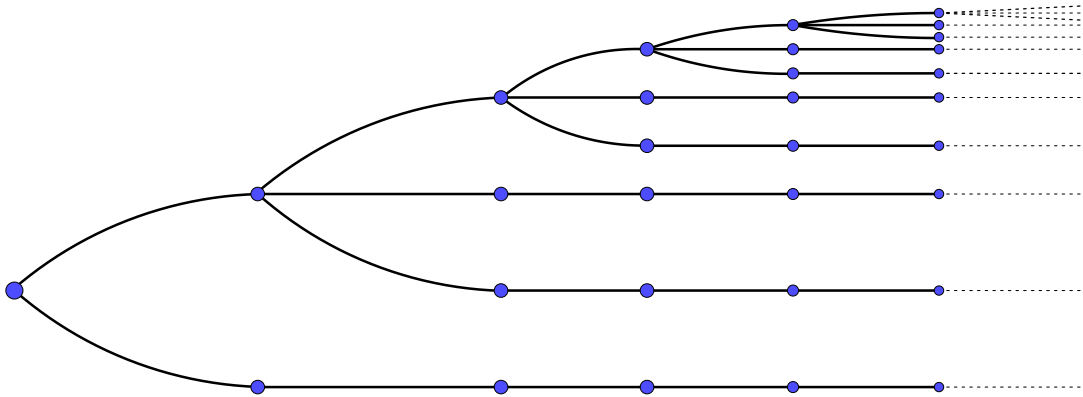

\subsection{Connections to the literature}

We provide some additional context in the form of a brief summary of related results. %
Our main results heavily rely on the results in \cite{WS16}, where semistable models for the modular curves $X(p^{n}N)$ are produced in terms of the infinite Lubin-Tate tower. More specifically, one first constructs a model for subspace of non-CM-points in the infinite Lubin-Tate tower, and one uses the explicit form of the local Langlands correspondence in terms of types for $\GL_{2}(\Q_{p})$ (see \cite{BH06}, \cite{WS10}, \cite{Strauch2008}, \cite{Carayol1990}) %
 to show that this exhausts the cohomology of the modular curves at a finite level, so that the induced model is automatically semistable. In \cite[Section 7]{WS16}, a sketch is given of how one might obtain the dual intersection graph of such a semistable model: one takes the graph defined in \cite{WS16}, one then quotients this graph by the corresponding subgroup (up to some small modifications), and this gives the desired dual intersection graph. It seems that this process has not led to any explicit graphs however in the literature, see the introduction of \cite{EP21} for instance. The current paper can be seen as a first step towards explicitly determining the various semistable invariants using the techniques in \cite{WS16}.   

A paper that is related to ours in certain aspects is \cite{Fargues2007}. There, a {quotient} of the infinite Lubin-Tate tower is shown to contain a copy of the Bruhat-Tits tree that maps to our canonical tree. It might be possible to directly obtain the decomposition groups and section from this result, but the author wasn't able to fully complete the argument due to the interference of the quotient. On the other hand, in this paper we obtain new proofs of these results and we give somewhat stronger statements: %
 this graph gives the pruned skeleton, and we can pinpoint the points in the tree where the non-trivial parts of the rest of the full skeleton attach. Moreover, we give a group-theoretic argument to show that the local parts glue, and we show how to deal with the auxiliary level structure in a purely group-theoretic way. 

Let us briefly mention the missing parts in our description of skeleta of modular curves: %
\begin{itemize}
\item The edge lengths for the outer maximal $1$-connected trees.
\item An extension over which the modular curve attains semistable reduction (for $X_{0}(N)$, we can use a theorem by Krir).
\item Explicit algebraic equations for the residue curves (which in principle can be obtained from the equations in \cite{WS16} by taking suitable quotients.)   
\end{itemize}
We discuss some of these in further detail in  \cref{sec:FutureDirections}.  
 
 \subsection{Short outline of the paper}\label{sec:Outline}
 
 We start in \cref{sec:BerkovichSkeleta} by defining the notion of a pruned skeleton. In \cref{sec:MonodromyLabelingSection}, we define monodromy labelings and in \cref{sec:SimplicialStructureCoverings} we show that they recover the local structure of a morphism of Berkovich analytifications of curves. 
 
 In \cref{sec:CoveringsModularCurves}, we prove \cref{thm:MainThm3}. We start by setting up our notation in Sections  \ref{sec:GroupNotation} and \ref{sec:ModularTower}. We then prove that the tame part of the tower of modular curves is residually tame in \cref{sec:TameStructure}. We review elliptic curves over arbitrary valued fields and define the canonical supersingular tree in \cref{sec:EllipticCurvesValuedFields}. In \cref{sec:pAdicTower} we use the residual tameness of the tame part and the results in \cite{KM85} to find the correct decomposition groups over the central vertex of the canonical supersingular tree. The rest of \cref{sec:CoveringsModularCurves} is then concerned with the tower over the remaining parts of the supersingular tree.  In \cref{sec:ReviewWS}, we give a review of the results in \cite{WS16}. We then use these in \ref{sec:WildTower} and \ref{sec:PrunedSkeleton} to prove the main theorem. We conclude in \cref{sec:PotentialGoodReduction} with a formula for the toric rank when the subgroup is decomposable, and we use this to prove the potential good reduction of various Jacobians.
 
\cref{sec:CosetSchemes} focuses on applying these techniques to the modular curves $X_{0}(N)$, $X_{1}(N)$, $X_{sp}(N)$ and $X^{+}_{sp}(N)$. For each of the corresponding congruence subgroups $\Gamma$, we define a coset scheme $\mathcal{F}_{\Gamma}$ over $\Z$ which classifies the left-coset spaces associated to $\Gamma$, see \cref{sec:QuotientFunctors}. We then determine the double coset spaces associated to $\Gamma$ and $\Gamma_{0}$ in \cref{sec:DCSCentralVertex}, which determines the pruned skeleton by the considerations in \cref{sec:DCSOuterParts}. In \cref{sec:CompX0N}, we determine the structure of the component groups for $X_{0}(N)$ over Krir's extension. We conclude the paper with a short list of future directions in \cref{sec:FutureDirections}.  
\vspace{-1cm}
\tableofcontents

\subsection{Acknowledgements}

The author would like to thank Kazuhiko Yamaki and Joseph Rabinoff for their suggestions and comments on monodromy labelings. The author would also like to thank Yuji Odaka, Omid Amini, Matthew Baker, Akio Tamagawa and Alexander Betts for helpful and enlightening discussions. The author was supported by UK Research and
Innovation under the Future Leaders Fellowship programme MR/S034463/2  %
as a Postdoctoral Fellow at Durham University, and the JSPS Postdoctoral Fellowship with ID No. 23769 and KAKENHI 23KF0187 as a Postdoctoral Fellow at Tohoku University and the University of Tsukuba. There is no additional computational data associated to this paper.

 \subsection{Notation}\label{sec:Notation}
 
 We will generally use the following notation in this paper:
 \begin{itemize}
 \item The greatest common divisor of two integers $N$ and $M$ is $(N,M)$. The Euler totient function is denoted by $\phi(N):=|(\mathbb{Z}/N\Z)^{*}|$. 
 \item $K$ will denote a non-archimedean field with valuation $v_{K}:K\to \mathbb{R}\cup\{\infty\}$, valuation ring $\mathcal{O}_{K}$, residue field $k$ and uniformizer $\pi_{K}$. 
 \item $\C_{p}$ will denote the completion of an algebraic closure of $\Q_{p}$. We write $v$ for the valuation on $\C_{p}$ with %
 $v(p)=1$. 
 \item Curves $X$ over a field $L$ are smooth, proper and geometrically irreducible, unless mentioned otherwise. Their function fields are denoted by $L(X)$.
 \item We call a finite separable dominant morphism of normal connected Noetherian schemes $X\to{Y}$ a covering. Here being separable means that the induced extension of function fields is separable.    
 \item The Jacobian of a curve $X$ is denoted by $J=J(X)$. If $X$ is defined over a local field $K$, we write $\mathcal{J}$ for the N\'{e}ron model of $J$ over $\mathcal{O}_{K}$.
 \item Finite metric graphs as in \cite[Section 2.1]{ABBR2015} are denoted by $\Sigma$. Unless mentioned otherwise, these are connected and compact. 
 \item Edges of finite metric graphs are subsets homeomorphic to intervals in $\mathbb{R}\cup\{\infty\}$. Here intervals are allowed to be open and closed on either side. The type of interval will usually be clear from context. We write $\ell(e)$ for the length of an edge $e\subset \Sigma$. 
 \end{itemize}

\section{Preliminaries}\label{sec:Preliminaries}

In this section, we provide some of the preliminary considerations needed to find skeleta of modular curves. We start by defining the pruned skeleton in \cref{sec:BerkovichSkeleta}, whose metric structure determines the tropical Jacobian of a curve. In \cref{sec:MonodromyLabelingSection} we define the notion of a monodromy labeling, and in \cref{sec:SimplicialStructureCoverings} we give a general Galois-theoretic reconstruction algorithm for coverings of metric trees in terms of these monodromy labelings. This tool will allow us to study coverings of graphs in terms of varying double coset spaces.         

\subsection{Pruned skeleta}\label{sec:BerkovichSkeleta}

Let $\Sigma$ be a finite connected metric graph as in \cite[Section 2.1]{ABBR2015} with $\beta_{1}(\Sigma)\neq{0}$, and let $\Sigma'\subset \Sigma$ be a connected closed subspace that is a tree (i.e., whose first Betti number is zero). If $\Sigma'$ meets the closure of its complement in only one point, then we say that $\Sigma'$ is $1$-connected to $\Sigma$. If the two $1$-connected trees intersect, then their union is again a tree, and it is again $1$-connected. We can thus consider maximal $1$-connected trees. By successively retracting all maximal $1$-connected trees, we obtain a finite metric graph $\Sigma^{pr}$. We call this the pruned skeleton of $\Sigma$.   %

\begin{definition}
Let $X$ be a proper smooth and connected algebraic curve over $K$ with $g(X)\geq{1}$ and minimal Berkovich skeleton $\Sigma(X)$. If $\Sigma(X)$ is a tree, then we define $\Sigma^{pr}(X)$ to be any point in $\Sigma(X)$. Otherwise, %
the pruned skeleton $\Sigma^{pr}(X)$ is the metric graph obtained by retracting all maximal $1$-connected trees.  %
\end{definition}

\begin{example}
A leaf is a $1$-connected tree that consists of a single line segment. 
Note that a leaf is part of the minimal Berkovich skeleton if and only if the attached point of valence one has a strictly positive weight, so that the corresponding residue curve has strictly positive genus. These outer leaves with positive genus are removed in the pruned skeleton.  
\end{example}
We now recall how one can obtain the group of connected components of the N\'{e}ron model of the Jacobian of $X$ from the pruned skeleton. Let $K$ be a local field with embedding $\mathbb{Q}_{p}\subset K\subset \mathbb{C}_{p}$, and assume that $X$ admits a semistable model over ${K}$. We write $v_{K}:K\to \mathbb{R}\cup\{\infty\}$ for the normalized valuation with $v(\pi_{K})=1$, where $\pi_{K}$ is a uniformizer of $\mathcal{O}_{K}$. We also use this normalization on $\C_{p}$ %
in this section. %
We write $\Sigma(X)(\Lambda_{K})$ for the finite graph induced by the $\Lambda_{K}$-valued points of $\Sigma(X)$, where $\Lambda_{K}=\Z\subset \R$ is the value group of $K$. 

Let $\mathcal{J}/\mathcal{O}_{K}$ be the N\'{e}ron model of the Jacobian $J(X)/K$, and let $\mathcal{J}^{0}_{s}$ be the identity component of the special fiber $\mathcal{J}_{s}$ of $\mathcal{J}$. The quotient scheme $\mathcal{J}_{s}/\mathcal{J}^{0}_{s}$ is finite \'{e}tale over the residue field $k$ of $K$; we denote it by $\Psi_{K}$ if $X$ is understood from context. 
\begin{definition}
The group $\Psi_{K}(\overline{\F}_{p})$ is the component group or tropical Jacobian of $X$ over $K$.  
\end{definition}
Consider a $\mathbb{Z}$-basis $\gamma_{i}$ of the first homology group of $\Sigma(X)(\Lambda_{K})$. The monodromy matrix $A=(a_{i,j})$ is given by 
\begin{equation*}
a_{i,j}=\langle \gamma_{i}, \gamma_{j}\rangle,
\end{equation*}%
where $\langle \cdot , \cdot \rangle$ denotes the normalized length pairing over $K$. We then have that 
\begin{equation*}
\Psi_{K}(\overline{\mathbb{F}}_{p})\simeq \mathbb{Z}^{n}/\mathrm{im}(A),
\end{equation*}
see \cite[Lemma 2.22]{DDMM2023}. %
Alternatively, one can also define this group in terms of a divisor class group on the graph, see \cite{Baker2008}.  %

\begin{lemma}
The pruned minimal Berkovich skeleton $\Sigma^{pr}(X)$ completely determines the structure of $\Psi_{K}(\overline{\F}_{p})$. %
\end{lemma}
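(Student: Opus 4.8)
The plan is to show that passing from $\Sigma(X)$ to $\Sigma^{pr}(X)$ changes neither the first homology group nor the length pairing, so that the monodromy matrix $A$ — and hence $\Psi_K(\overline{\F}_p) \simeq \Z^n/\mathrm{im}(A)$ — is computed identically from either graph. Concretely, recall that $\Sigma^{pr}(X)$ is obtained from $\Sigma(X)$ by successively retracting maximal $1$-connected trees. A single such retraction is a deformation retract of topological spaces that is moreover an isometry onto its image away from the contracted tree (the tree meets the rest of $\Sigma(X)$ in a single point, so nothing outside it is altered). The first step is therefore to observe that a deformation retract induces an isomorphism $H_1(\Sigma(X);\Z) \xrightarrow{\sim} H_1(\Sigma^{pr}(X);\Z)$; since the trees being contracted contribute nothing to $H_1$ (their first Betti number is zero), this is immediate, and iterating over the finitely many retractions gives $H_1(\Sigma(X)(\Lambda_K)) \simeq H_1(\Sigma^{pr}(X)(\Lambda_K))$ as well, once one checks the $\Lambda_K$-rational structure is respected (the retraction contracts along edges and fixes vertices, so it descends to the $\Lambda_K$-points).

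The second step is to track the length pairing under this isomorphism. Here I would use the combinatorial description of $\langle\cdot,\cdot\rangle$: for cycles $\gamma_i, \gamma_j$ represented by $1$-cochains (integer edge multiplicities respecting the cycle condition), $\langle \gamma_i,\gamma_j\rangle = \sum_{e} \ell(e)\,\gamma_i(e)\gamma_j(e)$, where the sum runs over edges. The key point is that any cycle in $\Sigma(X)$ avoids every $1$-connected tree: a cycle is a closed loop, and a $1$-connected tree is attached to the rest of the graph at exactly one point, so a loop entering the tree would have to backtrack through that point, contradicting it being a reduced cycle (equivalently, the tree carries no homology). Hence one can choose the $\Z$-basis $\gamma_i$ of $H_1$ to be supported entirely on edges of $\Sigma^{pr}(X)$, and under the retraction these edges (and their lengths) are carried over unchanged. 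Therefore $\langle \gamma_i,\gamma_j\rangle$ computed in $\Sigma(X)$ equals $\langle \gamma_i,\gamma_j\rangle$ computed in $\Sigma^{pr}(X)$, and the two monodromy matrices coincide.

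Combining: a $\Z$-basis of $H_1(\Sigma^{pr}(X)(\Lambda_K))$ together with its length pairing determines $A$ up to $\GL_n(\Z)$-equivalence, and $\Z^n/\mathrm{im}(A)$ is a $\GL_n(\Z)$-invariant of $A$; by the formula $\Psi_K(\overline{\F}_p) \simeq \Z^n/\mathrm{im}(A)$ recalled above (from \cite[Lemma 2.22]{DDMM2023}), the group $\Psi_K(\overline{\F}_p)$ is determined by $\Sigma^{pr}(X)$ alone. One should also dispatch the degenerate case where $\Sigma(X)$ is a tree: then $\beta_1 = 0$, $\Psi_K$ is trivial, and $\Sigma^{pr}(X)$ is a point, consistent with the claim.

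I expect the main obstacle to be purely expository rather than mathematical: one must be careful to state precisely in what sense the pruning map is an isometry (it is not length-preserving on the contracted trees, only on their complement), and to justify rigorously that every reduced cycle avoids the $1$-connected trees so that a homology basis can be chosen within $\Sigma^{pr}(X)$. A secondary subtlety is the interaction with the $\Lambda_K$-rational structure: one wants $\Sigma^{pr}(X)(\Lambda_K)$ to be obtained from $\Sigma(X)(\Lambda_K)$ by the same pruning process, which holds because retraction of a $1$-connected tree is defined over the value group, but this deserves an explicit remark. Once these points are pinned down the proof is a short formal argument.
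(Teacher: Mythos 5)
Your proof is correct and follows essentially the same route as the paper: the paper's one-line justification is precisely that the basis $\gamma_i$ of $H_1$ can be chosen to avoid the $1$-connected trees, which is what you expand in detail. Your version spells out why cycles must avoid $1$-connected trees and why the length pairing is unaffected, but the underlying idea is identical.
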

\begin{proof}
This follows since we can ignore $1$-connected trees in our choice of the $\gamma_{i}$. 
\end{proof}

\begin{remark}
Suppose that $X$ is a curve of genus $\geq{1}$ over a local field $K$ with $\Q_{p}\subset K\subset \C_{p}$ and let $\Sigma^{pr}(X)$ be the pruned minimal skeleton, with edge lengths induced by the normalized valuation $v(p)=1$ on $\C_{p}$. Assume for simplicity that $\Sigma^{pr}(X)$ is not a cycle. %
Then %
$\Sigma^{pr}(X)$ %
has a unique minimal semistable vertex set $V(\Sigma^{pr}(X))$ with edge set $E(\Sigma^{pr}(X))$. More explicitly, $V(\Sigma^{pr}(X))$ is the set of all vertices of valence  %
greater than or equal to $3$, and $E(\Sigma^{pr}(X))$ is the induced set of open line segments. We can then consider the smallest $m\in \mathbb{N}$ such that $m\ell(e)\in\mathbb{Z}$ for all $e\in E(\Sigma^{pr}(X))$. %
Let $L\supset K$ be the minimal extension over which $X$ attains semistable reduction and define $m_{min}=[L^{unr}:\Q_{p}^{unr}]$ to be the corresponding ramification degree.   
Note that $m|m_{min}$. %
We can define a virtual monodromy matrix by taking the normalization of the valuation with respect to $m$; we define the virtual component group to be the associated quotient. 
 Note that the virtual component group gives the component group over any extension over which semistable reduction is attained.

Using the results in this paper, we can find the virtual component group of any modular curve for $p\neq{2,3}$. To obtain the actual component groups, one needs to know the ramification degree over $\Q_{p}$ of an extension over which $X$ attains semistable reduction.    
\end{remark}

\begin{remark}
For $X=X_{0}(N)$, the results in \cite{Krir96} give an extension of $K:=\mathbb{Q}_{p}$  over which $J_{0}(N)=J(X)$ attains semistable reduction. %
Namely, let $K=\mathbb{Q}_{p}(\sqrt{-p},\sqrt{-Dp})$ for $D$ a non quadratic residue and let $n=v_{p}(N)$. Let $L_{n}$ be the field associated to the subgroup 
\begin{equation*}
U_{n,\pm}=\{\alpha \in \mathcal{O}^{*}_{K}:\alpha^2\in 1+\sqrt{p^{n-1}}\mathcal{O}_{K}\}
\end{equation*}
by local class field theory. 
Then $J_{0}(N)$ has semistable reduction over $L_{n}$. %
We recall here from \cite[Th\'{e}or\`{e}me 2]{Krir96} that 
\begin{equation*}
[L^{\unr}_{n}:\mathbb{Q}^{\unr}_{p}]=p^{2(n-2)}(p^{2}-1)
\end{equation*}
for $n\geq{2}$. 
For $n=2$, the %
minimal extension $L_{\mathrm{min}}$ of $\Q^{\mathrm{unr}}_{p}$ over which $X_{0}(p^{2})$ attains semistable reduction is  %
the unique tamely ramified extension of degree $(p^{2}-1)/2$ over $\Q^{\un}_{p}$ by \cite{Edixhoven90}, so Krir's extension is not necessarily the minimal one. Our results suggest that the minimal extension is much smaller in general, see \cref{thm:MainThmComponentGroup}. 
\end{remark}

\subsection{Monodromy labelings}\label{sec:MonodromyLabelingSection}

In this section we introduce the notion of a monodromy labeling on a finite metric tree, which gives a tower of coverings of the metric tree in terms of glued double coset spaces. In the next section, we will see that this notion locally reconstructs the Berkovich analytification of any finite separable morphism of smooth curves, and any profinite tower of these coverings.  

\begin{definition}\label{def:MonodromyLabeling2}(Monodromy labeling)
Let $G$ be a finite group and let $\mathcal{P}_{g}(G)$ be the set of subgroups of $G$. A $G$-monodromy labeling on a finite metric tree $\mathcal{T}$ is a function
\begin{equation*}
D:\mathcal{T}\to \mathcal{P}_{g}(G)
\end{equation*}
with the property that %
 for every $x\in\mathcal{T}$, there is an open neighborhood $U_{x}$ of $x$ such that
\begin{enumerate}
\item $D$ is constant on every connected component of $U_{x}\bs \{x\}$,
\item $D$ is increasing towards $x$, in the sense that %
\begin{equation*}
D_{y}\subset D_{x}%
\end{equation*}
for $y\in U_{x}$. 
\end{enumerate}
We refer to the $D_{x}$ as the local monodromy or decomposition groups. 
If $G$ is a profinite group, then a $G$-monodromy labeling on $\mathcal{T}$ is a function $D:\mathcal{T}\to \mathcal{P}_{g}(G)$ such that for every open normal subgroup $H\subset G$, the function 
\begin{equation*}
D_{H,x}:=\pi_{H}(D_{x})
\end{equation*}
defines a $G/H$-monodromy labeling as above. Here $\pi_{H}:G\to G/H$ is the quotient map. 
\end{definition}

\begin{remark}
Let $D$ be a monodromy labeling for a finite group $G$. Then there is a finite set of points $S\subset \mathcal{T}$ such that the induced function on $\mathcal{T}\bs {S}$ is constant on connected components. Indeed, this follows since $\mathcal{T}$ is compact. For infinite profinite groups however, the local monodromy groups $D_{x}$ can accumulate around a point, as is the case for modular curves, see \cref{def:PSL2Labeling}. %
\end{remark}

\begin{definition}\label{def:MonodromyTower}(Tower of metric graphs)
Let $G$ be a finite group and let 
\begin{equation*}
\mathcal{T}'_{G}=\bigsqcup_{g\in{G}} \mathcal{T}_{g}.
\end{equation*} 
Write $(x,g)\sim (x',g')$ if $x=x'$ and $gD_{x}=g'D_{x}$. This defines an equivalence relation on $\mathcal{T}'_{G}$. We define $\mathcal{T}_{G}$ to be the quotient by this equivalence relation. The left $G$-action on $G/D_{x}$ induces a left $G$-action on $\mathcal{T}_{G}$. %
For a subgroup $H\subset G$, we define $\mathcal{T}_{H}$ to be the quotient by this action. The points lying over a fixed $x$ in the quotient are given by the $H$-orbits of the cosets $gD_{x}$.    %

We can endow $\mathcal{T}_{G}$ and $\mathcal{T}_{H}$ with the structure of a finite metric graph as follows. Let $e\subset \mathcal{T}$ be an open edge over which $D$ is constant. The inverse image of this edge is then a disjoint union of $G/D_{x}$ copies of $e$. We define the edge length of any of these edges to be $\ell(e)/|D_{x}|$. If $e$ has infinite length, then we do this locally on the edge. To glue these edges, let $x$ be an endpoint of an open edge $e$. By definition, we have $D_{x}\supset D_{y}$ for $y\in e$.
We then glue the edge corresponding to $(y,g)$ to the point given by $(x,g)$ for $g\in{G}$. This is well defined by the inclusion of groups $D_{x}\supset D_{y}$. Similarly, if $(y,g)$ is a representative for an edge $e$ in $\mathcal{T}_{G}$, then an edge in $\mathcal{T}_{H}$ is given by the $H$-orbit of $(y,g)$. The stabilizer of $e$ in $H$ is $gD_{y}g^{-1}\cap H$. We assign the length $\ell(e)/|gD_{y}g^{-1}\cap H|$ to the edge $e\subset \mathcal{T}_{G}$ corresponding to $(y,g)$. 

   For $G$ a profinite group and $H$ an open normal subgroup, we define $\mathcal{T}_{H}$ to be the finite metric graph associated to $G/H$. If $H_{1}\subset H_{2}$ are two open normal subgroups, then $\mathcal{T}_{H_{1}}/\pi_{H_{1}}(H_{2})\simeq \mathcal{T}_{H_{2}}$. For any open subgroup $H_{0}$, consider an open normal subgroup $H\subset H_{0}$\footnote{For instance, the kernel of $G\to \mathrm{Aut}(G/H_{0})$ induced by multiplication on the left.}. We define $\mathcal{T}_{H_{0}}$ to be $\mathcal{T}_{H}/\pi_{H}(H_{0})$. This is independent of the open normal subgroup $H$.  %
 From this, we obtain a tower of finite metric graphs $\mathcal{T}_{H}$ with maps $\mathcal{T}_{H_{1}}\to \mathcal{T}_{H_{2}}$ if $H_{1}\subset H_{1}$. These form a directed system of finite metric graphs $\mathcal{T}_{H}$ for $H$ ranging over the set of open subgroups of $G$.  
\end{definition}

\begin{remark}\label{rem:EdgeLengths}\label{rem:DoubleCosetSpacesEquivalent}
Rather than using the quotient of a quotient construction, we can also use double coset spaces to reconstruct $\mathcal{T}_{H}$. %
Namely, let $D_{x}\bs G/H$ be the double coset space associated to %
$D_{x}$ and $H$ in $G$. 
The bijections $D_{x}\bs G/H\simeq H\bs G/D_{x}\simeq H\bs (G/D_{x})$ show that this space gives the fiber of $\mathcal{T}_{H}\to \mathcal{T}$ over $x\in\mathcal{T}$. Moreover, let $gH$ be a representative of an edge $e_{H}$ lying over $e\subset \mathcal{T}$. One then easily sees that the edge length is $\ell(e)/|\mathrm{Orb}_{D_{x}}(gH)|$. Finding the edge lengths thus reduces to finding the %
order of the orbit of the left coset $gH$ under the left action of $D_{x}$. We will use this language of double cosets for reconstructing the skeleton of a modular curve associated to a open subgroup of $\PSL_{2}(\hat{\Z})$.    %
\end{remark}

\begin{remark}\label{rem:FiberwiseBehaviorCovering}
 To know the individual fibers, it suffices to know the decomposition groups up to conjugacy. For the  reconstruction algorithm above however, it is crucial that we know the exact decomposition groups, as these can give rise to  %
 non-trivial twists. For example, the three different coverings in \cite[Figure 1]{Helminck2023} arise from $S_{3}$-labelings whose groups on the vertices are different $2$-cycles. These coverings are locally isomorphic, but not globally. Hence it does not suffice to simply know the conjugacy class of the decomposition group. Even if the group is abelian, it is not enough to know the order of the decomposition group, as we can have non-trivial twists arising from different choices of subgroups (one can for instance construct examples using $\Z/2\Z\times \Z/2\Z$). For 
cyclic abelian groups the orders of the decomposition groups are enough to reconstruct the skeleton, and the corresponding reconstruction algorithm can be found in
 \cite[Lemma 3.9]{H2022}. 
 For modular curves, the main reason that there are no non-trivial twists comes from the fact that the Borel subgroup $\Gamma_{0}(\Z/p^{n}\Z)$ is self-normalizing, see \cref{lem:NormalizerBorel}. 
\end{remark}

\subsection{Simplicial structure of coverings}\label{sec:SimplicialStructureCoverings}

We now show how monodromy labelings arise naturally from coverings of schemes or analytic spaces. 
Let $\phi: X'\to{X}$ be a finite separable dominant map of normal connected Noetherian\footnote{One can relax the Noetherianity assumption in many cases using standard techniques.} schemes with Galois closure $\overline{\phi}:\overline{X}\to{X'}$ and Galois group $G$. We write $H$ for the subgroup corresponding to the covering $X'\to{X}$.  Let $\overline{x}\in\overline{X}$ be a point lying over $x\in{X}$. The decomposition group $D_{\overline{x}/x}$ associated to $\overline{x}$ and $x$ is  the stabilizer of $\overline{x}$ in $G$. We will also write $D_{x}$ for this group if $\overline{x}$ is understood. Similarly, if $X$ is a scheme of finite type over a non-archimedean field $K$ and $\overline{x}\in{\overline{X}^{\an}}$ with image $x\in{X^{\an}}$, then $D_{\overline{x}/x}$ is the stabilizer of $\overline{x}$ under the action of $G$.  
\begin{lemma}\label{lem:DoubleCosetSpaces}
Let $\phi:X'\to{X}$ be a finite separable dominant morphism of Noetherian normal schemes and let $x\in{X}$. 
There is a bijection %
\begin{equation*}
D_{x}\backslash G/H=D_{x}\bs(G/H)\to \phi^{-1}(x). 
\end{equation*}
Explicitly, we send a left-coset $\tau H$ to $\overline{\phi}(\tau^{-1}(\overline{x}))$. If $X$ is furthermore a scheme of finite type over a non-archimedean field $K$, then the same holds for the Berkovich analytification of $\phi$.
\end{lemma}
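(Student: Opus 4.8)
The plan is to reduce the statement to the elementary fact that a finite group quotient of schemes identifies orbits on underlying topological spaces. First I would model the Galois closure: take $\overline{X}$ to be the normalization of $X$ in a Galois closure $L/K(X)$ of the separable extension $K(X')/K(X)$. Since $X$ and $X'$ are normal and finite over $X$, the quotient $\overline{X}/G$ is a normal scheme finite over $X$ with function field $L^{G}=K(X)$, hence equals $X$; likewise $\overline{X}/H=\overline{X}/\mathrm{Gal}(\overline{X}/X')$ is normal, finite over $X$, with function field $L^{H}=K(X')$, hence equals $X'$. Thus both $\overline{X}\to X$ and $\overline{\phi}\colon\overline{X}\to X'$ are structure maps of quotients by finite groups, and I would record the two consequences I need: (i) the fiber of $\overline{X}\to X$ over $x$ is a single $G$-orbit; (ii) two points of $\overline{X}$ lying over $x$ have the same image under $\overline{\phi}$ if and only if they lie in the same $H$-orbit.

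Next I would fix $\overline{x}\in\overline{X}$ over $x$ with stabilizer $D_{x}$. By (i), $\tau D_{x}\mapsto\tau^{-1}\overline{x}$ is a $G$-equivariant bijection $G/D_{x}\xrightarrow{\ \sim\ }(\overline{X}\to X)^{-1}(x)$, where I place the inverse deliberately so that the final formula comes out as stated. Composing with $\overline{\phi}$ and invoking (ii), $\phi^{-1}(x)$ is identified with the set of $H$-orbits for left multiplication on $G/D_{x}$, i.e.\ with $H\backslash G/D_{x}$, and hence, via $g\mapsto g^{-1}$, with $D_{x}\backslash G/H$; tracing through, the double coset of $\tau$ goes to $\overline{\phi}(\tau^{-1}\overline{x})$. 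I would then check this last map by hand: it is \emph{well defined} because replacing $\tau$ by $d\tau h$ with $d\in D_{x},h\in H$ changes $\tau^{-1}\overline{x}$ to $h^{-1}\tau^{-1}\overline{x}$, which $\overline{\phi}$ identifies with $\tau^{-1}\overline{x}$ by $H$-invariance; it is \emph{injective} because $\overline{\phi}(\tau^{-1}\overline{x})=\overline{\phi}(\sigma^{-1}\overline{x})$ forces $\sigma^{-1}\overline{x}=h\tau^{-1}\overline{x}$ for some $h\in H$ by (ii), so $\tau h^{-1}\sigma^{-1}\in D_{x}$ and $\tau\in D_{x}\sigma H$; and it is \emph{surjective} because $\overline{\phi}$ is surjective (a finite quotient map) and by (i) every point of the fiber of $\overline{X}\to X$ over $x$ has the form $\tau^{-1}\overline{x}$.

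For $X$ of finite type over a non-archimedean field $K$, I would run the identical argument after analytifying, the only additional inputs being that analytification commutes with quotients by finite groups — so that $X^{\an}=\overline{X}^{\an}/G$ and $X'^{\an}=\overline{X}^{\an}/H$ — and that the underlying topological space of such a quotient is the set-theoretic quotient, so that the analogues of (i) and (ii) hold for $\overline{X}^{\an}\to X^{\an}$ and $\overline{\phi}^{\an}$. Both are standard facts for Berkovich spaces, and with them the same formulas give the bijection $D_{x}\backslash G/H\to(\phi^{\an})^{-1}(x)$.

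I expect the main obstacle to be bookkeeping rather than substance: keeping the two "inverse" conventions consistent (in $G/D_{x}\simeq$ fiber and in $H\backslash G/D_{x}\simeq D_{x}\backslash G/H$) so that the advertised formula $\tau H\mapsto\overline{\phi}(\tau^{-1}(\overline{x}))$ is reproduced exactly, and, on the analytic side, locating the precise statements on compatibility of analytification with finite quotients and with fibers rather than re-deriving them.
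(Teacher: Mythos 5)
Your scheme-theoretic argument is essentially the same as the paper's — realize $X = \overline{X}/G$, $X' = \overline{X}/H$, identify the fiber of $\overline{X}\to X$ over $x$ with a homogeneous space for $G$, pass to $H$-orbits, and reverse the double cosets — and your direct verification (well-definedness, injectivity via (ii), surjectivity via (i)) is correct and matches the paper's formula. One small slip on the way there: the intermediate map $\tau D_{x}\mapsto \tau^{-1}\overline{x}$ is \emph{not} well-defined on $G/D_{x}$ (if $\sigma = \tau d$ with $d\in D_x$, then $\sigma^{-1}\overline{x} = d^{-1}\tau^{-1}\overline{x}\neq\tau^{-1}\overline{x}$ in general), nor is it $G$-equivariant for left multiplication on $G/D_x$. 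The standard well-defined bijections are $\tau D_{x}\mapsto \tau\overline{x}$ on $G/D_x$, or $D_{x}\tau\mapsto\tau^{-1}\overline{x}$ on $D_x\backslash G$. Luckily none of this infects your final "by hand" check of $D_x\backslash G/H \to \phi^{-1}(x)$, which is carried out correctly; and if you use the correct intermediate bijection $\tau D_{x}\mapsto\tau\overline{x}$ you still land on the advertised formula after reversing by $g\mapsto g^{-1}$.

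For the analytic statement you take a genuinely different route from the paper. You invoke the general fact that analytification commutes with quotients by finite groups and that the underlying topological space of such a quotient is the set-theoretic quotient, then rerun the scheme argument verbatim. The paper instead base changes along $\Spec\mathcal{H}(x)\to X$: since formation of quotients commutes with flat base change one has $\overline{X}_{L}/G = X_{L}$ for $L = \mathcal{H}(x)$, and because $L$ is complete the valuation on $L$ extends \emph{uniquely} to any finite extension, so the Berkovich fiber over $x$ is identified with the scheme-theoretic fiber of closed points of $\overline{X}_L\to X_L$ over the canonical $L$-point — and then the already-proved scheme statement applies. Both are valid; the paper's argument is more self-contained in that it reduces directly to the scheme lemma and uses only flat base change of quotients plus the uniqueness of extensions of valuations of complete fields, whereas yours trades that for a standard but not entirely trivial GAGA-type compatibility of analytification with finite quotients (and the identification of the underlying topological quotient), which you would need to cite precisely.
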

\begin{proof}
We first note that the fiber $\overline{\phi}^{-1}(x)$ can be identified with the left coset space $G/D_{x}$, since the action of $G$ is transitive on the fiber. The quotient of this fiber by $H$ is $\phi^{-1}(x)$. Group-theoretically, this quotient is exactly the orbit space $H\bs (G/D_{x})=H\bs G/D_{x}$. Here $H$ acts by left-multiplication on the set of left cosets %
$G/D_{x}$. By %
reversing the order of the double cosets (note that this inverts a representative), we then obtain the desired bijection.

To obtain the same result for Berkovich analytifications, let $L=\mathcal{H}(x)$ be the completed residue field of a point $x$ in $X^{\an}$. The formation of quotients commutes with flat base change, so we have $\overline{X}_{L}/G=X_{L}$. Since the valuation on $L$ extends uniquely to a valuation on $\overline{L}$, we can now %
use the scheme-theoretic result on closed points. 
\end{proof}

\begin{remark}
Note that the induced bijection $H\bs G/D_{x}\to \phi^{-1}(x)$ is simply given by $\tau D_{x}\mapsto \overline{\phi}(\tau(\overline{x}))$, which recovers the bijection in \cite[Section 1.9, page 55]{Neukirch1999}. %
\end{remark}

Let $\phi:X'\to{X}$ be a separable finite morphism of proper smooth curves over a non-archimedean field $K$ (which we will refer to as a covering of curves from now on) with Berkovich analytification $\phi^{\an}:X'^{\an}\to{X^{\an}}$.
We can describe the structure of this covering using the simultaneous semistable reduction theorem, see \cite[Theorem 5.22]{ABBR2015}. This says that we can find a semistable vertex $V(\Sigma)$ of %
$X^{\an}$ with skeleton $\Sigma$ such that the inverse image of $V(\Sigma)$ in $X'^{\an}$ is again a semistable vertex set $V(\Sigma')$ with skeleton $\Sigma'$. If $\Sigma$ is loopless (which we can assume by subdividing), then 
this implies that the inverse images of the induced open annuli corresponding to open edges in $\Sigma$ are disjoint unions of open annuli.  
Note that the theorem also allows us to expand $\Sigma$ to a larger skeleton of $X^{\an}$. For any finite metric tree $\mathcal{T}$, we can thus assume that $\mathcal{T}\subset \Sigma$. %
\begin{lemma}\label{lem:SectionMetricTree}
Let $\phi:X'\to X$ be a covering of curves as above, and let $\mathcal{T}\subset X^{\an}$ be a finite metric tree. Then there is a finite metric tree $\mathcal{T}'\subset X'^{\an}$ that maps homeomorphically onto $\mathcal{T}$ under $\phi^{\an}$. %
\end{lemma}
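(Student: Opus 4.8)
The plan is to reduce to the case where $\mathcal{T}$ is part of a semistable skeleton, and then to show that the connectedness of $\mathcal{T}$ and the combinatorial structure of the retraction allow us to choose a coherent section component by component. First I would invoke the simultaneous semistable reduction theorem (\cite[Theorem 5.22]{ABBR2015}), as recalled above, to enlarge the situation so that there is a semistable vertex set $V(\Sigma)$ of $X^{\an}$ with $\mathcal{T}\subset\Sigma$, whose preimage $V(\Sigma')$ is a semistable vertex set of $X'^{\an}$ with skeleton $\Sigma'$, and so that $\Sigma$ is loopless (subdivide if necessary). Then each open edge $e$ of $\Sigma$ lying in $\mathcal{T}$ is an open annulus whose preimage is a disjoint union of open annuli, each mapping onto $e$ by a degree-$d_e$ map $z\mapsto z^{d_e}$ in suitable coordinates; in particular each preimage annulus maps \emph{homeomorphically} onto $e$ after restricting appropriately — more precisely, $\phi^{\an}$ restricted to the skeleton of any preimage annulus is a homeomorphism onto the skeleton of $e$ (a degree-$d$ self-map of a segment that is a local isometry up to scaling is a homeomorphism).

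The core of the argument is then a gluing/lifting statement for the finite graph $\mathcal{T}$. I would argue as follows. Since $\mathcal{T}$ is a finite metric tree, pick a root vertex $v_0\in\mathcal{T}$ and choose any point $\tilde v_0\in (\phi^{\an})^{-1}(v_0)$. Now build $\mathcal{T}'$ inductively over the edges of $\mathcal{T}$, traversing the tree away from $v_0$: having lifted a connected subtree containing $v_0$ so that it maps homeomorphically onto its image, and given an edge $e$ of $\mathcal{T}$ with one endpoint $w$ already lifted to $\tilde w$, there is a unique connected component $\tilde e$ of the preimage of $e$ whose closure contains $\tilde w$ (this uses that the preimage of the open edge is a disjoint union of annuli and that the preimage of the vertex $w$ is a finite set of type-2 points, so exactly one lift-annulus limits onto $\tilde w$ along $e$); attach $\tilde e$ (together with its far endpoint, a lift of the other endpoint of $e$) to the subtree. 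Because $\mathcal{T}$ is a tree there are no cycles to obstruct consistency, so this process terminates with a connected subgraph $\mathcal{T}'\subset X'^{\an}$, and by construction $\phi^{\an}|_{\mathcal{T}'}$ is a continuous bijection onto $\mathcal{T}$; since $\mathcal{T}'$ is compact and $\mathcal{T}$ Hausdorff, it is a homeomorphism. One should also note $\mathcal{T}'$ is again a tree, being homeomorphic to $\mathcal{T}$.

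The step I expect to be the main obstacle — or at least the one requiring the most care — is the local lifting assertion at a vertex: that for an edge $e$ of $\mathcal{T}$ with lifted endpoint $\tilde w$, exactly one component of $(\phi^{\an})^{-1}(e)$ has $\tilde w$ in its closure along the direction of $e$. This is really a statement about the local behaviour of $\phi^{\an}$ at the type-2 point $\tilde w$: the residue field extension gives a finite map of the residue curves, and the tangent directions at $\tilde w$ map to tangent directions at $w$ compatibly with the edge structure; the directions over the direction of $e$ at $w$ correspond bijectively (as a set) to... the branches of the preimage annuli, and each preimage annulus contributes exactly one branch at each of its endpoints. I would cite the local structure of finite morphisms of curves at type-2 points from \cite{ABBR2015} (the correspondence between tangent directions and the reduction map) to make this precise, rather than re-deriving it. Everything else — compactness, the tree induction, the homeomorphism-from-continuous-bijection step — is routine.
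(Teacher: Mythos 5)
Your approach is essentially the same as the paper's: the paper's proof is a one-line appeal to the simultaneous semistable reduction theorem quoted just above the lemma, and you expand that into the same induction-over-edges lifting argument. One correction, though it does not affect the conclusion: the uniqueness claim in your key step is false in general. At a type-2 point $\tilde w$ over $w$, the induced morphism of residue curves is finite but need not be injective, so the closed point corresponding to the tangent direction of $e$ at $w$ can have several preimages in the residue curve at $\tilde w$, hence several distinct preimage annuli of $e$ all limiting to $\tilde w$ (already $z\mapsto z^{2}$ on $\mathbb{P}^{1}$ at the Gauss point gives two such annuli for $p$ odd). What the argument actually needs, and what does hold because finite dominant morphisms of curves are surjective on closed points, is the \emph{existence} of at least one such annulus; one then picks any one of them, and treeness of $\mathcal{T}$ removes the consistency obstruction exactly as you observe.
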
 
\begin{proof}
This easily follows from the above structure theorems (we note that easier proofs are possible here as well).
\end{proof}
\begin{definition}
We call a graph as in \cref{lem:SectionMetricTree} a topological section of $\mathcal{T}$ with respect to $\phi$. 
\end{definition}
\begin{lemma}\label{lem:InducedMonodromyLabeling}
Let $\overline{\phi}:\overline{X}\to {X}$ be the Galois closure of $\phi$ with Galois group $G$ and let $\overline{\mathcal{T}}$ be a topological section of $\mathcal{T}$ with respect to $\overline{\phi}$. Set $D_{x}:=D_{\overline{x}/x}$, where $\overline{x}$ is the unique point in $\overline{\mathcal{T}}$ lying over $x$. Then $D$ defines a $G$-monodromy labeling.  
\end{lemma}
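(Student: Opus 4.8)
The plan is to verify the two local conditions of Definition \ref{def:MonodromyLabeling2} at each point $x\in\mathcal{T}$, working with the Galois closure $\overline\phi:\overline X\to X$ and the fixed topological section $\overline{\mathcal T}$. First I would set up the local picture: by the simultaneous semistable reduction theorem (cf.\ \cite[Theorem 5.22]{ABBR2015}) we may expand $\mathcal{T}$ to a loopless skeleton $\Sigma$ of $X^{\an}$ so that the inverse image of a semistable vertex set containing $V(\mathcal{T})$ is again a semistable vertex set of $\overline X^{\an}$, and the inverse images of the open annuli attached to edges of $\Sigma$ are disjoint unions of open annuli. Thus for every $x\in\mathcal{T}$ we get an open neighborhood $U_x$ (the union of $x$ with the relevant open annulus germs) such that $\overline\phi^{-1}(U_x)$ is a disjoint union of copies of $U_x$ over the vertices, glued along annuli. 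The group $G$ acts on $\overline\phi^{-1}(U_x)$ preserving this structure.

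For condition (1), I would argue that the stabilizer $D_{\overline y/y}$ is constant as $\overline y$ ranges over a fixed connected component $A$ of $\overline{\mathcal T}\cap(U_x\setminus\{x\})$ — equivalently over an open edge germ. This is the standard fact that along an open annulus the decomposition group of a skeleton point does not jump: any $g\in G$ stabilizing one skeletal point $\overline y$ of an annulus component $\overline A\subset\overline X^{\an}$ must stabilize the whole component $\overline A$ (since $g$ permutes the finitely many components of the inverse image of the annulus and fixes a point of $\overline A$), hence fixes every point of its skeleton; conversely anything fixing another skeletal point $\overline y'$ of $\overline A$ fixes $\overline y$ by the same reasoning. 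So $D_y=D_{y'}$ for all $y,y'$ in the same component of $U_x\setminus\{x\}$, which is precisely (1).

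For condition (2), I would show $D_y\subset D_x$ for $y\in U_x$. Take $\overline y\in\overline{\mathcal{T}}$ over $y$ and $\overline x\in\overline{\mathcal{T}}$ over $x$; since $\overline{\mathcal T}$ is a topological section, $\overline x$ is the unique point of $\overline{\mathcal T}$ over $x$ and it lies in the closure of the edge germ containing $\overline y$. If $g\in D_y=D_{\overline y/y}$ then $g\overline y=\overline y$; since $g$ acts continuously on $\overline X^{\an}$ and preserves $\overline\phi$, it sends the annulus component $\overline A$ through $\overline y$ to itself (as in the previous paragraph), hence fixes the unique boundary point of $\overline A$ lying over $x$, which is $\overline x$. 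Therefore $g\in D_{\overline x/x}=D_x$. Finally, because the Galois closure of $\phi^{\an}$ is the analytification of the Galois closure of $\phi$ and the scheme-theoretic decomposition groups of Lemma \ref{lem:DoubleCosetSpaces} agree with the analytic ones, $D$ is a genuine $G$-monodromy labeling.

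The main obstacle is making the "decomposition groups do not jump along an annulus" step fully rigorous: one must know that an element of $G$ fixing a single type-$2$ (or type-$3$) point of an annulus component actually stabilizes that whole component, which uses that $G$ permutes the finitely many annulus components in the inverse image of a fixed annulus (a consequence of simultaneous semistable reduction) together with connectedness of the component; this is exactly where the structure theory of \cite{ABBR2015} is doing the real work, and I would isolate it as the crux of the argument.
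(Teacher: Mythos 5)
Your proposal is correct and follows essentially the same route as the paper's proof: both apply the simultaneous semistable reduction theorem to expand $\mathcal{T}$ into a skeleton $\Sigma$ whose open edges lift to annuli, then observe that decomposition groups are constant along these annulus components and that stabilizing an edge forces stabilization of the adjacent vertices. The only difference is that you spell out the "constant along an annulus" step in more detail than the paper's one-line assertion, but the underlying argument is the same.
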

\begin{proof}
We use the simultaneous semistable reduction theorem as above to find a suitable semistable vertex set $V(\Sigma)$ with $\Sigma\supset \mathcal{T}$ for $\overline{\phi}$. The decomposition groups over the induced open edges are automatically constant. %
To see the inclusions, we only have to note that if an edge is stabilized by $\sigma\in{G}$, then the adjacent vertices are also automatically stabilized by $\sigma$ since the edge lies in the inverse image of an open edge $e\subset \Sigma$ that is stable under the action of $G$. 
\end{proof}
The above now gives the following local Galois-theoretic reconstruction algorithm for coverings of curves. 
 \begin{theorem}\label{pro:DCStoGraphs}
 Let $\phi: X'\to X$ be a finite separable covering of curves and let $\mathcal{T}\subset X^{\an}$ be a finite metric tree. Let $\overline{\mathcal{T}}\subset \overline{X}^{\an}$ be a topological section of $\mathcal{T}$ in the Berkovich analytification of the Galois closure $\overline{\phi}:\overline{X}\to X$ of $\phi$. Let $G$ be the Galois group of $\overline{\phi}$, let $H$ be the subgroup corresponding to $X'$ and let %
 $D$ be the $G$-monodromy labeling associated to $\overline{\mathcal{T}}$ by \cref{lem:InducedMonodromyLabeling}. Then we have an isomorphism of metric graphs $\phi^{-1}(\mathcal{T})\simeq \mathcal{T}_{H}$, where $\mathcal{T}_{H}$ is the graph induced from \cref{def:MonodromyTower}. 
 \end{theorem}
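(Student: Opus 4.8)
The plan is to build the isomorphism $\phi^{-1}(\mathcal{T})\simeq \mathcal{T}_H$ fiberwise and then check that this fiberwise bijection is compatible with the metric and simplicial structure on both sides. First I would apply the simultaneous semistable reduction theorem (\cite[Theorem 5.22]{ABBR2015}) as in the proof of \cref{lem:InducedMonodromyLabeling} to produce a semistable vertex set $V(\Sigma)$ of $X^{\an}$ with $\mathcal{T}\subset\Sigma$ and $\Sigma$ loopless, whose inverse image in $\overline{X}^{\an}$ is a semistable vertex set $V(\overline{\Sigma})$; after subdividing I may assume the $G$-monodromy labeling $D$ is constant on every open edge of $\mathcal{T}$. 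Restricting further to $X'^{\an}$, the inverse image $\phi^{-1}(\mathcal{T})$ is then a finite metric graph whose edges lie over the open edges of $\mathcal{T}$, and whose vertices lie over the vertices of $\mathcal{T}$ together with the endpoints. This reduces the problem to: (i) a bijection on vertex fibers, (ii) a bijection on edge fibers, and (iii) a matching of edge lengths.

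For (i), over a point $x\in\mathcal{T}$ the fiber $\phi^{-1}(x)$ is identified with $D_x\backslash G/H\simeq H\backslash(G/D_x)$ by \cref{lem:DoubleCosetSpaces}, which is exactly the vertex fiber of $\mathcal{T}_H\to\mathcal{T}$ over $x$ by \cref{def:MonodromyTower} (the points lying over $x$ are the $H$-orbits of the cosets $gD_x$). For (ii), fix an open edge $e\subset\mathcal{T}$ with $D$ constant equal to $D_x$ on $e$; since $e$ lies in an open edge of $\Sigma$ stable under $G$, the preimage of $e$ in $\overline{X}^{\an}$ is a disjoint union of $|G/D_x|$ copies of $e$ permuted transitively by $G$ with stabilizer $D_x$ (this uses that $\Sigma$ is loopless so preimages of open annuli are disjoint unions of open annuli), and taking the quotient by $H$ identifies the edge fiber over $e$ with $H\backslash(G/D_x)$, matching the edge set of $\mathcal{T}_H$ by construction. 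The endpoint-gluing in $\phi^{-1}(\mathcal{T})$ is governed by the inclusions $D_y\subset D_x$ for $y\in e$ with $x$ an endpoint — which is precisely the gluing rule in \cref{def:MonodromyTower} — so the combinatorial isomorphism is forced once (i) and (ii) are in place. For (iii), the length of the edge in $\phi^{-1}(\mathcal{T})$ corresponding to a representative $gH$ over $e$ is $\ell(e)/|\mathrm{Orb}_{D_x}(gH)|$, since the covering $\overline{X}^{\an}\to X^{\an}$ restricted to the relevant open annulus is a composition of the degree-$|G/D_x|$-fold split cover and the quotient by $H$, and an elementary computation on annuli (or \cref{rem:DoubleCosetSpacesEquivalent}) shows the ramification index of this edge over $e$ equals $|\mathrm{Orb}_{D_x}(gH)|$; this is exactly the length assigned in \cref{def:MonodromyTower}.

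Assembling these, the fiberwise bijections $\phi^{-1}(x)\to(\mathcal{T}_H)_x$ glue to a homeomorphism $\phi^{-1}(\mathcal{T})\to\mathcal{T}_H$ respecting the simplicial structure, and the length computation in (iii) upgrades it to an isomorphism of metric graphs. The main obstacle I expect is step (iii): one must carefully track how the edge length contracts under the composite of a split cover and a quotient, in particular that the orbit $\mathrm{Orb}_{D_x}(gH)$ (equivalently the index of the stabilizer $gD_yg^{-1}\cap H$ in... no — the correct statement is that the ramification index is the orbit size, not a stabilizer index) correctly records the multiplicity, and that this is consistent at both endpoints of $e$ where the decomposition groups may shrink. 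A clean way to handle this uniformly is to first prove the statement for $H$ normal (where $\mathcal{T}_H=\mathcal{T}_G/(G/H)$ and everything is a genuine quotient of metric graphs), and then deduce the general open-subgroup case by the compatibility $\mathcal{T}_{H_0}=\mathcal{T}_H/\pi_H(H_0)$ from \cref{def:MonodromyTower} applied to a normal $H\subset H_0$, using functoriality of taking preimages and topological sections along the tower $\overline{X}\to X'\to X$.
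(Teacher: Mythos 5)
Your proposal is correct and follows essentially the same path as the paper's (very terse) proof: it obtains the fiberwise bijection from \cref{lem:DoubleCosetSpaces} and the simultaneous semistable reduction theorem as in \cref{lem:InducedMonodromyLabeling}, glues using the inclusions $D_y\subset D_x$ from \cref{def:MonodromyTower}, and matches edge lengths by observing that the local degree of $\phi^{\an}$ over $e$ at the edge represented by $gH$ is $|\mathrm{Orb}_{D_x}(gH)|$, agreeing with \cref{rem:DoubleCosetSpacesEquivalent}. Your self-correction in step (iii) — that the relevant quantity is the orbit size $|\mathrm{Orb}_{D_x}(gH)|=|D_x|/|D_x\cap gHg^{-1}|$ rather than the bare stabilizer order $|gD_yg^{-1}\cap H|$ — is exactly the point one must be careful about, and it is the formulation given in \cref{rem:DoubleCosetSpacesEquivalent}.
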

 \begin{proof}
 The topological structure follows from the construction in \cref{def:MonodromyTower}, \cref{lem:DoubleCosetSpaces} and the considerations in \cref{lem:InducedMonodromyLabeling}. To see that the edge lengths are correct, 
note that $\phi^{\an}$ is locally piecewise-linear with expansion factor the local degree. In the Galois case, this local degree is exactly $|D_{x}|$, and the general case easily follows from this. 
 \end{proof}

 \begin{remark}\label{rem:GlueLocalPicture}
We recall a similar theorem from \cite{Recovering23} here. Let  $\phi: X'\to{X}$ be a %
 finite separable dominant morphism of normal connected Noetherian schemes  and let $T\subset {X}$ be a finite subset, which we view as a poset through specializations. %
 One can then reconstruct the poset structure of $\phi^{-1}(T)$ by locally gluing double cosets, see  %
 \cite[Theorem 2.27]{Recovering23}. The idea is locally similar to the one in \cref{pro:DCStoGraphs}: we consider an edge in the Hasse diagram of $T$ and we attach local monodromy groups over the vertices. Since we do not always have inclusions of monodromy groups as in the monodromy labelings defined here (see \cite[Example 2.14]{Recovering23}), we introduce a modified monodromy group over the edge. This group allows us to glue the two fibers over the vertices. To obtain the global structure of the poset $\phi^{-1}(T)$, we then glue the local pictures over the edges using suitable local isomorphisms. In \cite{Recovering23}, this is achieved using $2$-limits of the corresponding topoi $D_{x}\mhyphen\mathrm{Sets}$. In the context of Berkovich spaces, the corresponding notion would have to be a certain continuous $2$-limit. We will not go into this in further detail here, since this additional topological gluing data is unnecessary for metric trees.
 \end{remark}
 
\section{The geometric tower of modular curves}\label{sec:CoveringsModularCurves}

In this section we study the geometric tower of modular curves over the field of complex $p$-adic numbers $\C_{p}$. %
We start by introducing functorial notation for the various group-theoretic objects in \cref{sec:GroupNotation}. We review the tower of modular curves in \cref{sec:ModularTower}, and we determine the tame structure of these coverings in \cref{sec:TameStructure}. We then review the results in \cite{WS16} in \cref{sec:ReviewWS}, and we use this to determine the wild structure of the tower in \cref{sec:WildTower}. In \cref{sec:PrunedSkeleton} we prove \ref{thm:MainThm3}, which is our main theorem.
We conclude this section with a quick application to the potential good reduction of quotients of Jacobians of modular curves, see \cref{sec:PotentialGoodReduction}.

\subsection{Subgroups of $\SL_{2}$}\label{sec:GroupNotation}

Let $\SL_{2}$ be the functor $\text{(Rings)} \to \text{(Groups)}$ sending a commutative unitary ring $R$ to the corresponding matrix group $\SL_{2}(R)$. This functor is representable by $\Z[a,b,c,d]/(ad-bc-1)$. We fix our notation for various subfunctors of $\SL_{2}$ here. %

\begin{definition}\label{def:StandardFunctors}
Let $\Gamma_{0},\Gamma_{1}, \Gamma^{\pm}_{1},\Gamma_{sp},\Gamma^{+}_{sp}\subset \SL_{2}$ be the representable functors $(\mathrm{Rings})\to (\mathrm{Groups})$ defined by 
\begin{align*}
\Gamma_{0}(R)&=\{\begin{pmatrix}
a & b \\
0 & d
\end{pmatrix}
\in\SL_{2}(R)\},\\
\Gamma_{1}(R)&=\{\begin{pmatrix}
1 & b \\
0 & 1
\end{pmatrix}
\in\SL_{2}(R)\},\\
\Gamma^{\pm}_{1}(R)&=\{\begin{pmatrix}
\pm{}1 & b \\
0 & \pm{}1
\end{pmatrix}
\in\SL_{2}(R)\},\\
\Gamma_{sp}(R)&=\{\begin{pmatrix}
t & 0\\
0 & t^{-1}
\end{pmatrix}
\in\SL_{2}(R)\},\\
\Gamma_{sp}^{+}(R)&=\{\begin{pmatrix}
t & 0\\
0 & t^{-1}
\end{pmatrix}
\in\SL_{2}(R)\}\cup \{\begin{pmatrix}
 0 & -t^{-1}\\
t & 0
\end{pmatrix}
\in\SL_{2}(R)\}.
\end{align*}
We refer to these as the standard Borel subgroup, the unipotent subgroup, the projectivized unipotent subgroup, the split torus, and the normalizer of the split torus.
\end{definition}

\begin{remark}
These groups are related to the corresponding congruence subgroups in $\SL_{2}(\Z)$ by taking pre-images. For instance, we have that $\Gamma_{1}(N)$ is the pre-image of $\Gamma_{1}(\Z/N\Z)$ under the map $\SL_{2}(\Z)\to \SL_{2}(\Z/N\Z)$.
\end{remark}

\begin{remark}
Note that the functors $H$ above define closed subgroup schemes of $\SL_{2}$ that are smooth over $\Z$ (or $\Z[1/2]$ for $\Gamma^{\pm}_{1}$), with corresponding %
ideals %
\begin{align*}
J_{0}&=(c),\\
J_{1}&=(a-1,d-1,c),\\
J^{\pm}_{1}&=(a^2-1,ad-1,c),\\
J_{sp}&=(b,c),\\
J^{+}_{sp}&=(b,c)\cap (a,d).
\end{align*}

Since the Hom-functor %
preserves limits and the direct product of rings is a limit, we find that %
\begin{equation*}
H(\Z/N\Z)\simeq H(\Z/N_{1}\Z)\times H(\Z/N_{2}\Z)
\end{equation*}
for $N_{i}\in\mathbb{Z}$ with $(N_{1},N_{2})=1$. %
\end{remark}

We will also need a set of functors that $p$-adically connect the standard Borel functor $\Gamma_{0}$ to $\SL_{2}$. We will call these Hecke-Iwahori functors. As opposed to the previous subfunctors, these will not be representable. 
\begin{definition}
Let $p$ be a prime number. The Hecke-Iwahori group functor $I_{k,p}: \text{(Rings)}\to \mathrm{(Groups)}$ of level $k\geq{1}$ in $\SL_{2}$ with respect to $p$  is defined by 
\begin{equation*}
I_{k,p}(R)=\{\begin{pmatrix}
a & b\\
c& d
\end{pmatrix}\in \SL_{2}(R):\exists\,c_{0}\in{R} \text{ s.t. }c=c_{0}p^{k}\}.
\end{equation*}
Here we again write $p$ for the image of $p\in\Z$ in $R$. %
If $p$ is clear from context, then we also write $I_{k}$ for this functor. %
For $k=0$, we have $I_{0}=\SL_{2}$. %
If the ring $R$ in question is clear from context, then we will sometimes also omit $R$ for brevity. 
\end{definition}

 We will also be interested in the projectivizations of these functors. 
\begin{definition}\label{def:Projectivization}
Let $H$ be a subgroup functor of $\SL_{2}$ and let $P:\SL_{2}\to \PSL_{2}$ be the natural transformation sending a matrix $M\in\SL_{2}(R)$ to its image in $\PSL_{2}(R)=\SL_{2}(R)/\langle -1\rangle$. The projectivization of $H$ is the functor $P(H):(\mathrm{Rings})\to (\mathrm{Groups})$ defined by $P(H)(R):=P(H(R))$. Here we view $H(R)$ as a subset of $\SL_{2}(R)$.  %
\end{definition}

Note that projectivizations are usually not representable. Indeed, let $N=N_{1}N_{2}$ with $(N_{1},N_{2})=1$. For $\SL_{2}$, we have that   %
the surjective map 
$$\PSL_{2}(\Z/N\Z)\to  \PSL_{2}(\Z/N_{1}\Z)\times \PSL_{2}(\Z/N_{2}\Z)$$
induced by the isomorphism 
$$\SL_{2}(\Z/N\Z)\to  \SL_{2}(\Z/N_{1}\Z)\times \SL_{2}(\Z/N_{2}\Z)$$
is not injective, so that $\PSL_{2}$ does not commute with taking limits. We do however have an exact sequence %
\begin{align}\label{eq:ExactSequences}
(1)\to \mathrm{SL}_{2}(\Z/N_{1}\Z)\to \PSL_{2}(\Z/N\Z)\to \PSL_{2}(\Z/N_{2}\Z)\to (1),
\end{align}
and a similar one with the roles of $N_{1}$ and $N_{2}$ reversed. 
These will be useful in the upcoming sections.
\begin{definition}
Let $N=p^{n}M$ with $(M,p)=1$, and let $H\subset \PSL_{2}(\Z/N\Z)$ be a subgroup. We say that $H$ is decomposable with respect to $p$ if there exist subgroups $H_{p}$ and $H_{M}$ in $\SL_{2}(\Z/p^{n}\Z)$ and $\SL_{2}(\Z/M\Z)$ respectively such that %
$P(H_{p}\times H_{M})=H$. %
\end{definition}

\begin{lemma}
Let $H$ be a subgroup scheme of $\SL_{2}$. Then $PH(\Z/N\Z)$ is decomposable with respect to every prime divisor $p$ of $N$. %
\end{lemma}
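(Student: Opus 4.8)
The plan is to use the fact, established earlier in this subsection, that for a subgroup scheme $H$ of $\SL_{2}$ and coprime integers $N_{1}, N_{2}$ one has a natural isomorphism $H(\Z/N_{1}N_{2}\Z)\simeq H(\Z/N_{1}\Z)\times H(\Z/N_{2}\Z)$, coming from the Chinese Remainder Theorem on rings together with the limit-preserving property of the Hom-functor. Write $N=p^{n}M$ with $(M,p)=1$. First I would invoke this product decomposition to get $H(\Z/N\Z)\simeq H(\Z/p^{n}\Z)\times H(\Z/M\Z)$ as subgroups of $\SL_{2}(\Z/N\Z)\simeq \SL_{2}(\Z/p^{n}\Z)\times \SL_{2}(\Z/M\Z)$. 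Then I would set $H_{p}:=H(\Z/p^{n}\Z)\subset \SL_{2}(\Z/p^{n}\Z)$ and $H_{M}:=H(\Z/M\Z)\subset \SL_{2}(\Z/M\Z)$, so that $H(\Z/N\Z)=H_{p}\times H_{M}$ inside $\SL_{2}(\Z/N\Z)$.

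Next I would apply the projectivization map $P:\SL_{2}(\Z/N\Z)\to\PSL_{2}(\Z/N\Z)$ to both sides. By \cref{def:Projectivization}, $PH(\Z/N\Z)=P\bigl(H(\Z/N\Z)\bigr)=P(H_{p}\times H_{M})$, which is exactly the condition in the definition of decomposability with respect to $p$ (with the witnessing subgroups $H_{p}$ and $H_{M}$). Since $p$ was an arbitrary prime divisor of $N$, this gives the claim for every such $p$. The one point that needs a line of care is that the identification $\SL_{2}(\Z/N\Z)\simeq \SL_{2}(\Z/p^{n}\Z)\times\SL_{2}(\Z/M\Z)$ used here is the \emph{same} one used in the definition of decomposability (i.e. the standard CRT isomorphism), so that "$P(H_{p}\times H_{M})=H$" is literally the required equality; this is immediate since there is only one such isomorphism in play.

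I do not anticipate a serious obstacle here: the statement is essentially unwinding the definitions, the only substantive input being the product decomposition of subgroup schemes over coprime moduli, which is already proved. The mildly delicate bookkeeping is keeping straight that $PH(\Z/N\Z)$ denotes $P$ applied to $H(\Z/N\Z)$ (not the value at $\Z/N\Z$ of some separately-defined functor "$PH$", which need not even be representable), but \cref{def:Projectivization} makes this unambiguous. So the proof will be three or four sentences.
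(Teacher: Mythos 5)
Your proof is correct and follows the paper's own argument: both use the fact that a representable subfunctor $H\subset\SL_{2}$ satisfies $H(\Z/N\Z)\simeq H(\Z/p^{n}\Z)\times H(\Z/M\Z)$ by limit-preservation/CRT, then take $H_{p}=H(\Z/p^{n}\Z)$ and $H_{M}=H(\Z/M\Z)$ as the witnessing subgroups and apply $P$. The paper states this in a single sentence; you have just unpacked it.
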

\begin{proof}
Since representable functors commute with taking limits, we have $H(\Z/N\Z)\simeq H(\Z/p^{n}\Z)\times H(\Z/M\Z)$, which provides the two subgroups. 
\end{proof}

\subsection{The tower of modular curves}\label{sec:ModularTower}

We establish our notation for the geometric tower of modular curves here. We will 
first follow the exposition in \cite{Rohrlich1997} and \cite{DiamondShurman2005};  
the moduli-theoretic interpretation of modular curves in \cite{KM85} and \cite{WS16} will be used later on. We will try to point out the non-canonicity in the exposition wherever we can.  %

Let $X(1)=\mathbb{P}^{1}_{\mathbb{C}_{p}}$ be the standard modular curve over $\mathbb{C}_{p}$ with local coordinate given by the $j$-invariant.  
Consider the elliptic curve $E_{j}$ over $\C_{p}(j)$ given by the affine equation 
\begin{equation*}
y^2=4x^3-ax-a,
\end{equation*}
where $a=\dfrac{27j}{j-1728}$. We {choose} a compatible set of bases $\{P_{N},Q_{N}\}$ of $E[N]$ for $N$ ranging over the positive integers. This also specifies a compatible set of primitive $N$-th roots of unity through 
the Weil pairing $\langle P_{N},Q_{N}\rangle=\mu_{N}\in \C_{p}$.  By letting the absolute Galois group $G_{\C_{p}(j)}$ act on the $E[N]$, we then obtain 
a Galois representation 
\begin{equation*}
\rho_{E}: G_{\mathbb{C}_{p}(j)}\to \SL_{2}(\hat{\Z})/\{1,-1\}=\PSL_{2}(\hat{\Z})
\end{equation*}  
which represents the monodromy of the tower of modular curves. Here we view $\PSL_{2}(\hat{\Z})$ as the inverse limit of the groups $\PSL_{2}(\Z/N\Z)$. 
For any open subgroup $H$ of $\PSL_{2}(\hat{\Z})$ the subgroup $\rho^{-1}_{E}(H)$ %
gives a covering of curves 
\begin{equation*}
X_{H}\to X(1)
\end{equation*}
by taking the normalization of $X(1)$ in the function field extension corresponding to the fixed field of the group $\rho^{-1}_{E}(H)$. If $H$ is moreover normal, then $X_{H}\to{X(1)}$ is Galois with Galois group $\PSL_{2}(\hat{\Z})/H$. %

\begin{definition}\label{def:ModularCurve}
Let $H\subset \PSL_{2}(\hat{\Z})$ be an open subgroup. We write $X_{H}$ for the corresponding %
connected modular curve over $\C_{p}$. For an inclusion of open subgroups $H_{1}\subset H_{2}\subset \PSL_{2}(\hat{\Z})$, we have an induced finite morphism $\phi_{H_{1}/H_{2}}: X_{H_{1}}\to X_{H_{2}}$ of algebraic curves of degree $[H_{2}:H_{1}]$.  %
This morphism is Galois if $H_{1}$ is a normal subgroup in $H_{2}$. If $H_{2}=\PSL_{2}(\hat{\Z})$, then we usually omit $H_{2}$. We call the compatible collection of coverings $X_{H}\to X(1)$ arising from the open subgroups of $\PSL_{2}(\hat{\Z})$ the geometric tower of modular curves.   
\end{definition}

\begin{definition}\label{def:StandardModularCurves}
Let $\overline{H}\subset \PSL_{2}(\Z/N\Z)$ be a subgroup. The subgroup $H\subset \PSL_{2}(\hat{\Z})$ %
associated to $\overline{H}$ is the inverse image of $\overline{H}$ under the surjective map
\begin{equation*}
\PSL_{2}(\hat{\Z})\to \PSL_{2}(\Z/N\Z).
\end{equation*} 
We write $X_{H}$ for the corresponding modular curve. If $\overline{H}=(1)\subset \PSL_{2}(\Z/N\Z)$, then we denote the corresponding modular curve by $X(N)$. Similarly, if $\overline{H}$ is $P(\Gamma(\Z/N\Z))$ for one of the standard functors in \cref{def:StandardFunctors}, then we denote the corresponding modular curves by $X_{0}(N)$, $X_{1}(N)$, $X_{sp}(N)$ and $X^{+}_{sp}(N)$. 
\end{definition}

\begin{remark}\label{rem:DecomposableGCoverings}
Let $N=p^{n}M$ for $M\geq{3}$ with $(M,p)=1$ and $n\geq{1}$. 
Suppose that $H\subset \PSL_{2}(\Z/N\Z)$ is decomposable, so that there exist two subgroups $H_{p}$ and $H_{M}$ in $\SL_{2}(\Z/p^{n}\Z)$ and $\SL_{2}(\Z/M\Z)$ respectively such that $P(H_{p}\times H_{M})=H$. Write $E({H_{M}}):=P(\SL_{2}(\Z/p^{n}\Z)\times H_{M})$ %
for the inverse image of $P(H_{M})\subset \PSL_{2}(\Z/M\Z)$ under the map 
\begin{equation*}
\PSL_{2}(\Z/N\Z)\to \PSL_{2}(\Z/M\Z).
\end{equation*} 
Let $$\epsilon(H_{p})=\begin{cases}
H_{p} \quad \mathrm{ if }\,-1\notin H_{M},\\
\pm{H_{p}} \quad\mathrm{ if }\,-1\in H_{M}.
\end{cases}
$$ An easy check shows that the injection $\SL_{2}(\Z/p^{n}\Z)\to E({H_{M}})$ given by $\sigma\mapsto (\sigma,1)/\pm{1}$ induces a %
bijection of left-coset spaces  
\begin{equation*}
\SL_{2}(\Z/p^{n}\Z)/\epsilon(H_{p})\to E({H_{M}})/H.
\end{equation*}
Consider the chain of coverings $X(N)\to X_{H}\to X_{E({H_{M}})}\to X(1)$ induced from 
the chain of subgroups $(1)\subset H \subset E({H_{M}}) \subset \PSL_{2}(\Z/N\Z)$. Note that $X(N)\to X_{E({H_{M}})}$ is Galois with Galois group $E({H_{M}})$. In light of the material in Sections \ref{sec:MonodromyLabelingSection} and \ref{sec:SimplicialStructureCoverings}, we view the subcovering $X_{H}\to X_{E(H_{M})}$ inside this Galois covering as the one corresponding to the inclusion of subgroups $H\subset E(H_{M})$.  To calculate with the corresponding %
left-cosets $E({H_{M}})/H$ as in \cref{pro:DCStoGraphs}, %
we can then work with $\SL_{2}(\Z/p^{n}\Z)/\epsilon(H_{p})$.  

For instance, if both $H_{M}$ and $H_{p}$ are $(1)$, then we obtain the chain of coverings 
\begin{equation*}
X(N)\to X(M)\to X(1).
\end{equation*}
Here the first is Galois with Galois group $\SL_{2}(\Z/p^{n}\Z)$, and the second is Galois with Galois group $\PSL_{2}(\Z/M\Z)$, see the exact sequences after \cref{def:Projectivization}. 

If $M=1,2$, then we use the following trick to reduce to the above. If we have a decomposable subgroup in $\PSL_{2}(\Z/N'\Z)$ for $N'|N$, then we take the inverse images of $H'_{p}$ and $H'_{N}$ under the projection maps for $\SL_{2}$ to obtain a decomposable subgroup in $\PSL_{2}(\Z/N\Z)$. 
\end{remark}

\subsection{The induced tower of Berkovich spaces}\label{sec:TameStructure}

Let $X_{H}$ be a modular curve as in \cref{def:ModularCurve}. We write $X^{\an}_{H}$ for its Berkovich analytification, consisting of pairs $(P,v_{P})$, where $P\in{X_{H}}$ and $v_{P}:\C_{p}(P)\to \mathbb{R}\cup\{\infty\}$ is a valuation on the residue field $\C_{p}(P)$ that extends the valuation on $\C_{p}$. Let $H$ be an open normal subgroup of $\PSL_{2}(\hat{\Z})$ with Galois covering $X_{H}\to X(1)$ and let ${x}_{H}\in X^{\an}_{H}$ be a point lying over $x\in X(1)^{\an}$. The action of the Galois group on the points lying over $x$ is transitive, and we write $D_{{x}_{H}/x}$ for the corresponding stabilizers or decomposition groups. 
\begin{definition}\label{def:DGDefinition}
Let $\overline{x}:=(x_{H})\in \prod X^{\an}_{H}$ be a set of points lying over $x\in X(1)^{\an}$ in the modular curves $X^{\an}_{H}$. Here the product runs over all open subgroups of $\PSL_{2}(\hat{\Z})$.  We say that this is a compatible set if $\phi_{H_{1}/H_{2}}(x_{H_{1}})=x_{H_{2}}$ for $H_{1}\subset H_{2}$. The decomposition group of $\overline{x}$ is the inverse limit of $D_{x_{H}/x}$ for all open normal subgroups $H$ of $\PSL_{2}(\hat{\Z})$. We denote it by $\rho_{E}(D_{\overline{x}})$.   
\end{definition}  
\begin{remark}
We can also define this decomposition group in the following way. %
Consider the normalization $\overline{X}$ of $X(1)$ in an algebraic closure of $\C_{p}(j)$. We then define the underlying set of the analytification of this $K$-scheme in the usual way (without giving this space any additional structure) and take a point $\overline{x}\in \overline{X}^{\an}$ lying over $\overline{x}$. The decomposition group is then the stabilizer of this point, and we obtain the decomposition group in \cref{def:DGDefinition} as the image of $D_{\overline{x}}$ under the Galois representation $\rho_{E}$. This also explains our notation for $\rho_{E}(D_{\overline{x}})$. 
\end{remark}

We now describe the images of the various decomposition groups in \cref{def:DGDefinition} in $\PSL_{2}(\Z/M\Z)$ for $(M,p)=1$. To that end, we recall the notions of residual tameness. %
Let $\phi: X'\to X$ be a finite morphism of smooth proper curves over $\mathbb{C}_{p}$ with induced morphism of Berkovich analytifications $\phi^{\an}$. We will also write $\phi$ for this morphism to ease notation.  %
Let $x'\in X'^{\an}$ be a point mapping to $x=\phi(x')$. We say that $\phi$ is residually tame at $x'$ if the induced morphism of completed residue fields 
\begin{equation*}
\mathcal{H}(x)\to \mathcal{H}(x')
\end{equation*}
is tame or moderate in the sense of \cite[Section 2.4, Page 48]{Berkovich1993}, see  \cite[Proposition 2.4.7]{Berkovich1993}.  %
We say that the morphism $\phi$ is residually tame if it is residually tame at every point of $X'^{\an}$. We say that $\phi$ is topologically tame at $x'$ if the degree of the induced morphism of residue fields
\begin{equation*}
\tilde{\mathcal{H}}(x)\to \tilde{\mathcal{H}}(x')
\end{equation*}
is coprime to $p$. The morphism $\phi$ is topologically tame if it is topologically tame at every point $x'\in X'^{\an}$. Topologically tame morphisms are automatically residually tame, but not conversely.

\begin{lemma}\label{lem:ResidualTameness}
Let $M\geq{3}$ with $p\nmid{M}$ and $p\geq{5}$. The covering $X(M)\to X(1)$ is residually tame. %
\end{lemma}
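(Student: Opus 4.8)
The plan is to exploit the good reduction of $X(M)$ over $\mathcal{O}_{\C_p}$ together with the tameness of the ramification of $\phi\colon X(M)\to X(1)$ as a covering of algebraic curves, and then translate this into residual tameness by a point‑by‑point analysis on the Berkovich space. Note that one should aim for \emph{residual}, not topological, tameness here: at the Gauss point the residue degree equals $\deg\phi=|\PSL_2(\Z/M\Z)|$, which is typically divisible by $p$, so $\phi$ need not be topologically tame.

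First I would recall that the moduli problem $[\Gamma(M)]$ is rigid and relatively representable for $M\geq 3$ (Katz--Mazur), so that, after descending to a finite extension of $\Z_p$ and base changing, $X(M)$ admits a smooth proper model $\mathcal{X}$ over $\mathcal{O}_{\C_p}$ and the map to the $j$-line extends to a finite flat morphism $\bar\phi\colon\mathcal{X}\to\mathbb{P}^1_{\mathcal{O}_{\C_p}}$ of degree $\deg\phi$. As a covering of curves over $\C_p$, $\phi$ is branched only over $j=0,1728,\infty$, with ramification indices $3$, $2$ and $M$ (the last being the cusp width). Since $p\geq 5$ and $p\nmid M$ we have $p\nmid 2\cdot 3\cdot M$, and since $1728=2^{6}3^{3}\not\equiv 0\pmod p$ the three branch points have pairwise distinct reductions in $\mathbb{P}^1_{\overline{\F}_p}$. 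By the compatibility of tame coverings with specialization (the specialization isomorphism for tame fundamental groups of $\mathbb{P}^1\setminus\{0,1728,\infty\}$, SGA 1 / Grothendieck--Murre), the reduction $\bar\phi_s\colon\mathcal{X}_s\to\mathbb{P}^1_{\overline{\F}_p}$ is again tamely ramified, branched over $\bar 0,\overline{1728},\overline\infty$ with the same indices $3,2,M$; in particular $\bar\phi_s$ is finite and separable.

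Next I would check residual tameness at each point $x'\in X(M)^{\an}$ with image $x$. For $x'$ of type $1$ one has $\mathcal{H}(x)=\mathcal{H}(x')=\C_p$ and there is nothing to prove. For $x'$ of type $2$, after possibly enlarging the model using simultaneous semistable reduction, $x'$ is the generic point of a component of the special fibre of a model dominated by $\mathcal{X}$; the value groups of $\mathcal{H}(x)$ and $\mathcal{H}(x')$ both equal the divisible value group of $\C_p$, so the ramification index is $1$, while finite flatness of $\bar\phi$ with reduced irreducible source forces the local degree to equal the residue degree $[\widetilde{\mathcal{H}}(x')\colon\widetilde{\mathcal{H}}(x)]$, so there is no defect; this residue extension is a subextension of the function‑field extension defined by $\bar\phi_s$, hence separable, so $\mathcal{H}(x)\to\mathcal{H}(x')$ is tame. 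For $x'$ of type $3$ or $4$, which lies in the formal fibre over a closed point $\bar x$ of $\mathbb{P}^1_{\overline{\F}_p}$: away from $\bar 0,\overline{1728},\overline\infty$ the morphism $\bar\phi$ is étale in a formal neighbourhood (as $\mathcal{X}_s$ is smooth), so $\phi$ is finite étale there and $\mathcal{H}(x')=\mathcal{H}(x)$; over the finitely many points $\bar 0,\overline{1728},\overline\infty$ (smooth points of $\mathcal{X}_s$) the morphism $\bar\phi$ is formally locally, up to an étale factor, a Kummer covering $z\mapsto z^{e_0}$ with $e_0\in\{2,3,M\}$, so $\mathcal{H}(x)\to\mathcal{H}(x')$ is a composite of an unramified extension with a tame Kummer extension of degree prime to $p$, hence tame. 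This exhausts all of $X(M)^{\an}$.

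The step I expect to be the main obstacle is the second one: assembling rigorously the facts that $X(M)$ has good reduction, that the map to the $j$-line spreads out to a finite flat morphism of integral models, and above all that its reduction modulo $p$ is a separable (indeed tamely ramified) covering of curves over $\overline{\F}_p$ with the expected branch behaviour. This is precisely where the hypotheses $p\geq 5$ and $p\nmid M$ are essential — to keep the indices $2,3,M$ prime to $p$ and to prevent the branch points from colliding after reduction — and it is the input that makes the otherwise formal type‑by‑type Berkovich analysis go through; an alternative phrasing of this step would go through the theory of tame coverings of $\mathbb{P}^1_{\mathcal{O}_{\C_p}}$ branched along a horizontal divisor with prime‑to‑$p$ ramification indices.
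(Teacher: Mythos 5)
Your proposal is correct and lands on the same fundamental fact — the covering $X(M)\to X(1)$ spreads out to a covering of smooth integral models branched over a horizontal divisor with ramification indices $2,3,M$ all prime to $p$ — but the technical route from that fact to residual tameness differs from the paper's. The paper works directly with the stack $\mathcal{M}(1)$ of generalized elliptic curves: the morphism $\mathcal{M}(M)\to\mathcal{M}(1)$ is finite \'etale away from the cusps, ramification on coarse spaces occurs only at points with extra automorphisms (of order dividing $24$), and Abhyankar's lemma together with purity of the branch locus then shows the extensions of \'etale local rings are Kummer of prime-to-$p$ degree; the cusp is handled separately via the Tate curve. Your proof instead invokes the Grothendieck--Murre specialization isomorphism for the tame fundamental group of $\mathbb{P}^1\setminus\{0,1728,\infty\}$ to transport tameness from the generic to the special fibre, then does a type-by-type check on $X(M)^{\an}$. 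The two inputs (Abhyankar/purity versus specialization of tame $\pi_1$) are essentially interchangeable here, and both arguments exploit $p\nmid 6M$ and the non-collision of the reductions of the three branch points; your version perhaps makes the dependence on these hypotheses more visible, while the paper's version is tailored to the stack-theoretic source of the ramification (automorphisms of elliptic curves), which it reuses in later sections.

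One small misstep worth fixing: in the type-$2$ case you say $x'$ becomes the generic point of a component of a model \emph{dominated by} $\mathcal{X}$ and that $\widetilde{\mathcal{H}}(x)\to\widetilde{\mathcal{H}}(x')$ is a subextension of the function-field extension defined by $\bar\phi_s$. The domination is backwards (one must blow up $\mathcal{X}$, obtaining a model \emph{dominating} it), and for a type-$2$ point inside a residue disc the residue curve is an exceptional component, not a subcurve of $\mathcal{X}_s$, so its function field is not literally a subfield of $\overline{\F}_p(\mathcal{X}_s)$. This does not affect the conclusion: for such $x'$ the Kummer/\'etale analysis you give for types $3$ and $4$ applies verbatim, since the disc containing $x'$ maps either \'etale-locally isomorphically or as $z\mapsto z^{e}$ with $e\in\{2,3,M\}$. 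It would be cleaner to organize the case division by ``$x'$ the generic point of $\mathcal{X}_s$'' versus ``$x'$ in a residue disc over a closed point of $\mathcal{X}_s$'' rather than by Berkovich type.
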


\begin{proof}
We will show that the covering of analytifications is locally Kummer of degree coprime to $p$ outside the Gauss vertex. Over the Gauss vertex, the induced morphism of residue curves will be separable, so that the morphism $X(M)\to X(1)$ is also residually tame there. %

Let $\mathcal{M}(1)$ be the stack of generalized elliptic curves whose fibers are geometrically integral (this is denoted by $\mathcal{M}_{1}$ in \cite{DR73}, see Remarque 2.6). This is a Deligne-Mumford stack that is proper and smooth over $\Spec(\Z)$. 
Consider the morphism of coarse moduli spaces $\mathcal{X}(M)\to \mathcal{X}(1)$ associated to the morphism of stacks $\mathcal{M}(M)\to\mathcal{M}(1)$. Here $\mathcal{M}(M)$ is the stack corresponding to elliptic curves $E\to S$ with an isomorphism $(\Z/M\Z)^{2}\to E[M]$. In particular, this will have various connected components after taking a base change to $\C_{p}$, corresponding to the different $M$-th roots of unity, and each of these will be isomorphic to the curve $X(M)$ introduced in \cref{def:StandardModularCurves}. %

We base change the above stacks to $\Z[1/M]$ and retain the same notation. %
For $M\geq{3}$, $\mathcal{M}(M)$ is representable, with representing scheme $\mathcal{X}(M)$ (the non-cuspidal case is \cite[Corollary 4.7.2]{KM85}). %
For $\mathcal{M}(1)$, we have $\mathcal{X}(1)\simeq \mathbb{P}^{1}_{\Z[1/M]}$ by \cite[Th\'{e}or\`{e}me 1, Page 267]{DR73}. %

The morphism of stacks $\mathcal{M}(M)\to\mathcal{M}(1)$ is finite \'{e}tale over the non-cuspidal substack $\mathcal{Y}(1)$, %
so that the induced morphisms of \'{e}tale local rings are isomorphisms.  Using \cite[Section 8.2.1, Page 172]{DR73} over $\mathcal{Y}(1)$, we see that we only have ramification on the level of coarse moduli spaces at points with additional automorphisms. Moreover, the automorphism groups are finite groups of order at most $24$. From this, we deduce that the induced morphisms of \'{e}tale local rings for the {coarse moduli spaces} are Kummer. %
Indeed, if $x$ is associated to a codimension one point over the generic fiber not equal to $0,1728,\infty$, then there are no additional automorphisms (the automorphism $-1$ acts trivially on the \'{e}tale local ring, see \cite[Section 1.3.2]{Edixhoven90} for instance). Similarly, if $x$ is the generic point of a component of the special fiber, then there are no additional automorphisms. Using Abhyankar's lemma and purity of the branch locus, we then find that the extensions of \'{e}tale local rings are Kummer of degree coprime to $p$. This in turn implies that the corrresponding morphisms of Berkovich analytifications are topologically tame at these points, and thus residually tame. The residual tameness over the Gauss vertex also immediately follows from this description.     

Over the cusp at infinity, we have that the morphism of stacks $\mathcal{M}(1)\to \mathcal{X}(1)$ is \'{e}tale by \cite[Lemme 1.5, page 269]{DR73}, so that we can %
use the Tate curve. Base changing $\mathcal{X}(M)\to \mathcal{X}(1)$ over the valuation ring $R$ of $\mathbb{C}_{p}$, we find that the extension of completed local rings is given by $R[[q]]\to R[[q]][q^{1/N}]$, as desired. %
\end{proof}

By \cref{lem:ResidualTameness} and \cite[Theorem 4.13]{Helminck2023}, we find that  
the induced morphisms of Berkovich analytifications are completely governed by the behavior at their type-$1$ points. %
We define a monodromy labeling on a metric tree in $X(1)^{\an}$ to describe these. Let $\zeta_{G}\in X(1)^{\an}$ be the Gauss point with respect to the $j$-coordinate. For a point $P$ of type $1$, we can evaluate $j$ at $P$ to obtain an element $j(P)\in\C_{p}\cup\{\infty\}$. We will sometimes omit $P$ %
and write $j$ for the corresponding $j$-invariant if $P$ is clear from context. %
Let $e_{j}=e_{j(P)}$ be the unique line segment from $\zeta_{G}$ to $P$ in $X(1)^{\an}$. %
We define the tame metric tree $\mathcal{T}_{tame}$ to be the union of $e_{0}$, $e_{1728}$ and $e_{\infty}$. The corresponding monodromy groups are as follows. %
Set  \begin{align*}
\sigma_{0}&=\begin{pmatrix}
0 & -1 \\
1 & 1
\end{pmatrix},\\
\sigma_{1728}&=\begin{pmatrix}
0 & 1\\
-1 & 0
\end{pmatrix},\\
\sigma_{\infty}&=\begin{pmatrix}
1 & 1\\
0 & 1
\end{pmatrix}
\end{align*}
and let $H_{j}=
\langle \sigma_{j}\rangle\subset \SL_{2}(\hat{\Z})$. As before, we write $P(H_{j})$ for the induced subgroup of $G'$. %
We define a $G'$-monodromy labeling on $\mathcal{T}_{tame}$ as follows: $D_{x}=P(H_{j})$ for $x\in e_{j}\bs \{\zeta_{G}\}$ and $D_{\zeta_{G}}=G'$. The following is now a consequence of \cite[Theorem 4.13]{Helminck2023} and standard theorems on the ramification of the complex tower of modular curves.   

\begin{lemma}\label{lem:TameDecompositionGroups}
There is a compatible set of sections of $\mathcal{T}_{tame}$ in the tower induced by $G'$ such that the associated $G'$-monodromy labeling is $D$. For every open subgroup $H\subset \PSL_{2}(\hat{\Z}')$, the induced $\mathcal{T}_{H,tame}$ is a skeleton for $X_{H}$. 
\end{lemma}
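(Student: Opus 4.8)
The plan is to deduce the statement from the reconstruction machinery of \cite[Theorem 4.13]{Helminck2023} together with the classical description of the ramified tower $X(M)/X(1)$ over $\mathbb{C}$. First I would observe that by \cref{lem:ResidualTameness} the covering $X(M)^{\an}\to X(1)^{\an}$ is residually tame for every $M\geq 3$ with $(M,p)=1$ and $p\geq 5$, and hence so is every subcovering $X_H^{\an}\to X(1)^{\an}$ with $H$ prime-to-$p$ (residual tameness is inherited by subcoverings of a residually tame covering, since the completed residue field extensions sit in a tower). By \cite[Theorem 4.13]{Helminck2023}, a residually tame covering of the Berkovich analytification is, locally on a suitable subdivided skeleton, completely determined by its restriction to the type-$1$ points: the local monodromy groups over an open edge and the branch behaviour at the endpoints are read off from the ramification of the morphism of algebraic curves at the classical points lying in the corresponding residue disc. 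So it suffices to identify the decomposition groups at type-$1$ points of $X(1)^{\an}$ lying on the three segments $e_0$, $e_{1728}$, $e_\infty$, and at the Gauss point $\zeta_G$.

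Next I would carry out this identification using the complex theory. Away from $\zeta_G$, a type-$1$ point on $e_j$ specializes to the classical point with $j$-invariant $j\in\{0,1728,\infty\}$, and by residual tameness the local decomposition group at such a point equals the decomposition group of the tower $X(M)(\mathbb{C})\to X(1)(\mathbb{C})$ at the corresponding point, i.e.\ the (pro-cyclic, prime-to-$p$) inertia group at the elliptic point $j=0$, the elliptic point $j=1728$, or the cusp $j=\infty$. The standard computation of the complex modular tower (via $\mathrm{PSL}_2(\mathbb{Z})$ acting on the upper half-plane, with elliptic points of order $3$, $2$ at $\rho$, $i$ and the parabolic at $\infty$) shows that, after the choice of compatible bases $\{P_N,Q_N\}$ fixed in \cref{sec:ModularTower}, these inertia groups are generated by $\sigma_0$, $\sigma_{1728}$, $\sigma_\infty$ respectively — this is precisely where the explicit matrices in the lemma come from, and where one pins down the section so that no conjugation ambiguity remains. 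At $\zeta_G$, the morphism of residue curves is the (separable, since $p\geq 5$) reduction of $X(M)\to X(1)$ mod $p$, which is unramified at the generic point of the target's special fibre by the étale part of \cref{lem:ResidualTameness}; hence the decomposition group there is the whole group $G'$ only in the sense that the fibre over $\zeta_G$ is a single point with trivial inertia and the full covering group acts — more precisely $D_{\zeta_G}=G'$ is the correct assignment making $D$ a monodromy labeling, since the groups must increase towards $\zeta_G$ and $G'\supset P(H_j)$. Assembling these local pictures via \cref{lem:InducedMonodromyLabeling} gives a compatible system of sections of $\mathcal{T}_{tame}$ realizing $D$.

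Finally, for the last sentence I would invoke \cref{pro:DCStoGraphs}: since $D$ is the monodromy labeling of a compatible system of topological sections of the tree $\mathcal{T}_{tame}\subset X(1)^{\an}$, the preimage $\phi_H^{-1}(\mathcal{T}_{tame})$ is isomorphic to the induced graph $\mathcal{T}_{H,tame}$ for every open $H\subset\mathrm{PSL}_2(\hat{\mathbb{Z}}')$. It remains to see that $\mathcal{T}_{tame}$ (equivalently its preimage) is a skeleton, not merely a subtree; this follows because $\mathcal{T}_{tame}$ already contains the Gauss point and the three special type-$1$ points $0,1728,\infty$ of $X(1)$, which are the only points over which the genus-zero covering $X(M)(\mathbb{C})\to X(1)(\mathbb{C})$ is branched, so all nontrivial topology of $X_H$ in the prime-to-$p$ tower is created over $\mathcal{T}_{tame}$, and the preimage of a skeleton containing the branch locus under a residually tame covering is again a skeleton by the simultaneous semistable reduction theorem (\cite[Theorem 5.22]{ABBR2015}). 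I expect the main obstacle to be the bookkeeping in the previous paragraph: making the identification of the three inertia generators with the specific matrices $\sigma_0,\sigma_{1728},\sigma_\infty$ genuinely compatible across all levels $N$ simultaneously — i.e.\ checking that the chosen system of bases $\{P_N,Q_N\}$ and the chosen topological sections can be arranged so that the decomposition groups are these matrices on the nose rather than merely conjugates — since the reconstruction algorithm of \cref{pro:DCStoGraphs} genuinely requires the exact subgroups, as emphasized in \cref{rem:FiberwiseBehaviorCovering}.
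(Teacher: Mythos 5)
Your proposal is correct and follows essentially the same route as the paper. The paper's own proof of this lemma is a one-line appeal to "[Theorem 4.13, Helminck2023] and standard theorems on the ramification of the complex tower of modular curves," and your argument is a faithful expansion of exactly that sketch: residual tameness (Lemma~\ref{lem:ResidualTameness}) lets you invoke the reconstruction-from-type-$1$-points theorem, the type-$1$ decomposition groups are read off from the inertia at the elliptic points and the cusp of the complex tower (and your identification of the three generators $\sigma_0,\sigma_{1728},\sigma_\infty$ of orders $3,2,\infty$ in $\PSL_2(\Z)$ is the standard one), the decomposition group at $\zeta_G$ is $G'$ because the special fiber of the prime-to-$p$ level model is geometrically irreducible so the fiber is a singleton, and the skeleton claim follows from simultaneous semistable reduction once one notes $\mathcal{T}_{tame}$ is the convex hull of the branch locus $\{0,1728,\infty\}$ in $X(1)^{\an}$. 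Your closing caveat about fixing the exact matrices (rather than conjugates) via the compatible system of bases $\{P_N,Q_N\}$ is precisely the point Remark~\ref{rem:FiberwiseBehaviorCovering} flags, and the paper handles it the same way by fixing those bases in Section~\ref{sec:ModularTower}; since the lemma only asserts existence of a compatible system of sections realizing $D$, you are free to choose the sections accordingly.
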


We now construct the canonical supersingular metric tree inside $X(1)^{\an}$. We start with the Gauss point $\zeta_{G}$ of $X(1)^{\an}$ with respect to the coordinate $j$, which will be the central vertex of $\mathcal{T}_{can}$. Let $\tilde{E}/\mathbb{F}_{p^{2}}$ be a supersingular elliptic curve with $j$-invariant $\overline{j}\in\mathbb{F}_{p^{2}}$. We choose a set of lifts $\mathcal{S}\subset \mathbb{Q}^{\unr}_{p}\subset \mathbb{C}_{p}$ of these supersingular $j$-invariants. As before, we will identify these with points $P$ of type $1$ in $X(1)^{\an}$, and we will write $j\in\mathcal{S}$ rather than $j(P)$. %
Note that $|\mathcal{S}|=g(X_{0}(p))+1$, where $g(X_{0}(p))$ is the genus of the modular curve $X_{0}(p)$. If $\overline{0}$ or $\overline{1728}$ is a supersingular $j$-invariant, then we assume that the lifts are $0$ and $1728$ respectively. For every $x\in\mathcal{S}$, there is a unique geodesic $e_{j}$ from $x$ to $\zeta_{G}$, which we view as a continuous injective map %
$[0,\infty]\to X(1)^{\an}$, where $0$ is mapped to $\zeta_{G}$ and $\infty$ is mapped to $x$. We moreover use the normalization induced from \cite[Section 2.3]{BPRa1} with $v(p)=1$, which we call the standard parametrization. %

To define the canonical supersingular metric tree $\mathcal{T}_{can}$, we will need the following function on $\mathcal{S}$: %
\begin{equation*}
b(j)=\begin{cases}
1 \text{ if }j\neq{0,1728},\\
2 \text{ if }j=1728,\\
3 \text{ if }j=0.
\end{cases}
\end{equation*}       
\begin{definition}
Let $e_{j}$ be the infinite line segment from $\zeta_{G}$ to $x\in\mathcal{S}$ in $X(1)^{\an}$ and let $e_{j,c}$ be the closed line segment corresponding to $[0,b(j)p/(p+1)]$ in the standard parametrization $[0,\infty]$ of $e_{j}$. The canonical supersingular metric tree $\mathcal{T}_{can}$ is the union of these line segments $e_{j,c}$. This is independent of our choice of lifts $\mathcal{S}\subset \mathbb{Q}^{\unr}_{p}$. There is a canonical retraction map %
$X(1)^{\an}\to \mathcal{T}_{can}$, which we denote by $\tau_{\mathcal{T}_{can}}$.  Consider the half-closed half-open interval %
\begin{equation*}
[b(j)p^{1-n}/(p+1),b(j)p^{2-n}/(p+1))\subset [0,\infty]
\end{equation*}
in the standard parametrization of $e_{j}$. 
We write $I_{n,j}$ for the corresponding line segment in $\mathcal{T}_{can}$. We refer to this as the (strict) canonical locus of order $p^{n}$. The image of the point $b(j)p^{1-n}/(p+1)$ under the map $[0,\infty]\to X(1)^{\an}$ corresponding to $e_{j}$ for $n\geq{1}$ is $\zeta_{j,n}$. The image of the point $b(j)p^{1- n}/2$ for $n\geq{1}$ is $\zeta'_{j,n}$.        
\end{definition}

\begin{example}
The canonical supersingular tree for $p=37$ is shown in \cref{fig:CanonicalSupersingularTree2}. Here we can choose $\mathcal{S}=\{8, 3+\sqrt{15},3-\sqrt{15}\}$. The black intermediate vertices are $\zeta_{j,n}$. The $\zeta'_{j,n}$ lie in between the various $\zeta_{j,n}$.   
\end{example}

We will give a moduli-theoretic interpretation of this canonical supersingular tree in the next section.

\subsection{An interlude on elliptic curves over valued fields and canonical subgroups}\label{sec:EllipticCurvesValuedFields}

We give a short moduli-theoretic description of elliptic curves over valued fields. %
\begin{definition}
Let $L_{i}\supset \C_{p}$ be two valued field extensions, and let $E_{i}/L_{i}$ be two elliptic curves. We say that the $E_{i}$ are $v$-isomorphic (or simply: isomorphic) if %
there exists 
an isomorphism of $L$-schemes \begin{equation*}
E_{1}\times_{\Spec(L_{1})} \Spec(L) \to E_{2}\times_{\Spec(L_{2})}\Spec(L)
\end{equation*}  
arising from valued field extensions $L_{i}\to L$. 

Let $E/L$ be an elliptic curve over a valued field $L\supset \C_{p}$ and suppose that the $j$-invariant of $E$ is not in $\C_{p}$, so that we obtain %
a homomorphism $\C_{p}(j)\to L$. We define a point $s(E/L)\in X(1)^{\an}$ by giving $\C_{p}(j)$ the valuation induced from $L$. This is independent of the chosen $v$-isomorphism class of $E$.
\end{definition}

Let $P\in X(1)^{\an}$ be a type-$2$ point with completed residue field $L\supset \C_{p}(j)$. We can consider the elliptic curve $E_{j,L}$ from \cref{sec:ModularTower} as an elliptic curve over this valued field. Applying the map $s(\cdot)$, we then directly see that $s(E_{j,L})=P$. We thus see that $E_{j}$ gives rise to a representative of every isomorphism class of elliptic curves over a valued field by endowing $\C_{p}(j)$ with different valuations.     
Moreover, the unique stable model $\mathcal{E}/R_{L}$ for each of these points in $X(1)^{\an}$ can already be defined over the valuation ring arising from the valuation on $\C_{p}(j)$. Indeed, one easily sees that no finite extension is needed here since the value group of $\C_{p}$ is divisible, see \cite{Helminck2019Faithful} for an elementary approach using minimal models. %

We now recall the notion of a canonical subgroup over an arbitrary complete valued field here, see \cite{Rabinoff2012a}. Let $E$ be an elliptic curve over an extension $L\supset \mathbb{C}_{p}$ as above, and suppose that $v(j)\geq{0}$, so that we can find a smooth model $\mathcal{E}/R_{L}$ (we will not need the bad reduction case). %
Note that the smooth model is uniquely determined up to automorphism (see for instance \cite[Theorem 5.38]{BPR2016}).
Let $\overline{\infty}$ be the reduction of $\infty$, which we view as a closed point of $\mathcal{E}$. The completion of $\mathcal{O}_{\mathcal{E},\overline{\infty}}$ is then isomorphic to $R_{L}[[T]]$ for a parameter $T$. If $\mathcal{E}$ is given by a minimal Weierstrass equation with coordinates $x$ and $y$, and $\mathrm{char}(k_{L})\neq{2,3}$, then we can for instance choose $T=x/y$.
\begin{definition}
Let $G=E[p^{n}](\overline{L})$ be the geometric $p^{n}$-torsion points of $E$ and let $r>0$ be a fixed real number. We define $G_{r}=\{P\in{G}:v(T(P))>r\}$. If $G_{r}\simeq \Z/p^{n}\Z$ for some $r>0$, then we say that $E$ has a canonical subgroup of order $p^{n}$. This is independent of the chosen parameter $T$ of $\mathcal{E}$ at $\overline{\infty}$. The existence of a canonical subgroup of order $p^{n}$ is moreover insensitive to %
valued field extensions, so that we can talk about the canonical subgroup of order $p^{n}$ of a point $P\in X(1)^{\an}$ with $v_{P}(j)\geq{0}$. %
\end{definition}  

   We can now geometrically characterize the existence of canonical subgroups %
   as follows.   

\begin{lemma}\label{lem:CanonicalLoci1}
Let $\tau_{\mathcal{T}_{can}}$ be the retraction map $X(1)^{\an}\to \mathcal{T}_{can}$. %
Let $E$ be an elliptic curve with good reduction over a valued field $L\supset{\C_{p}}$ corresponding to a point $x\in X(1)^{\an}$ and let $\mathcal{E}/R_{L}$ be a smooth model over the valuation ring $R_{L}$. Then $\mathcal{E}/R_{L}$ admits a canonical subgroup of order $p$ if and only if $\tau_{\mathcal{T}_{can}}(x)$ lies in the interior of $\mathcal{T}_{can}$. Similarly, $\mathcal{E}/R_{L}$ admits a canonical subgroup of order $p^{n}$ if and only if $\tau_{\mathcal{T}_{can}}(x)$ is either $\zeta_{G}$, or it lies in $\bigcup_{m\geq{n}} I_{n,j}$ for some $j$.      
\end{lemma}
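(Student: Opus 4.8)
The plan is to reduce the statement to a classical computation with the Hasse invariant and the theory of canonical subgroups as developed by Katz, Lubin and others, translated into the Berkovich-analytic language via the standard parametrization. First I would recall the quantitative version of the canonical subgroup theorem: for an elliptic curve $E$ with good reduction over $L \supset \C_p$, writing $h = v(\mathrm{Ha}(E))$ for the (truncated) valuation of a lift of the Hasse invariant, a canonical subgroup of order $p$ exists precisely when $h < p/(p+1)$, and in that case the canonical subgroup of order $p^n$ exists precisely when $h < p^{1-n}/(p+1)$, i.e. when $v(\mathrm{Ha})$ is small enough that the iterated Frobenius/Verschiebung construction does not degenerate. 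The key dictionary is then that the quantity $b(j)h$ (with the correction factor $b(j) \in \{1,2,3\}$ accounting for the extra automorphisms at $j = 0, 1728$) is exactly the parameter along the geodesic $e_j$ in the standard parametrization: a point $x$ with $\tau_{\mathcal{T}_{can}}(x)$ sitting at parameter $t \in [0,\infty]$ on $e_j$ corresponds to an elliptic curve whose Hasse invariant has valuation $h$ with $b(j)h = t$ (or $h = 0$, i.e. supersingular reduction of the reduction, when $t = \infty$; and $h$ unbounded below — good ordinary — near $t = 0$). This identification is essentially the content of the fact that the supersingular disc in $X(1)^{\an}$ is a closed disc of radius $p/(p+1)$ in the normalization $v(p)=1$, rescaled by $b(j)$.

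Given this dictionary, the two equivalences are immediate translations. For the first: $\tau_{\mathcal{T}_{can}}(x)$ lies in the interior of $\mathcal{T}_{can}$ iff either $\tau_{\mathcal{T}_{can}}(x) = \zeta_G$ (so $x$ has potentially good ordinary reduction after reduction, $h$ arbitrarily small or zero) or it lies strictly between $\zeta_G$ and the endpoint of some $e_{j,c}$, i.e. at parameter $t$ with $0 < t < b(j)p/(p+1)$; by the dictionary this says $h < p/(p+1)$, which is exactly the existence of a canonical subgroup of order $p$. The only boundary case to check carefully is $x$ mapping to the endpoint of $e_{j,c}$ itself, where $h = p/(p+1)$ and the canonical subgroup of order $p$ fails to be well-defined — this is why ``interior'' is the right condition. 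For the second: $\tau_{\mathcal{T}_{can}}(x)$ is $\zeta_G$ (ordinary case, all canonical subgroups exist) or lies in $\bigcup_{m \geq n} I_{m,j}$ for some $j$; by the definition of $I_{m,j}$ as the line segment corresponding to the interval $[b(j)p^{1-m}/(p+1), b(j)p^{2-m}/(p+1))$, the union over $m \geq n$ is the half-open segment $(0, b(j)p^{1-n}/(p+1))$ in parameter, i.e. $0 < t < b(j)p^{1-n}/(p+1)$, which translates to $h < p^{1-n}/(p+1)$ — exactly the condition for the existence of a canonical subgroup of order $p^n$. I would also handle the degenerate endpoints $j = 0$ and $j = 1728$ separately, noting that there the automorphism group forces the extra factor $b(j)$, and that if one of $\overline{0}, \overline{1728}$ is supersingular the convention fixing the lift to $0$ or $1728$ makes the picture consistent.

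The main technical obstacle, and the place where I expect to spend the most care, is establishing the precise dictionary ``$b(j) \cdot v(\mathrm{Ha}(E_x)) = $ standard parameter of $\tau_{\mathcal{T}_{can}}(x)$ along $e_j$'' cleanly, rather than just up to a constant. Concretely, one must pin down how the valuation of the Hasse invariant varies as $x$ moves along $e_j$, show it is the linear function one expects with slope dictated by the $v(p)=1$ normalization, and correctly incorporate the $b(j)$ factor coming from the fact that near $j = 0, 1728$ the coarse moduli map $\mathcal{M}(1) \to X(1)$ is ramified of order $3$ (resp. $2$), so distances on $X(1)^{\an}$ differ from distances on the moduli stack by exactly $b(j)$. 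I would do this by working with the explicit Weierstrass family $E_j : y^2 = 4x^3 - ax - a$ with $a = 27j/(j-1728)$ introduced in \cref{sec:ModularTower}, computing the Hasse invariant as a polynomial in $a$ (equivalently in $j$), and reading off its valuation as a function of $v(j)$ or $v(j - 1728)$ near the relevant supersingular lift; this is a finite explicit computation once the radius-$p/(p+1)$ normalization from \cite{BPRa1} is in hand, and it is exactly parallel to the classical computations of Katz and Buzzard on overconvergent modular forms. The rest is bookkeeping with the definition of $I_{n,j}$ and the points $\zeta_{j,n}$.
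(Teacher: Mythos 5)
Your approach is essentially the same as the paper's: reduce to the quantitative canonical subgroup criterion of Buzzard, and translate it into the standard parametrization along $e_{j}$ using a correction factor $b(j)$ to account for the automorphisms at $j=0,1728$. Where the paper differs is in how the translation is carried out: it passes to the representable moduli scheme $\mathcal{X}_{1}(M)$ for $M\geq 5$, over which the map to $\mathcal{X}(1)$ is tame, and reads Buzzard's criterion off the explicit parameters $j$, $j^{1/3}$, $(j-1728)^{1/2}$ at the supersingular points. You instead propose to work with the Weierstrass family $E_{j}$ directly on $X(1)$. That is workable, but you would need to be careful precisely at $\overline{j}=0,1728$, where the family degenerates and the coarse moduli map is ramified; the paper's detour through $\mathcal{X}_{1}(M)$ is designed to sidestep exactly that issue.

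There are, however, two genuine arithmetic errors that you should fix. First, the criterion for a canonical subgroup of order $p^{n}$ to exist is $v(\mathrm{Ha}) < p^{2-n}/(p+1)$, not $p^{1-n}/(p+1)$. As written your general formula even contradicts your own correct statement of the $n=1$ case: for $n=1$ your formula gives $h < 1/(p+1)$ rather than $h < p/(p+1)$. The correct bound comes from iterating the fact that if $C$ is the canonical subgroup of $E$ then $v(\mathrm{Ha}(E/C)) = p\cdot v(\mathrm{Ha}(E))$, so requiring a canonical subgroup of order $p$ on $E/C^{n-1}$ gives $p^{n-1}v(\mathrm{Ha}(E)) < p/(p+1)$, i.e.\ $v(\mathrm{Ha}(E)) < p^{2-n}/(p+1)$. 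Second, you read the union incorrectly: $\bigcup_{m\geq n} I_{m,j}$ corresponds in the standard parametrization to $(0,\, b(j)p^{2-n}/(p+1))$, not $(0,\, b(j)p^{1-n}/(p+1))$, since the supremum of the right endpoints $b(j)p^{2-m}/(p+1)$ over $m\geq n$ is attained at $m=n$. These two mistakes happen to cancel, so you land on the right conclusion, but both must be corrected for the argument to stand.
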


\begin{proof}
Let $M\geq{5}$. We retain the notation from the proof of \cref{lem:ResidualTameness}. Recall our assumption $p\neq 2,3$, which ensures that the morphism of coarse moduli spaces $\mathcal{X}(M)\to \mathcal{X}(1)$ is sufficiently tame. Note that the subquotient $\mathcal{X}_{1}(M)$ corresponding to the congruence subgroup $\Gamma_{1}(M)$ is also representable, so that $\mathcal{X}_{1}(M)\to \mathcal{X}(1)$ satisfies the same tameness properties.  According to \cite[Section 3]{Buzzard2003}, we need to find parameters at the supersingular points of $\mathcal{X}_{1}(M)$. If $\overline{j}\neq{0,1728}$, then $j$ is again a parameter since the map of coarse moduli spaces is \'{e}tale. If $\overline{j}=0$, then there are points at which $j^{1/3}$ is a parameter, and if $\overline{j}=1728$, then there are points at which $(j-1728)^{1/2}$ is a parameter. The canonical loci are then obtained by taking $v(H)<p^{2-n}/(p+1)$ for $H$ one of the parameters above. %
This immediately gives the desired loci.   
\end{proof}

\begin{remark}
If we identify $e_{j}$ with $[0,b(j)p/(p+1)]$, then we call $\tau(x)\in[0,b(j)p/(p+1)]$ the modified Hasse invariant of $x$. By the above, the modified Hasse invariant of a point $x$ completely determines whether the associated elliptic curve $E_{x}$ has a canonical subgroup of order $p^{n}$.    
\end{remark}

\subsection{The tower over the central vertex}\label{sec:pAdicTower}

In this section, we give %
the splitting behavior of the tower of modular curves over the central vertex of the canonical %
supersingular metric tree $\mathcal{T}_{can}$. Using \cref{lem:ResidualTameness}, we will see that we can promote the results in \cite[Section 13]{KM85} to arbitrary maps of modular curves.  %

\begin{lemma}\label{lem:DecompositionGroupCentralVertex2}
Let $\zeta_{G}\in X(1)^{\an}$ be the Gauss point. We define a $\PSL_{2}(\hat{\Z})$-monodromy labeling on $\{\zeta_{G}\}$ through $D_{\zeta_{G}}=P(\Gamma_{0}(\Z_{p})\times \SL_{2}(\Z'))$. There is a set of sections of $\zeta_{G}$ in the tower of modular curves whose associated monodromy labeling is $D$.  
\end{lemma}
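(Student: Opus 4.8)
The plan is to combine the residual-tameness input of \cref{lem:ResidualTameness} with the explicit supersingular analysis of \cite[Section 13]{KM85} to pin down the decomposition group at $\zeta_G$. First I would unwind what has to be shown: since $\PSL_2(\hat\Z) = \PSL_2(\Z_p) \times \PSL_2(\hat\Z')$ up to the usual $\pm 1$ subtleties (where $\hat\Z' = \prod_{\ell\neq p}\Z_\ell$), the decomposition group at a compatible system of points over $\zeta_G$ splits into its $p$-part and its prime-to-$p$ part, and we must identify the $p$-part with (the projectivization of) the Borel $\Gamma_0(\Z_p)$ and the prime-to-$p$ part with all of $\SL_2(\Z')$. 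The prime-to-$p$ claim is precisely the content of \cref{lem:ResidualTameness} together with \cref{lem:TameDecompositionGroups}: at the Gauss vertex the covering $X(M)\to X(1)$ for $(M,p)=1$ has trivial decomposition group because the induced morphism of residue curves is separable of degree prime to $p$ and in fact the full Galois group acts on the residue curve, so the stabilizer of a point over $\zeta_G$ in the residual cover is trivial — equivalently, $\zeta_G$ has a single preimage of "full size". Hence I would first reduce, via the exact sequences after \cref{def:Projectivization} and the decomposability of the relevant subgroups, to computing the decomposition group purely in the $p$-adic tower $X(p^n)\to X(1)$ at $\zeta_G$.

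For the $p$-part, the key step is to invoke \cite[Section 13]{KM85}: over the supersingular Gauss point, the elliptic curve $E_{\zeta_G}$ has good supersingular reduction, and the $p^n$-torsion group scheme over the residue field is a connected-by-étale extension whose connected part gives a canonical flag. Concretely, the reduction of $E_j$ at $\zeta_G$ is a supersingular elliptic curve $\tilde E/\overline{\F}_p$, and $\tilde E[p^n]$ is a local-local group scheme of order $p^{2n}$; the line of argument is that the Galois action on the generic $p^n$-torsion must preserve the "canonical subgroup filtration" on the special fiber, which is exactly a complete flag stabilized by a Borel, and conversely the full Borel is realized (the residual extension is totally ramified of the expected degree, matching $[\SL_2(\Z/p^n\Z) : \Gamma_0(\Z/p^n\Z)]$, the number of lines in $(\Z/p^n\Z)^2$ — i.e. $p^{n-1}(p+1)$, which by \cref{lem:DoubleCosetSpaces} and \cref{lem:CanonicalLoci1} is the number of non-canonical-subgroup directions). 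Passing to the inverse limit over $n$ gives $\Gamma_0(\Z_p)$, and projectivizing gives $P(\Gamma_0(\Z_p))$; the residual tameness of the prime-to-$p$ direction guarantees these $p$-adic sections can be chosen compatibly with the tame ones of \cref{lem:TameDecompositionGroups}, yielding a single compatible system $\overline{x}$ with $\rho_E(D_{\overline x}) = P(\Gamma_0(\Z_p)\times \SL_2(\Z'))$.

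The last thing to nail down is the \emph{existence of a compatible set of sections}, not merely the conjugacy class of the decomposition group: I would argue that, having fixed the compatible basis $\{P_N, Q_N\}$ of $E[N]$ in \cref{sec:ModularTower}, the reduction map identifies the canonical flag on $\tilde E[p^n]$ with a specific flag in $(\Z/p^n\Z)^2$, and choosing the point $x_{p^n}$ over $\zeta_G$ to be the one whose stabilizer is the standard (upper-triangular) Borel makes all the $D_{x_{p^n}/\zeta_G}$ literally equal to $\Gamma_0(\Z/p^n\Z)$, compatibly in $n$; simultaneously picking the unique point over $\zeta_G$ in each prime-to-$p$ layer (which is unramified there) gives compatibility across the whole tower. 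I expect the main obstacle to be bookkeeping the $\pm 1$ / projectivization issues and checking that the $p$-adic and prime-to-$p$ choices of section can be made simultaneously — i.e. that there is no obstruction to gluing the tame sections of \cref{lem:TameDecompositionGroups} with the $p$-adic ones coming from \cite{KM85} — rather than either piece individually, each of which is essentially a citation.
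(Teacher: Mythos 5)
There is a genuine conceptual error at the heart of your $p$-adic argument. You assert that ``the reduction of $E_j$ at $\zeta_G$ is a supersingular elliptic curve $\tilde E/\overline{\F}_p$, and $\tilde E[p^n]$ is a local-local group scheme of order $p^{2n}$.'' This is wrong. The Gauss point $\zeta_G$ has residue field $\overline{\F}_p(j)$ with $j$ a transcendental parameter, and the reduction of $E_j$ at $\zeta_G$ is the elliptic curve over this function field with generic $j$-invariant; its Hasse invariant is a nonzero polynomial in $j$, so the reduction is \emph{ordinary}, not supersingular. Consequently $\tilde E[p^n]$ is an extension of an \'etale $\Z/p^n\Z$ by a connected $\mu_{p^n}$, and it is precisely this connected--\'etale flag (the canonical subgroup tower, which at $\zeta_G$ exists at every level) that the Galois action must preserve, which is where $\Gamma_0(\Z_p)$ comes from. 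A local-local group scheme has no such canonical rank-one flag, and if the reduction really were supersingular the decomposition group would not be a Borel at all; that situation occurs at the endpoints $\zeta_j$ of $\mathcal{T}_{can}$, where \cref{lem:CentralVertex} instead produces $\SL_2(\Z_p)$. You've swapped the two regimes. The parenthetical gesture about the residual extension being ``totally ramified of the expected degree'' and counting ``non-canonical-subgroup directions'' via \cref{lem:CanonicalLoci1} inherits this confusion: at $\zeta_G$ every direction carries canonical subgroups of all orders, and the index $p^{n-1}(p+1)=|\mathbb{P}^1_{\Z}(\Z/p^n\Z)|$ counts lines, i.e.\ the fiber over $\zeta_G$ in $X(p^n)^{\an}$, not a ramification filtration.

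Two smaller points. First, you write that the prime-to-$p$ covering $X(M)\to X(1)$ has ``trivial decomposition group'' at $\zeta_G$; you mean the opposite, that the decomposition group is all of $\PSL_2(\Z/M\Z)$ (equivalently there is a single preimage of $\zeta_G$), which is what \cref{lem:TameDecompositionGroups} supplies. Second, even once the ordinary picture is in place, you still need to rule out a twisted extension when gluing the $p$-part to the prime-to-$p$ part; the paper's proof reduces this to a diagram chase, but the precise control comes from the self-normalizing property of the Borel (\cref{lem:NormalizerBorel}), which you don't invoke. That said, your overall strategy --- split via the exact sequences, get the tame part from residual tameness, and extract the Borel from \cite[Section~13]{KM85} --- is the same as the paper's; the paper works with the generic point of the special fiber of the canonical Katz--Mazur model and the Igusa components directly, which packages the ordinary reduction for you and avoids the trap you fell into.
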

\begin{proof}
Let $M\geq{3}$ be an integer coprime to $p$ and consider the $\SL_{2}(\Z/p^{n}\Z)$-covering $X(p^{n}M)^{\an}\to X(M)^{\an}$. Let $\mathcal{X}(M)^{can,\zeta^{i}_{M}}$ be the $\zeta^{i}_{M}$-canonical moduli problem over $\Z_{p}[\zeta_{M}]$ for some primitive $\zeta^{i}_{M}$, %
see \cite[Section 2.1]{EP21} (this provides the auxiliary level structure denoted by $\mathcal{P}$ there). Note that this defines a connected scheme. Similarly, we write $\mathcal{X}(p^{n}M)^{can,\zeta^{j}_{p^{n}M}}$ for the corresponding scheme over $\Z_{p}[\zeta_{p^{n}M}]$, and we assume that the Weil pairings are chosen in a compatible way, so that $(\zeta^{j}_{p^{n}M})^{p^{n}}=\zeta^{i}_{M}$. By applying a base change, we can consider both as schemes over $\Z_{p}[\zeta_{p^{n}M}]$.   %

Using \cite[Theorem 13.10.3]{KM85}, we find that there is a point $\zeta_{p^{n}M,G}\in X(p^{n}M)^{\an}$ whose decomposition group is $\Gamma_{0}(\Z/p^{n}\Z)$. Indeed, we can interpret the Berkovich-theoretic point $\zeta_{M,G}$ lying over $\zeta_{G}$ as being induced from the generic point of the special fiber of $\mathcal{X}(M)^{can,\zeta^{i}}$, and the covering over this point is \'{e}tale: we have that the special fiber of $\mathcal{X}(p^{n}M)^{can,\zeta^{j}_{p^{n}M}}$  is the disjoint union of a set of components that are smooth over $\overline{\mathbb{F}}_{p}$, and the induced maps are finite flat, so that the map is generically smooth of relative dimension zero. The Igusa components are moreover geometrically irreducible, so that we in fact obtain an induced point  %
$\zeta_{p^{n}M,G}\in X(p^{n}M)^{\an}$ with the desired decomposition group.  %
Note that there is a commutative diagram 
 \begin{equation*}
 \begin{tikzcd}
 (0)  \arrow[r] & D_{\zeta_{p^{n}M,G}/\zeta_{M,G}}  \arrow[r]\arrow[d] & D_{\zeta_{p^{n}M,G}/\zeta_{1,G}}  \arrow[r] \arrow[d] & D_{\zeta_{M,G}/\zeta_{1,G}}\arrow[d]\arrow[r] & (0) \\
  (0)  \arrow[r] & \SL_{2}(\Z/p^{n}\Z)  \arrow[r] & \PSL_{2}(\Z/p^{n}M\Z) \arrow[r] & \PSL_{2}(\Z/M\Z) \arrow[r] & (0)
 \end{tikzcd}
 \end{equation*}
 with exact horizontal rows. Since $D_{\zeta_{M,G}/\zeta_{1,G}}$ is $\PSL_{2}(\Z/M\Z)$ by \cref{lem:TameDecompositionGroups}, it follows that $ D_{\zeta_{p^{n}M,G}/\zeta_{1,G}}$ is contained in $P(\Gamma_{0}(\Z/p^{n}\Z)\times \SL_{2}(\Z/M\Z))$, and by comparing orders we find the desired equality.    
\end{proof}

\subsection{A review of \cite{WS16}}\label{sec:ReviewWS}

In this section we review some of the results and concepts in \cite{WS16}. We first describe the infinite Lubin-Tate tower and the various groups that act on this space. We then recall the notion of a CM point and its associated %
linking orders. %
Finally, we review the dual intersection graph $\mathcal{T}^{o}$ constructed in  %
\cite[Section 6.3]{WS16} and we determine the stabilizers of the Berkovich-theoretic points associated to the vertices of $\mathcal{T}^{o}$.  
 
 Let $E_{0}/\overline{\F}_{p}$ be a supersingular elliptic curve and let $G_{0}/\overline{\F}_{p}$ be the formal completion of $E_{0}$ at infinity. Since $E_{0}$ is supersingular, $G_{0}$ is a formal group of height $2$ and dimension $1$ over $\overline{\F}_{p}$. %
Let $K_{0}$ be the completion of the maximal unramified extension $\Q^{unr}_{p}$ of $\Q_{p}$ and write $\mathcal{C}$ for the category of %
complete local Noetherian $\mathcal{O}_{K_{0}}$-algebras with residue field $\overline{\F}_{p}$. Let $A\in \mathrm{Ob}(\mathcal{C})$. %
A deformation $(G,i)$ of $G_{0}$ consists of a one-dimensional formal group $G/A$ %
together with an isomorphism $i:G\otimes_{A}\overline{\F}_{p}\to G_{0}$. %
 This defines a functor $\mathcal{C}\to \mathrm{(Sets)}$ %
and this functor is representable by $A_{0}\simeq \mathcal{O}_{K_{0}}[[u]]$. %
We denote the associated %
affine formal scheme by $\mathcal{M}^{(0)}_{G_{0},0}$. We view this as the affine formal scheme corresponding to a supersingular point on the Katz-Mazur model of a suitable modular curve, see \cite[Proposition 4.7.4]{WS13}\footnote{Note that the moduli problem $\Gamma_{1}(N)$ used there can easily be replaced by $\Gamma(N)$, as this is only necessary to make the moduli problem representable.}.       

We can similarly study deformations of $G_{0}$ together with a level-$p^{n}$ Drinfeld structure in the sense of \cite[Section 2.2]{WS16}. The corresponding functor $\mathcal{C}\to (\mathrm{Sets})$ is representable by a regular complete local ring $A_{m}$, and we denote the corresponding formal scheme by $\mathcal{M}_{G_{0},m}=\mathrm{Spf}(A_{m})$. Consider the direct limit $A'_{\infty}=\lim_{\rightarrow}A_{m}$. Taking the completion of this ring with respect to the topology induced by $\mathfrak{m}_{0}\subset A_{0}$, we obtain %
the ring $A_{\infty}$. 
\begin{definition}\label{def:InfiniteLubinTateSpace}
The infinite Lubin-Tate space $\mathcal{M}^{(0)}_{G_{0},\infty}=\mathcal{M}^{(0)}_{\infty}$ associated to $G_{0}$ is $\Spf(A_{\infty})$. %
\end{definition}

We quickly recall the natural action of $\GL_{2}(\Z_{p})$ on this space. 
The functorial definition of $A_{m}$ allows us to define an %
action of %
$\GL_{2}(\Z/p^{m}\Z)$ on $A_{m}$. These actions are compatible for varying $m$, so that we also obtain an action of $\GL_{2}(\Z_{p})$ on the ring $A'_{\infty}$. This action is moreover continuous, as the action of $\GL_{2}(\Z_{p})$ on $\mathfrak{m}_{0}$ is trivial. We conclude that the action extends to $A_{\infty}$, and thus to %
$\mathcal{M}^{(0)}_{\infty}$. %

Using the Weil pairing on $G_{0}$, one obtains a natural map 
\begin{equation*}
\mathcal{M}^{(0)}_{G_{0},\infty}\to \mathcal{M}^{(0)}_{\bigwedge^{2}G_{0},\infty},
\end{equation*}
see \cite[Section 6]{WS13} and \cite[Section 2.5]{WS16}. Since $\bigwedge^{2}G_{0}$ is a formal group of dimension one and height one, we find that the latter is isomorphic to the completion of the integral $p^{\infty}$-cyclotomic extension $W_{cycl}\supset \mathcal{O}_{K_{0}}$ by classical Lubin-Tate theory. We now take the adic space associated to the base change of $\mathcal{M}^{(0)}_{G_{0},\infty}$ to $%
\C_{p}$. %
We denote this by $\mathcal{M}^{(0),ad}_{\infty,\overline{\eta}}$, as in \cite[Section 2.10]{WS16}. The connected components of this space correspond to the different embeddings of $W_{cycl}$ into $\mathcal{O}_{\C_{p}}$, see \cite{Strauch2008}.  
\begin{definition}
A connected component of $\mathcal{M}^{(0),ad}_{\infty,\overline{\eta}}$ corresponding to an embedding  $W_{cycl}\to \mathcal{O}_{\C_{p}}$ is denoted by $\mathcal{M}^{{o},ad}_{\infty,\overline{\eta}}$. This space is invariant under the action of $\SL_{2}(\Z_{p})$. We similarly denote the induced connected components of the adic spaces $\mathcal{M}^{(0),ad}_{G_{0},m,\overline{\eta}}$ by $\mathcal{M}^{o,ad}_{m,\overline{\eta}}$. We write $\pi_{m}$ for the natural map $\mathcal{M}^{{o},ad}_{\infty,\overline{\eta}}\to \mathcal{M}^{o,ad}_{m,\overline{\eta}}$.   
\end{definition}
Let $D$ be the division quaternion algebra over $\Q_{p}$ (unique up to isomorphism) with reduced norm $N:D\to \Q_{p}$. Concretely, this algebra can be represented as the non-commutative algebra over $\Q_{p}$ generated by $1,i,j,k$, subject to the relations $i^2=d$ for $d\in\Z_{p}^{*}$ a non-square, $j^2=p$ and $ij=k=-ij$. Note that this is the endomorphism algebra of the  
the formal group over $\Z_{p}$ associated to the fixed supersingular elliptic curve $\overline{E}_{0}$, up to isogeny. 

Consider the group 
$\mathcal{G}:=(\GL_{2}(\Q_{p})\times D^{*})^{\mathrm{det}=N}$ of pairs $(g_{1},g_{2})\in\GL_{2}(\Q_{p})\times D^{*}$ such that $\mathrm{det}(g_{1})=N(g_{2})$. This group acts on the connected component %
$\mathcal{M}^{\mathrm{o},ad}_{\infty,\overline{\eta}}$, and the action of  
$\SL_{2}(\Z_{p})$ described earlier is compatible with this action, in the sense that if we view it as a subgroup through the embedding $\sigma \mapsto (\sigma,1)$, then it gives the same action\footnote{In line with the definition in \cite{WS16}, this is a right-group action. The notation used here will reflect this. }. To describe the action of the part coming from the quaternion algebra, one has to use the crystalline nature of the universal cover $\tilde{G}$, which allows one to lift endomorphisms of $G_{0}$ to $\tilde{G}$. We refer the reader to \cite[Section 2.9]{WS16}. For a given point $x$ in $\mathcal{M}^{\mathrm{o},ad}_{\infty,\overline{\eta}}$ and $\sigma\in\mathcal{G}$, we write $x^{\sigma}$ for the image of $x$ under the action, as in \cite{WS16}.

 Among the various points in $\mathcal{M}^{o,ad}_{\infty,\overline{\eta}}$, the points with \emph{complex multiplication} play an especially important role in the construction of semistable models in \cite{WS16}, which we recall here. %
 We can represent $x\in \mathcal{M}^{o,ad}_{\infty,\overline{\eta}}(\mathcal{O}_{\C_{p}})$ by a triple $(G,i,\phi)$, where ${G}$ is a one-dimensional formal group over $\mathcal{O}_{\C_{p}}$, $i:{G}_{0}\to {G}\otimes \overline{\F}_{p}$ is an isomorphism, and 
\begin{equation*}
\phi: \Q_{p}^2\to V(G)
\end{equation*} 
is an isomorphism of $\Q_{p}$-vector spaces. Here $V(G)$ is the $p$-adic rational Tate module of $G$. 

\begin{definition}
Let $x\in \mathcal{M}^{o,ad}_{\infty,\overline{\eta}}(\mathcal{O}_{\C_{p}})$ with triple $(G,i,\phi)$.
For a quadratic extension $L\supset \Q_{p}$, we say that $x$ has complex multiplication by $L$ (or: CM by $L$) if there exists an injection 
\begin{equation*}
\mathcal{O}_{L}\to \mathrm{End}(G),
\end{equation*} 
where $\mathcal{O}_{L}$ is an order in $L$. The point $x$ has CM if it has CM by some $L\supset \Q_{p}$.  We say that $x$ is unramified if $L\supset \Q_{p}$ is unramified, and $x$ is ramified if $L\supset \Q_{p}$ is ramified.   
\end{definition}

For more information on these types of points, we refer the reader to \cite{Gross86}, \cite{Wewers2007} and \cite{CM2007}. %

\begin{remark}
Let $E/{\C_{p}}$ be the elliptic curve associated to the formal group $G$. In many cases, even though  
 the formal group $G$ has complex multiplication, the elliptic curve $E$ will not.  %
 These types of elliptic curves were called elliptic curves with fake CM in \cite{CM2007}. We note here that the elliptic curves with complex multiplication lie \emph{dense} in the space of fake CM points by \cite[Corollary 1.4]{HMRL2021}. 
We can thus view elliptic curves with fake CM as suitable limits of elliptic curves with real CM.   
\end{remark}

Consider a pair $(x,n)$ consisting of a CM point $x$ and an integer $n\geq{0}$. For each such pair, one has an affinoid subdomain $\mathcal{Z}_{x,m}\subset  \mathcal{M}^{o,ad}_{\infty,\overline{\eta}}$, and these cover $\mathcal{M}^{o,ad}_{\infty,\overline{\eta}}$. The exact definition of these affinoids can be found in \cite[Section 5.4]{WS16}. We note here that we follow \cite{WS16} in not adding the parentheses, which are saved for a special modified affinoid. Namely, if $v=(x,n)$, then $\mathcal{Z}_{v}=\mathcal{Z}_{(x,n)}$ is the affinoid constructed in \cite[Section 6.3]{WS16} by removing finitely many residue regions. %
\begin{lemma}\label{lem:AffinoidRedIntegral}
The reduction of $\mathcal{Z}_{x,n}$ is integral. 
\end{lemma}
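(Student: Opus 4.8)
The statement "the reduction of $\mathcal{Z}_{x,n}$ is integral" is a claim about the affinoid $\mathcal{Z}_{x,n} \subset \mathcal{M}^{o,ad}_{\infty,\overline{\eta}}$ constructed in \cite[Section 6.3]{WS16} by removing finitely many residue regions from the larger affinoid $\mathcal{Z}_{x,n}$ (without parentheses) of \cite[Section 5.4]{WS16}. The natural strategy is to cite the relevant structural result of \cite{WS16} directly. First I would recall from \cite[Section 5.4, Theorem 5.4.x]{WS16} the explicit description of the reduction $\overline{\mathcal{Z}_{x,n}}$ of the unmodified affinoid: depending on whether $x$ has unramified or ramified CM, its reduction is described as (a smooth compactification of) an explicit affine curve — roughly, a Deligne--Lusztig-type curve of the form $xy^q - x^q y = 1$ in the ramified case, or an Artin--Schreier / Drinfeld-type curve in the unramified case — and in particular this reduction is a \emph{geometrically integral} smooth projective curve over $\overline{\F}_p$, or at worst a known integral affine curve.

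**Key steps.** The argument then has three steps. (1) Invoke the explicit computation of $\overline{\mathcal{Z}_{x,n}}$ from \cite{WS16}: it is an integral curve (in fact a smooth geometrically connected curve, up to its finitely many points at infinity). (2) Observe that passing from $\mathcal{Z}_{x,n}$ (with parentheses) to $\mathcal{Z}_{x,n}$ (without parentheses — the affinoid of \cite[Section 6.3]{WS16}) removes only finitely many \emph{residue regions}, i.e. finitely many formal fibers / residue discs, each of which reduces to an open subset or a point of $\overline{\mathcal{Z}_{x,n}}$; removing finitely many points (or finitely many affinoid subdomains lying over points) from an integral curve leaves an integral curve, since an open dense subscheme of an integral scheme is integral. (3) Conclude that $\overline{\mathcal{Z}_{x,n}}$ is integral. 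Concretely I would phrase this as: the reduction map is surjective and the generic point of $\overline{\mathcal{Z}_{x,n}}$ is not one of the finitely many removed residue points, so the reduction of the modified affinoid still contains that generic point and remains irreducible; reducedness is preserved because $\mathcal{Z}_{x,n}$ is still a smooth affinoid (the model is semistable by \cite{WS16}, hence its special fibers are reduced).

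**Main obstacle.** The genuinely delicate point is step (2): one must be careful that "removing finitely many residue regions" does not disconnect the reduction or introduce new components. This requires knowing that the removed residue discs each reduce to a \emph{single} closed point (or a connected open) of the irreducible curve $\overline{\mathcal{Z}_{x,n}}$, rather than, say, to a union of components — i.e. one needs the compatibility of the reduction map with the residue-region decomposition from \cite[Section 6.3]{WS16}. Assuming that combinatorial bookkeeping from \cite{WS16}, integrality is then immediate. So the proof is essentially: \emph{cite the explicit reduction from \cite{WS16}, note it is an integral curve, and observe that excising finitely many residue discs — each lying over a single point — from an integral curve yields an integral curve.} I would keep the written proof to two or three sentences, referring the reader to the relevant statements of \cite{WS16} for the explicit form of $\overline{\mathcal{Z}_{x,n}}$ and leaving the routine "open dense subscheme of integral is integral" observation implicit.
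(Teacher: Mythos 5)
Your proposal rests on a misreading of the lemma's subject. In the notational convention set up just before the statement, $\mathcal{Z}_{x,n}$ (subscript without parentheses) is the \emph{unmodified} affinoid of \cite[Section 5.4]{WS16}, while $\mathcal{Z}_{(x,n)} = \mathcal{Z}_v$ (with parentheses) is the \emph{modified} affinoid of \cite[Section 6.3]{WS16} obtained by deleting finitely many residue regions. The lemma is about $\mathcal{Z}_{x,n}$ --- the large affinoid, not the modified one. Consequently your steps (2)--(3) and your ``main obstacle'' paragraph, which are all about tracking what happens when residue discs are removed, prove something the lemma never asked for. The actual content needed is precisely what you relegate to step (1): cite \cite{WS16} for the explicit description of the reduction and read off integrality.

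Even within step (1), though, your citation is both imprecise and incomplete. The reductions are not computed in \cite[Section 5.4]{WS16} (that is where the affinoids themselves are defined); the explicit computations giving the Deligne--Lusztig and Artin--Schreier type curves you describe appear in \cite[Sections 5.5--5.8]{WS16}, and crucially those sections only handle the \emph{primitive} vertices. For the \emph{imprimitive} vertices the reduction is rational and its integrality is established separately, in the proof of \cite[Theorem 6.4.4]{WS16}. The paper's proof of the lemma is a single sentence pointing to exactly these two sources, one for each case. Your proposal distinguishes ramified vs.\ unramified CM (a real dichotomy in \cite{WS16}, but not the one that organizes the proof here) and never mentions the primitive/imprimitive dichotomy, so as written it only justifies the lemma for primitive vertices. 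The fix is to drop steps (2)--(3), and replace the vague ``Section 5.4, Theorem 5.4.x'' citation with the two precise references, one per case.
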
  
\begin{proof}
This follows from the results in \cite[Sections 5.5-5.8]{WS16} for the primitive vertices and the proof of \cite[Theorem 6.4.4]{WS16} for the imprimitive vertices. \end{proof}
\begin{remark}
Consider the non-singular projective curve $C/\overline{\F}_{p}$ corresponding to the affine model $y^{p}+y=x^{p+1}$, which occurs naturally as the curve associated to vertices $(x,2k)$ with $x$ unramified. %
Note that the covering induced by $(x,y)\mapsto x$ is completely ramified over $\infty$ (as one sees using Newton polygons), so it is connected. 
\end{remark}

By \cref{lem:AffinoidRedIntegral}, $\overline{\mathcal{Z}}_{x,m}$ has a unique generic point. Using the considerations in \cite[Remark 7.3.11]{BB2013}, we see that this corresponds to a valuation of rank one on the %
adic space $\mathcal{M}^{o,ad}_{\infty,\overline{\eta}}$. %
\begin{definition}
Let $x$ be a CM point, and let $n\geq{0}$ be an integer. Then the Berkovich or Hausdorff point $[x,n]\in \mathcal{M}^{o,ad}_{\infty,\overline{\eta}}$ is defined to be the rank-one valuation associated to the reduction of the affinoid $\mathcal{Z}_{x,m}$.  
\end{definition}  

The affinoids $\mathcal{Z}_{x,m}$ are not fixed by the group action $\mathcal{G}$, but their stabilizers can be given in terms of \emph{linking orders} $\mathcal{L}_{x,n}$. These are orders inside $M_{2}(\Q_{p})\times D$ that are related to the classification of $2$-dimensional representations by the local Langlands correspondence, see \cite{BH06} and \cite{WS10}. %
Rather than giving the full definition, we will go through an explicit example in some detail and refer the reader to \cite[Definition 4.2.2]{WS16}. %
\begin{example}\label{exa:LinkingOrderExample}
Let $L=\Q_{p}(\pi)$ with $\pi^2=p$, and let $D=(d,p)_{\Q_{p}}$ be the unique (up to isomorphism) division quaternion algebra over $\Q_{p}$ with $i^2=d$ for $d\in\Z^{*}_{p}$ a non-square and $j^2=p$. We write $\mathcal{O}_{L}=\Z_{p}[\pi]$ for the maximal order in $L$ with prime ideal $\mathfrak{p}_{L}=(\pi)$, and $\mathcal{O}_{D}$ for the maximal order in $D$.  
We fix an embedding ${L}=\Q_{p}(\pi)\to D=(d,p)_{\Q_{p}}$ using $\pi\mapsto j$. %
Let $\{e_{1},e_{2}\}$ be a basis of $V(G)\simeq \Q_{p}^{2}$, and consider the action of $L$ on $V(G)$ given by 
\begin{align*}
\pi(e_{1})&=e_{2},\\
\pi(e_{2})&=pe_{1}.
\end{align*}
Let $x$ be a point with complex multiplication by $L$ and suppose that the corresponding embeddings $i_{1}:L\to M_{2}(\Q_{p})$ and $i_{2}:L\to D$ are as above. We write $\Delta_{x}(L):L\to M_{2}(\Q_{p})\times D$ for the diagonal embedding and define 
\begin{align*}
\varpi_{1}&=\begin{pmatrix}
1 & 0\\
0 & -1
\end{pmatrix}\\
\varpi_{2}&=j.
\end{align*}
Suppose that $n=2k+1$ is odd. We then have
\begin{equation*}
\mathcal{L}_{x,n}=\Delta_{x}(\mathcal{O}_{L})+(\mathfrak{p}^{n}_{L}\times \mathfrak{p}^{n}_{L})+(\mathfrak{p}^{k+1}_{L}\varpi_{1}\times \mathfrak{p}^{k+1}_{L}\varpi_{2}).
\end{equation*}
Similarly, if $n=2k$, then 
\begin{equation*}
\mathcal{L}_{x,n}=\Delta_{x}(\mathcal{O}_{L})+(\mathfrak{p}^{n}_{L}\times \mathfrak{p}^{n}_{L})+(\mathfrak{p}^{k}_{L}\varpi_{1}\times \mathfrak{p}^{k}_{L}\varpi_{2}). 
\end{equation*}
These are both subalgebras of $M_{2}(\Z_{p})\times \mathcal{O}_{D}$. %

Suppose $n=0$. A quick calculation then shows that we can explicitly %
describe the order as $\mathcal{L}_{x,0}=\mathcal{U}\times \mathcal{O}_{D}$, where 
\begin{equation*}
\mathcal{U}=\{
\begin{pmatrix}
a_{0}+a_{1}+a_{2} & (b_{0}+b_{1}-b_{2})p\\
b_{0}+b_{1}+b_{2} & a_{0}+a_{1}-a_{2}
\end{pmatrix}\,:\,a_{i},b_{i}\in\Z_{p}\}=\{\begin{pmatrix}
c_{1} & c_{2}p\\
c_{3} & c_{4}
\end{pmatrix}\,:\,c_{i}\in\Z_{p}\}.
\end{equation*}
In other words, the linking order is an Iwahori algebra times the full maximal order $\mathcal{O}_{D}$. %
\end{example}

\begin{remark}
Suppose that $x$ is an unramified point with linking order $\mathcal{L}_{x,0}$. Then $\mathcal{L}_{x,0}$ is conjugate to $M_{2}(\Z_{p})\times \mathcal{O}_{D}$. In particular, %
we find that there is an unramified CM point such that $\mathcal{L}_{x,0}=M_{2}(\Z_{p})\times \mathcal{O}_{D}$. The corresponding adjacent points $(y,0)$ will have linking orders conjugate to an Iwahori algebra times $\mathcal{O}_{D}$, see \cref{exa:LinkingOrderExample}.  Changing the conjugacy class of $\mathcal{L}_{x,0}$ will be important for the determination of decomposition groups later on, see \cref{lem:CentralVertex}. %

\end{remark}

\begin{definition}
Let $x$ be a point with complex multiplication by $L\supset\Q_{p}$ and linking order $\mathcal{L}_{x,n}$. %
We set 
$\mathcal{K}_{x,n}=\Delta_{x}(L^{*}) \mathcal{L}^{*}_{x,n}$
and $\mathcal{K}^{1}_{x,n}=\mathcal{K}_{x,n}\cap \mathcal{G}$. The decomposition group $D_{x,n}$ associated to this group is the intersection of $\mathcal{K}^{1}_{x,n}$ with $\SL_{2}(\Z_{p})\subset \mathcal{G}$. 
\end{definition}
By \cite[Theorem 5.1.2]{WS16}, the group $\mathcal{K}^{1}_{x,n}$ is contained in the stabilizer of $\mathcal{Z}_{x,n}$. We will shortly see that this is in fact an equality.   
\begin{definition}
Consider the set of all pairs $(x,n)$, where $x\in \mathcal{M}^{o,ad}_{\infty,\overline{\eta}}(\mathcal{O}_{\C_{p}})$ is a CM point and $n\geq{0}$ is an integer. We call the integer $n$ the level of the pair. We define an equivalence relation on this set of pairs as follows: we have $(x,n)\sim (y,m)$ if $n=m$ and there exists a $\sigma\in \mathcal{K}^{1}_{x,n}$ such that $x^{\sigma}=y$. The corresponding equivalence classes form the vertices of a graph $\mathcal{T}^{o}$. We will often denote an equivalence class by $(x,n)$ again. The incidence relations among these vertices are as follows. The vertices $(x,n)$ and $(y,m)$ are incident (up to exchanging pairs) if one of the following holds: 
\begin{enumerate}
\item $m=0=n$, $x$ is unramified, $y$ is ramified, and $\mathcal{A}_{y}\subset \mathcal{A}_{x}$, where $\mathcal{A}_{x}$ and $\mathcal{A}_{y}$ are the chain orders associated to $x$ and $y$, see \cite[Section 12.1]{BH06} and \cite[Definition 4.2.1]{WS16}. 
\item $(x,n)$ is equivalent to $(y,m+1)$.  
\end{enumerate}  
 The vertices of level zero give rise to a connected subtree $\mathcal{T}^{o}_{0}$. For a fixed CM point $x$, we write $\mathcal{T}^{o}_{(x,0)}$ for the connected graph generated by vertices $(x',n)$ such that $(x',0)\sim (x,0)$. We write $\mathcal{T}^{o}_{x}$ for the connected graph generated by $(x,n)$ for $n\geq{0}$. 
\end{definition}
\begin{remark}
Note that, unlike in the previous sections, the graph $\mathcal{T}^{o}$ is not a metric graph. We will however see in \cref{lem:CompatibilityLemmaQuotient2} that we can identify the images of finite subgraphs of $\mathcal{T}^{o}$ under the projection maps $\pi_{m}$ with finite metric graphs.   %
\end{remark}
The group $\mathcal{G}$ acts naturally on the graph $\mathcal{T}^{o}$ through $(x,n)^{\sigma}=(x^{\sigma},n)$. Moreover, the stabilizer of a vertex $v=(x,n)$ is exactly the group $\mathcal{K}^{1}_{x,n}$. 
\begin{lemma}
Let $v=(x,n)\in\mathcal{T}^{o}$. We have $\mathrm{Stab}(v)=\mathcal{K}^{1}_{x,n}$. 
\end{lemma}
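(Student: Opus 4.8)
The plan is to show both inclusions $\mathrm{Stab}(v) \supset \mathcal{K}^1_{x,n}$ and $\mathrm{Stab}(v) \subset \mathcal{K}^1_{x,n}$. The first inclusion is essentially recorded already: by \cite[Theorem 5.1.2]{WS16}, $\mathcal{K}^1_{x,n}$ is contained in the stabilizer of the affinoid $\mathcal{Z}_{x,n}$ inside $\mathcal{M}^{o,ad}_{\infty,\overline{\eta}}$, and since the vertex $v=(x,n)$ is (by \cref{lem:AffinoidRedIntegral}) precisely the unique generic point of the reduction $\overline{\mathcal{Z}}_{x,n}$, any automorphism preserving the affinoid preserves its reduction and hence fixes this distinguished point. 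One has to be slightly careful that the equivalence class defining $v$ is exactly the $\mathcal{K}^1_{x,n}$-orbit of $x$ at level $n$: if $\sigma \in \mathcal{K}^1_{x,n}$ then $x^\sigma$ lies in the same class, so $\sigma$ fixes $v$. This gives $\mathcal{K}^1_{x,n} \subset \mathrm{Stab}(v)$.

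For the reverse inclusion, suppose $\sigma \in \mathcal{G}$ fixes $v = (x,n)$, i.e. $(x^\sigma, n) \sim (x,n)$. By definition of the equivalence relation on vertices, there is some $\tau \in \mathcal{K}^1_{x,n}$ with $x^{\sigma} = x^{\tau}$. Then $\tau^{-1}\sigma$ fixes the point $x$ itself in $\mathcal{M}^{o,ad}_{\infty,\overline{\eta}}$, so it suffices to identify the pointwise stabilizer of a CM point $x$ with complex multiplication by $L$ inside $\mathcal{G}$. This stabilizer should be exactly $\Delta_x(L^*) \cap \mathcal{G}$ — the automorphisms of the formal group together with its CM-structure, transported via the trivialization $\phi$ — which is contained in $\mathcal{K}^1_{x,n} = \Delta_x(L^*)\mathcal{L}^*_{x,n} \cap \mathcal{G}$. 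Hence $\tau^{-1}\sigma \in \mathcal{K}^1_{x,n}$, and therefore $\sigma = \tau(\tau^{-1}\sigma) \in \mathcal{K}^1_{x,n}$ as well, since $\mathcal{K}^1_{x,n}$ is a group.

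The main obstacle is the precise computation of the pointwise stabilizer of a CM point $x$ under the $\mathcal{G}$-action, and confirming it equals $\Delta_x(L^*) \cap \mathcal{G}$. This requires unwinding the description of the $\mathcal{G}$-action on triples $(G, i, \phi)$ from \cite[Section 2.9]{WS16}: an element $(g_1, g_2)$ acts on the trivialization $\phi$ by $g_1$ and on the formal group (through its crystalline universal cover and the endomorphism algebra $D$) by $g_2$, and fixing the point $x$ forces $g_2$ to be an automorphism of $G$ commuting with the CM action, which under $\Delta_x$ corresponds to an element of $L^*$, and then $g_1$ is determined compatibly via $\phi$. Once this identification is in place, the containment in $\mathcal{K}^1_{x,n}$ is immediate from the definition of the linking order, which always contains $\Delta_x(\mathcal{O}_L)$, hence $\Delta_x(L^*) \subset \Delta_x(L^*)\mathcal{L}^*_{x,n}$. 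I would also double-check the level-$n$ bookkeeping: that the equivalence class at level $n$ is genuinely governed by $\mathcal{K}^1_{x,n}$ rather than by a smaller or larger group, using the definition of $\sim$ on pairs $(x,n)$ directly.
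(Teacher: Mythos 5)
Your proof is correct and takes essentially the same route as the paper's: the easy inclusion $\mathcal{K}^1_{x,n}\subset\mathrm{Stab}(v)$ comes straight from the definition of the equivalence relation on pairs $(x,n)$, and the reverse inclusion reduces, exactly as in the paper, to the fact that the pointwise stabilizer of the CM point $x$ in $\mathcal{G}$ is $\Delta_x(L^*)\cap\mathcal{G}\subset\mathcal{K}^1_{x,n}$. The only cosmetic difference is that your opening paragraph detours through the affinoid $\mathcal{Z}_{x,n}$ and the Berkovich point $[x,n]$ (which is really a separate object from the combinatorial vertex $v$, namely the generic point of the reduction) before landing on the direct observation that any $\sigma\in\mathcal{K}^1_{x,n}$ keeps $x^\sigma$ in the same class; the paper simply starts from that direct observation.
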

\begin{proof}
We automatically have $\mathrm{Stab}(v)\supset \mathcal{K}^{1}_{x,n}$. Suppose that $(x^{\sigma},n)=(x,n)^{\sigma}=(x,n)$. There then is a $\tau\in\mathcal{K}^{1}_{x,n}$ such that $(x^{\sigma})^{\tau}=x$. But the stabilizer in $\mathcal{G}$ of the CM point $x$ is %
$\Delta_{x}(L^{*})\subset \mathcal{K}^{1}_{x,n}$, so that the desired equality follows.     
\end{proof}

\begin{lemma}
The stabilizer of $\mathcal{Z}_{x,n}$ inside $\mathcal{G}$ is $\mathcal{K}^{1}_{x,n}$. 
\end{lemma}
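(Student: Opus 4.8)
The plan is to prove that the stabilizer $\mathrm{Stab}_{\mathcal{G}}(\mathcal{Z}_{x,n})$ equals $\mathcal{K}^{1}_{x,n}$ by combining the known inclusion $\mathcal{K}^{1}_{x,n}\subset \mathrm{Stab}_{\mathcal{G}}(\mathcal{Z}_{x,n})$ (which is \cite[Theorem 5.1.2]{WS16}, as recalled just above the statement) with the reverse inclusion. First I would invoke \cref{lem:AffinoidRedIntegral}: since the reduction $\overline{\mathcal{Z}}_{x,n}$ is integral, it has a unique generic point, and hence the affinoid $\mathcal{Z}_{x,n}$ determines a well-defined rank-one point $[x,n]\in \mathcal{M}^{o,ad}_{\infty,\overline{\eta}}$. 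Any $\sigma\in\mathcal{G}$ stabilizing $\mathcal{Z}_{x,n}$ must therefore fix the point $[x,n]$, and conversely the $\mathcal{G}$-action permutes the affinoids $\mathcal{Z}_{y,n}$ in the covering of $\mathcal{M}^{o,ad}_{\infty,\overline{\eta}}$ so that the stabilizer of $[x,n]$ coincides with the stabilizer of $\mathcal{Z}_{x,n}$ (the affinoid is recovered from its generic point as the corresponding residue region). So it suffices to show $\mathrm{Stab}_{\mathcal{G}}([x,n])\subset \mathcal{K}^{1}_{x,n}$.

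Next I would reduce the containment to a statement purely about the graph $\mathcal{T}^{o}$. The point $[x,n]$ corresponds to the vertex $v=(x,n)$ of $\mathcal{T}^{o}$, and by the lemma just proved, $\mathrm{Stab}_{\mathcal{G}}(v)=\mathcal{K}^{1}_{x,n}$. The key geometric input is that distinct vertices of $\mathcal{T}^{o}$ give rise to distinct Berkovich points $[y,m]$ — equivalently, that two affinoids $\mathcal{Z}_{y,m}$ and $\mathcal{Z}_{y',m'}$ have the same reduction (same generic point) only if $(y,m)\sim(y',m')$. Granting this, if $\sigma$ fixes $[x,n]$ then $[x^{\sigma},n]=[x,n]^{\sigma}=[x,n]$ forces $(x^{\sigma},n)\sim(x,n)$ as vertices of $\mathcal{T}^{o}$, i.e. $\sigma\in\mathrm{Stab}_{\mathcal{G}}(v)=\mathcal{K}^{1}_{x,n}$. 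For the injectivity of the vertex-to-point assignment, I would appeal to the structure of the covering of $\mathcal{M}^{o,ad}_{\infty,\overline{\eta}}$ by the affinoids $\mathcal{Z}_{x,n}$ established in \cite{WS16}: the affinoids $\mathcal{Z}_{x,n}$ for distinct vertices are distinct affinoid subdomains whose reductions are distinct irreducible components (or residue regions) in the associated formal model, so their generic points are genuinely different rank-one valuations. This is exactly the content needed to identify $\mathcal{T}^{o}$ with a subset of the Berkovich space (as alluded to in the remark preceding the statement and made precise in \cref{lem:CompatibilityLemmaQuotient2}).

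The main obstacle I anticipate is making rigorous the claim that the reduction map sends distinct vertices of $\mathcal{T}^{o}$ to distinct generic points, i.e. that the affinoid $\mathcal{Z}_{x,n}$ is recoverable from the rank-one point $[x,n]$. One subtlety is that the $\mathcal{Z}_{x,n}$ are obtained from the ``parenthesized'' affinoids $(\mathcal{Z})_{x,n}$ by removing finitely many residue regions, so one must check that this surgery does not collapse the generic point or identify two previously distinct ones; here \cref{lem:AffinoidRedIntegral} (integrality of the reduction) is what saves us, since it guarantees a single generic point that survives the removal. A second subtlety is equivariance: one must verify that $[x,n]^{\sigma}=[x^{\sigma},n]$, which follows because $\sigma\colon\mathcal{M}^{o,ad}_{\infty,\overline{\eta}}\to\mathcal{M}^{o,ad}_{\infty,\overline{\eta}}$ is an isomorphism of adic spaces carrying $\mathcal{Z}_{x,n}$ isomorphically onto $\mathcal{Z}_{x^{\sigma},n}$ (by the definition of the $\mathcal{G}$-action on pairs and on the affinoids in \cite[Section 6.3]{WS16}), hence carrying generic point to generic point. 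Once these two points are pinned down, the proof is a short formal argument chaining together \cite[Theorem 5.1.2]{WS16}, \cref{lem:AffinoidRedIntegral}, and the preceding lemma on $\mathrm{Stab}(v)$.
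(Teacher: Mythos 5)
Your overall strategy matches the paper's: use \cref{lem:AffinoidRedIntegral} to get a well-defined rank-one point $[x,n]$, observe that anything stabilizing $\mathcal{Z}_{x,n}$ induces an automorphism of the reduction and hence fixes $[x,n]$, and then reduce to the previously established equality $\mathrm{Stab}(v)=\mathcal{K}^{1}_{x,n}$ for the vertex $v=(x,n)\in\mathcal{T}^{o}$. The crux is therefore the claim that distinct vertices of $\mathcal{T}^{o}$ give distinct Berkovich points, and this is exactly where your argument has a gap: you appeal vaguely to ``the structure of the covering of $\mathcal{M}^{o,ad}_{\infty,\overline{\eta}}$ by the affinoids\dots established in \cite{WS16}: the affinoids for distinct vertices are distinct affinoid subdomains whose reductions are distinct irreducible components\dots in the associated formal model.'' But at infinite level there is no formal model in the classical sense, and the assertion is not something that can simply be read off. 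The paper's proof supplies the missing argument concretely: it first checks that $[x,n]$ lies in the modified affinoid $\mathcal{Z}_{v}$ (since only residue disks at \emph{closed} points are removed), passes to \emph{finite-level} affinoids $\mathcal{Z}^{m}_{v}$ where there genuinely is a semistable cover with a dual intersection graph $\mathcal{T}^{m}$, observes that $\mathcal{Z}^{m}_{v}$ and $\mathcal{Z}^{m}_{v^{\sigma}}$ both contain the image of $[x,n]$ and hence intersect, and then derives a contradiction from the structural fact that two level-$n$ vertices are never adjacent in $\mathcal{T}^{o}$ (edges connect vertices of differing level), so the corresponding finite-level affinoids must be disjoint.

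Two smaller points. First, you claim an equality $\mathrm{Stab}_{\mathcal{G}}([x,n])=\mathrm{Stab}_{\mathcal{G}}(\mathcal{Z}_{x,n})$ justified by ``the affinoid is recovered from its generic point as the corresponding residue region''; this reverse containment is not needed for the argument (only $\mathrm{Stab}(\mathcal{Z}_{x,n})\subset\mathrm{Stab}([x,n])$ is used) and is not obviously true at infinite level, so it is best dropped. Second, your equivariance remark $[x,n]^{\sigma}=[x^{\sigma},n]$ is correct and is indeed needed; the paper uses it implicitly when passing from ``$\sigma$ fixes $[x,n]$'' to information about the vertex $v^{\sigma}$. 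In short: right approach and right intermediate steps, but the central injectivity/disjointness claim needs the explicit finite-level dual-intersection-graph argument rather than an appeal to unspecified structure.
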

\begin{proof}
The inclusion $\mathcal{K}^{1}_{x,n}\subset \mathrm{Stab}(\mathcal{Z}_{x,n})$ is \cite[Theorem 5.1.2]{WS16}. For the other inclusion, suppose that $\sigma$ stabilizes $\mathcal{Z}_{x,n}$, but that it is not in $\mathcal{K}^{1}_{x,n}$. The isomorphism $\sigma: \mathcal{Z}_{x,n}\to \mathcal{Z}_{x,n}$ then induces an isomorphism of reductions $\overline{\mathcal{Z}}_{x,n}\to \overline{\mathcal{Z}}_{x,n}$, so that the generic point is fixed. %
We thus see that $\sigma$ necessarily fixes the Berkovich point $[x,n]$.   %
Moreover, $[x,n]$ is in the affinoid $\mathcal{Z}_{v}\subset \mathcal{Z}_{x,n}$ used for the semistable cover in \cite[Section 6.4]{WS16}, since only residue disks corresponding to closed points are removed.  %
By definition, we have that the vertex in $\mathcal{T}^{o}$ corresponding to $(x^{\sigma},n)$ is different. As in \cite{WS16}, we now consider the induced finite-level affinoids $\mathcal{Z}^{m}_{v}$ for $m$ large enough. %
The affinoids $\mathcal{Z}^{m}_{v}$ and $\mathcal{Z}^{m}_{v^{\sigma}}$ intersect, as they contain the image of $[x,n]$. But %
 $v$ and $v^{\sigma}$ are not neighbors in the dual intersection graph $\mathcal{T}^{m}$ corresponding to this semistable cover (as these are given by vertices with lower or higher level $n$), so we see that they cannot intersect. We conclude that the inclusion is an equality.     
\end{proof}

\begin{corollary}\label{cor:DecompositionCentralVertex}
The stabilizer of $[x,n]$ inside $\mathcal{G}$ is $\mathcal{K}^{1}_{x,n}$. The stabilizer of $[x,n]$ in $\SL_{2}(\Z_{p})$ is $D_{x,n}$. 
\end{corollary}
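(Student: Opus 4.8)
The plan is to deduce this essentially for free from the preceding lemma, which identifies $\mathrm{Stab}_{\mathcal{G}}(\mathcal{Z}_{x,n})$ with $\mathcal{K}^{1}_{x,n}$, together with the fact that $[x,n]$ is by construction the rank-one valuation attached to the generic point of $\overline{\mathcal{Z}}_{x,n}$, which is unique by \cref{lem:AffinoidRedIntegral}. So the whole corollary is a matter of transporting information between $\mathcal{Z}_{x,n}$ and its reduction.

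First I would establish $\mathcal{K}^{1}_{x,n}\subseteq \mathrm{Stab}_{\mathcal{G}}([x,n])$. Any $\sigma\in\mathcal{K}^{1}_{x,n}$ stabilizes $\mathcal{Z}_{x,n}$ by the preceding lemma, hence induces an automorphism of the integral reduction $\overline{\mathcal{Z}}_{x,n}$; such an automorphism fixes the unique generic point of $\overline{\mathcal{Z}}_{x,n}$, and therefore fixes the associated rank-one valuation $[x,n]$.

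For the reverse inclusion I would replay the argument used in the proof of the preceding lemma. Suppose $\sigma\in\mathcal{G}$ fixes $[x,n]$ but $\sigma\notin\mathcal{K}^{1}_{x,n}$, so that the vertices $v=(x,n)$ and $v^{\sigma}=(x^{\sigma},n)$ of $\mathcal{T}^{o}$ are distinct. Since only residue disks around closed points are removed in passing from $\mathcal{Z}_{x,n}$ to the modified affinoid $\mathcal{Z}_{v}$, the point $[x,n]$ still lies in $\mathcal{Z}_{v}$, and because $\sigma$ fixes it, its image also lies in $\mathcal{Z}_{v^{\sigma}}$. Passing to the finite-level affinoids $\mathcal{Z}^{m}_{v}$ and $\mathcal{Z}^{m}_{v^{\sigma}}$ for $m$ sufficiently large, these must then meet, so $v$ and $v^{\sigma}$ would be neighbors in the dual intersection graph $\mathcal{T}^{m}$ of the semistable cover of \cite{WS16}. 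But $x$ and $x^{\sigma}$ have complex multiplication by the same quadratic field $L$, hence are both ramified or both unramified, which rules out incidence relation (1); and incidence relation (2) only relates vertices whose levels differ by one. Thus $v$ and $v^{\sigma}$ are non-adjacent in $\mathcal{T}^{o}$, a contradiction, and we conclude $\mathrm{Stab}_{\mathcal{G}}([x,n])=\mathcal{K}^{1}_{x,n}$.

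Finally, for the second statement I would intersect with $\SL_{2}(\Z_{p})$, viewed inside $\mathcal{G}$ via $\sigma\mapsto(\sigma,1)$, obtaining $\mathrm{Stab}_{\SL_{2}(\Z_{p})}([x,n])=\mathrm{Stab}_{\mathcal{G}}([x,n])\cap\SL_{2}(\Z_{p})=\mathcal{K}^{1}_{x,n}\cap\SL_{2}(\Z_{p})=D_{x,n}$, which is exactly the definition of $D_{x,n}$. The only point needing a little care — and the nearest thing to an obstacle — is verifying that $[x,n]$ genuinely lies in the smaller modified affinoid $\mathcal{Z}_{v}$ rather than merely in $\mathcal{Z}_{x,n}$, and that the combinatorics of $\mathcal{T}^{o}$ really do forbid two level-$n$ vertices from being neighbors; both facts are inherited from \cite{WS16} and the incidence description given above, so this is genuinely a short corollary rather than a new argument.
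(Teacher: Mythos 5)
Your proof is correct and follows exactly the route the paper intends: the corollary is left without an explicit proof because both directions are already present in the proof of the preceding lemma, and you extract them faithfully (containment of $\mathcal{K}^{1}_{x,n}$ via the integral reduction fixing the generic point, the reverse inclusion by the finite-level non-adjacency contradiction, and then intersecting with $\SL_{2}(\Z_{p})$ using the definition of $D_{x,n}$). Your extra observation that $x$ and $x^{\sigma}$ are CM by the same quadratic field, so incidence relation (1) cannot apply when $n=0$, is a welcome clarification of the parenthetical remark ``(as these are given by vertices with lower or higher level $n$)'' in the lemma's proof, which only explicitly addresses incidence relation (2).
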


\begin{remark}
To see how the material in this section links to coverings of modular curves, let $Y(M)^{can,\zeta^{i}_{M}}$ be the canonical model of level $M$ with Weil pairing $\zeta^{i}_{M}$ over $\Z_{p}[\zeta_{M}]$ for $M\geq{3}$, see \cite[Section 2.1]{EP21} and \cite[Chapter 13]{KM85}. %
If we take a supersingular point in $Y(M)^{can,\zeta^{i}_{M}}(\overline{\F}_{p})$ corresponding to an edge in $\mathcal{T}_{M,can}$ (this finite metric tree is the inverse image of $\mathcal{T}_{can}$ in $X(M)^{\an}$), then the completed local ring is isomorphic to the ring $A_{0}\simeq \mathcal{O}_{K_{0}}[[u]]$ (after extending the base to $\mathcal{O}_{K_{0}}$). Moreover, the completed local rings corresponding to points in the Drinfeld level-$p^{n}$ model over $Y(M)^{can}$ are isomorphic to the $A_{n}$, see \cite[Proposition 4.7.4]{WS13}.     %
\end{remark}

\subsection{The tower over the outer line segments}\label{sec:WildTower}

We now use the results in \cite{WS16} to recover the decomposition groups over our canonical supersingular tree. The basic idea is as follows: we first show using group-theoretical considerations and a result by Bouw and Wewers that we can pinpoint the image of a single unramified vertex in Weinstein's tree under the projection map to the Berkovich analytification of $X(1)$. When then calculate the overall structure of the quotient tree, and we use this together with a convergence result to determine the location of the remaining vertices.   

\begin{remark}
Throughout this section, we fix an auxiliary level $M\geq{3}$ and a closed point $\overline{x}\in \mathcal{X}(M)^{can,\zeta^{i}_{M}}(\overline{\F}_{p})$. By identifying $\mathcal{X}(M)_{\C_{p}}^{can,\zeta^{i}_{M}}$ with $X(M)$, we see that $\overline{x}$ corresponds to an edge $e_{j}\subset\mathcal{T}_{M,can}$. As in the previous section, we fix an embedding of the $p^{\infty}$-cyclotomic extension into $\C_{p}$. We retain the notation for the natural projection maps %
\begin{equation*}
\pi_{m}: \mathcal{M}^{\mathrm{o},ad}_{\infty,\overline{\eta}}\to \mathcal{M}^{o,ad}_{m,\overline{\eta}},
\end{equation*} 
and for $m=0$ we identify $\mathcal{M}^{o,ad}_{0,\overline{\eta}}$ with an residue disk in $X(M)^{ad}$. Here ${X}(M)^{ad}$ is the adic space corresponding $X(M)$. %
The points of rank one in $X(M)^{ad}$ lie in the Berkovich analytic space $X(M)^{an}$ by definition.  %
\end{remark}

\begin{lemma}\label{lem:CentralVertex}
There is an unramified pair $(x,0)$ such that $D_{x,0}=\SL_{2}(\Z_{p})$.  
\end{lemma}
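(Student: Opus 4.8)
\textbf{Proof plan for \cref{lem:CentralVertex}.}

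The plan is to produce the unramified pair $(x,0)$ by combining the group-theoretic description of the central decomposition group obtained in \cref{lem:DecompositionGroupCentralVertex2} with the Lubin--Tate picture reviewed in \cref{sec:ReviewWS}. First I would use \cref{lem:DecompositionGroupCentralVertex2}: over the Gauss point $\zeta_{G}\in X(1)^{\an}$ the decomposition group in the $p$-adic part of the tower is (a conjugate of) $\Gamma_{0}(\Z_{p})$, the Iwahori subgroup of $\SL_{2}(\Z_{p})$. Transporting this through the natural identification of a supersingular residue disk in $X(M)^{\an}$ with $\mathcal{M}^{o,ad}_{0,\overline{\eta}}$, and through the projection maps $\pi_{m}$, this says that the Berkovich point lying over $\zeta_{G}$ in the infinite Lubin--Tate tower has $\SL_{2}(\Z_{p})$-stabilizer equal to an Iwahori subgroup. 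By \cref{cor:DecompositionCentralVertex}, the stabilizer of the Berkovich point $[x,0]$ attached to a CM vertex $(x,0)$ of level zero is $D_{x,0}=\mathcal{K}^{1}_{x,0}\cap \SL_{2}(\Z_{p})$, and from \cref{exa:LinkingOrderExample} (the case $n=0$) one reads off that for an unramified $x$ with $\mathcal{L}_{x,0}$ conjugate to $\mathcal{U}\times \mathcal{O}_{D}$ the group $D_{x,0}$ is exactly such an Iwahori subgroup, while changing the conjugacy class of $\mathcal{L}_{x,0}$ to $M_{2}(\Z_{p})\times \mathcal{O}_{D}$ (which is possible for an unramified point, as noted in the remark following \cref{exa:LinkingOrderExample}) yields $D_{x,0}=\SL_{2}(\Z_{p})$.

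The key steps, in order, are: (i) recall from the remark after \cref{exa:LinkingOrderExample} that there exists an unramified CM point $x$ whose linking order is $\mathcal{L}_{x,0}=M_{2}(\Z_{p})\times \mathcal{O}_{D}$ on the nose; (ii) compute $\mathcal{K}_{x,0}=\Delta_{x}(L^{*})\mathcal{L}^{*}_{x,0}$ and then $\mathcal{K}^{1}_{x,0}=\mathcal{K}_{x,0}\cap \mathcal{G}$, observing that the factor of $\mathcal{L}^{*}_{x,0}$ contributes $\GL_{2}(\Z_{p})\times \mathcal{O}_{D}^{*}$ and that intersecting with the determinant-equals-norm condition and then with $\SL_{2}(\Z_{p})\hookrightarrow \mathcal{G}$ leaves precisely $\SL_{2}(\Z_{p})$; (iii) conclude via \cref{cor:DecompositionCentralVertex} that $D_{x,0}=\SL_{2}(\Z_{p})$ for this choice of $x$. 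Step (ii) is a direct unwinding of the definitions of $\mathcal{G}=(\GL_{2}(\Q_{p})\times D^{*})^{\det=N}$, of $\mathcal{K}_{x,n}$, and of the embedding $\sigma\mapsto(\sigma,1)$ of $\SL_{2}(\Z_{p})$; here $L$ is the unramified quadratic field, so $\Delta_{x}(L^{*})$ only enlarges the group, and its intersection back with $\SL_{2}(\Z_{p})$ is absorbed.

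The main obstacle will be step (i)--(ii): one must be careful that the linking order can genuinely be normalized to the \emph{maximal} order $M_{2}(\Z_{p})\times\mathcal{O}_{D}$ rather than merely to a conjugate of it, since conjugating $\mathcal{L}_{x,0}$ inside $M_2(\Q_p)\times D$ moves the associated Berkovich point and changes $D_{x,0}$ to an Iwahori subgroup; this is exactly the subtlety flagged in the remark preceding the lemma. I would handle this by invoking that remark directly: for an unramified point the chain order $\mathcal{A}_{x}$ can be taken maximal, i.e.\ there is a concrete unramified CM point realizing $\mathcal{L}_{x,0}=M_{2}(\Z_{p})\times\mathcal{O}_{D}$, and for that point the computation in (ii) is immediate. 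One should also double-check the compatibility of the $\SL_{2}(\Z_{p})$-action used in \cref{sec:ReviewWS} (a right action, via $(\sigma,1)$) with the decomposition groups in \cref{lem:DecompositionGroupCentralVertex2}, but since both descriptions ultimately give the stabilizer of a point under the same $\SL_{2}(\Z_{p})$-action this is only a matter of matching conventions. With the linking order pinned down, the identity $D_{x,0}=\mathcal{K}^{1}_{x,0}\cap\SL_{2}(\Z_{p})=\SL_{2}(\Z_{p})$ follows, completing the proof.
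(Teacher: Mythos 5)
Your key steps (i)--(iii) match the paper's proof: choose an unramified CM point $x$ with $\mathcal{L}_{x,0}=M_{2}(\Z_{p})\times\mathcal{O}_{D}$ on the nose (not merely conjugate to it), unwind $\mathcal{K}^{1}_{x,0}\cap\SL_{2}(\Z_{p})$ to get $\SL_{2}(\Z_{p})$, and conclude via \cref{cor:DecompositionCentralVertex}. The paper's proof is exactly this, stated in two lines, so in substance you have the same argument.

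That said, your opening paragraph contains a couple of confusions worth flagging even though they do not propagate into the key steps. First, \cref{lem:DecompositionGroupCentralVertex2} concerns the Gauss point $\zeta_{G}$, which lies in the ordinary locus; it is disjoint from the supersingular residue disk over which $\mathcal{M}^{o,ad}_{\infty,\overline{\eta}}$ sits, so there is no ``Berkovich point lying over $\zeta_{G}$ in the infinite Lubin--Tate tower.'' The central vertex $[x_{c},0]$ is located at the supersingular endpoint $\zeta_{j}$, not at $\zeta_{G}$, and the paper establishes this \emph{after} the present lemma, using it as an input rather than the other way around. Second, \cref{exa:LinkingOrderExample} treats a \emph{ramified} CM point (there $L=\Q_{p}(\pi)$ with $\pi^{2}=p$), which is why its $n=0$ linking order is Iwahori-times-$\mathcal{O}_{D}$; an \emph{unramified} point has $\mathcal{L}_{x,0}$ conjugate to $M_{2}(\Z_{p})\times\mathcal{O}_{D}$, as the remark following the example states, so ``an unramified $x$ with $\mathcal{L}_{x,0}$ conjugate to $\mathcal{U}\times\mathcal{O}_{D}$'' never occurs. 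Also, $\Gamma_{0}(\Z_{p})$ (upper-triangular in $\SL_{2}(\Z_{p})$) is the Borel, not the Iwahori $I_{1,p}(\Z_{p})$; the paper distinguishes these. None of this affects steps (i)--(iii), which are correct; but tightening the framing would remove the impression that \cref{lem:DecompositionGroupCentralVertex2} is an ingredient here, when it is not.
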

\begin{proof}
The linking orders $\mathcal{L}_{x,0}$ are all conjugate to $M_{2}(\Z_{p})\times \mathcal{O}_{D}$. We can then find an element $x$ such that $\mathcal{L}_{x,0}=M_{2}(\Z_{p})\times \mathcal{O}_{D}$, which gives %
the equality of decomposition groups. 
\end{proof}
\begin{definition}\label{def:CentralVertex}
Let $x_{c}$ be the point obtained from \cref{lem:CentralVertex}. We call $[x_{c},0]$ a central vertex in $\mathcal{M}^{o,ad}_{\infty,\overline{\eta}}$. 
\end{definition} 
\begin{lemma}
Let $\phi_{M}:X(M)^{\an}\to X(1)^{\an}$ be the natural map. Then  %
$\phi_{M}(\pi_{0}[x_{c},0])=\zeta_{j}$. %
\end{lemma}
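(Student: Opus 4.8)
The plan is to identify the image of the central vertex $[x_c,0]$ by tracking where its decomposition group $D_{x_c,0}=\SL_2(\Z_p)$ (established in \cref{lem:CentralVertex}) forces the point to land. First I would invoke \cref{cor:DecompositionCentralVertex} to record that the stabilizer of $[x_c,0]$ inside $\SL_2(\Z_p)$ is all of $\SL_2(\Z_p)$. Passing to a finite level via $\pi_0$, the point $\pi_0[x_c,0]$ lies in the residue disk $\mathcal{M}^{o,ad}_{0,\overline\eta}$ inside $X(M)^{an}$ attached to the supersingular edge $e_j\subset \mathcal{T}_{M,can}$, and it is fixed by the action of $\SL_2(\Z_p)$ at level $M$. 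Under $\phi_M: X(M)^{an}\to X(1)^{an}$, whose Galois group over the supersingular locus incorporates $\SL_2(\Z_p)$ (this is the $p$-adic part of the monodromy, as in \cref{rem:DecomposableGCoverings} and the chain $X(p^nM)\to X(M)\to X(1)$), a point whose full decomposition group in the $p$-part is $\SL_2(\Z_p)$ must map to a point over which the $p$-adic covering is totally ramified, i.e. it is the unique point of $X(M)^{an}$ over its image that is fixed by the entire $\SL_2(\Z_p)$-action.

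Next I would pin down that image in $X(1)^{an}$. The supersingular residue disk of $X(1)^{an}$ attached to the lift $j\in\mathcal S$ is the preimage under $\tau_{\mathcal{T}_{can}}$ of the half-open edge $e_j\setminus\{\zeta_G\}$, and the unique point of $X(1)^{an}$ lying at the "bottom" of this disk — the one corresponding to the supersingular reduction with no canonical subgroup structure being deformed, equivalently the type-$1$ point $j$ itself as the unique point whose decomposition group in $\SL_2(\Z_p)$ is everything — is $\zeta_j$. Concretely: by \cref{cor:DecompositionCentralVertex} and the description of the Lubin–Tate tower, the vertex $[x_c,0]$ corresponds under $\pi_0$ to the generic point of the formal deformation space $\Spf(A_0)\simeq \mathcal{O}_{K_0}[[u]]$ of $G_0$, base-changed appropriately; this generic point, viewed in $X(1)^{an}$ via the moduli interpretation at the end of \cref{sec:ReviewWS}, is exactly the point $s(E)$ for $E$ the universal deformation, which retracts to the endpoint $x=j$ of $e_j$ because its $j$-invariant specializes to the supersingular value $\overline j$ with the boundary of the disk being the canonical locus. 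So $\phi_M(\pi_0[x_c,0])$ retracts onto $j$, and since $D_{x_c,0}=\SL_2(\Z_p)$ is maximal, there is no further splitting forcing it into the interior of $e_j$; hence $\phi_M(\pi_0[x_c,0])=\zeta_j$.

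The cleanest way to run the argument rigorously is probably to compare decomposition groups. On one side, $D_{[x_c,0]}$ in $\SL_2(\Z_p)$ is $\SL_2(\Z_p)$ by \cref{lem:CentralVertex}. On the other side, for the point $\zeta_j\in X(1)^{an}$, the theory of canonical subgroups (\cref{lem:CanonicalLoci1}) says that $\zeta_j$ (the endpoint of $e_j$, sitting at distance $b(j)p/(p+1)$ from $\zeta_G$, outside the canonical locus) is the point whose associated elliptic curve is supersingular and admits no canonical subgroup; the fiber of the full modular tower over $\zeta_j$ is controlled by the Lubin–Tate period map, and the decomposition group there is the full $\SL_2(\Z_p)$ — this is precisely the content of the Lubin–Tate uniformization at a supersingular point. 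Since both $\pi_0[x_c,0]$ and $\zeta_j$ lie in the same supersingular residue disk of $X(1)^{an}$ (being the one indexed by $j$) and both have decomposition group equal to all of $\SL_2(\Z_p)$, and since within a residue disk there is a unique such point (the "bottom" of the disk, as any interior point of $e_j$ has strictly smaller decomposition group by the inclusions $\SL_2(\Z_p)\supset I_1\supset I_2\supset\cdots$ of \cref{lem:CanonicalLoci1} and the canonical subgroup filtration), we conclude they coincide: $\phi_M(\pi_0[x_c,0])=\zeta_j$.

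The main obstacle I expect is making precise the claim that the decomposition group of $\zeta_j$ in $\SL_2(\Z_p)$ is exactly $\SL_2(\Z_p)$ and not a proper subgroup — equivalently, that the $p$-adic tower $X(p^\infty M)\to X(M)$ is totally ramified (at the level of the local decomposition group) over the supersingular point $\zeta_j$ with no canonical subgroup. This is morally the statement that the Lubin–Tate tower over a single supersingular point is connected with Galois action the full $\SL_2(\Z_p)$, which follows from Weinstein's results in \cite{WS16} (the central vertex of $\mathcal{T}^o$), but one has to carefully match Weinstein's affinoid $\mathcal{Z}_{x_c,0}$ and its Berkovich point $[x_c,0]$ with the correct point of $X(M)^{an}$ — namely verifying via the moduli/deformation-space description that $\pi_0$ sends the central vertex to the point of the supersingular disk lying over $\zeta_j$ (as opposed to over $\zeta_{j,1}$ or some other interior point). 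Once the residue disk and the "bottom of the disk" characterization are in place, the equality of the two maximal decomposition groups forces the conclusion.
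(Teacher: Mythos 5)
Your overall strategy --- compare the decomposition group of $\pi_0[x_c,0]$ with that of $\zeta_j$ and appeal to uniqueness --- is in the spirit of the paper's argument, and you correctly start from \cref{cor:DecompositionCentralVertex}. But there is a genuine gap that you partially acknowledge and do not close: your uniqueness claim is only justified for points lying on the edge $e_j$ (via the Iwahori filtration $\SL_2(\Z_p)\supset I_1\supset I_2\supset\cdots$), whereas $\phi_M(\pi_0[x_c,0])$ is a priori just some type-$2$ point of the supersingular residue disk in $X(1)^{\an}$ and need not lie on $\mathcal{T}_{can}$ at all. Your characterization of $\zeta_j$ as the point with supersingular reduction and no canonical subgroup does not pin it down either, since every point of the disk retracting past $\zeta_{j,1}$ shares that property. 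Moreover, the claim that the decomposition group of $\zeta_j$ is all of $\SL_2(\Z_p)$ is itself something one must establish independently of the identification you are trying to prove; labeling it as ``the content of the Lubin--Tate uniformization'' is circular without a concrete reference.

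The paper closes the gap with two ingredients your proposal omits. First, passing to level $p$ and using the exact sequence of decomposition groups, the image of $\pi_1[x_c,0]$ in $X(p)^{\an}$ is shown to have decomposition group $\PSL_2(\F_p)$; Bouw--Wewers \cite{BW2004} is then invoked to establish that there is a \emph{unique} such point over the supersingular disk, and that the distance from $\zeta_G$ to $x=\phi_M(\pi_0[x_c,0])$ is $b(j)p/(p+1)$. Second, a canonical-subgroup contradiction shows the modified Hasse invariant (the retraction position $\tau_{\mathcal{T}_{can}}(x)$) is also $b(j)p/(p+1)$: if it were smaller, the elliptic curve $E_{j,L}$ over the completed residue field of $x$ would admit a canonical $p$-subgroup via \cref{lem:CanonicalLoci1}, which --- being invariant under the full $p$-decomposition group $\SL_2(\Z_p)$ --- is impossible. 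Matching the distance with the retraction position then forces $x$ onto the tree at its endpoint, i.e.\ $x=\zeta_j$. So the canonical subgroup input you correctly identify is used as a contradiction rather than as a characterization, and the uniqueness and distance that your argument takes for granted require the Bouw--Wewers result.
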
 
\begin{proof}
Consider the compatible set of points $\{\pi_{n}[x_{c},0]\}$. By \cref{cor:DecompositionCentralVertex}, we directly find that the decomposition group of this compatible set as in \cref{def:DGDefinition} is $\SL_{2}(\Z_{p})$. This implies that 
the decomposition group of $\pi_{1}([x_{c},0])$ over $\pi_{0}([x_{c},0])$ in the covering $X(pM)^{\an}\to X(M)^{\an}$ is $\SL_{2}(\Z/p\Z)$. We now consider the exact sequence of decomposition groups 
 \begin{equation*}
 (0)\to \SL_{2}(\Z/p\Z) \to D_{\overline{x}/x} \to H_{j}
 \end{equation*}
 induced by $X(pM)^{\an}\to X(M)^{\an}\to X(1)^{\an}$, where $\overline{x}=\pi_{1}([x_{c},0])$ and $x=\phi_{M}(\pi_{0}([x_{c},0]))$. From this, we find that the image of ${D}_{\overline{x}/x}$ in $\PSL_{2}(\F_{p})$ is the full group. 
 
 Consider the covering $X(p)^{\an}\to X(1)^{\an}$. For every supersingular elliptic curve over $\overline{\F}_{p}$ with corresponding open disk $U\subset X(1)^{\an}$, there is only one point in the pre-image of $U$ with decomposition group $\PSL_{2}(\F_{p})$ by the results in \cite{BW2004}. We conclude that this point is the image of $\overline{x}$ in $X(p)^{\an}$. We note that the results in \cite{BW2004} also show that the length from the Gauss vertex to $x=\phi_{M}(\pi_{0}[x_{c},0])$ is $b(j)p/(p+1)$.  Suppose for a contradiction that the modified Hasse invariant of $x$ is less than $b(j)p/(p+1)$. Consider the elliptic curve $E_{j,L}$ that generates the tower of modular curves as in \cref{sec:ModularTower}, where $L$ is now the completion of $\C_{p}(j)$ with respect to the valuation induced by $\phi_{M}(\pi_{0}[x_{c},0])$. Since its modified Hasse invariant is less than $b(j)p/(p+1)$, we find by \cref{lem:CanonicalLoci1} that $E_{j,L}$ has a canonical subgroup of order $p$. Since this subgroup is invariant under the decomposition group, we obtain a contradiction, so that the modified Hasse invariant of $x$ is $b(j)p/(p+1)$. Since the length of $x$ to the Gauss vertex is $b(j)p/(p+1)$, we conclude that $x=\zeta_{j}$.   %
\end{proof}

\begin{corollary}
The decomposition group over $\zeta_{j}$ is the image of $\SL_{2}(\Z_{p})\times H_{j}$ in $\PSL_{2}(\hat{\Z})$, up to conjugation. 
\end{corollary}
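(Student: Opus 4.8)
The plan is to build the decomposition group over $\zeta_j$ from two halves: its $p$-adic half, which is the decomposition group $\SL_2(\Z_p)$ of the central vertex $[x_c,0]$ supplied by \cref{lem:CentralVertex} and \cref{cor:DecompositionCentralVertex}, and its prime-to-$p$ half, which is the tame decomposition group $P(H_j)$ over $\zeta_j$ supplied by \cref{lem:TameDecompositionGroups} (here $H_j=\langle\sigma_j\rangle$ when $j\in\{0,1728\}$ and $H_j$ is trivial otherwise, since $X(M)\to X(1)$ is unramified over $\zeta_j$ in the remaining cases). The essential content of the preceding lemma is that these two halves sit over one and the same point $\zeta_j$, so it remains only to check that they combine to the \emph{full} preimage.

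To do this, I would fix a compatible system of points $\overline{x}=(x_H)$ over $\zeta_j$ in the tower of modular curves such that for every auxiliary level $M'\geq 3$ coprime to $p$ and every $n\geq 0$, the point $x_{X(p^nM')}$ is the image of $\pi_n[x_c,0]$ under the identification of the Drinfeld level-$p^n$ deformation space with the formal neighbourhood of the supersingular point of $X(M')$, and such that the image of $\overline{x}$ in each $X(M')$ lies on a section of $\mathcal{T}_{tame}$ realizing the labeling of \cref{lem:TameDecompositionGroups}; this is possible since the fibers are finite and nonempty and the relevant systems are cofiltered, and by the previous lemma $\overline{x}$ lies over $\zeta_j$. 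For $N=p^nM'$, consider the exact sequence
\begin{equation*}
(1)\to \SL_2(\Z/p^n\Z)\to \PSL_2(\Z/p^nM'\Z)\to \PSL_2(\Z/M'\Z)\to (1),
\end{equation*}
whose kernel is $\mathrm{Gal}(X(p^nM')/X(M'))$. Intersecting the finite-level decomposition group $D_{\overline{x}/\zeta_j}\subset\PSL_2(\Z/p^nM'\Z)$ with this kernel gives the decomposition group of $\overline{x}$ over its image in $X(M')$ in the covering $X(p^nM')\to X(M')$; near the supersingular point this covering is the Drinfeld level-$p^n$ tower over the deformation ring, so \cref{lem:CentralVertex} and \cref{cor:DecompositionCentralVertex} identify it with all of $\SL_2(\Z/p^n\Z)$. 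The image of $D_{\overline{x}/\zeta_j}$ in $\PSL_2(\Z/M'\Z)$ is the decomposition group of $x_{X(M')}$ over $\zeta_j$ in $X(M')\to X(1)$, which equals $P(H_j)$ by \cref{lem:TameDecompositionGroups} and the choice of section. Since $\SL_2(\Z/p^n\Z)$ is exactly the kernel of the map to $\PSL_2(\Z/M'\Z)$, these two computations force $D_{\overline{x}/\zeta_j}$ to be the entire preimage of $P(H_j)$, namely $P(\SL_2(\Z/p^n\Z)\times H_j)$; the global $\pm 1$ plays no role because $-1\in\SL_2(\Z/p^n\Z)$. Taking the inverse limit over $N=p^nM'$ yields $\rho_E(D_{\overline{x}})=P(\SL_2(\Z_p)\times H_j)$, and replacing the section by another one conjugates the answer, which gives the statement up to conjugation.

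The point requiring the most care is the invariance used in the middle of the argument: one needs the $p$-adic decomposition group of the central vertex to remain $\SL_2(\Z_p)$ after adjoining an arbitrary auxiliary level $M'$, i.e.\ that the formal neighbourhood of the supersingular point of $X(p^nM')$ is the Drinfeld level-$p^n$ tower over the \emph{same} deformation ring $A_0\cong\mathcal{O}_{K_0}[[u]]$ regardless of $M'$; this is \cite[Proposition 4.7.4]{WS13} (with the harmless substitution of $\Gamma$-structure for $\Gamma_1$-structure noted there). The remaining inputs — that decomposition groups are compatible and surject in towers of Galois extensions, and that the conjugations chosen at the various finite levels are compatible — are routine.
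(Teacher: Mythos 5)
Your proof is correct and follows essentially the same route as the paper: combining the $p$-adic decomposition group $\SL_2(\Z_p)$ from \cref{lem:CentralVertex} and \cref{cor:DecompositionCentralVertex} with the tame part $P(H_j)$ from \cref{lem:TameDecompositionGroups} via the exact sequence $(1)\to\SL_2(\Z/p^n\Z)\to\PSL_2(\Z/p^nM'\Z)\to\PSL_2(\Z/M'\Z)\to(1)$, precisely the mechanism the paper uses both in the proof of the preceding lemma and again (with the same commutative-diagram argument) in the proof of \cref{thm:MainThm3}. The additional care you take in fixing a compatible system and in invoking \cite[Proposition 4.7.4]{WS13} for the auxiliary-level invariance spells out points the paper treats implicitly, but the argument is the intended one.
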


We now move to the remaining points in $\mathcal{T}_{can}$. We first review the structure of the tree of level zero $\mathcal{T}^{o}_{0}$, which is a barycentric subdivision of the Bruhat-Tits tree over $\Z_{p}$. Here the unramified vertices $(x,0)$ in the tree correspond to the ordinary vertices of the Bruhat-Tits tree, and the ramified vertices $(y,0)$ correspond to the vertices in the subdivided edges of the tree. In terms of the terminology of \cite{WS16}, we have that the chain order $\mathcal{A}_{x}$ of an unramified point $x$ is conjugate to $M_{2}(\Z_{p})$, and the chain order $\mathcal{A}_{y}$ of a ramified point $y$ is conjugate to the Iwahori subalgebra. We then have that there is an edge between $(x,0)$ and $(y,0)$ if and only if there is an inclusion of chain orders $\mathcal{A}_{y}\subset\mathcal{A}_{x}$. For instance, for the central vertex $x_{c}$, there are $p+1$ Iwahori subalgebras, corresponding to elements of $\mathbb{P}^{1}_{\Z}(\F_{p})$. For each of these, there is a unique distinct $M_{2}(\Z_{p})$ conjugate $\mathcal{A}_{x'}$ that contains it. By continuing in this way for the other $p$ Iwahori subalgebras that are contained in $\mathcal{A}_{x'}$, we obtain the Bruhat-Tits tree. From this identification, we immediately see that the action of $\GL_{2}(\Q_{p})$ is preserved. This implies that the quotient by $\SL_{2}(\Z_{p})$ is an infinite line segment, starting with the vertex $(x_{c},0)$. We record this in a lemma.
\begin{lemma}\label{lem:BruhatTitsTree2}
The procedure described above identifies $\mathcal{T}^{o}_{0}$ with a barycentric subdivision of the Bruhat-Tits tree. This identification preserves the action of $\SL_{2}(\Z_{p})$, and the quotient $\mathcal{T}^{o}_{0}/\SL_{2}(\Z_{p})$ is an infinite line segment, starting with the vertex $(x_{c},0)$.  
\end{lemma}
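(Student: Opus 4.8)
The plan is to transfer the standard description of the Bruhat--Tits tree of $\mathrm{PGL}_2(\Q_p)$ in terms of chain orders in $M_2(\Q_p)$ to the level-zero locus $\mathcal{T}^o_0$, and then to read off the quotient by $\SL_2(\Z_p)$ from the Cartan decomposition.

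First I would recall the dictionary between hereditary (chain) orders in $M_2(\Q_p)$ and simplices of the Bruhat--Tits tree $T$, cf.\ \cite[Section 12.1]{BH06}: the maximal chain orders are exactly the $\GL_2(\Q_p)$-conjugates of $M_2(\Z_p)$ and are the vertices of $T$; the remaining (non-maximal) chain orders are the $\GL_2(\Q_p)$-conjugates of the standard Iwahori order and are the edges of $T$; and an Iwahori order lies in a maximal order precisely when the corresponding edge is incident to the corresponding vertex. Hence the simplicial complex whose vertices are \emph{all} chain orders (of both types) and whose edges record inclusions $\mathcal{A}'\subset\mathcal{A}$ is, by definition, the barycentric subdivision $T'$ of $T$; it has valence $p+1$ at the maximal-order vertices and valence $2$ at the Iwahori-order vertices. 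The inductive recipe in the paragraph before the statement --- begin with $\mathcal{A}_{x_c}=M_2(\Z_p)$, pass to its $p+1$ Iwahori suborders indexed by $\mathbb{P}^1(\F_p)$, observe that each is contained in a unique \emph{second} maximal order, and iterate --- is precisely the usual construction of $T'$.

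Next I would construct the identification $\mathcal{T}^o_0\simeq T'$. To a level-zero vertex $(x,0)$ (an equivalence class of pairs) attach the first-factor projection $\mathcal{A}_x$ of the linking order $\mathcal{L}_{x,0}$; by \cref{lem:CentralVertex}, \cref{exa:LinkingOrderExample} and the surrounding remarks this is the chain order of $x$, maximal if $x$ is unramified and an Iwahori order if $x$ is ramified. This is well defined on $\sim$-classes: if $x'=x^\sigma$ with $\sigma=(g_1,g_2)\in\mathcal{K}^1_{x,0}$, then $\mathcal{A}_{x'}=g_1\mathcal{A}_x g_1^{-1}$, and $g_1$ normalizes $\mathcal{A}_x$, since $\mathcal{K}^1_{x,0}$ is generated by $\mathcal{L}^*_{x,0}$ (whose first factor is $\mathcal{A}^*_x$) together with $\Delta_x(L^*)$, and the image of $L^*$ in $M_2(\Q_p)$ normalizes the chain order attached to $x$. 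It is surjective because the unramified CM points realize every $\GL_2(\Q_p)$-conjugate of $M_2(\Z_p)$ (\cref{lem:CentralVertex} and the ensuing remark) and their neighbours realize every Iwahori order, and it is incidence-preserving because the edges of $\mathcal{T}^o_0$ were \emph{defined} by the relation $\mathcal{A}_y\subset\mathcal{A}_x$. A check that a chain order determines its level-zero vertex up to $\sim$, using the relation between CM points, their embeddings and their linking orders in \cite[Sections 4--5]{WS16}, then upgrades this to a simplicial isomorphism $\mathcal{T}^o_0\simeq T'$. Finally, the $\mathcal{G}$-action on $\mathcal{T}^o$ restricts, via $\sigma\mapsto(\sigma,1)$, to an action of $\SL_2(\Z_p)$ which under this identification is the usual action of $\SL_2(\Z_p)\subset\GL_2(\Q_p)$ on $T'$ by conjugating chain orders, so the identification is $\SL_2(\Z_p)$-equivariant. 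To finish I would compute $\SL_2(\Z_p)\backslash T'$: every element of $\SL_2(\Z_p)$ fixes the base vertex $v_0=(x_c,0)$ (it stabilizes the lattice $\Z_p^2$), and $\SL_2(\Z_p)$ acts transitively on $\partial T=\mathbb{P}^1(\Q_p)$, hence on each sphere around $v_0$; therefore $\SL_2(\Z_p)\backslash T$ is the ray with origin the class of $v_0$, and passing to the barycentric subdivision shows $\mathcal{T}^o_0/\SL_2(\Z_p)$ is an infinite line segment starting with the vertex $(x_c,0)$.

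The main obstacle is the bookkeeping in the middle step: verifying that $\Delta_x(L^*)$ really normalizes $\mathcal{A}_x$ and, more seriously, that two level-zero CM points with the same chain order are $\mathcal{K}^1$-equivalent --- i.e.\ that $(x,0)\mapsto\mathcal{A}_x$ is a bijection on level-zero vertices rather than merely a surjection. This requires the fine structure of CM points and linking orders from \cite{WS16}, and not just the conjugacy classification of hereditary orders; everything else (the tree dictionary, the equivariance, and the ray quotient) is standard.
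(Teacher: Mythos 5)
Your proposal is correct and takes essentially the same approach as the paper: both identify $\mathcal{T}^o_0$ with the barycentric subdivision of the Bruhat--Tits tree by sending a level-zero vertex $(x,0)$ to its chain order $\mathcal{A}_x$, with incidence corresponding to inclusion of chain orders, and then deduce the ray quotient from $\SL_2(\Z_p)$-equivariance and the Cartan decomposition. The one difference is stylistic: you are more explicit about well-definedness on $\sim$-classes and about the injectivity of $(x,0)\mapsto\mathcal{A}_x$, which the paper leaves implicit in its iterative construction of the tree; the "obstacle" you flag is real but is handled at the same (informal) level in the paper, resting on the structure of CM points and linking orders from \cite{WS16}.
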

We now consider the quotients of the other graphs $\mathcal{T}^{o}_{x}$ and $\mathcal{T}^{o}_{(x,0)}$. 

\begin{lemma}\label{lem:StructureOuterCMEdges2}
Let $\Gamma\subset \SL_{2}(\Z_{p})$ be a subgroup. 
The graph $\mathcal{T}_{(x,0)}/\Gamma$ is a tree, %
and the graph $\mathcal{T}_{x}/\Gamma$ is a subtree. 
\end{lemma}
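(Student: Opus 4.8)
The plan is to show both statements by reducing to the combinatorial structure of the graphs $\mathcal{T}^{o}_{(x,0)}$ and $\mathcal{T}^{o}_{x}$ and the fact that $\Gamma$ acts on them through the level-raising incidence relation. First I would recall that, by the definition of $\mathcal{T}^{o}$, the vertices of $\mathcal{T}^{o}_{(x,0)}$ are the equivalence classes $(x',n)$ with $(x',0)\sim(x,0)$, and edges come from the two incidence rules: the level-zero rule $(\mathcal{A}_{y}\subset\mathcal{A}_{x})$, which by \cref{lem:BruhatTitsTree2} already realizes $\mathcal{T}^{o}_{0}$ as a (barycentric subdivision of a) tree, and the level-raising rule $(x',n)\sim(y,m+1)$, which attaches to each vertex $(x',n)$ a collection of ``child'' vertices of level $n+1$. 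The key observation is that this level-raising rule is hierarchical: each vertex of level $n+1$ is incident only to exactly one vertex of level $n$ (the one it came from, up to the equivalence), so the full graph $\mathcal{T}^{o}_{x}$ is obtained from the single infinite ray generated by the $(x,n)$, $n\geq 0$ — which is manifestly a tree — and $\mathcal{T}^{o}_{(x,0)}$ is obtained from the single level-zero vertex $(x,0)$ together with all the rooted trees hanging off it at successively higher levels. In other words, $\mathcal{T}^{o}_{(x,0)}$ is already a tree before quotienting, and $\mathcal{T}^{o}_{x}$ is a subtree of it.

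Next I would analyze the action of $\Gamma\subset \SL_{2}(\Z_{p})\subset\mathcal{G}$ on these graphs. Since $\SL_{2}(\Z_{p})$ fixes the connected component $\mathcal{M}^{o,ad}_{\infty,\overline{\eta}}$ and acts on $\mathcal{T}^{o}$ compatibly with levels (via $(x',n)^{\sigma}=(x'^{\sigma},n)$), any $\Gamma$ preserves the level of each vertex and preserves the incidence relations. The crucial point is that the quotient of a tree by a group action is again a tree provided the action does not identify two distinct vertices of a single edge and does not create a cycle — equivalently, provided $\Gamma$ acts \emph{without inversions} and each orbit meets a given ``geodesic direction'' at most once. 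Here this follows from the level structure: an element of $\Gamma$ that identifies $(x',n)$ with $(x'',n)$ cannot simultaneously identify their unique lower-level neighbors with anything other than each other, and since levels are preserved no edge can be folded onto itself. Concretely, I would argue that the quotient map $\mathcal{T}^{o}_{(x,0)}\to \mathcal{T}^{o}_{(x,0)}/\Gamma$ cannot create a loop: a loop would require a nontrivial $\gamma\in\Gamma$ and a path from a vertex $v$ to $v^{\gamma}$ whose image closes up, but because the unique-lower-neighbor property forces $\gamma$ to act on the root vertex $(x,0)$ and hence, by induction on level, to act compatibly along the whole rooted-tree structure, any such path retracts. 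Therefore $\beta_{1}(\mathcal{T}^{o}_{(x,0)}/\Gamma)=0$, i.e.\ it is a tree; and $\mathcal{T}^{o}_{x}/\Gamma$, being the image of the subtree $\mathcal{T}^{o}_{x}$, is a connected subgraph of a tree, hence a subtree.

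The main obstacle I anticipate is making precise the ``no folding / no new cycles'' claim for the $\Gamma$-action, because a priori $\Gamma$ could act with large stabilizers and the quotient of a tree by an arbitrary group \emph{can} fail to be a tree (it is only guaranteed when the action is without inversions on a \emph{suitable subdivision}). The clean way around this is to work with the barycentric subdivision already present in $\mathcal{T}^{o}_{0}$ (and introduce one for the level-raising edges if needed), on which any group of automorphisms acts without inversions, and then invoke the standard fact that the quotient of a tree by a group acting without inversions is a tree (Serre, \emph{Trees}, I.\S 4). I would also need to check the action does not fuse two subtrees hanging off $(x,0)$ into a cycle — this is where the hierarchy is used: distinct child-subtrees at $(x,0)$ are sent to child-subtrees at $(x,0)^{\gamma}$, and if $\gamma$ fixes the root the images are again disjoint child-subtrees, so no cycle is formed. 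Once these points are in place, both conclusions follow immediately.
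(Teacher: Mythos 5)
Your core idea is essentially the paper's, but you wrap it in an incorrect appeal to Serre. The statement that ``the quotient of a tree by a group acting without inversions is a tree'' is simply false and does not appear in \emph{Trees}, I.\S 4: a free group acting freely (hence without inversions, with trivial stabilizers) on its Cayley tree has quotient a wedge of circles. What Serre's machinery gives you is a graph-of-groups decomposition of $\Gamma$ over the quotient graph, which can very well have loops. So the first half of your last paragraph, which you present as ``the clean way around this,'' is a dead end.

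The part of your proposal that actually works is the ``unique lower-level neighbor / hierarchy'' observation, and this is precisely what the paper's proof exploits. The key mechanism is the nesting of linking orders $\mathcal{K}^{1}_{x,n}\subset \mathcal{K}^{1}_{x,m}$ for $m<n$: if $\sigma\in\Gamma$ identifies $(x,n)$ with $(y,n)$, then $\sigma(x,m)=(y,m)$ for all $m<n$, so identifications propagate down to level $0$. Equivalently, each vertex at level $n\geq 1$ has a unique level-$(n-1)$ neighbor, the $\Gamma$-action is level-preserving, and only the elements of $\Gamma$ that fix the root $(x,0)$ act on $\mathcal{T}^{o}_{(x,0)}$; a level-preserving, parent-preserving quotient of a rooted tree is again a rooted tree, which is all one needs. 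The paper states this as ``the graphs $\mathcal{T}_{x}/\Gamma$ for various $x$ are glued from the bottom.'' You gesture at this in the sentence beginning ``I would also need to check the action does not fuse two subtrees\dots'' but relegate it to a side check, when in fact it is the whole argument; promoting it and deleting the Serre paragraph gives the paper's proof.
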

\begin{proof}
By definition, the action preserves the level $n$ of a vertex $(x,n)$. The map $\mathcal{T}_{x}\to \mathcal{T}_{x}/\Gamma$ is thus an isomorphism. Suppose that $\sigma(x,n)=(y,n)$ for $\sigma\in\Gamma$. This means that there is a $\tau\in\mathcal{K}^{1}_{\sigma(x),n}$ such that $\tau(\sigma(x))=y$. Since $\mathcal{K}^{1}_{\sigma(x),n}\subset \mathcal{K}^{1}_{\sigma(x),m}$ for $m<n$, we have that $\sigma(x,m)=(y,m)$ for all $m<n$. In other words, the graphs $\mathcal{T}_{x}/\Gamma$ for various $x$ are glued from the bottom, so that the overall graph is again a tree.  
\end{proof}

We now recall that the vertices $(x,n)$ can be identified with valuations $[x,n]$ of rank one. The recipe in \cite[Section 6.3]{WS16} shows that we can also identify the intermediate edges with line segments in the Berkovich analytification, at least on a finite level:   

\begin{lemma}\label{lem:CompatibilityLemmaQuotient2}
We can identify the edges in the graph $\mathcal{T}/\Gamma(p^{n})$ with line segments in the Berkovich analytification of $X(p^{n}M)$.  %
\end{lemma}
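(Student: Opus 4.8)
The statement to prove is \cref{lem:CompatibilityLemmaQuotient2}: the edges of the quotient graph $\mathcal{T}^{o}/\Gamma(p^{n})$ can be identified with line segments in $X(p^{n}M)^{\an}$. My plan is to run the following strategy. First, recall that the vertices $(x,m)$ of $\mathcal{T}^{o}$ are already identified with the rank-one valuations $[x,m]$ on $\mathcal{M}^{o,ad}_{\infty,\overline{\eta}}$ coming from the unique generic points of the integral reductions $\overline{\mathcal{Z}}_{x,m}$ (\cref{lem:AffinoidRedIntegral} and the definition of $[x,m]$). The projection $\pi_{n}:\mathcal{M}^{o,ad}_{\infty,\overline{\eta}}\to \mathcal{M}^{o,ad}_{n,\overline{\eta}}$ pushes these valuations forward to points of rank one on $\mathcal{M}^{o,ad}_{n,\overline{\eta}}$, and since $\mathcal{M}^{o,ad}_{n,\overline{\eta}}$ is identified with a residue disk inside $X(p^{n}M)^{ad}$, the image points lie in the Berkovich space $X(p^{n}M)^{\an}$. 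So the vertices map to honest Berkovich points of the modular curve at level $p^{n}M$.

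Next, I would use the explicit construction of the semistable cover in \cite[Section 6.3--6.4]{WS16}: there the finite-level affinoids $\mathcal{Z}^{m}_{v}$ (for $v$ a vertex and $m$ large) are constructed so that the dual intersection graph $\mathcal{T}^{m}$ of the resulting semistable model has vertex set indexed by the $\mathcal{Z}^{m}_{v}$ and edges corresponding to the annuli glueing adjacent affinoids. An edge $e$ between two incident vertices $v=(x,k)$ and $v'=(x',k')$ of $\mathcal{T}^{o}$ thus corresponds, after passing to a finite level $\ge n$ and then pushing down via $\pi_n$, to the open annulus between the reductions $\overline{\mathcal{Z}}^{m}_{v}$ and $\overline{\mathcal{Z}}^{m}_{v'}$; the skeleton of that annulus is a line segment in $X(p^{n}M)^{\an}$ joining $\pi_{n}[x,k]$ to $\pi_{n}[x',k']$. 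I would then observe that the $\Gamma(p^{n})$-action on $\mathcal{T}^{o}$ is compatible with the $\SL_{2}(\Z/p^{n}\Z)$-action on $X(p^{n}M)^{\an}$ (via the embedding $\SL_2(\Z_p)\hookrightarrow \mathcal{G}$ and the identification of $\mathcal{M}^{o,ad}_{n,\overline{\eta}}$ with a residue disk), so that the quotient $\mathcal{T}^{o}/\Gamma(p^{n})$ is sent to the corresponding quotient configuration of Berkovich points and connecting segments inside $X(p^{n}M)^{\an}$; concretely, an edge of $\mathcal{T}^{o}/\Gamma(p^{n})$ is a $\Gamma(p^{n})$-orbit of edges of $\mathcal{T}^{o}$, and its image is the line segment cut out by the corresponding orbit of annuli, which (because distinct orbit representatives give disjoint annuli downstairs) is well defined.

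The main obstacle I anticipate is the "finite level" bookkeeping: the affinoids $\mathcal{Z}_{x,m}$ live on the infinite tower $\mathcal{M}^{o,ad}_{\infty,\overline{\eta}}$ and are only realized by genuine affinoids with integral reduction after descending to a sufficiently high but finite level, and one must check that for a \emph{fixed} finite subgraph of $\mathcal{T}^{o}$ one can choose a single level $m\ge n$ that works for all its vertices and edges simultaneously, and that the image under $\pi_{n}$ of the resulting line segments is independent of the auxiliary choice of $m$ (this is the same style of argument used in the proof of the preceding stabilizer lemma, where the affinoids $\mathcal{Z}^{m}_{v}$ and $\mathcal{Z}^{m}_{v^{\sigma}}$ were compared). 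I would handle this by restricting attention to finite subgraphs (which suffices, since any edge lies in one), invoking that the $\mathcal{Z}_{x,m}$ cover $\mathcal{M}^{o,ad}_{\infty,\overline{\eta}}$ and that on any finite piece the cover is eventually realized at finite level, and then noting that the retraction of $X(p^{n}M)^{\an}$ onto the skeleton of the semistable model built from the $\pi_{n}$-images is canonical, so the resulting identification of edges with line segments does not depend on $m$. The remaining verifications — that incident vertices map to the endpoints of the associated annulus skeleton, and that the $\Gamma(p^{n})$-equivariance descends — are then routine given \cref{lem:BruhatTitsTree2}, \cref{lem:StructureOuterCMEdges2}, \cref{cor:DecompositionCentralVertex}, and the structure theorems of \cite[Section 6]{WS16}.
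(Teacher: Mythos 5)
Your proof takes essentially the same approach as the paper's: push the Berkovich points $[x,m]$ down through $\pi_n$ to $X(p^n M)^{\an}$, realize edges as line segments at a sufficiently high finite level $m$ using the semistable cover of \cite[Section 6.3]{WS16}, and then project further down, noting that the $\Gamma(p^n)$-action preserves levels so incident vertices are not identified. You spell out the annulus/skeleton picture and the equivariance in more detail than the paper (whose proof is quite terse), but the underlying argument is the same.
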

\begin{proof}
Let $(x,n)$ and $(x,n+1)$ be vertices with Berkovich points $[x,n]$ and $[x,n+1]$. 
It suffices to show that for some large enough $m$, the smallest connected subspace containing $\pi_{m}([x,n])$ and $\pi_{m}([x,n+1])$ (in terms of Berkovich analytifications) is a line segment, %
since we can then simply take the image of this line segment in lower $X(p^{m}M)^{\an}$, which again forms a line segment since the vertices are not identified. The lemma now follows from the construction in \cite[Section 6.3]{WS16}.     
\end{proof}

\begin{lemma}\label{lem:CompatibilityLemmaQuotient3}
Under the identification in \cref{lem:CompatibilityLemmaQuotient2}, the image of the Bruhat Tits line segment from \cref{lem:BruhatTitsTree2} is mapped to the line segment from $\zeta_{j}$ to the Gauss vertex in $X(1)^{\an}$. Moreover, the unramified vertices $(x,0)$ are mapped to $\zeta_{j,n}$, and the ramified vertices $(y,0)$ are mapped to $\zeta'_{j,n}$.      
\end{lemma}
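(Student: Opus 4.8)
The plan is to bootstrap from the already-established image of the central vertex, to compute the decomposition groups of all remaining vertices of the Bruhat--Tits ray using the linking orders of \cite{WS16}, transport this information down to $X(1)^{\an}$, and then pin down the exact images using the quantitative theory of canonical subgroups together with a convergence argument. To set up the combinatorics, recall from \cref{lem:BruhatTitsTree2} that $\mathcal{T}^{o}_{0}/\SL_{2}(\Z_{p})$ is an infinite ray with consecutive vertices $v_{0}=(x_{c},0),\,y_{1},\,v_{1},\,y_{2},\,v_{2},\dots$, where the $v_{n}$ are the images of the unramified vertices and the $y_{n}$ of the ramified ones, and that we have already identified the image of $v_{0}$ in $X(1)^{\an}$ with $\zeta_{j}$. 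By \cref{lem:CompatibilityLemmaQuotient2}, after passing to a sufficiently high level $X(p^{m}M)^{\an}$ each edge of this ray is a genuine line segment; pushing these down to $X(1)^{\an}$ they remain (possibly degenerate) line segments, so since $X(1)^{\an}$ is a tree the image of the whole ray is the reduced path joining the images of the $v_{n}$ and $y_{n}$. It therefore suffices to locate those images.

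Next I would compute the decomposition groups. By \cref{cor:DecompositionCentralVertex} the stabilizer of $[v_{n},0]$ (resp.\ $[y_{n},0]$) in $\SL_{2}(\Z_{p})$ is $D_{v_{n},0}$ (resp.\ $D_{y_{n},0}$), which one reads off from the corresponding linking order as in \cref{exa:LinkingOrderExample} and \cite[Definition 4.2.2]{WS16}. Under the identification of \cref{lem:BruhatTitsTree2} with the Bruhat--Tits tree, $v_{n}$ is the homothety class of a lattice at combinatorial distance $n$ from the standard one along the geodesic fixed by the diagonal torus, so $D_{v_{n},0}$ is the associated Iwahori subgroup of $\SL_{2}(\Z_{p})$ --- a conjugate of the group of matrices that are upper triangular modulo $p^{n}$ --- and $D_{y_{n},0}$ is the intermediate Iwahori attached to the chain of lattices realizing the subdivided edge. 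These fit into a strictly decreasing chain $\SL_{2}(\Z_{p})=D_{v_{0},0}\supsetneq D_{y_{1},0}\supsetneq D_{v_{1},0}\supsetneq D_{y_{2},0}\supsetneq\cdots$ whose intersection is the standard Borel $\Gamma_{0}(\Z_{p})$.

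I would then transport this to $X(1)^{\an}$ exactly as in the preceding lemma: running the exact sequence of decomposition groups for $X(p^{m}M)^{\an}\to X(M)^{\an}\to X(1)^{\an}$ and using \cref{lem:TameDecompositionGroups} for the tame direction shows that the decomposition group over the image $x_{n}$ of $v_{n}$ (resp.\ $x'_{n}$ of $y_{n}$) is the image of $D_{v_{n},0}\times H_{j}$ (resp.\ $D_{y_{n},0}\times H_{j}$) in $\PSL_{2}(\hat{\Z})$, up to conjugacy. Since $D_{v_{n},0}$ stabilizes a canonical cyclic $p^{n}$-subgroup of the universal formal group but no canonical $p^{n+1}$-subgroup, \cref{lem:CanonicalLoci1} forces $\tau_{\mathcal{T}_{can}}(x_{n})$ into the canonical locus $I_{n,j}$ of order exactly $p^{n}$, and similarly for $x'_{n}$; in particular all these points lie on $e_{j}$. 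Arguing inductively along the ray from $v_{0}\mapsto\zeta_{j}$, I would then combine three facts: consecutive vertex images are joined by a line segment inside $e_{j}$, so the path is monotone towards $\zeta_{G}$; the jumps of the decomposition group along $e_{j}$ occur exactly at the points $\zeta_{j,n}$ and $\zeta'_{j,n}$, which by \cref{lem:CanonicalLoci1} together with the finer structure of the canonical subgroup and the construction of \cite[Section 6.3]{WS16} are the points where the order of the canonical subgroup changes and where the wild ramification of the step $X(p^{n}M)\to X(p^{n-1}M)$ concentrates; and the $x_{n}$ converge, as $n\to\infty$, to the unique point of $e_{j}$ whose decomposition group is the Borel, namely $\zeta_{G}$ by \cref{lem:DecompositionGroupCentralVertex2}. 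This forces $x_{n}=\zeta_{j,n}$ and $x'_{n}=\zeta'_{j,n}$ for every $n$, whence the image of the ray is the geodesic from $\zeta_{j}$ to $\zeta_{G}$.

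The step I expect to be the main obstacle is this last one: converting the purely group-theoretic chain of Iwahori subgroups into the exact metric positions $\zeta_{j,n},\zeta'_{j,n}$. This requires, on the one hand, matching the level of each Iwahori subgroup precisely with the thresholds $b(j)p^{1-n}/(p+1)$ and $b(j)p^{1-n}/2$ appearing in the definition of the canonical and ramification loci --- which hinges on the quantitative form of \cref{lem:CanonicalLoci1} and on locating the ramified vertices $y_{n}$ inside $I_{n,j}$, information that ultimately comes from \cite[Section 6.3]{WS16} --- and, on the other hand, making the convergence statement rigorous, i.e.\ ruling out that the images $x_{n}$ stabilize at a point strictly between $\zeta_{j}$ and $\zeta_{G}$; for the latter one uses that the decomposition groups strictly decrease to the Borel and that, by upper semicontinuity of the monodromy labeling, any limit point on $e_{j}$ would have decomposition group containing the Borel, which on $e_{j}$ singles out $\zeta_{G}$. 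The cases $j=0,1728$, where $b(j)=3,2$ and the supersingular parameters are $j^{1/3}$ or $(j-1728)^{1/2}$, add the usual bookkeeping but no new difficulty.
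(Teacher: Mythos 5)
Your overall strategy — convergence plus canonical subgroup theory — overlaps with the paper's in spirit, but the route is different and the key quantitative input is not quite the one the paper uses. You argue via the Iwahori decomposition groups $D_{v_n,0}$, $D_{y_n,0}$ and try to transport these to $X(1)^{\an}$, using \cref{lem:CanonicalLoci1} to place images in the intervals $I_{n,j}$ and a convergence-to-$\zeta_G$ argument for the limit. The paper's proof is much shorter and more direct: it invokes the nesting $\mathcal{Z}_{x,m}\supset\mathcal{Z}_{x,m+1}$ with $\bigcap_m \mathcal{Z}_{x,m}=\{x\}$ (from \cite[Theorem 5.1.2]{WS16}) to get convergence of $\phi_M(\pi_0[x,m])$ to the type-$1$ point $\phi_M(\pi_0(x))$, and then reads off the exact positions from the explicit Hasse invariants of height-$2$ formal groups with CM, namely $p^{1-n}/(p+1)$ and $p^{1-n}/2$, citing \cite[Proposition 4.6]{Wewers2007} and \cite[Lemma 4.8]{CM2007}.

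The gap you flag at the end is real, and your proposed fix does not close it. Knowing that $D_{v_n,0}$ is an Iwahori subgroup of level $n$ only tells you the decomposition group on an open stretch of $e_j$; the group is locally constant away from the vertices, so this does not single out the endpoint $\zeta_{j,n}$ inside $I_{n,j}$. Worse, the claim that the jumps of the monodromy labeling occur exactly at $\zeta_{j,n}$, $\zeta'_{j,n}$ is precisely what this lemma (together with its neighbors) is establishing, so appealing to it as a fact is circular at this point of the paper. The missing ingredient — which the paper supplies and which you do not cite — is an \emph{independent} computation of the Hasse invariants of the quasi-canonical/CM points $[x,0]$, $[y,0]$, which is exactly the content of the Wewers and Coleman--McMurdy references. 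Your appeal to ``the quantitative form of \cref{lem:CanonicalLoci1}'' is not enough: that lemma locates the canonical loci as intervals, not the specific type-$2$ points at their boundaries.
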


\begin{proof}
Let $\phi_{M}:X(M)^{\an}\to X(1)^{\an}$ be the projection map. 
Since $\mathcal{Z}_{x,m}\supset \mathcal{Z}_{x,m+1}$ and $\bigcap \mathcal{Z}_{x,m}=\{x\}$ (see \cite[Theorem 5.1.2]{WS16}), we have that the $\phi_{M}(\pi_{0}[x,m])$ converge to $\phi_{M}(\pi_{0}(x))$ for $m\to \infty$ in $\mathbb{P}^{1,\an}_{\C_{p}}$.  
Using \cref{lem:CompatibilityLemmaQuotient2} %
and the fact that the Hasse invariants of formal groups of height $2$ are the numbers $p^{1-n}/(p+1)$ and $p^{1-n}/2$ for ramified and unramified points respectively %
(see \cite[Proposition 4.6]{Wewers2007} and \cite[Lemma 4.8]{CM2007}), we quickly find the desired statement. %
\end{proof}

 \begin{definition}\label{def:BruhatTitsSegment}
 Let $(x_{c},0)$ be the central vertex in \cref{def:CentralVertex} with linking order $M_{2}(\Z_{p})\times \mathcal{O}_{D}$ and let $(y_{c},0)$ be the adjacent ramified point corresponding to the standard Iwahori algebra.
Let $\pi_{D}$ be an element %
of $D$ with $N(\pi_{D})=p$ and set 
 $g=(\begin{pmatrix}
1 & 0 \\
0 & p
\end{pmatrix},\pi_{D})\in(\mathrm{GL}_{2}(\Q_{p})\times D^*)^{\mathrm{det}=N}$. Set %
$x_{n}=g^{n}x_{c}$ and $y_{n}=g^{n}y_{c}$. We call the graph these points generate a Bruhat-Tits line in $\mathcal{T}^{o}$.  
 \end{definition}
 \begin{lemma}\label{lem:AdjacencyLemma}
 The vertex $(x_{1},0)$ is the unramified vertex not equal to $(x_{c},0)$ adjacent to the ramified vertex $(y_{c},0)$. More generally, the vertex $(x_{n},0)$ is the unramified vertex not equal to $(x_{n-1},0)$ adjacent to $(y_{n-1},0)$.  
 \end{lemma}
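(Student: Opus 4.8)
The plan is to reduce the general statement to the case $n=1$ by equivariance under the element $g$ of \cref{def:BruhatTitsSegment}, and to settle the case $n=1$ by a short computation with chain orders inside the Bruhat--Tits tree.

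For the reduction, recall that $\mathcal{G}$ acts on the graph $\mathcal{T}^o$ through $(x,m)^\sigma=(x^\sigma,m)$, and that this action preserves the level of a vertex, preserves the ramified/unramified dichotomy, and preserves incidence; hence each $g^{n-1}$ induces a graph automorphism of $\mathcal{T}^o$. Directly from the definitions in \cref{def:BruhatTitsSegment}, this automorphism sends $(x_c,0)\mapsto(x_{n-1},0)$, $(y_c,0)\mapsto(y_{n-1},0)$ and $(x_1,0)\mapsto(x_n,0)$. Therefore it is enough to establish the case $n=1$: that $(x_1,0)$ is the unique unramified vertex distinct from $(x_c,0)$ that is incident to $(y_c,0)$. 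Applying $g^{n-1}$ then yields the statement for general $n$.

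So let $n=1$. By \cref{lem:BruhatTitsTree2}, $\mathcal{T}^o_0$ is the barycentric subdivision of the Bruhat--Tits tree of $\mathrm{PGL}_2(\Q_p)$, where an unramified vertex $(x,0)$ is recorded by the homothety class of its maximal chain order $\mathcal{A}_x\subset M_2(\Q_p)$ and a ramified vertex $(y,0)$ by the midpoint of the edge determined by the Iwahori chain order $\mathcal{A}_y$. Moreover the chain order is an invariant living in $M_2(\Q_p)$ with $\mathcal{A}_{x^{(\sigma_1,\sigma_2)}}=\sigma_1^{-1}\mathcal{A}_x\sigma_1$, so the $\mathcal{G}$-action on $\mathcal{T}^o_0$ is the standard $\GL_2(\Q_p)$-action through the first projection. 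Normalize so that $\mathcal{A}_{x_c}=M_2(\Z_p)$; by \cref{exa:LinkingOrderExample} the ramified vertex $y_c$ has chain order the standard Iwahori $\mathcal{U}=\left(\begin{smallmatrix}\Z_p&p\Z_p\\\Z_p&\Z_p\end{smallmatrix}\right)$, and $\mathcal{U}\subset M_2(\Z_p)$ reflects that $(y_c,0)$ is incident to $(x_c,0)$. The first component of $g$ is $g_1=\left(\begin{smallmatrix}1&0\\0&p\end{smallmatrix}\right)\notin\Q_p^{*}\GL_2(\Z_p)$, so $\mathcal{A}_{x_1}=g_1^{-1}M_2(\Z_p)g_1=\left(\begin{smallmatrix}\Z_p&p\Z_p\\ p^{-1}\Z_p&\Z_p\end{smallmatrix}\right)$ records a vertex distinct from $(x_c,0)$, and it contains $\mathcal{U}=\mathcal{A}_{y_c}$, whence $(x_1,0)$ is incident to $(y_c,0)$. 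Finally, a ramified vertex of the subdivided tree lies on exactly one edge and so has exactly two unramified neighbours; one of them is $(x_c,0)$, so the other is forced to be $(x_1,0)$. This proves the case $n=1$, and with it the lemma.

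The single genuine subtlety is the direction of the translation by $g$: one must check, against the action conventions of \cite{WS16}, that $g$ moves $x_c$ toward $y_c$ rather than away from it --- concretely, that the particular Iwahori $\mathcal{A}_{y_c}$ pinned down in \cref{exa:LinkingOrderExample} lies in $g_1^{-1}M_2(\Z_p)g_1$ and not in $g_1 M_2(\Z_p)g_1^{-1}$ (which would select the opposite neighbour). Once this bookkeeping is matched, the rest is formal: adjacency is read off from the inclusion of chain orders, and the general case follows by transport of structure under $g^{n-1}$.
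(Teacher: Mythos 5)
Your proof is correct and uses the same chain-order argument as the paper: identify $\mathcal{A}_{x_c}=M_2(\Z_p)$, transport it under $g_1$, and read adjacency off an Iwahori inclusion, concluding by the uniqueness of the second unramified neighbour of a midpoint in the barycentric subdivision. The reduction to $n=1$ via the graph automorphism $g^{n-1}$ is exactly what the paper compresses into ``by conjugating this standard Iwahori algebra, we then also easily find the other inclusions.''

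The convention point you raise at the end is a real one. The paper writes $\sigma(\mathcal{A}_x)\sigma^{-1}=\mathcal{A}_{\sigma(x)}$ and then computes $g_1^{n}\sigma g_1^{-n}$, so it is taking the lower-left Iwahori as the ``standard'' one; your $\mathcal{A}_{x^{\sigma}}=\sigma^{-1}\mathcal{A}_x\sigma$ is the formula actually compatible with the right action $x\mapsto x^{\sigma}$ fixed in \cref{sec:ReviewWS}, and it leads you to the upper-right Iwahori $\mathcal{U}$ of \cref{exa:LinkingOrderExample} instead. The two choices differ only in which of the two Iwahori edges at $x_c$ gets the label $y_c$ (they are swapped by the Weyl element), so the adjacency statement is unaffected; but the discrepancy does propagate into the next lemma, where the paper lands on $I_{n,p}(\Z_p)$ with $p^n$ in the lower-left corner while your convention would produce the opposite Iwahori. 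You are right to flag this bookkeeping, and your version is internally self-consistent — it just does not match the direction the paper adopts, so if you carry your conventions forward you should adjust the identification of $D_{(x_n,0)}$ accordingly.
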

 \begin{proof}
Let $\mathcal{A}_{x}$ be the chain order of a point $x$ with complex multiplication. We have $\sigma(\mathcal{A}_{x})\sigma^{-1}=\mathcal{A}_{\sigma(x)}$. Note that $\mathcal{A}_{x_{c}}=M_{2}(\Z_{p})$ by construction. Let $g_{1}=\begin{pmatrix} 1 & 0 \\ 0 & p\end{pmatrix}$ and $\sigma=\begin{pmatrix} a & b \\ c & d \end{pmatrix}\in M_{2}(\Z_{p})$. We then find 
\begin{equation*}
g_{1}^{n}\sigma g_{1}^{-n}=\begin{pmatrix} a & b/p^{n} \\ cp^{n} & d \end{pmatrix}.
\end{equation*}
For $n=1$, we have that the standard Iwahori algebra with elements divisible by $p$ in the lower-left corner is contained inside this algebra. We conclude that the vertices $(x_{c},0)$, $(y_{c},0)$ and $(x_{1},0)$ form two edges in the subdivided Bruhat-Tits tree. By conjugating this standard Iwahori algebra, we then also easily find the other inclusions. 
 \end{proof}
 \begin{lemma}
$D_{(x_{n},0)}=I_{n,p}(\Z_{p})$ and $D_{(y_{n},0)}=I_{n,p}(\Z_{p})$. 
 \end{lemma}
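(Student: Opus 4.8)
The plan is to compute the decomposition group $D_{(x_n,0)} = \mathcal{K}^1_{x_n,0} \cap \SL_2(\Z_p)$ directly from the description of the linking order $\mathcal{L}_{x_n,0}$, using the translation action of the element $g$ from \cref{def:BruhatTitsSegment}. First I would recall that by definition $\mathcal{K}^1_{x,0} = (\Delta_x(L^*)\mathcal{L}^*_{x,0}) \cap \mathcal{G}$, so $D_{x,0}$ is the intersection of this with $\SL_2(\Z_p)$, embedded via $\sigma \mapsto (\sigma,1)$. For the central vertex we have $\mathcal{L}_{x_c,0} = M_2(\Z_p) \times \mathcal{O}_D$ by \cref{lem:CentralVertex}, and a short check (as in \cref{exa:LinkingOrderExample}, the $n=0$ case, up to conjugacy) gives $D_{x_c,0} = \SL_2(\Z_p) = I_{0,p}(\Z_p)$, and $D_{y_c,0} = I_{1,p}(\Z_p)$, the standard Iwahori, since the chain order of $y_c$ is the standard Iwahori subalgebra.

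Next I would use the transformation rule for linking orders under the $\mathcal{G}$-action: since $x_n = g^n x_c$ with $g = (g_1, \pi_D)$ and $g_1 = \begin{pmatrix}1 & 0\\ 0 & p\end{pmatrix}$, we have $\mathcal{L}_{x_n,0} = g \mathcal{L}_{x_{n-1},0} g^{-1}$, and hence $\mathcal{K}^1_{x_n,0} = g\,\mathcal{K}^1_{x_{n-1},0}\,g^{-1}$. Therefore
\begin{equation*}
D_{x_n,0} = \left(g^n\, \mathcal{K}^1_{x_c,0}\, g^{-n}\right) \cap \SL_2(\Z_p).
\end{equation*}
Now I would carry out the conjugation computation in the first factor exactly as in the proof of \cref{lem:AdjacencyLemma}: for $\sigma = \begin{pmatrix} a & b\\ c & d\end{pmatrix} \in \SL_2(\Z_p)$,
\begin{equation*}
g_1^n \sigma g_1^{-n} = \begin{pmatrix} a & b/p^n\\ c p^n & d\end{pmatrix},
\end{equation*}
so an element of $\SL_2(\Z_p)$ lies in $g_1^n \SL_2(\Z_p) g_1^{-n}$ precisely when its lower-left entry is divisible by $p^n$, i.e.\ precisely when it lies in $I_{n,p}(\Z_p)$. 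Combining this with the $n=0$ computation $\mathcal{K}^1_{x_c,0} \cap \SL_2(\Z_p) = \SL_2(\Z_p)$ (and keeping track that the $D^*$-factor of $g$ interacts with $\mathcal{O}_D^*$ in a way that does not further constrain the $\SL_2(\Z_p)$-part, since the stabilizer condition $\det = N$ is already built into $\mathcal{G}$) yields $D_{x_n,0} = I_{n,p}(\Z_p)$. The same argument applied to $y_c$, whose chain order is the standard Iwahori, gives $\mathcal{K}^1_{x_c,0} \cap \SL_2(\Z_p)$ replaced by $I_{1,p}(\Z_p)$ at level $0$, and conjugating by $g^n$ gives lower-left divisibility by $p^{n+1}$; but one must check that the pairing $\Delta_{y}(L^*)$ contributes an extra element — in fact the $n\geq 1$ linking order formulas in \cref{exa:LinkingOrderExample} show the extra generator $\mathfrak{p}^k_L \varpi_1$ drops the constraint back by one, so $D_{y_n,0} = I_{n,p}(\Z_p)$ as well. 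I would state this carefully by tracking the $\varpi_1 = \begin{pmatrix}1 & 0\\ 0 & -1\end{pmatrix}$ term through the conjugation by $g_1^n$.

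The main obstacle I anticipate is the careful bookkeeping of the factor $\Delta_{x}(L^*)$ inside $\mathcal{K}^1_{x,n} = \Delta_x(L^*)\mathcal{L}^*_{x,n}$: one has to verify that the element of $L^*$ that one can multiply by (which has nontrivial $\SL_2(\Z_p)$-component after projecting to the $\GL_2(\Q_p)$-factor) does not enlarge the intersection with $\SL_2(\Z_p)$ beyond $I_{n,p}(\Z_p)$ for the $x_n$, and conversely does account for why $D_{y_n,0}$ is $I_{n,p}$ rather than $I_{n+1,p}$. The cleanest way to handle this is to reduce everything to the central/adjacent pair via the $g^n$-conjugation — since $g \in \mathcal{G}$ normalizes $\SL_2(\Z_p)$ up to the explicit Iwahori-level shift computed above — so that the only genuine computations are the two base cases $D_{x_c,0} = \SL_2(\Z_p)$ and $D_{y_c,0} = I_{1,p}(\Z_p)$, which follow from the explicit form of $\mathcal{L}_{x_c,0}$ and $\mathcal{L}_{y_c,0}$ in \cref{exa:LinkingOrderExample}. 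I would also remark that $D_{x_n,0}$ and $D_{y_n,0}$ are only well-defined up to the global conjugation coming from our choice of central vertex in \cref{lem:CentralVertex}, which matches the ``up to conjugation'' caveats appearing in the surrounding corollaries.
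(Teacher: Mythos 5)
For $D_{(x_n,0)}$ your approach is essentially the paper's: conjugate $\mathcal{L}_{(x_c,0)} = M_2(\Z_p) \times \mathcal{O}_D$ by $g^n$, observe that $\pi_D$ normalizes $\mathcal{O}_D$ so the quaternionic factor is unchanged, and intersect the resulting Iwahori algebra in the $\GL_2$-factor with $\SL_2(\Z_p)$. Your additional remark that neither the $D^*$-factor nor $\Delta_{x_n}(L^*)$ enlarges the intersection is correct and worth spelling out.

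For $D_{(y_n,0)}$, however, there is a genuine gap. You rightly notice that conjugating $I_{1,p}(\Z_p)$ by $g^n$ pushes the lower-left divisibility to $p^{n+1}$ rather than $p^n$, and you then invoke the $\mathfrak{p}_L^k\varpi_1$ generator from the linking-order formulas of \cref{exa:LinkingOrderExample} to ``drop the constraint back by one.'' That invocation is misplaced: the terms $\mathfrak{p}_L^k\varpi_1$ enter $\mathcal{L}_{x,m}$ only when the Drinfeld \emph{level} $m$ (the \emph{second} coordinate of the pair) is at least $1$, whereas the group in question is $D_{(y_n,0)}$, at level $m=0$. The index $n$ in $y_n = g^n y_c$ measures position along the Bruhat--Tits tree, not level, so at level $0$ the linking order is just the conjugated Iwahori times $\mathcal{O}_D$ and there is no $\varpi_1$-term available. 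Nor does the factor $\Delta_{y_n}(L^*)$ supply the missing slack: requiring the quaternionic coordinate of the product to be the identity forces the $L^*$-element into $\mathcal{O}_L^*$, and since $\mathcal{O}_L$ preserves the lattice chain defining the Iwahori, $i_1(\mathcal{O}_L^*)$ already lies inside $\mathcal{L}^*_{y_n,0}$, so multiplying by $\Delta_{y_n}(\mathcal{O}_L^*)$ does not enlarge the intersection with $\SL_2(\Z_p)$. The paper avoids this computation entirely: it notes that the two unramified neighbours of $(y_n,0)$ in $\mathcal{T}^o_0$, namely $(x_n,0)$ and $(x_{n+1},0)$, are not $\SL_2(\Z_p)$-conjugate, so the stabilizer of $(y_n,0)$ cannot interchange them, and is thus pinned down by the already-computed stabilizers of the adjacent unramified vertices. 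You should replace your $\varpi_1$-argument by this adjacency/non-conjugacy argument.
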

\begin{proof}
By \cref{lem:AdjacencyLemma}, we have $(x_{n},0)=(x_{c},0)^{g^{n}}$.
Note that conjugation by $\pi_{D}$ sends $\mathcal{O}_{D}$ to $\mathcal{O}_{D}$. %
We then have
\begin{equation*}
\mathcal{L}_{(x_{n},0)}=g^{n}\mathcal{L}_{(x_{c},0)}g^{-n}=\mathcal{I}_{n}\times \mathcal{O}_{D}. 
\end{equation*} 
Here $\mathcal{I}_{n}=\{\begin{pmatrix}
c_{1} & c_{2}\\
c_{3}p^{n} & c_{4}
\end{pmatrix}\,:\,c_{i}\in\Z_{p}\}$ is the standard upper-triangular Iwahori algebra of level $n$. 
By intersecting $\mathcal{I}_{n}$ with $\SL_{2}(\Z_{p})$, we then find the standard Iwahori group of level $n$. %
Note that this also completely determines $D_{(y_{n},0)}$, since the vertices adjacent to $(y_{n},0)$ are not conjugate by $\SL_{2}(\Z_{p})$.    
\end{proof}

\subsection{The pruned skeleton of a modular curve}\label{sec:PrunedSkeleton}

We now combine the results in the last two sections to find the pruned skeleton of a modular curve. We first define a $\PSL_{2}(\hat{\Z})$-labeling of $\mathcal{T}_{can}$ using the results of the previous sections. 
\begin{definition}\label{def:PSL2Labeling}
Let $\mathcal{T}_{can}$ be the canonical supersingular tree and let $G=\PSL_{2}(\hat{\Z})$. We define a $G$-monodromy labeling using the data in %
\cref{tab:TableDecompositionGroups2}. Here we take the image of the group in the table under the map $\SL_{2}(\hat{\Z})\to \PSL_{2}(\hat{\Z})$.  For every open subgroup $H\subset \PSL_{2}(\hat{\Z})$, this determines a finite connected metric graph $\mathcal{T}_{H,can}$ using the construction in \cref{def:MonodromyTower}.    
\begin{table}[h]
\begin{center}
{\renewcommand{\arraystretch}{1.5}
\begin{tabular}{ |c|c| } 
 \hline
Point or interval in $\mathcal{T}_{can}$ & Monodromy group \\
\hline
$\zeta_\mathrm{G}$ & $\Gamma_{0}(\mathbb{Z}_{p})\times \SL_{2}(\hat{\Z}')$\\ %
\hline
 $I_{n,j}$ & $I_{n,p}(\Z_{p})\times H_{j}$  \\
 \hline
$\zeta_{j}$ & $\SL_{2}(\mathbb{Z}_{p})\times H_{j}$ \\
 \hline
\end{tabular} 
}
 \caption{The groups over the supersingular tree $\mathcal{T}_{can}$ whose images in $\PSL_{2}(\hat{\Z})$ give a monodromy labeling over $\mathcal{T}_{can}$ for the tower of modular curves. %
 We view these groups as %
 subgroups of the product $\SL_{2}(\mathbb{Z}_{p})\times \SL_{2}(\hat{\Z}')$, where $\hat{\Z}'$ is the inverse limit of $\Z/N\Z$ over all $N$ with $(N,p)=1$. %
 \label{tab:TableDecompositionGroups2}
 }
\end{center}
\end{table}
\end{definition}

Before we prove that this data indeed describes the desired pruned skeleta, we give an easy lemma. 

\begin{lemma}\label{lem:NormalizerBorel}
The Borel subgroup $\Gamma_{0}(\Z/p^{n}\Z)\subset \SL_{2}(\Z/p^{n}\Z)$ is self-normalizing. 
\end{lemma}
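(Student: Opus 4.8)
The plan is to reduce the statement to a concrete matrix computation over the finite ring $R=\Z/p^{n}\Z$. I want to show that if $g\in \SL_{2}(R)$ normalizes $B:=\Gamma_{0}(R)$ — the group of upper-triangular matrices in $\SL_{2}(R)$ — then $g\in B$. The natural strategy is: first reduce modulo $p$, where $\SL_{2}(\F_{p})$ and its Borel are classical and one knows the Borel is self-normalizing (for $p\geq 5$ this is standard; one can cite it or give the one-line argument that the normalizer of a Borel in a reductive group is that Borel, applied to $\SL_2$), and then lift the conclusion through the filtration $B \supset B_1 \supset B_2 \supset \cdots$ coming from the congruence subgroups, using that the kernel of $\SL_2(\Z/p^n\Z)\to \SL_2(\Z/p^{n-1}\Z)$ is abelian and central enough to control.

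First I would set up notation: write $\pi:\SL_{2}(\Z/p^{n}\Z)\to \SL_{2}(\F_{p})$ for the reduction map, with kernel $K_{1}$. Given $g$ normalizing $B$, its image $\bar g$ normalizes $\pi(B)=\Gamma_{0}(\F_{p})$, which is a Borel subgroup of $\SL_{2}(\F_{p})$; since a Borel of $\SL_{2}$ over a field is self-normalizing (the normalizer of a parabolic is itself), $\bar g\in \Gamma_{0}(\F_{p})$. Hence after multiplying $g$ on the left by a suitable element of $B$ (lifting $\bar g$, which we can do since $B\to \pi(B)$ is surjective), we may assume $g\in K_{1}$, i.e. $g\equiv I \pmod p$. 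Now I would argue directly: write $g=I+pX$ for a matrix $X$ over $\Z/p^{n-1}\Z$ (well-defined mod $p^{n-1}$), with the $\SL_2$ condition forcing $\operatorname{tr}(X)\equiv 0$ to leading order. The condition that $g$ normalizes $B$ means $gug^{-1}\in B$ for all $u\in B$; testing this on the diagonal torus element $t=\operatorname{diag}(1+p,\,(1+p)^{-1})$ (or more robustly on a well-chosen unipotent and a well-chosen semisimple element) and reading off the lower-left entry of $gtg^{-1}$, one gets that the lower-left entry of $X$ must vanish modulo $p^{n-1}$. That forces $g\in B$, completing the argument.

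Concretely, the cleanest version of the last step: conjugation by $g=I+pX$ sends $u=\begin{pmatrix}1&1\\0&1\end{pmatrix}$ to $u + p[X,u] + p^2(\cdots)$, and the $(2,1)$-entry of $[X,u]$ is $-c$ where $c$ is the $(2,1)$-entry of $X$; for $gug^{-1}$ to be upper triangular we need this to vanish mod $p^{n-1}$. Combined with testing on $\operatorname{diag}(1+p,(1+p)^{-1})$ to kill any remaining obstruction in the higher-order terms (an induction on $n$ is the honest way to handle the $p^2$ tail: assume the result for $\Z/p^{n-1}\Z$, apply it to $g$ mod $p^{n-1}$ to get that $g$ is upper triangular mod $p^{n-1}$, then handle the top layer $K_{n-1}/K_{n}$ directly), one concludes $c\equiv 0$, hence $g$ is upper triangular, hence $g\in B$. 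So the real structure of the proof is: (i) base case via self-normalizing Borels over $\F_{p}$, (ii) an induction on $n$ peeling off one layer of the congruence filtration at a time, each layer being an abelian group on which the adjoint action is linear and easy to compute.

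The main obstacle I anticipate is purely bookkeeping: making the induction on $n$ clean, in particular verifying that after reducing to $g\in K_{1}$ and then to $g$ upper-triangular mod $p^{n-1}$, the remaining freedom (an element of the abelian layer $K_{n-1}/K_{n}\cong \mathfrak{sl}_{2}(\F_{p})$) that normalizes $B$ is exactly the upper-triangular part $\mathfrak{b}$. This is a one-line linear-algebra fact — the normalizer of $\mathfrak{b}$ inside $\mathfrak{sl}_2$ under the adjoint action of $B$ is $\mathfrak{b}$ itself — but one has to be slightly careful that "normalizes $B$" at the group level really does descend to "$\operatorname{Ad}(u)$ preserves $\mathfrak{b}$ for all $u\in B$" at the Lie-algebra level of the top layer, which it does because $B$ is connected enough / generated by the torus and the upper unipotent, both of which one can test against explicitly. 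Given $p\geq 5$ (indeed the standing hypothesis $p\neq 2,3$), no small-characteristic pathologies intervene. I expect the whole proof to be under half a page.
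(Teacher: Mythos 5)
Your approach is genuinely different from the paper's and is, after two local corrections, a valid alternative route. The paper argues via the left action of $\SL_2(\Z/p^n\Z)$ on $\mathbb{P}^1_{\Z}(\Z/p^n\Z)$: $\Gamma_0(\Z/p^n\Z)$ is the stabilizer of $[1:0]$, and if $\sigma$ normalizes it then $\sigma([1:0])$ must again be a $\Gamma_0$-fixed point; a short case analysis on the two shapes $[a:1]$ and $[1:dp^i]$ shows the only such point is $[1:0]$, so $\sigma\in\Gamma_0$. Your proof is instead the hands-on congruence-filtration argument: reduce mod $p$ to the classical field case, then peel off the layers $K_{n-1}/K_n\simeq\mathfrak{sl}_2(\F_p)$ by induction, using that the $\operatorname{Ad}(B)$-normalizer of $\mathfrak{b}$ inside $\mathfrak{sl}_2(\F_p)$ is $\mathfrak{b}$. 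That skeleton is correct and the final appeal to the Lie-algebra fact is the right one. The paper's proof is a bit slicker conceptually and avoids the induction; yours is more elementary and generalizes mechanically to other split groups, so each buys something.

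Two of the concrete computations you offer in support are wrong, however, and would derail a reader trying to fill in the details. First, testing against the upper unipotent $u=\begin{pmatrix}1&1\\0&1\end{pmatrix}$ gives \emph{no} information: with $X=\begin{pmatrix}\alpha&\beta\\ \gamma&\delta\end{pmatrix}$ one has $[X,u]=\begin{pmatrix}-\gamma&\alpha-\delta\\0&\gamma\end{pmatrix}$, so the $(2,1)$-entry of $[X,u]$ is $0$, not $-\gamma$ as you claim; you have read off the $(1,1)$-entry by mistake. Second, your diagonal test element $t=\operatorname{diag}(1+p,(1+p)^{-1})$ reduces to the identity mod $p$, so at the top layer, where the whole computation lives in $\mathfrak{sl}_2(\F_p)$ and only $u\bmod p$ matters, it contributes nothing either (quantitatively, $(1+p)-(1+p)^{-1}$ has $p$-adic valuation $1$, so you lose one power of $p$ and only learn $\gamma\equiv0\bmod p^{n-2}$). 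The element that actually does the job is a semisimple $t=\operatorname{diag}(a,a^{-1})$ with $a^2\not\equiv 1\bmod p$, e.g.\ $a=2$ (permissible since $p\neq 2,3$): then $[X,t]_{21}=\gamma(a-a^{-1})$ with $a-a^{-1}$ a unit, which forces $\gamma=0$ at the top layer and completes your induction. Even more directly, without any Lie-algebra expansion: if $g=\begin{pmatrix}a&b\\c&d\end{pmatrix}$ normalizes $B$ then $(g\,t\,g^{-1})_{21}=cd(\,a_t-a_t^{-1}\,)$ for $t=\operatorname{diag}(a_t,a_t^{-1})$, and once the mod-$p$ step has given you that $d$ is a unit, this vanishing forces $c=0$ in a single stroke, with no induction at all.
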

\begin{proof}
Suppose that $\sigma\notin \Gamma_{0}(\Z/p^{n}\Z)$ and consider its induced action on $\mathbb{P}^{1}_{\Z}(\Z/p^{n}\Z)$ (see \cref{sec:QuotientFunctors}). If $\sigma$ takes $[1:0]$ to $[a:1]$ for some $a\in \Z/p^{n}\Z$, then we can use the classical result for fields. If $\sigma$ takes $[1:0]$ to $[1:dp^{i}]$ for some $i>0$ and $d\in(\Z/p^{n}\Z)^{*}$, then conjugating the Borel gives a subgroup with non-trivial entries in the lower-left corner, as one easily checks. We conclude that $\sigma$ takes $[1:0]$ to $[1:0]$, so that $\sigma\in\Gamma_{0}(\Z/p^{n}\Z)$.    %
\end{proof}

\begin{theorem}\label{thm:MainThm3}
Let $\mathcal{T}_{can}\subset X(1)^{\an}$ be the canonical supersingular tree and let $\phi_{H}: X^{\an}_{H}\to X(1)^{\an}$ be the morphism of modular curves corresponding to %
an open subgroup $H$ of $\PSL_{2}(\hat{\Z})$. Let $\mathcal{T}_{can,H}$ be the metric graph induced by the monodromy labeling in \cref{tab:TableDecompositionGroups2}.   %
Then $\phi^{-1}_{H}(\mathcal{T}_{can})\simeq \mathcal{T}_{can,H}$. Moreover, $\mathcal{T}_{can,H}$ deformation retracts onto the pruned skeleton of $X^{\an}_{H}$.
\end{theorem}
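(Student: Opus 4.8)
The statement splits into two assertions: the isomorphism of metric graphs $\phi^{-1}_{H}(\mathcal{T}_{can})\simeq \mathcal{T}_{can,H}$, and the claim that $\mathcal{T}_{can,H}$ deformation retracts onto the pruned skeleton $\Sigma^{pr}(X_H^{\an})$. For the first assertion, the plan is to verify that the data in \cref{tab:TableDecompositionGroups2} really defines a $\PSL_2(\hat\Z)$-monodromy labeling in the sense of \cref{def:MonodromyLabeling2}, and that it is the one induced (via \cref{lem:InducedMonodromyLabeling}) by a compatible system of topological sections of $\mathcal{T}_{can}$ in the tower. Then \cref{pro:DCStoGraphs}, applied after writing $H$ as the subgroup of some finite quotient $\PSL_2(\Z/N\Z)$, gives $\phi^{-1}_H(\mathcal{T}_{can})\simeq \mathcal{T}_{can,H}$ directly. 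The monodromy-labeling axioms amount to the inclusions $\SL_2(\Z_p)\supset I_{n,p}(\Z_p)\supset I_{n+1,p}(\Z_p)\supset \Gamma_0(\Z_p)$ in the $p$-part, combined with the tame groups $H_j$, and these inclusions are increasing towards $\zeta_G$ along each edge $e_{j,c}$, with the jumps exactly at the $\zeta_{j,n}$; that the groups are the correct decomposition groups is precisely the content of \cref{lem:DecompositionGroupCentralVertex2} (over $\zeta_G$), the corollary after \cref{lem:CompatibilityLemmaQuotient3} (over $\zeta_j$, up to conjugacy), and the last lemma of \cref{sec:WildTower} (over the $I_{n,j}$). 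The remaining point is to upgrade ``up to conjugacy'' to genuine equality so that the gluing is the specific one in \cref{def:MonodromyTower}: here one uses \cref{lem:NormalizerBorel}, since the transition groups along an edge all contain a fixed conjugate of $\Gamma_0(\Z/p^n\Z)$, whose self-normalizing property forces the conjugating elements to be uniquely determined, exactly as advertised in \cref{rem:FiberwiseBehaviorCovering}; the auxiliary tame level $H_j$ is handled by \cref{lem:TameDecompositionGroups} and the decomposable-covering formalism of \cref{rem:DecomposableGCoverings}.

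For the second assertion, the plan is the strategy sketched in the introduction: enlarge $\mathcal{T}_{can}$ to a larger finite metric tree $\mathcal{T}_{can,mod}\subset X(1)^{\an}$ together with an extension of the monodromy labeling, chosen so that $\mathcal{T}_{can,mod,H}:=\phi_H^{-1}(\mathcal{T}_{can,mod})$ \emph{is} a semistable skeleton of $X_H^{\an}$ (equivalently, contains the minimal skeleton $\Sigma(X_H)$). Concretely, $\mathcal{T}_{can,mod}$ should adjoin to $\mathcal{T}_{can}$ the tame tree $\mathcal{T}_{tame}$ of \cref{sec:TameStructure} — whose preimage accounts for the tame ramification loci — and the geodesics to the cusps; by \cref{lem:TameDecompositionGroups} and \cref{lem:ResidualTameness} the resulting preimage is a skeleton of $X_H$. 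Over each newly attached edge of $\mathcal{T}_{can,mod}\setminus \mathcal{T}_{can}$, the assigned decomposition groups \emph{increase} as one moves away from $\mathcal{T}_{can}$ toward the leaves (this is the crucial monotonicity: the canonical-subgroup / residual-tameness picture says the covering becomes ``less ramified'', i.e. the decomposition group grows, as we move into the ordinary locus and toward the cusps). Consequently, by the construction in \cref{def:MonodromyTower} the preimage $\phi_H^{-1}(e)$ of each such edge $e$ is a disjoint union of subtrees of $X_H^{\an}$, each meeting $\mathcal{T}_{can,H}=\phi_H^{-1}(\mathcal{T}_{can})$ in a single point — that is, each is a $1$-connected tree in the sense of \cref{sec:BerkovichSkeleta}. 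Retracting all of these recovers $\mathcal{T}_{can,H}$ from the full skeleton $\mathcal{T}_{can,mod,H}$. It then remains to check that this retraction coincides with (a sufficient part of) the pruning that produces $\Sigma^{pr}(X_H)$: every edge of $\mathcal{T}_{can,H}$ either already lies on the minimal skeleton or lies on a loop of it, so nothing of $\Sigma^{pr}(X_H)$ is thrown away, while everything thrown away is a $1$-connected tree; this gives $\mathcal{T}_{can,H}$ deformation retracts onto $\Sigma^{pr}(X_H^{\an})$.

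The main obstacle I expect is the construction and analysis of the enlarged labeling on $\mathcal{T}_{can,mod}$: one must (i) choose the extra edges and their decomposition groups so that the preimage is genuinely a semistable skeleton and not merely a subgraph thereof — this requires knowing that there is no additional semistable-model combinatorics hidden in the ordinary locus or near the cusps beyond what \cite{WS16}, \cite{KM85} and the tame analysis provide — and (ii) prove the monotonicity of the decomposition groups along the attached edges in a uniform, group-theoretic way, so that the ``disjoint union of trees'' conclusion is clean. The subtlety in (ii) is that over the jump points $\zeta_{j,n}$ the group can both decrease (going inward, along $\mathcal{T}_{can}$) and, on the newly attached branches, increase; one has to verify that the branches of $\mathcal{T}_{can,mod}$ at such a vertex are precisely the ones along which the group increases, so that the labeling axioms still hold at the vertex and the pruning picks up exactly these branches. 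Everything else — the edge-length bookkeeping via \cref{rem:EdgeLengths}, and the identification of the retract with $\Sigma^{pr}$ via \cref{sec:BerkovichSkeleta} — is routine once these two points are settled.
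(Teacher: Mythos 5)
The first half of your proposal (the isomorphism $\phi_H^{-1}(\mathcal{T}_{can})\simeq\mathcal{T}_{can,H}$) is essentially the paper's argument: reduce to $X(p^nM)\to X(1)$, identify the labeled section via \cref{lem:DecompositionGroupCentralVertex2}, the exact-sequence diagram, and the Bruhat--Tits segments of \cref{sec:WildTower}, and upgrade ``up to conjugacy'' to exact equality using the self-normalizing Borel (\cref{lem:NormalizerBorel}); then apply \cref{pro:DCStoGraphs}.

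The second half has a genuine gap: your concrete proposal for the enlarged tree $\mathcal{T}_{can,mod}$ is wrong, and your assessment of where the missing skeleton lives is misdirected. You suggest adjoining $\mathcal{T}_{tame}$ and the cusp geodesics to $\mathcal{T}_{can}$ and assert that the preimage is then a full skeleton of $X_H^{\an}$. This fails already for the Galois covering $X(p^nM)\to X(1)$: the extra combinatorics is not ``in the ordinary locus or near the cusps'' as you worry in item (i) — it lives \emph{inside the supersingular residue disks}. The canonical tree $\mathcal{T}_{can}$ only sees (the image of) the central Bruhat--Tits line segment $\mathcal{T}^o_0/\SL_2(\Z_p)$ from \cref{lem:BruhatTitsTree2}. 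Weinstein's dual graph $\mathcal{T}^o$ contains, hanging off every vertex of that line, the CM subtrees $\mathcal{T}^o_{(x,0)}$ of higher level $n\geq 1$ (see \cref{lem:StructureOuterCMEdges2}); after passing to the quotient and applying \cref{lem:CompatibilityLemmaQuotient2,lem:CompatibilityLemmaQuotient3} these become line segments in $X(p^nM)^{\an}$ whose images in $X(1)^{\an}$ attach at the interior vertices $\zeta_{j,n}$ and $\zeta'_{j,n}$ of $\mathcal{T}_{can}$. The preimage of $\mathcal{T}_{can}\cup\mathcal{T}_{tame}$ omits all of these branches, so it is a proper subgraph of the semistable skeleton and the ``deformation retract'' conclusion cannot be read off from it.

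The correct $\mathcal{T}_{can,mod}$, and the one the paper uses, is the \emph{image of Weinstein's tree} from \cite[Section 6.4]{WS16}, i.e.\ $\mathcal{T}_{can}$ together with the projected CM trees attached at $\zeta_{j,n}$, $\zeta'_{j,n}$. Since quotients of semistable models are semistable, the preimage of this tree under $\phi_H$ is the full skeleton. Over the newly attached segments the covering $X(M)^{\an}\to X(1)^{\an}$ is completely split (residual tameness), so the decomposition groups are given purely by the $\SL_2(\Z/p^n\Z)$-factor and increase monotonically \emph{away} from $\mathcal{T}_{can}$ by \cref{lem:StructureOuterCMEdges2}; hence the preimages of the new segments are disjoint unions of $1$-connected trees, giving the retraction onto $\mathcal{T}_{can,H}$ as you intended. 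Your monotonicity and pruning argument is correct in spirit; it is the identification of what must be adjoined — and the verification that nothing else is needed, which is where \cite{WS16} enters — that needs repair.
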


\begin{proof}
We first show that the given monodromy labeling is the monodromy labeling associated to $X(N)^{\an}\to X(1)^{\an}$, where $N=p^{n}M$, $(M,p)=1$ and $M$ is sufficiently large. In particular, we can assume $M\geq{3}$. %
We first choose a section %
$\mathcal{T}_{M,can}\subset X(M)^{\an}$ of $\mathcal{T}_{can}$. Consider the covering $X(p^{n}M)\to X(M)$ with Galois group $\SL_{2}(\Z/p^{n}\Z)$. 
Using \cref{lem:NormalizerBorel} and \cref{lem:DecompositionGroupCentralVertex2}, we can now uniquely characterize the different points over the central vertex of $X(M)^{\an}$ by the corresponding decomposition group in $\SL_{2}(\Z/p^{n}\Z)$. 
For every supersingular $j$-invariant in $X(M)^{\an}$, we choose the image of the Bruhat-Tits line segment as in \cref{def:BruhatTitsSegment}. %
Note that these are all connected to the same central vertex for varying $j$, since their decomposition groups are equal. In other words, we obtain a section $\mathcal{T}_{p^{n}M,can}$ of $\mathcal{T}_{M,can}$, and we know the corresponding decomposition groups by the results in \cref{sec:pAdicTower} and  \cref{sec:WildTower}. We now give the decomposition groups of $\mathcal{T}_{p^{n}M,can}$ for the covering $\pi: X(p^{n}M)^{\an}\to X(1)^{\an}$.  
 For $x$ in $\mathcal{T}_{p^{n}M,can}$ not over the central vertex with supersingular $j$-invariant $j$, we have a commutative diagram  
 \begin{equation*}
 \begin{tikzcd}
 (0)  \arrow[r] & D_{x/\pi_{n}(x)}  \arrow[r]\arrow[d] & D_{x} \arrow[r] \arrow[d] & P(H_{j}) \arrow[d]\arrow[r] & (0) \\
  (0)  \arrow[r] & \SL_{2}(\Z/p^{n}\Z)  \arrow[r] & \PSL_{2}(\Z/p^{n}M\Z) \arrow[r] & \PSL_{2}(\Z/M\Z) \arrow[r] & (0)
 \end{tikzcd}.
 \end{equation*}
Here the horizontal sequences are exact (see \cref{eq:ExactSequences}) and the vertical arrows are injective. 
Note that both $D_{x/\pi_{n}(x)}$ and $H_{j}$ contain $-1$. We now directly find that  
that $D_{x}$ is the image of 
$D_{x/\pi_{n}(x)}\times H_{j}$ in $\PSL_{2}(\Z/p^{n}M\Z)$. %
As we saw in \cref{lem:DecompositionGroupCentralVertex2}, the same proof works over the central vertex after we replace $H_{j}$ with $\SL_{2}(\Z/M\Z)$.   %

By Proposition \ref{pro:DCStoGraphs}, it follows that for any open subgroup $H\subset \PSL_{2}(\hat{\Z})$, we can recover the inverse image of $\mathcal{T}_{can}$ under the map $X^{\an}_{H}\to X(1)^{\an}$ from this monodromy labeling. %
We have to prove that these graphs retract onto the pruned minimal skeleton of the corresponding modular curve. For $X(p^{n}M)^{\an}$,  %
this directly follows from \cref{lem:StructureOuterCMEdges2}, as the trees $\mathcal{T}_{(x,0)}$ are $1$-connected to $(x,0)$. 

To prove the general case, we complete our monodromy labeling for the covering $X(p^{n}M)^{\an}\to X(1)^{\an}$ over $\mathcal{T}_{can}$ to a monodromy labeling over a larger tree. More explicitly, this tree $\mathcal{T}_{can,mod}$ can be interpreted as the image of the tree used in Weinstein's recipe (see \cite[Section 6.4]{WS16}) for a semistable model of $X(p^{n}M)$. Here we again interpret the edges as line segments in the Berkovich analytification as in \cref{lem:CompatibilityLemmaQuotient2}. The new segments in $\mathcal{T}_{can,mod}$ attach to the points $\zeta_{j,n}$ and $\zeta'_{j,n}$, see \cref{lem:CompatibilityLemmaQuotient3}. Since quotients of semistable models are again semistable, we see that the inverse image of this tree gives the full skeleton of any quotient of $X(p^{n}M)$.    %
For the new attached segments, we can easily determine the decomposition groups. Indeed, over these segments the covering $X(M)^{\an}\to X(1)^{\an}$ is completely split by the residual tameness. We conclude that the decomposition groups are simply given by their factor in $\SL_{2}(\Z/p^{n}\Z)\subset P(\SL_{2}(\Z/p^{n}\Z)\times \SL_{2}(\Z/N\Z))$. %
Note that these factors increase monotonically over these attached trees (see \cref{lem:StructureOuterCMEdges2}), including the boundary point where $H_{j}$ is added. 
It directly follows from this monotonicity of the groups $D_{x}$ that the inverse image of the attached parts is a disjoint union of trees, which are automatically $1$-connected to the skeleton. This concludes the proof. 
\end{proof}

\subsection{First Betti numbers and a criterion for potential good reduction}%
\label{sec:PotentialGoodReduction}

As a first application of \cref{thm:MainThm3}, we give a formula for the first Betti numbers of the Berkovich analytifications of modular curves associated to decomposable subgroups. Let $p\neq{2,3}$ be a prime and let $M\geq{3}$ be an integer with $(M,p)=1$ and $n\geq{1}$. These will be fixed throughout this section. %
Recall that if $H$ is decomposable subgroup of $\PSL_{2}(\Z/N\Z)$ for $N=p^{n}M$ with respect to $p$, then there exist two subgroups $H_{p}$ and $H_{M}$ such that the image of $H_{p}\times H_{M}$ in $\PSL_{2}(\Z/N\Z)$ gives $H$. Note that if $H$ is a decomposable subgroup of $\PSL_{2}(\Z/N'\Z)$ for $N'|N$, then we can take inverse images to obtain a decomposable subgroup of $\PSL_{2}(\Z/N\Z)$. In particular, the assumption $M\geq{3}$ is not a restriction.  %

To reconstruct the pruned skeleton of $X_{H}$, we first recall the following.  %
\begin{lemma}\label{lem:ReconstructGeneralizedTree}
Let $j\in{S}$. There is a bijection between the set of edges in $\mathcal{T}_{P(H_{M}),can}$ over $e_{j}\subset \mathcal{T}_{can}$ and the double coset space $H_{j}\bs \PSL_{2}(\Z/M\Z)/P(H_{M})$. In particular, if $j\neq {0,1728}$, then the number of points lying over $j$ is $[\PSL_{2}(\Z/M\Z):P(H_{M})]$.  
\end{lemma}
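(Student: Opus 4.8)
The plan is to read off the portion of $\mathcal{T}_{P(H_{M}),can}$ lying over $e_{j}$ directly from the $\PSL_{2}(\hat{\Z})$-monodromy labeling of \cref{tab:TableDecompositionGroups2}, using \cref{thm:MainThm3} together with the reconstruction of a covering from a monodromy labeling in \cref{def:MonodromyTower} and \cref{rem:EdgeLengths}. By \cref{thm:MainThm3} we have $\mathcal{T}_{P(H_{M}),can}\simeq \phi^{-1}_{P(H_{M})}(\mathcal{T}_{can})$, and along $e_{j}$ the labeling equals $P(\SL_{2}(\Z_{p})\times H_{j})$ at $\zeta_{j}$ and $P(I_{n,p}(\Z_{p})\times H_{j})$ on each $I_{n,j}$, with jumps only at the points $\zeta_{j,n}$. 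First I would invoke \cref{rem:EdgeLengths} and \cref{lem:DoubleCosetSpaces}: over a subinterval of $e_{j}$ on which the labeling is constant with value $D_{x}$, the edges lying above it are in bijection with $D_{x}\bs \PSL_{2}(\hat{\Z})/P(H_{M})$, and at each $\zeta_{j,n}$, resp.\ $\zeta_{j}$, consecutive ones are glued along the surjection induced by the inclusion $D_{I_{n+1,j}}\subset D_{I_{n,j}}$, resp.\ $D_{I_{1,j}}\subset D_{\zeta_{j}}$.

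The heart of the argument is that all these double coset spaces are canonically identified with $H_{j}\bs\PSL_{2}(\Z/M\Z)/P(H_{M})$, compatibly with the gluing maps. Let $\pi\colon\PSL_{2}(\hat{\Z})\to \PSL_{2}(\Z/M\Z)$ be the reduction map. Since $(M,p)=1$, the map $\pi$ kills the $\SL_{2}(\Z_{p})$-factor, so for any subgroup $\Gamma_{p}\subset\SL_{2}(\Z_{p})$ we have $\pi(P(\Gamma_{p}\times H_{j}))=P(\overline{H}_{j})$, where $\overline{H}_{j}$ denotes the image of $H_{j}$ in $\SL_{2}(\Z/M\Z)$; in particular this is independent of $\Gamma_{p}$. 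By \cref{def:StandardModularCurves} the group $P(H_{M})\subset\PSL_{2}(\hat{\Z})$ is by definition the $\pi$-preimage of $P(H_{M})\subset\PSL_{2}(\Z/M\Z)$, hence contains $\ker\pi$; the elementary identity $A\bs G/B=\pi(A)\bs\pi(G)/\pi(B)$, valid whenever $\ker\pi\subset B$, then gives a bijection
\begin{equation*}
P(\Gamma_{p}\times H_{j})\bs \PSL_{2}(\hat{\Z})/P(H_{M})\;\xrightarrow{\sim}\;P(\overline{H}_{j})\bs \PSL_{2}(\Z/M\Z)/P(H_{M}),
\end{equation*}
which, since $\pm 1$ acts trivially on $\PSL_{2}(\Z/M\Z)$, is the same as $H_{j}\bs\PSL_{2}(\Z/M\Z)/P(H_{M})$ (with $H_{j}$ acting through its image). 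This bijection is induced by $\pi$ and does not involve $\Gamma_{p}$, so for $\Gamma_{p}'\subset\Gamma_{p}$ it turns the gluing surjection $P(\Gamma_{p}'\times H_{j})\bs\PSL_{2}(\hat{\Z})/P(H_{M})\to P(\Gamma_{p}\times H_{j})\bs\PSL_{2}(\hat{\Z})/P(H_{M})$ into the identity.

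It follows that the gluings over $e_{j}$ in \cref{def:MonodromyTower} are all trivial, so $\phi^{-1}_{P(H_{M})}(e_{j}\setminus\{\zeta_{G}\})$ is a disjoint union of segments, one for each class in $H_{j}\bs\PSL_{2}(\Z/M\Z)/P(H_{M})$, each mapping piecewise-linearly onto $e_{j}\setminus\{\zeta_{G}\}$; as $\mathcal{T}_{can}$ is truncated at $\zeta_{j}$ and the lifts of the $\zeta_{j,n}$ have valence two, the closure of each such segment is a single edge of $\mathcal{T}_{P(H_{M}),can}$ over $e_{j}$, which gives the claimed bijection. For the last assertion, if $j\neq0,1728$ then the corresponding elliptic curve has no extra automorphisms, so by the proof of \cref{lem:ResidualTameness} the covering $X(M)\to X(1)$ is unramified around $j$; hence $H_{j}$ is trivial, $H_{j}\bs\PSL_{2}(\Z/M\Z)/P(H_{M})=\PSL_{2}(\Z/M\Z)/P(H_{M})$ has $[\PSL_{2}(\Z/M\Z):P(H_{M})]$ elements, and since the covering is also trivial on the residue disk of the type-$1$ point $j$, each edge over $e_{j}$ meets $\phi^{-1}_{P(H_{M})}(j)$ in exactly one point, so this is also the number of points over $j$. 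I expect the main obstacle to be the $\pm1$ bookkeeping when passing between $\SL_{2}$ and $\PSL_{2}$ — in particular the fact that $-I\in H_{j}$ exactly when $j\in\{0,1728\}$ — together with a careful check that the double-coset bijections above are genuinely compatible with the gluing maps of \cref{def:MonodromyTower}; the remaining steps are the routine double-coset manipulation just described.
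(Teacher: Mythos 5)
The paper presents this lemma with the phrase ``we first recall the following'' and gives no written proof, so there is no explicit paper argument to compare against; it is intended as an immediate consequence of \cref{thm:MainThm3}, the monodromy labeling in \cref{tab:TableDecompositionGroups2}, and the double-coset reconstruction of \cref{def:MonodromyTower}/\cref{rem:EdgeLengths}. Your proof correctly assembles precisely these ingredients: you invoke \cref{thm:MainThm3} to identify $\mathcal{T}_{P(H_M),can}$ with $\phi^{-1}_{P(H_M)}(\mathcal{T}_{can})$, read off the local fibers as double cosets, reduce them all to $H_j\bs\PSL_2(\Z/M\Z)/P(H_M)$ via the identity $A\bs G/B\simeq\pi(A)\bs\pi(G)/\pi(B)$ when $\ker\pi\subset B$ (which is the right identity, and the containment $\ker\pi\subset P(H_M)$ holds since $P(H_M)$ is defined as a preimage), and observe that this identification is $\pi$-functorial so the gluing surjections become identities; this gives the stated bijection, and the unramified case settles the ``in particular'' clause. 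This is exactly the approach the paper intends. One small stylistic remark: for the triviality of $H_j$ when $j\neq 0,1728$, it would be cleaner to point to \cref{lem:TameDecompositionGroups} and the definition of $H_j$ (only defined to be nontrivial for $j\in\{0,1728,\infty\}$) rather than to the proof of \cref{lem:ResidualTameness}, though your reasoning is not wrong.
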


\begin{definition}
Let $s=|\mathcal{S}|=g(X_{0}(p))+1$ be the number of supersingular elliptic curves over $\mathbb{F}_{p^{2}}$ and let $H_{M}$ be as above. We denote the number of supersingular points associated to $P(H_{M})$ by $$s(H)=\sum_{j\in\mathcal{S}}|H_{j}\bs \PSL_{2}(\Z/M\Z)/P(H_{M})|.$$ We refer to the tree $\mathcal{T}_{P(H_{M}),can}$ %
as the canonical supersingular tree for $H_{M}$.    
\end{definition}

Let $E(H_{M})=P(\SL_{2}(\Z/p^{n}\Z)\times {H_{M}})$ be the inverse image of $P(H_{M})$ in $\PSL_{2}(\Z/M\Z)$. 
Recall from \cref{rem:DecomposableGCoverings} that the chain of subgroups 
\begin{equation*}
(1)\subset H \subset E(H_{M}) \subset \PSL_{2}(\Z/M\Z)
\end{equation*}
gives rise to a chain of coverings
\begin{equation*}
X(M)\to X_{H}\to X_{E(H_{M})}\to X(1).
\end{equation*} 
Here $X_{E(H_{M})}$ is simply $X_{P(H_{M})}$. The covering $X(M)\to X_{E(H_{M})}$ is Galois with Galois group $E(H_{M})$. We will use this relative covering for our calculations.   
We have an %
identification of left coset spaces  
\begin{equation*}
\SL_{2}(\Z/p^{n}\Z)/\epsilon(H_{p})\simeq E(H_{M})/H_{M},
\end{equation*}  
see \cref{rem:DecomposableGCoverings}. 
We will study these cosets in more detail for the functors $\Gamma_{0}$, $\Gamma_{1}$, $\Gamma^{\pm}_{1}$, $\Gamma_{sp}$ and $\Gamma^{+}_{sp}$ in \cref{sec:CosetSchemes}. For now, we use these identifications to obtain %
the following formula that characterizes the topological structure of the Berkovich analytification of a modular curve.   %

\begin{theorem}\label{thm:MainThm2v2}
Let $H\subset \PSL_{2}(\Z/N\Z)$ be a decomposable subgroup with subgroups $H_{p}$ and $H_{M}$ in $\SL_{2}(\Z/p^{n}\Z)$ and $\SL_{2}(\Z/M\Z)$. Let $b_{p}(H)=|\epsilon(H_{p})\backslash \mathbb{P}^{1}_{\Z}(\Z/p^{n}\Z)|$ and let $s(H)$ be the number of supersingular $j$-invariants associated to the modular curve $X_{P(H_{M})}$.  Then 
\begin{equation*}
\beta_{1}(\Sigma(X_{H}))=(s(H)-1)(b_{p}(H)-1).
\end{equation*}
\end{theorem}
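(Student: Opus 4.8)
The plan is to compute $\beta_1(\Sigma(X_H))$ by first reducing to the pruned skeleton $\Sigma^{pr}(X_H)$, since $\beta_1(\Sigma(X_H)) = \beta_1(\Sigma^{pr}(X_H))$, and then applying \cref{thm:MainThm3}, which identifies $\Sigma^{pr}(X_H)$ (up to deformation retract) with the graph $\mathcal{T}_{can,H}$ induced by the $\PSL_2(\hat{\Z})$-monodromy labeling of \cref{tab:TableDecompositionGroups2}. The first Betti number of a connected graph is $\beta_1 = E - V + 1$, so I would count the vertices and edges of $\mathcal{T}_{can,H}$. Since $H$ is decomposable with $P(H_p\times H_M)=H$, I would pass to the relative Galois covering $X(M)\to X_{E(H_M)}$ with group $E(H_M)$ as in \cref{rem:DecomposableGCoverings}, using the identification of left-coset spaces $\SL_2(\Z/p^n\Z)/\epsilon(H_p)\simeq E(H_M)/H$. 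This reduces all the double-coset bookkeeping over $\mathcal{T}_{can}$ to bookkeeping inside $\SL_2(\Z/p^n\Z)$ with the $p$-adic monodromy groups $\Gamma_0(\Z_p)$, $I_{n,p}(\Z_p)$, $\SL_2(\Z_p)$ from the table.

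The key structural point is that $\mathcal{T}_{can,H}$, after pruning, looks like a "generalized theta graph": it consists of $s(H)$ "arms" — one for each supersingular point counted with the multiplicity $|H_j\backslash \PSL_2(\Z/M\Z)/P(H_M)|$ as in \cref{lem:ReconstructGeneralizedTree} and the definition of $s(H)$ — where each arm is the image of a Bruhat–Tits line segment, all of whose endpoints away from the supersingular side are glued at vertices lying over $\zeta_G$. Over $\zeta_G$ the monodromy group is $\Gamma_0(\Z_p)\times\SL_2(\hat{\Z}')$, so by \cref{lem:DoubleCosetSpaces} the fiber over $\zeta_G$ in $X_H$ has cardinality $|\Gamma_0(\Z/p^n\Z)\backslash \SL_2(\Z/p^n\Z)/\epsilon(H_p)| = |\epsilon(H_p)\backslash \mathbb{P}^1_{\Z}(\Z/p^n\Z)| = b_p(H)$, using that $\SL_2/\Gamma_0 = \mathbb{P}^1$. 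Thus $\mathcal{T}_{can,H}$ has $b_p(H)$ vertices over $\zeta_G$, and each of the $s(H)$ supersingular edges of $\mathcal{T}_{can}$ lifts — because over $\zeta_j$ the group is all of $\SL_2(\Z_p)\times H_j$, which swallows the whole $\SL_2(\Z/p^n\Z)$-coset space — to arms that each connect to all $b_p(H)$ of these central vertices exactly once; more precisely, each supersingular point contributes a subtree connecting to the central vertices, and after pruning only the part joining distinct central vertices survives. The counting then gives, for a connected graph whose pruned skeleton is built from $s(H)$ bundles of $b_p(H)$ paths between a fixed pair of endpoints (glued appropriately through the $b_p(H)$ vertices), the Euler characteristic $\beta_1 = (s(H)-1)(b_p(H)-1)$.

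To make the count rigorous I would argue as follows. Contract each arm (image of a Bruhat–Tits segment for a fixed supersingular point, with its $1$-connected CM-trees pruned off by \cref{lem:StructureOuterCMEdges2}) to a single edge joining the relevant vertices over $\zeta_G$; the resulting graph $\Gamma$ has the same $\beta_1$. One supersingular point, say the first, already connects up all $b_p(H)$ central vertices into a tree (this uses that a single supersingular edge lifts surjectively onto the $\mathbb{P}^1_{\Z}(\Z/p^n\Z)$-coset space via the Iwahori monodromy, so its arms span all central vertices), contributing $b_p(H)-1$ edges and no cycles. Each of the remaining $s(H)-1$ supersingular points adds $b_p(H)$ further edges among the same $b_p(H)$ vertices without adding vertices, hence contributes $b_p(H)$ independent cycles. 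Therefore $\beta_1(\Gamma) = (s(H)-1)\,b_p(H) - (b_p(H)-1) + \big((b_p(H)-1) - (b_p(H)-1)\big)$; rearranging, $\beta_1 = (s(H)-1)b_p(H) - (s(H)-1) = (s(H)-1)(b_p(H)-1)$, as claimed.

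The main obstacle will be verifying carefully that each supersingular edge lifts to precisely $b_p(H)$ arms connecting the $b_p(H)$ central vertices, i.e. that the images of the Bruhat–Tits lines for the various $\SL_2(\Z/p^n\Z)$-cosets indeed realize a complete "matching" between the fiber over $\zeta_j$ and the fiber over $\zeta_G$ with the right multiplicities; concretely one must check that $|\SL_2(\Z/p^n\Z)/\epsilon(H_p)| = b_p(H)\cdot(\text{arm multiplicity over }\zeta_j)$ and that each arm joins a single central vertex, which follows from $\Gamma_0(\Z/p^n\Z)\backslash\SL_2(\Z/p^n\Z)/I_{n,p}(\Z/p^n\Z)$ being a single double coset (equivalently $\SL_2(\Z_p) = \Gamma_0(\Z_p)\cdot I_{n,p}(\Z_p)$, itself a consequence of the Iwasawa/Bruhat decomposition and \cref{lem:NormalizerBorel}). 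Once that combinatorial structure is pinned down, the Betti-number computation is the routine Euler-characteristic count sketched above, and the auxiliary level $M\geq 3$ assumption is harmless by the inverse-image trick recalled before the statement.
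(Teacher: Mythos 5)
Your overall strategy matches the paper's: pass to $\Sigma^{pr}(X_H)$, use \cref{thm:MainThm3} to identify it with $\mathcal{T}_{can,H}$, reduce via \cref{rem:DecomposableGCoverings} to bookkeeping in $\SL_2(\Z/p^n\Z)/\epsilon(H_p)$, and compute $\beta_1 = E - V + 1$. However, the final count is faulty in several places. The preimage in $\mathcal{T}_{can,H}$ of a supersingular edge $e_j$ is a tree $T_j$ with $b_p(H)$ boundary edges attaching to the $b_p(H)$ central vertices over $\zeta_G$ and a single outer vertex over $\zeta_j$; all of $T_j$ survives pruning (only the CM-subtrees hanging off $T_j$, as in \cref{lem:StructureOuterCMEdges2}, are pruned away), so the phrase ``after pruning only the part joining distinct central vertices survives'' is inaccurate. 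More seriously, the claim that $T_0$ ``contributes $b_p(H)-1$ edges and no cycles'' while each remaining $T_j$ ``contributes $b_p(H)$ independent cycles'' is inconsistent with what a genuine contraction would produce (each additional $T_j$ glued at the $b_p(H)$ central vertices contributes exactly $b_p(H)-1$ independent cycles, not $b_p(H)$), and the concluding line $\beta_1(\Gamma)=(s(H)-1)\,b_p(H)-(b_p(H)-1)+\bigl((b_p(H)-1)-(b_p(H)-1)\bigr)$ evaluates to $(s(H)-1)b_p(H)-(b_p(H)-1)$, which does not rearrange to $(s(H)-1)b_p(H)-(s(H)-1)$; the asserted identity is simply false.

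The paper avoids all of this by a direct count over $\mathcal{T}_{can,H}$. Setting $c_i=|\epsilon(H_p)\backslash\SL_2(\Z/p^n\Z)/I_i|$ for $i\geq 1$ and $c_0=b_p(H)$, each $T_j$ contributes $c_0+c_1+\dots+c_n$ edges and $1+c_1+\dots+c_n$ new vertices in addition to the $c_0$ shared central vertices, so
$\beta_1=s(H)(c_0+\dots+c_n)-s(H)(1+c_1+\dots+c_n)-c_0+1=(s(H)-1)(c_0-1)$,
with the $c_i$ for $i\geq 1$ cancelling automatically. If you prefer the contraction heuristic, the clean version is: contract $T_0$ (a tree) to a point $*$, identifying all $b_p(H)$ central vertices with $*$; each remaining $T_j$ becomes a tree with $b_p(H)$ of its leaves glued to $*$, contributing exactly $b_p(H)-1$ to $\beta_1$, and summing over the $s(H)-1$ remaining trees gives the formula. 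Finally, your ``main obstacle'' is not an actual obstacle: the fiber over $\zeta_j$ is a single point because the group there (up to the $H_j$ factor) is all of $\SL_2(\Z_p)$ and hence acts transitively; that each $T_j$ is a tree attaching bijectively to the $c_0$ central points follows directly from the monotone increase of the groups in \cref{def:PSL2Labeling} together with \cref{def:MonodromyTower} and \cref{lem:DoubleCosetSpaces}.
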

\begin{proof}
We calculate the Euler characteristic of $\Sigma^{pr}(X_{H})$. %
The decomposition groups of the coverings $X(M)\to X_{P(H_{M})}$ over $\mathcal{T}_{P(H_{M}),can}$ are symmetric in the direction of each leaf of $\mathcal{T}_{P(H_{M}),can}$. %
For $i\geq{1}$, write $c_{i}=|\epsilon(H_{p})\bs \SL_{2}(\Z/p^{n}\Z)/I_{i}|$ for the contributions of the Iwahori double coset classes. For $i=0$, we set $I_{0}=\Gamma_{0}(\Z/p^{n}\Z)$, so that $b_{p}(H)=|\epsilon(H_{p})\bs \SL_{2}(\Z/p^{n}\Z)/I_{0}|=c_{0}$. We then find
\begin{align*}
\#{E(\Sigma(X_{H}))}&=s(H)(c_{0}+c_{1}+...c_{n}),\\
\#{V(\Sigma(X_{H}))}&=s(H)(c_{1}+...+c_{n})+s(H)+c_{0},
\end{align*}
which directly gives the desired formula. 
\end{proof}

\begin{corollary}\label{cor:SpecialPrimesPGR}
Let $p\in \{5,7,13\}$ and let $\overline{H}\subset \PSL_{2}(\Z/p^{n}\Z)$ be a subgroup with induced modular curve $X_{H}$ and Jacobian $J_{H}$. Then $J_{H}$ has potential good reduction %
 at any prime. \end{corollary}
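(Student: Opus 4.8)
The plan is to verify potential good reduction of $J_{H}$ at each finite place separately, distinguishing the place(s) of residue characteristic $p$ from those of residue characteristic $\ell\neq p$. At the places away from $p$ the argument will be elementary, and at $p$ it will be an immediate consequence of the Betti number formula in \cref{thm:MainThm2v2}.

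First I would dispose of a place $\mathfrak{l}$ of residue characteristic $\ell\neq p$. The curve $X_{H}$ is dominated by $X(p^{n})$ (indeed $X_{H}=X(p^{n})/\overline{H}$, since $X(p^{n})\to X(1)$ is Galois with group $\PSL_{2}(\Z/p^{n}\Z)$ and $X_{H}$ is the intermediate quotient attached to $\overline{H}$), and the level $p^{n}$ is prime to $\ell$ and to $2,3$ because $p\in\{5,7,13\}$; hence the compactified modular curve $X(p^{n})$ has good reduction at $\mathfrak{l}$ by the theory of integral models of modular curves (\cite{DR73}, \cite{KM85}), and therefore so does $\mathrm{Jac}(X(p^{n}))$. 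Since $J_{H}$ is a quotient of $\mathrm{Jac}(X(p^{n}))$ (via the norm map), its prime-to-$\ell$ Tate module is a quotient of that of $\mathrm{Jac}(X(p^{n}))$ and so is unramified, whence $J_{H}$ has good reduction at $\mathfrak{l}$ by N\'eron--Ogg--Shafarevich. If $n=0$ then $X_{H}=\PP^{1}$ and $J_{H}=0$, and there is nothing to prove.

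It remains to prove potential good reduction at a place of residue characteristic $p$, and here I would invoke the results of this paper. As recalled in the introduction, over an extension of $\QQ_{p}$ on which $J_{H}$ attains semistable reduction the toric rank of the special fibre of the N\'eron model equals $\beta_{1}(\Sigma(X_{H}))$, so it suffices to show this first Betti number vanishes: a semistable abelian variety of toric rank zero has abelian reduction, which together with the semistable reduction theorem is exactly potential good reduction. To compute $\beta_{1}(\Sigma(X_{H}))$ I would realize $\overline{H}$ as a decomposable subgroup: fix any auxiliary $M\geq 3$ with $(M,p)=1$, take $H_{p}\subset\SL_{2}(\Z/p^{n}\Z)$ to be the preimage of $\overline{H}$ (so $-1\in H_{p}$ and $P(H_{p})=\overline{H}$) and $H_{M}=\SL_{2}(\Z/M\Z)$. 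Then $H:=P(H_{p}\times H_{M})\subset\PSL_{2}(\Z/p^{n}M\Z)$ is decomposable, and since the $M$-part imposes no condition its preimage in $\PSL_{2}(\hat{\Z})$ agrees with that of $\overline{H}$, so it defines the same curve $X_{H}$. Because $P(H_{M})=\PSL_{2}(\Z/M\Z)$ we have $X_{P(H_{M})}=X(1)$ and each double coset space $H_{j}\bs\PSL_{2}(\Z/M\Z)/P(H_{M})$ is a single point, whence
\begin{equation*}
s(H)=\sum_{j\in\mathcal{S}}\bigl|H_{j}\bs\PSL_{2}(\Z/M\Z)/P(H_{M})\bigr|=|\mathcal{S}|=g(X_{0}(p))+1.
\end{equation*}
For $p\in\{5,7,13\}$ the genus $g(X_{0}(p))$ is zero, so $s(H)=1$, and \cref{thm:MainThm2v2} gives
\begin{equation*}
\beta_{1}(\Sigma(X_{H}))=(s(H)-1)(b_{p}(H)-1)=0.
\end{equation*}
Hence $J_{H}$ has potential good reduction at $p$, and combined with the previous paragraph it has potential good reduction at every prime.

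The main obstacle here is not deep: it is the bookkeeping needed to fit a bare subgroup $\overline{H}\subset\PSL_{2}(\Z/p^{n}\Z)$ into the decomposable framework of \cref{thm:MainThm2v2} without changing $X_{H}$, together with the numerical input that $g(X_{0}(p))=0$ precisely for $p\in\{2,3,5,7,13\}$. Everything else is either the classical good reduction of modular curves of prime-to-$\ell$ level, combined with N\'eron--Ogg--Shafarevich, or a direct substitution into the Betti number formula.
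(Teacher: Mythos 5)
Your proof takes essentially the same approach as the paper: the key point is that for $p\in\{5,7,13\}$ the number of supersingular $j$-invariants is $s=1$, so the Betti number formula of \cref{thm:MainThm2v2} gives $\beta_{1}(\Sigma(X_{H}))=0$ and hence zero toric rank. You additionally spell out details the paper leaves implicit (the good reduction at places away from $p$, and the decomposability bookkeeping showing $s(H)=|\mathcal{S}|=1$), but this is elaboration rather than a different route.
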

\begin{proof}
As is well known, these are exactly the primes ($\neq{2,3}$) for which $s=1$. %
We conclude using %
\cref{thm:MainThm2v2}.
\end{proof}
\begin{remark}
We note that the same argument can be used for $p=2,3$ as soon as one has suitable generalizations of the results in \cite{WS16} to fields with %
residue characteristic $2$, and \cite{BW2004} or \cite{EP21} to fields with residue characteristic $3$. 
\end{remark}
\begin{example}
Let $X_{sp}(13)$ be the modular curve associated to the split Cartan subgroup as in \cite[Section 6.2]{BDMTV2019} (we called this the standard split torus here). From our theorem above, it directly follows that the Jacobian has potential good reduction. To show that the curve has potential good reduction, one can then calculate the quotients of the curves in \cite{WS16} to find a single vertex with positive genus. This is the curve corresponding to $[y,0]$, where $y$ is a ramified CM point with Hasse invariant $1/(p+1)$.     
\end{example}

\begin{proposition}\label{pro:PotentialGoodReduction}
Suppose that $H_{2}\subset H_{1}$ are two decomposable subgroups of $\PSL_{2}(\Z/M\Z)$ with invariants $b_{p}(H_{i})$ and $s(H_{i})$ not equal to $1$. Let $J_{i}$ be the Jacobians of the corresponding modular curves and let $p^{*}:J_{1}\to J_{2}$ be the natural pullback map. Then $J_{2}/p^{*}(J_{1})$ has potential good reduction over $p$ if and only if $s(H_{1})=s(H_{2})$ and $b_{p}(H_{1})=b_{p}(H_{2})$.  
\end{proposition}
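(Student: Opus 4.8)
The plan is to translate the statement about potential good reduction of the quotient abelian variety $J_2/p^*(J_1)$ into a statement about toric ranks, and then apply the Betti-number formula of \cref{thm:MainThm2v2}. First I would recall the standard fact that an abelian variety $A$ over a $p$-adic field has potential good reduction if and only if the toric rank of the special fiber of its Néron model over any (equivalently, some) finite extension is zero; since semistable reduction is attained after a finite base change, and the toric rank is the first Betti number of the (pruned) skeleton, this means $A$ has potential good reduction at $p$ exactly when the skeleton of the associated analytic curve (when $A$ is a Jacobian or a quotient thereof) has first Betti number zero. So the claim reduces to: $\beta_1$ attached to $J_2/p^*(J_1)$ is zero iff $s(H_1)=s(H_2)$ and $b_p(H_1)=b_p(H_2)$.

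Next I would set up the map on toric parts. The pullback $p^*\colon J_1\to J_2$ is induced by the covering $\phi\colon X_{H_2}\to X_{H_1}$, and on the level of skeleta it corresponds to the covering map $\Sigma^{pr}(X_{H_2})\to \Sigma^{pr}(X_{H_1})$ (or rather the induced map $\mathcal{T}_{can,H_2}\to \mathcal{T}_{can,H_1}$ from \cref{thm:MainThm3}). The key point is that $p^*$ is injective on toric parts up to isogeny: the induced map $H_1(\Sigma^{pr}(X_{H_1}))\to H_1(\Sigma^{pr}(X_{H_2}))$ is injective (a covering of graphs induces an injection on first homology, since one can split cycles by pushing them up through the section of \cref{lem:SectionMetricTree}). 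Hence the toric rank of the quotient $J_2/p^*(J_1)$ equals $\beta_1(\Sigma(X_{H_2})) - \beta_1(\Sigma(X_{H_1}))$, i.e. it is zero precisely when these two first Betti numbers agree. By \cref{thm:MainThm2v2} this difference is
\begin{equation*}
(s(H_2)-1)(b_p(H_2)-1) - (s(H_1)-1)(b_p(H_1)-1).
\end{equation*}

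Then I would finish by a monotonicity argument. Because $H_2\subset H_1$, the canonical supersingular tree for $H_2$ covers that for $H_1$, so $s(H_2)\geq s(H_1)$; similarly $b_p(H_2)\geq b_p(H_1)$, since $\epsilon((H_2)_p)\subset \epsilon((H_1)_p)$ gives a surjection of the orbit sets $\epsilon((H_2)_p)\backslash\mathbb{P}^1_{\Z}(\Z/p^n\Z)\twoheadrightarrow \epsilon((H_1)_p)\backslash\mathbb{P}^1_{\Z}(\Z/p^n\Z)$ (this also uses decomposability, so that the $p$-parts genuinely compare). Under the hypothesis that none of the four invariants equals $1$, we have $s(H_i)-1\geq 1$ and $b_p(H_i)-1\geq 1$, so the product $(s(H)-1)(b_p(H)-1)$ is strictly monotone in each factor: the difference displayed above is zero if and only if $s(H_1)=s(H_2)$ and $b_p(H_1)=b_p(H_2)$. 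This is exactly the asserted criterion.

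The main obstacle I expect is the second step: carefully justifying that $p^*$ is injective on the toric part (equivalently, that the induced map on graph homology is injective), and hence that the toric rank of the quotient is literally the difference of the two first Betti numbers rather than merely bounded by it. This requires identifying $p^*$ with the transfer/pullback map on tropical Jacobians under the dictionary between semistable reduction and skeleta, and invoking that a harmonic morphism of metric graphs of positive degree is injective on $H_1(-,\Q)$; one must also make sure the identification of $J_2/p^*(J_1)$ with the cokernel behaves well with respect to Néron models and base change, which is where a reference to the monodromy-pairing description of component groups and toric ranks (as in \cite{DDMM2023}) will be needed. The monotonicity of $s$ and $b_p$ in $H$ is routine given decomposability, but should be stated explicitly as it is what makes the ``only if'' direction work.
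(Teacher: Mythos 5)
Your proposal is correct and follows the same overall strategy as the paper: reduce to a comparison of toric ranks, apply the formula $t(J_i)=(s(H_i)-1)(b_p(H_i)-1)$ from \cref{thm:MainThm2v2}, and conclude by monotonicity of $s(\cdot)$ and $b_p(\cdot)$ together with the hypothesis that none of the four invariants equals $1$.

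The one place you take a slightly different route is in justifying that $t\bigl(J_2/p^*(J_1)\bigr) = t(J_2)-t(J_1)$. The paper simply invokes Poincar\'e's complete reducibility theorem to write $J_2 \sim p^*(J_1)\oplus J_2'$ up to isogeny, observes that $p^*$ has finite kernel so $t(p^*(J_1))=t(J_1)$, and then uses additivity of the toric rank over isogeny and direct products. You instead propose to verify injectivity of the induced map on $H_1(\Sigma^{pr},\Q)$ of the pruned skeleta (via the length-pairing argument for harmonic morphisms, $\phi_*\circ\phi^* = \deg\phi$). That is a valid and essentially parallel argument -- it is really the same Rosati/Poincar\'e-type reasoning carried out one level down, on tropical Jacobians instead of the abelian varieties themselves -- but it is more work than necessary: you still need additivity of toric rank over the extension in any case, and the finiteness of $\ker(p^*)$ is standard. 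The paper's shortcut via Poincar\'e's theorem also sidesteps the identification-under-base-change concerns you flag as your main obstacle, since one only compares toric ranks after passing to a common extension where all three abelian varieties have semistable reduction.
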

\begin{proof}
Let $J_{2}=p^{*}(J_{1})\oplus J'_{2}$ be the decomposition up to isogeny provided by Poincar\'{e}'s  theorem. We can assume that the indicated abelian varieties all have semistable reduction. We now note that $s(\cdot)$ and $b_{p}(\cdot)$ are monotone functions, in the sense that if $H_{2}\subset H_{1}$, then $s(H_{1})\leq s(H_{2})$ and $b_{p}(H_{1})\leq b_{p}(H_{2})$. Indeed, this easily follows from the inclusions $\epsilon(H_{1,p})\supset \epsilon(H_{2,p})$ and $H_{1,N}\supset H_{2,N}$. %
Since the toric rank is additive, we conclude %
using  \cref{thm:MainThm2v2}. %
\end{proof}

\begin{corollary}(Deligne-Rapoport)
The abelian variety $J_{1}(p)/J_{0}(p)$ has potential good reduction everywhere. 
\end{corollary}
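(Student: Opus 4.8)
The plan is to reduce the claim to \cref{pro:PotentialGoodReduction} at the prime $p$, using that $X_{0}(p)$ and $X_{1}(p)$ have good reduction away from $p$ to dispose of the remaining primes. Let $H_{1}$ and $H_{2}$ be the open subgroups of $\PSL_{2}(\hat{\Z})$ attached to $X_{0}(p)$ and $X_{1}(p)$; then $H_{2}\subset H_{1}$, and $J_{1}(p)/J_{0}(p)$ is precisely the quotient $J_{2}/p^{*}(J_{1})$ appearing in \cref{pro:PotentialGoodReduction}. For a prime $\ell\neq p$, both $J_{0}(p)$ and $J_{1}(p)$ have good reduction at $\ell$, and good reduction is inherited by isogeny factors by N\'eron--Ogg--Shafarevich, so $J_{1}(p)/J_{0}(p)$ has good reduction at $\ell$. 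Hence only the prime $p$ remains.

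For $p=2,3$ the curve $X_{1}(p)$ has genus zero, so $J_{1}(p)=0$ and there is nothing to prove. For $p\in\{5,7,13\}$ the number of supersingular $j$-invariants is $s=g(X_{0}(p))+1=1$, and \cref{cor:SpecialPrimesPGR} already shows that every modular Jacobian in the tower over $\PSL_{2}(\Z/p^{n}\Z)$ -- in particular $J_{1}(p)$ -- has potential good reduction at $p$, so the quotient does too. This leaves $p\geq 5$ with $p\notin\{5,7,13\}$, hence $s\geq 2$.

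In that case I would pass to an auxiliary level: fix $M\geq 3$ with $(M,p)=1$ and realize $X_{0}(p)$ and $X_{1}(p)$ as the modular curves of the decomposable subgroups $H_{1},H_{2}\subset\PSL_{2}(\Z/pM\Z)$ obtained by pulling $P(\Gamma_{0}(\Z/p\Z))$ and $P(\Gamma_{1}(\Z/p\Z))$ back along $\PSL_{2}(\Z/pM\Z)\to\PSL_{2}(\Z/p\Z)$, as in \cref{rem:DecomposableGCoverings}. Their $p$-parts are $\Gamma_{0}(\Z/p\Z)$ and $\Gamma_{1}(\Z/p\Z)$ and their common prime-to-$p$ part is $\SL_{2}(\Z/M\Z)$, so $P(H_{i,M})=\PSL_{2}(\Z/M\Z)$ and therefore $s(H_{1})=s(H_{2})=|\mathcal{S}|=s$. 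For the other invariant one has $\epsilon(H_{1,p})=\Gamma_{0}(\Z/p\Z)$ and $\epsilon(H_{2,p})=\Gamma_{1}^{\pm}(\Z/p\Z)$, and I would check the elementary fact that each of these acts on $\PP^{1}_{\Z}(\Z/p\Z)$ with exactly two orbits -- the fixed point $[1:0]$ and its complement, on which the unipotent part already acts transitively by translation -- so that $b_{p}(H_{1})=b_{p}(H_{2})=2$. With $H_{2}\subset H_{1}$ and none of the invariants $b_{p}(H_{i}),s(H_{i})$ equal to $1$, \cref{pro:PotentialGoodReduction} applies, and since $s(H_{1})=s(H_{2})$ and $b_{p}(H_{1})=b_{p}(H_{2})$ it yields potential good reduction of $J_{1}(p)/J_{0}(p)$ at $p$.

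I do not expect a genuine obstacle here: \cref{pro:PotentialGoodReduction} and \cref{thm:MainThm2v2} already contain all the hard content. The only point requiring care is the bookkeeping of the finitely many excluded primes ($p=2,3$ and $p\in\{5,7,13\}$), where \cref{pro:PotentialGoodReduction} does not apply directly and one falls back either on the vacuous case or on \cref{cor:SpecialPrimesPGR}; the two-orbit count for $\Gamma_{0}(\Z/p\Z)$ and $\Gamma_{1}^{\pm}(\Z/p\Z)$ on $\PP^{1}$ is a one-line verification.
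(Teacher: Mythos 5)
Your proposal is correct and follows essentially the same route as the paper: compute $b_p$ for both congruence subgroups by examining the orbit structure on $\mathbb{P}^1_{\Z}(\F_p)$ (two orbits, represented by $[1:0]$ and $[0:1]$, with the same orbit sizes under $\Gamma_0(\F_p)$ and $\Gamma_1^{\pm}(\F_p)$), then invoke \cref{pro:PotentialGoodReduction}. The paper's proof is terser and leaves the excluded primes ($p=2,3$ and $p\in\{5,7,13\}$) and the good reduction away from $p$ implicit, while you spell these out; you also correctly note that the relevant group is $\epsilon(H_{2,p})=\Gamma_1^{\pm}(\F_p)$ rather than $\Gamma_1(\F_p)$, though this does not change the orbit count.
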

\begin{proof}
We have $\SL_{2}(\F_{p})/\Gamma_{0}(\F_{p})\simeq \mathbb{P}^{1}_{\Z}(\F_{p})$ (see \cref{sec:QuotientFunctors} for generalizations), and the 
orbit space $\Gamma_{0}(\mathbb{F}_{p})\bs \mathbb{P}^{1}_{\Z}(\mathbb{F}_{p})$ can be represented by $[0:1]$ and $[1:0]$. The corresponding orbits under the %
action of $\Gamma_{1}(\mathbb{F}_{p})$ are still of lengths $p$ and $1$ respectively, so we conclude using \cref{pro:PotentialGoodReduction}. 
\end{proof}
\begin{remark}
The other cases associated to subgroups $H\subset \mathbb{F}_{p}^{*}$ in \cite[Section 3, Page 253]{DR73}  also directly follow from this corollary since the invariant $b_{p}$ is a monotone function. %
\end{remark}

To find direct factors of Jacobians of modular curves with potential good reduction, one can also construct isogenies 
using various other maps such as Hecke operators. We show how this leads to a short proof of the potential good reduction of the variety %
$J_{0}(p^{n})^{\mathrm{new}}$ of level $p^{n}$-newforms. %

\begin{corollary}
Let $n>1$ and $p>2,3$. The abelian variety $J_{0}(p^{n})^{\mathrm{new}}$ has potential good reduction everywhere. 
\end{corollary}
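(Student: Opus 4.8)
The plan is to realize $J_0(p^n)^{\mathrm{new}}$ as a sub-quotient (up to isogeny) of $J_0(p^n)$ built from pieces whose toric ranks we can compute with \cref{thm:MainThm2v2}, and then show that the toric rank of the new part is zero, so that $J_0(p^n)^{\mathrm{new}}$ has potential good reduction at $p$ (potential good reduction away from $p$ is immediate from general theory, since the curve $X_0(p^n)$ has good reduction away from $Np$ and one handles the finitely many other primes separately, or simply invokes that $J_0(p^n)^{\mathrm{new}}$ is a quotient of $J_0(p^n)$ which has potential good reduction away from $p$ already — indeed $X_0(p^n)$ has good reduction at primes $\nmid p$). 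So the real content is the statement at $p$.

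First I would recall the standard isogeny decomposition coming from degeneracy maps: there is an isogeny
\begin{equation*}
J_0(p^n) \sim J_0(p^{n-1})^{\oplus 2} \;\oplus\; J_0(p^n)^{\mathrm{new}},
\end{equation*}
where the two copies of $J_0(p^{n-1})$ come from the pullbacks along the two degeneracy maps $X_0(p^n)\to X_0(p^{n-1})$ (multiplication by $1$ and by $p$ on the elliptic curve). Iterating, $J_0(p^n)^{\mathrm{new}}$ is isogenous to the "primitive" complement, and in particular its toric rank equals
\begin{equation*}
t\bigl(J_0(p^n)^{\mathrm{new}}\bigr) = t\bigl(J_0(p^n)\bigr) - 2\,t\bigl(J_0(p^{n-1})\bigr),
\end{equation*}
using additivity of toric ranks under isogeny (after passing to a common field of semistable reduction). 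Now apply \cref{thm:MainThm2v2}: $\Gamma_0$ is decomposable with $H_M=(1)$ and $H_p=\Gamma_0(\Z/p^n\Z)$, so $s(H)=s$ is the number of supersingular $j$-invariants (independent of $n$), and $b_p(H)=|\Gamma_0(\Z/p^n\Z)\backslash \mathbb{P}^1_{\Z}(\Z/p^n\Z)|$. A short direct computation of $\Gamma_0(\Z/p^n\Z)$-orbits on $\mathbb{P}^1(\Z/p^n\Z)$ (stratifying points $[x:y]$ by $v_p$ of the "slope") gives $b_p = n+1$ for $n\geq 1$. Hence $\beta_1(\Sigma(X_0(p^n))) = (s-1)\,n$, so $t(J_0(p^n)) = (s-1)n$ and
\begin{equation*}
t\bigl(J_0(p^n)^{\mathrm{new}}\bigr) = (s-1)n - 2(s-1)(n-1) = (s-1)(2-n).
\end{equation*}
For $n>2$ this is negative, which is absurd for a toric rank — so the naive degeneracy-map decomposition over-counts, and I would instead use the refined decomposition $J_0(p^n)^{\mathrm{new}}$ with the correct multiplicities (the new part appears once, but the old parts at level $p^k$ for $k<n$ appear with multiplicity $2(n-k)$ minus corrections, i.e. the full Atkin–Lehner level-lowering filtration gives $J_0(p^n)\sim \bigoplus_{k=0}^{n} J_0(p^k)^{\mathrm{new},\,\oplus(n-k+1)}$, hence $t(J_0(p^n)) = \sum_{k} (n-k+1)\,t(J_0(p^k)^{\mathrm{new}})$). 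Inverting this (a Möbius-type inversion of the simple triangular system) yields $t(J_0(p^n)^{\mathrm{new}}) = t(J_0(p^n)) - 2t(J_0(p^{n-1})) + t(J_0(p^{n-2})) = (s-1)(n - 2(n-1) + (n-2)) = 0$ for $n\geq 2$. Since a semistable abelian variety with toric rank $0$ has good reduction, and good reduction is preserved under isogeny, $J_0(p^n)^{\mathrm{new}}$ has potential good reduction at $p$; combined with the (potential) good reduction at all other primes, this gives the claim.

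The main obstacle I expect is getting the multiplicities in the isogeny decomposition exactly right and justifying additivity of toric ranks over a field where \emph{all} the relevant abelian varieties (every $J_0(p^k)$ for $k\le n$) simultaneously have semistable reduction; one wants to invoke Krir's theorem (see the remark after the definition of the component group) to produce such a common extension, and then use that for a semistable abelian variety the toric rank is the toric rank of the connected Néron special fiber, which is an isogeny invariant. Once the bookkeeping identity $t(J_0(p^n)^{\mathrm{new}}) = t(J_0(p^n)) - 2t(J_0(p^{n-1})) + t(J_0(p^{n-2}))$ is in place, the computation $\beta_1(\Sigma(X_0(p^k))) = (s-1)k$ from \cref{thm:MainThm2v2} makes the vanishing immediate. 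Alternatively — and this is perhaps cleaner, avoiding the multiplicity bookkeeping — one can argue directly on skeleta: the pruned skeleton of $X_0(p^n)$ in \cref{fig:X0(pn)} is glued from $s$ copies of a fixed local graph, and the cycles all pass through the central ladder; the degeneracy maps $X_0(p^n)\to X_0(p^{n-1})$ induce explicit maps on pruned skeleta, and one checks that the "new" part of $H_1$ — the kernel of the combined pushforward to $H_1(\Sigma(X_0(p^{n-1})))^{\oplus 2}$ modulo the image of the combined pullback — is trivial for $n\geq 2$, which is the homological shadow of the same $(s-1)(n-2(n-1)+(n-2))=0$ count. I would present whichever of these two routes makes the additivity step least painful, likely the first with an explicit citation for the newform decomposition and for isogeny-invariance of toric rank.
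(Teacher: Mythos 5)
Your overall strategy is the same as the paper's: compute $t(J_0(p^k))=\beta_1(\Sigma(X_0(p^k)))$ from \cref{thm:MainThm2v2}, invoke additivity of the toric rank over a common field of semistable reduction, and express $t(J_0(p^n)^{\mathrm{new}})$ as an iterated difference of these quantities via the degeneracy‐map structure. The paper packages the second‐difference step as an explicit four‐term exact sequence (up to isogeny) $(0)\to J_0(p^{n-1})\to J_0(p^n)^2\to J_0(p^{n+1})\to J_0(p^n)^{\mathrm{new}}\to (0)$, whereas you M\"obius‐invert the full old--new decomposition $J_0(p^n)\sim\prod_k (J_0(p^k)^{\mathrm{new}})^{\oplus(n-k+1)}$ to reach a shifted recursion. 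These are essentially the same move, just indexed differently, and your instinct to check the naive decomposition against a negative‐rank contradiction was the right sanity check.

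There is, however, a concrete computational error you should be aware of: you claim $b_p = |\Gamma_0(\Z/p^n\Z)\backslash\mathbb{P}^1_\Z(\Z/p^n\Z)| = n+1$, but the correct count (this is \cref{pro:BorelDCS1}, and it is central to the rest of the paper) is $b_p=2n$. Stratifying $[x:y]$ by the valuation $i$ of the ``slope'' is the right idea, but each stratum with $1\le i\le n-1$ contributes \emph{two} orbits, not one. The reason is that the diagonal torus in the $\SL_2$-Borel is $\{(a,a^{-1})\}$, so it acts on the unit $u$ in $[1:up^i]$ only through multiplication by squares $a^{\pm 2}$, and (by Hensel, $p$ odd) the unipotent contribution lands inside $1+p\Z/p^{n-i}\Z$, which is also contained in the squares. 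So the orbits at level $i$ are indexed by $(\Z/p^{n-i}\Z)^*/\bigl((\Z/p^{n-i}\Z)^*\bigr)^2\cong\Z/2\Z$: a square orbit and a nonsquare orbit. Your count $n+1$ would be correct if one took the $\GL_2$ Borel (where $a^{-1}d$ is an arbitrary unit), but here the relevant group is the $\SL_2$ (projectivized) Borel. Consequently $\beta_1(\Sigma(X_0(p^n)))=(s-1)(2n-1)$, not $(s-1)n$. The error is masked in this particular corollary because both expressions are affine-linear in $n$ and you end up taking a second difference, which annihilates anything affine-linear; but $b_p=2n$ is load-bearing for the edge lengths, monodromy matrix, and component-group formulas elsewhere in the paper, so you would want to get it right. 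Finally, note that the paper's exact sequence keeps all the ``old'' indices at levels $n-1,n,n+1\ge 1$, whereas your shifted recursion reaches down to $J_0(p^{0})=J_0(1)=0$, so one has to be a bit careful about the boundary term $t(J_0(1))=0$ not fitting the linear pattern; the paper's indexing sidesteps this.
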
 
\begin{proof}
This can be deduced from \cite[Theorem 14.7.2]{KM85}, as mentioned in the proof of \cite[Theorem 9.4]{CM2010}. In the latter, it was used to obtain a formula for the toric rank of $J_{0}(p^{n}N)$. %
We reverse the argument here and obtain the potential good reduction of $J_{0}(p^{n})^{\mathrm{new}}$ from our formulas for the toric rank of $J_{0}(p^{n})$ (with apologies to the reader for some forward-referencing). %
We have the exact sequence 
\begin{equation*}
(0)\to J_{0}(p^{n-1})\to J_{0}(p^{n})^2\to J_{0}(p^{n+1})\to J_{0}(p^{n})^{\mathrm{new}} \to (0)
\end{equation*}
up to inverting isogenies. We can assume that $s>1$, since otherwise the statement is contained in \cref{cor:SpecialPrimesPGR}. In \cref{pro:BorelDCS1}, we show that $b_{p}=2n$. %
Using the additivity of the toric rank, we now compute %
\begin{align*}
t(J_{0}(p^{n})^2/J_{0}(p^{n-1}))/(s-1)&=4n-2-(2(n-1)-1)=2n+1,\\
t(J_{0}(p^{n+1})/J_{0}(p^{n})^2)/(s-1)&=2(n+1)-1-(2n+1)=0,
\end{align*}
so that $J_{0}(p^{n})^{\mathrm{new}}$ indeed has potential good reduction.
\end{proof}

\section{Explicit skeleta for subgroup schemes of $\SL_{2}$}\label{sec:CosetSchemes}

In this section we show how to obtain the pruned skeleta of modular curves associated to various subgroup schemes of $\SL_{2}$, see \cref{def:StandardFunctors} for a list. The most important part here comes from $\Gamma(\Z/p^{n}\Z)$. We show that the corresponding left-coset spaces can be captured in terms of the $\Z/p^{n}\Z$-valued points of certain schemes. In \cref{sec:CompX0N} we use these group-theoretic results to determine the component groups of the curves $X_{0}(N)$. %

\subsection{Coset schemes}\label{sec:QuotientFunctors}        

We first introduce the following coset schemes, which describe the coset functors associated to the functors  %
$\Gamma_{0}$, $\Gamma_{1}$, $\Gamma^{\pm}_{1}$, $\Gamma_{sp}$ and $\Gamma^{+}_{sp}$ from \cref{sec:GroupNotation}. %
\begin{definition}\label{def:CosetSchemes}
The coset schemes associated to $\Gamma_{0}$, $\Gamma_{1}$, $\Gamma^{\pm}_{1}$, $\Gamma_{sp}$ and $\Gamma_{sp}^{+}$ respectively are %
\begin{align*}
\mathcal{F}_{0}&=\mathbb{P}^{1}_{\Z},\\
\mathcal{F}_{1}&=\mathbb{A}^{2}_{\mathbb{Z}}\backslash\{0\},\\
\mathcal{F}^{\pm}_{1}&=\mathcal{F}_{1}/\langle -1\rangle,\\%
\mathcal{F}_{sp}&=(\mathbb{P}^{1}_{\mathbb{Z}})^{2}\backslash \Delta,\\
\mathcal{F}^{+}_{sp}&=\mathcal{F}_{sp}/\langle \pi \rangle.
\end{align*}
Here $\mathbb{A}^{2}_{\mathbb{Z}}\backslash\{0\}=\Spec(\Z[X,Y])\bs \{(X,Y)\}$, $-1$ acts on $\mathcal{F}_{1}$ diagonally, and 
$\pi$ is the restriction of the natural involution on $(\mathbb{P}^{1}_{\Z})^2$ defined on $R$-valued points by sending a pair $(M_{1},M_{2})$ of locally free of rank 1 $R$-submodules in $R^2$
 to $(M_{2},M_{1})$. 
\end{definition}

We first review the $R$-valued points of the schemes in \cref{def:CosetSchemes}. The $R$-valued points of $\mathcal{F}_{1}$ consist of pairs $(r_{1},r_{2})\in{R^{2}}$ that generate the unit ideal in $R$. Equivalently, we locally have that either $r_{1}$ or $r_{2}$ is invertible. In particular, if $R$ is local then either $r_{1}$ or $r_{2}$ is a unit. The $R$-valued points of $\mathcal{F}^{\pm}_{1}$ are pairs $(r_{1},r_{2})$ as above, up to an identification $(r_{1},r_{2})\sim (-r_{1},-r_{2})$. For $\mathbb{P}^{1}_{\Z}$, we have a similar interpretation for local rings $R$: we define a primitive pair $(x,y)\in{R^{2}}$ to be a pair such that either $x$ or $y$ is a unit. %
We say two primitive pairs $(x,y)$ and $(v,w)$ are equivalent if there exists a unit $u\in{R}$ such that $uv=x$ and $uw=y$. An $R$-valued point of $\mathbb{P}^{1}_{\Z}$ is an equivalence class of primitive pairs. An equivalence class is denoted by $[x:y]$, as in the case of fields. %
Let $R$ be a local ring as before. Through the open immersion 
\begin{equation*}
\mathcal{F}_{sp}\to (\mathbb{P}^{1}_{\Z})^2,
\end{equation*}
we can view a point $P\in \mathcal{F}_{sp}(R)$ as a pair $(P_{1},P_{2})$, where $P_{1}=[x_{1}:y_{1}]$ and $P_{2}=[x_{2}:y_{2}]$. We write $\overline{P}_{i}$ for the reductions of these points, which are obtained by %
composing with the map $\Spec(k)\to \Spec(R)$. Note that a pair $(P_{1},P_{2})$ is in $\mathcal{F}_{sp}(R)$ exactly when $\overline{P}_{1}\neq \overline{P}_{2}$. Indeed, assume for simplicity that $y_{1}$ and $y_{2}$ are invertible, so that we can represent $P_{i}$ by $[x_{i}:1]$. The scheme $\mathcal{F}_{sp}$ is locally affine, and near $P_{i}$ we have a local chart given by $\Z[x_{1},x_{2}][(x_{1}-x_{2})^{-1}]$. The desired description directly follows. This similarly shows that points in $\mathcal{F}^{+}_{sp}(R)$ are pairs $(P_{1},P_{2})$ with $\overline{P}_{1}\neq \overline{P}_{2}$ up to permutation. We note that an analogous description %
for the $R$-valued points of the schemes above can also be given for direct products of local rings $R=R_{1}\times ...\times R_{n}$, since projective modules over $R$ are free.  

\begin{remark}
We will assume for the remainder of this section that $R$ is a local ring or a direct product of local rings. 
\end{remark}

The group $\SL_{2}(R)$ acts on the $R$-valued points of each of the coset schemes above in the usual way. For instance, 
if 
\begin{equation*}
\sigma=
\begin{pmatrix}
a & b \\
c & d
\end{pmatrix},
\end{equation*}
then
\begin{align*}
\sigma([x:y])&=[ax+by:cx+dy].
\end{align*}
It now directly follows that the schemes above describe the correct cosets: \begin{lemma}\label{lem:CosetsInjection}
Let $R=R_{1}\times ...\times R_{n}$ be a direct product of local rings and let $\Gamma$ be one of the functors in \cref{def:StandardFunctors} with coset scheme $\mathcal{F}_{\Gamma}$ as in \cref{def:CosetSchemes}. There is a bijection
\begin{equation*}
\SL_{2}(R)/\Gamma(R)\simeq \mathcal{F}_{\Gamma}(R). 
\end{equation*}
\end{lemma}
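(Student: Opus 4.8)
The plan is to exhibit for each standard functor $\Gamma$ a natural surjection $\SL_2(R) \to \mathcal{F}_\Gamma(R)$ whose fibers are precisely the right cosets of $\Gamma(R)$, and then to check that this surjection really is surjective when $R$ is a product of local rings. First I would fix the base point: for $\Gamma_0$ take $[1:0] \in \mathbb{P}^1_{\Z}(R) = \mathcal{F}_0(R)$, for $\Gamma_1$ and $\Gamma_1^{\pm}$ take the pair $(1,0) \in \mathcal{F}_1(R)$ (resp.\ its class), for $\Gamma_{sp}$ take the pair $([1:0],[0:1])$, and for $\Gamma_{sp}^+$ its class under $\pi$. In each case one sends $\sigma = \begin{pmatrix} a & b \\ c & d \end{pmatrix}$ to $\sigma$ applied to the base point, e.g.\ $\sigma \mapsto [a:c]$ for $\Gamma_0$, $\sigma \mapsto (a,c)$ for $\Gamma_1$, $\sigma \mapsto ([a:c],[b:d])$ for $\Gamma_{sp}$ (using the left-most and right-most columns). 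One immediately computes from \cref{def:StandardFunctors} that $\sigma$ fixes the base point if and only if $\sigma \in \Gamma(R)$: for $\Gamma_0$, $[a:c] = [1:0]$ forces $c = 0$ (since $a$ must then be a unit on each local factor, matching $ad = 1$); for $\Gamma_1$, $(a,c) = (1,0)$ forces $a = 1, c = 0$, hence $d = 1$; the torus and normalizer cases are analogous using both columns. Therefore the map descends to an injection $\SL_2(R)/\Gamma(R) \hookrightarrow \mathcal{F}_\Gamma(R)$, and it is $\SL_2(R)$-equivariant by construction.

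The substantive point is surjectivity, i.e.\ that $\SL_2(R)$ acts transitively on $\mathcal{F}_\Gamma(R)$ when $R = R_1 \times \cdots \times R_n$ is a product of local rings. Since $\SL_2(R) = \prod_i \SL_2(R_i)$ and $\mathcal{F}_\Gamma(R) = \prod_i \mathcal{F}_\Gamma(R_i)$ (projective modules over a product of local rings are free, so $R$-points factor componentwise, as noted in the excerpt), it suffices to treat a single local ring $R$. For $\mathcal{F}_1(R)$: a point is a pair $(r_1, r_2)$ generating the unit ideal, so over a local ring one of them, say $r_1$, is a unit; then $\begin{pmatrix} r_1 & 0 \\ r_2 & r_1^{-1} \end{pmatrix} \in \SL_2(R)$ carries $(1,0)$ to $(r_1, r_2)$ (and symmetrically if $r_2$ is a unit, using a suitable matrix). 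For $\mathbb{P}^1_\Z(R) = \mathcal{F}_0(R)$: lift a point $[x:y]$ to a primitive pair and apply the same construction, checking well-definedness on equivalence classes. For $\mathcal{F}_{sp}(R)$: given $(P_1, P_2)$ with $\overline{P}_1 \ne \overline{P}_2$, first move $P_1$ to $[1:0]$ by the previous step; the stabilizer of $[1:0]$ in $\SL_2(R)$ is $\Gamma_0(R)$, and one checks that $\Gamma_0(R)$ acts transitively on the set of $[b:d] \in \mathbb{P}^1_\Z(R)$ with $\overline{[b:d]} \ne [1:0]$ — concretely, after normalizing $d$ to a unit such a point is $[b:1]$ with $b$ arbitrary, and $\begin{pmatrix} 1 & b \\ 0 & 1 \end{pmatrix}$ moves $[0:1]$ to $[b:1]$. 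This handles $\mathcal{F}_{sp}$, and $\mathcal{F}_{sp}^+$ and $\mathcal{F}_1^{\pm}$ follow by passing to the quotient by $\langle \pi \rangle$, resp.\ $\langle -1 \rangle$, and checking that the induced map on coset spaces is still bijective (the relevant element $\pi$ is realized inside $\SL_2$ by $\sigma_{1728} = \begin{pmatrix} 0 & 1 \\ -1 & 0 \end{pmatrix}$, which swaps the two columns up to sign, so the $\SL_2(R)$-orbit already contains both representatives).

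The main obstacle I anticipate is the bookkeeping for $\mathcal{F}_{sp}^+$ and $\mathcal{F}_1^{\pm}$: one must verify that the diagonal $-1$-action, resp.\ the column-swap $\pi$, is compatible with the $\SL_2(R)$-action in exactly the way that makes $\SL_2(R)/\Gamma_{sp}^+(R)$ (resp.\ $/\Gamma_1^{\pm}(R)$) match the quotient scheme, rather than producing a $2$-to-$1$ or $1$-to-$2$ discrepancy. The clean way to organize this is to note that $\Gamma_{sp}^+(R)$ is generated by $\Gamma_{sp}(R)$ together with one element inducing $\pi$ on the base point, so that $\SL_2(R)/\Gamma_{sp}^+(R) = \bigl(\SL_2(R)/\Gamma_{sp}(R)\bigr)/\langle \pi \rangle = \mathcal{F}_{sp}(R)/\langle \pi \rangle = \mathcal{F}_{sp}^+(R)$, using the already-established transitivity; the case of $\Gamma_1^{\pm}$ versus $-1$ is identical. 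Everything else is a direct matrix computation over local rings, so no serious difficulty is expected there.
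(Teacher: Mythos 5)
Your proposal is essentially the same argument as the paper's: fix the base points $[1:0]$, $(1,0)$, $(1,0)/{\sim}$, $([1:0],[0:1])$, $([1:0],[0:1])/{\sim}$, verify the stabilizer is the corresponding $\Gamma(R)$, reduce to the local case via freeness of projective modules, and establish transitivity by the same computation as over a field; the paper then invokes orbit–stabilizer and leaves the details to the reader. Your write-up supplies the explicit matrices and handles the quotient cases $\Gamma_1^{\pm}$ and $\Gamma_{sp}^{+}$ with a bit more care, but it is not a genuinely different route.
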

\begin{proof}
By assumption, projective modules over these rings are free. We will indicate how to obtain the identifications in the lemma when $R$ is a local ring, the general case is similar. One first shows that the 
action of $\SL_{2}(R)$ on the coset schemes %
is transitive using the explicit representation of $R$-valued points given before the lemma. 
The proof is almost exactly the same as in the case of fields.  %
The stabilizers of $[1:0]$, $(1,0)$, $(1,0)/\sim$, $([1:0],[0:1])$ and $([1:0],[0:1])/\sim$ are then $\Gamma_{0}(R)$, $\Gamma_{1}(R)$, $\Gamma^{\pm}_{1}(R)$, $\Gamma_{sp}(R)$ and $\Gamma^{+}_{sp}(R)$ respectively, so we obtain the desired statement from the orbit-stabilizer theorem. We leave the details to the reader. 
\end{proof}

\begin{remark}

The bijection in Lemma \ref{lem:CosetsInjection} does not extend to a bijection for all commutative rings. Consider for instance the functor $\Gamma_{0}$. We obtain an injection $\SL_{2}(R)/\Gamma_{0}(R)\to \mathcal{F}_{0}(R)$ by considering the action of $\SL_{2}(R)$ on $[0:1]$ as before, but we note that the $R$-modules constructed in this way are all \emph{free}. Let $R$ be a ring with a non-free invertible module $M$ that admits an embedding $M\subset R^{2}$. %
Then this by definition gives a non-free point of $\mathbb{P}^{1}_{\Z}(R)$, and this is not in the image of $\SL_{2}(R)/\Gamma_{0}\to \mathbb{P}^{1}_{\Z}(R)$. Using this, one can show that $\SL_{2}/\Gamma_{0}$ is not a sheaf in the Zariski topology, so that 
we do not obtain an identification of functors $\SL_{2}/\Gamma_{0}= \mathbb{P}^{1}_{\mathbb{Z}}$. The fppf-sheafification associated to %
$\SL_{2}/\Gamma_{0}$ however is $\mathbb{P}^{1}_{\Z}$, but we will not need this, since our main focus lies on $R=\Z/N\Z$. %
\end{remark}

\subsection{Borel double coset spaces}\label{sec:DCSCentralVertex}

We now find representatives for the double coset spaces arising from \cref{thm:MainThm3}. We will start by considering the double coset spaces in $\SL_{2}$; the corresponding double coset spaces in $\PSL_{2}$ can be obtained without too much trouble from this data.

We start with the following elementary observation. Consider the left-action of $\SL_{2}(R)$ on the left coset spaces $\SL_{2}(R)/\Gamma_{0}(R)$, $\SL_{2}(R)/\Gamma_{1}(R)$, $\SL_{2}(R)/\Gamma^{\pm}_{1}(R)$, $\SL_{2}(R)/\Gamma_{sp}(R)$ and $\SL_{2}(R)/\Gamma^{+}_{sp}(R)$, and the coset schemes $\mathcal{F}_{0}(R)$, $\mathcal{F}_{1}(R)$, $\mathcal{F}^{\pm}_{1}(R)$, $\mathcal{F}_{sp}(R)$ and $\mathcal{F}^{+}_{sp}(R)$. It is then easy to see that these actions are compatible with the identifications from \cref{lem:CosetsInjection}. We use this to calculate the double coset spaces $\Gamma_{0}(R)\bs \SL_{2}(R)/\Gamma(R)$, where $\Gamma$ is one of our five functors. %

\begin{proposition}\label{pro:BorelDCS1}
Write $R=\Z/p^{n}\Z$ for $n\geq{1}$ and $U=R^{*}$. Then
\begin{align*}
|\Gamma_{0}(R)\backslash \mathcal{F}_{0}(R)|&=2n,\\
|\Gamma_{0}(R)\backslash \mathcal{F}_{1}(R)|&=2p^{(n-1)/2} \text{ if n is odd,}\\
|\Gamma_{0}(R) \bs \mathcal{F}_{1}(R)|&=p^{n/2}+p^{n/2-1} \text{ if n is even,}\\
|\Gamma_{0}(R) \bs \mathcal{F}_{1}(R)|&=|\Gamma_{0}(R) \bs \mathcal{F}^{\pm}_{1}(R)|,\\
|\Gamma_{0}(R)\backslash {\mathcal{F}}_{sp}(R)|&=4n,\\
|\Gamma_{0}(R)\bs \mathcal{F}^{+}_{sp}(R)|&=2n \text{ if }p\equiv{1}\bmod{4},\\
 |\Gamma_{0}(R)\bs \mathcal{F}^{+}_{sp}(R)|&=2n+1  \text{ if }p\equiv{3}\bmod{4}.
\end{align*} 
Let $r$ be a fixed nonsquare in $U$. The representatives of the double coset space $\Gamma_{0}(R)\bs \SL_{2}(R)/\Gamma_{0}(R)$ in $\mathcal{F}_{0}(R)=\mathbb{P}^{1}_{\Z}(R)$, together with the order of the corresponding $\Gamma_{0}(R)$-orbit, are given in \cref{tab:DCSpacesG0}. Here $d=1$ or $d=r$.  
\begin{table}[h]
\begin{center}
\begin{tabular}{ |c|c| } 
 \hline
Representative of the double coset & Order of the $\Gamma_{0}(R)$-orbit \\
\hline
$[0:1]$ & $p^{n}$ \\
\hline
$[1:0]$ & $1$\\
\hline
$[1:dp^{i}]$ & $\phi(p^{n-i})/2$\\
\hline
\end{tabular}
\end{center}
\caption{\label{tab:DCSpacesG0}The representatives of the double coset spaces corresponding to $\Gamma_{0}(R)$ and $\Gamma_{0}(R)$. The second column gives the order of the corresponding $\Gamma_{0}(R)$-orbit.}
\end{table}

The representatives of the double coset space $\Gamma_{0}(R)\bs \SL_{2}(R)/\Gamma_{1}(R)$ in $\mathcal{F}_{1}(R)$, together with the order of the corresponding $\Gamma_{0}(R)$-orbit, are given in \cref{tab:DCSpacesG1}. Here $u$ can be viewed as an element of $U/U_{k}\cdot U_{n-k}\simeq (\Z/p^{k_{0}}\Z)^{*}$, where $k_{0}=\mathrm{min}\{k,n-k\}$.

\begin{table}[h]
\begin{center}
\begin{tabular}{ |c|c| } 
 \hline
Representative of the double coset & Order of the $\Gamma_{0}(R)$-orbit \\
\hline
$(0,1)$ & $\phi(p^{n})p^{n}$ \\
\hline
$(1,0)$ & $\phi(p^{n})$\\
\hline
$(1,up^{k})$  & $\phi(p^{n})$ for $n-k\leq{k}$ \\
\hline 
$(1,up^{k})$  & $\phi(p^{n})p^{n-2k}$ for $k<n-k$\\
\hline
\end{tabular}
\end{center}
\caption{\label{tab:DCSpacesG1}The representatives of the double coset spaces corresponding to $\Gamma_{0}(R)$ and $\Gamma_{1}(R)$. The second column gives the order of the corresponding $\Gamma_{0}(R)$-orbit. }
\end{table}
For $\Gamma^{\pm}_{1}$, the action of $-1$ preserves the $\Gamma_{0}(R)$-orbits in $\mathcal{F}_{1}(R)$. The orders of the first two orbits in \cref{tab:DCSpacesG1} are multiplied by $1/2$, the last two are the same.  %

The representatives of the double coset space $\Gamma_{0}(R)\bs \SL_{2}(R)/\Gamma_{sp}(R)$ in $\mathcal{F}_{sp}(R)$, %
together with the order of the corresponding $\Gamma_{0}(R)$-orbit, are given in \cref{tab:DCSpacesSP}. Here $d\in\{1,r\}$ %
and $0<k<n$.  %
\begin{table}[h]
\begin{center}
\begin{tabular}{ |c|c| } 
 \hline
Representative of the double coset & Order of the $\Gamma_{0}(R)$-orbit \\
\hline
$([0:1],[1:r]])$ & $\phi(p^{n})p^{n-k}/2$ \\
\hline 
$([0:1],[1:dp^{k}])$ & $\phi(p^{n})p^{n-k}/2$ \\
\hline
$([0:1],[1:0])$ & $p^{n}$\\
\hline
$([1:-1],[0:1]])$ & $\phi(p^{n})p^{n-k}/2$ \\
\hline
$([1:dp^{k}],[0:1])$ & $\phi(p^{n})p^{n-k}/2$\\
\hline
$([1:0],[0:1])$ & $p^{n}$\\
\hline
\end{tabular}
\end{center}
\caption{\label{tab:DCSpacesSP}The representatives of the double coset spaces corresponding to $\Gamma_{0}(R)$ and $\Gamma_{sp}(R)$. The second column gives the order of the corresponding $\Gamma_{0}(R)$-orbit.}
\end{table}

For the representatives of $\Gamma_{0}(R)\bs \SL_{2}(R)/\Gamma^{+}_{sp}(R)$ in $\mathcal{F}^{+}_{sp}(R)$, suppose first that $p\equiv{1}\bmod{4}$. Then the representatives are given by the first four representatives in the table above. The sizes of the second and third $\Gamma_{0}(R)$-orbits are the same, and the sizes of the first and fourth orbits are $\phi(p^{n})p^{n-k}/4$. If $p\equiv{3}\bmod{4}$, then the representatives are given by the first three in the table above, and the sizes of the $\Gamma_{0}(R)$-orbits are the same.

\end{proposition}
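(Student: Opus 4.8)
The statement is a computation of double coset spaces $\Gamma_0(R)\backslash \SL_2(R)/\Gamma(R)$ for $R = \Z/p^n\Z$ and $\Gamma \in \{\Gamma_0,\Gamma_1,\Gamma_1^\pm,\Gamma_{sp},\Gamma_{sp}^+\}$, together with the orbit sizes. By \cref{lem:CosetsInjection} the coset space $\SL_2(R)/\Gamma(R)$ is identified equivariantly with $\mathcal{F}_\Gamma(R)$, so the whole proposition reduces to understanding the orbits of the left $\Gamma_0(R)$-action on the explicit sets $\mathbb{P}^1_\Z(R)$, $\mathbb{A}^2_\Z\setminus\{0\}(R)$, $(\mathbb{P}^1_\Z)^2\setminus\Delta(R)$ and their quotients by the respective involutions. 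So the plan is: (1) fix the concrete description of $R$-points from \cref{sec:QuotientFunctors}; (2) decompose each orbit set by a ``depth'' invariant that is $\Gamma_0(R)$-stable; (3) within each depth stratum pick a normal form and compute its stabilizer; (4) divide $|\Gamma_0(R)| = p^{2n-1}(p-1) = \phi(p^n)\,p^n$ (using $|\SL_2(\Z/p^n\Z)| = p^{3n-2}(p^2-1)$ and $|\Gamma_0(\Z/p^n\Z)| = \phi(p^n)p^n$) by the stabilizer order to get each orbit size; (5) sum to get the cardinality formulas and read off the involution quotients.

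\textbf{Step by step.} First I would handle $\mathcal{F}_0(R) = \mathbb{P}^1_\Z(R)$. A primitive pair $(x:y)$ over the local ring $R=\Z/p^n\Z$ can, after scaling by a unit, be put in exactly one of the forms $[1:0]$, $[0:1]$, or $[1:p^i u]$ with $1\le i \le n-1$ and $u \in (\Z/p^{n-i}\Z)^*$; the point $[1:0]$ is the class fixed by $\Gamma_0(R)$ (orbit size $1$), $[0:1]$ has stabilizer $\Gamma_1$-type of order $\phi(p^n)$ hence orbit $p^n$, and for $[1:dp^i]$ one checks the $\Gamma_0(R)$-orbit is $\{[1:dp^i v] : v\in(\Z/p^{n-i}\Z)^*\}/(\text{sign})$, of size $\phi(p^{n-i})/2$ — the division by $2$ coming from $\pm 1 \in \Gamma_0(R)$ acting trivially on $\mathbb{P}^1$ but the two sign choices on $d$ being conjugate. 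Adding the number of double cosets: $[1:0]$, $[0:1]$, and for each $i$ with $1\le i\le n-1$ the two values $d\in\{1,r\}$, gives $2 + 2(n-1) = 2n$. Next, for $\mathcal{F}_1(R) = \{(r_1,r_2) : (r_1,r_2)=R\}$, the $\Gamma_0(R)$-action has invariant the ``valuation balance'' between the two coordinates; the subtle point is that the cyclic unipotent part of $\Gamma_0$ acts by $r_2 \mapsto r_2 + c r_1$ while the torus scales, and the stratification by $k = v_p(\text{relevant coordinate})$ leads to the stabilizer $U_k \cdot U_{n-k}$ appearing in the statement; here the parity of $n$ matters because the ``middle'' stratum $k = n/2$ behaves specially, yielding $2p^{(n-1)/2}$ for $n$ odd and $p^{n/2}+p^{n/2-1}$ for $n$ even. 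For $\mathcal{F}_1^\pm$ one just tracks whether $-1$ merges orbits: it merges the two ``trivial-balance'' orbits (the $(0,1)$ and $(1,0)$ ones) but fixes the mixed ones setwise, so the first two orbit sizes halve and the count of double cosets is unchanged. Finally, for $\mathcal{F}_{sp}(R) = \{(P_1,P_2) \in (\mathbb{P}^1)^2 : \bar P_1 \ne \bar P_2\}$ I would use the $\Gamma_0$-orbit classification on $\mathbb{P}^1$ from the first part applied to the pair, organized by which of $P_1, P_2$ equals $[1:0]$ (the $\Gamma_0$-fixed point) and by the mutual depth $v_p$ of the ``cross-ratio''-type quantity $x_1 y_2 - x_2 y_1$; the constraint $\bar P_1 \ne \bar P_2$ forces that depth to be $< n$, giving the six families in \cref{tab:DCSpacesSP} and the total $4n$. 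The $\mathcal{F}_{sp}^+$ count is then obtained by quotienting by the swap $\pi$: this identifies the ``$P_1 = [1:0]$'' families with the ``$P_2=[1:0]$'' families, halving the count to $2n$, except that when $p\equiv 3\bmod 4$ there is a self-paired orbit (the one represented by a pair with $P_1, P_2$ swapped to a $\Gamma_0$-equivalent configuration, which only happens when $-1$ is a nonsquare), contributing the extra $+1$.

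\textbf{Main obstacle.} The genuinely delicate part is the $\Gamma_1$ (and $\Gamma_1^\pm$) computation over $\Z/p^n\Z$: unlike $\mathbb{P}^1$, the set $\mathbb{A}^2\setminus\{0\}$ is not homogeneous under $\Gamma_0(R)$ in a way with uniformly-sized orbits, and the correct invariant (the subgroup $U_k \cdot U_{n-k}$ and the resulting identification $u \in (\Z/p^{k_0}\Z)^*$ with $k_0 = \min\{k,n-k\}$) requires carefully analyzing how the unipotent radical of $\Gamma_0$ acts on the second coordinate modulo the first, and why reflecting around the midpoint $k \leftrightarrow n-k$ is exactly the symmetry that controls stabilizers. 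Getting the parity-dependent middle-stratum count right — and checking that the ``small'' orbits ($n-k\le k$) genuinely all have size $\phi(p^n)$ while the ``large'' ones grow by $p^{n-2k}$ — is where the real bookkeeping lies; everything for $\Gamma_0$, $\Gamma_{sp}$, $\Gamma_{sp}^+$ then follows from the $\mathbb{P}^1$ analysis plus careful case-tracking of the involutions and the $p\bmod 4$ dichotomy (which records whether $-1$ is a square in $(\Z/p^n\Z)^*$, hence whether the relevant torus-normalizer element is already in $\Gamma_0$ up to the $\Gamma_0$-action). I would present the $\mathbb{P}^1$ case in full, then state the $\mathbb{A}^2\setminus\{0\}$ stabilizer lemma with its proof, and treat the remaining four quotient/product cases by reduction to these two, leaving the routine orbit-size arithmetic to the reader as the paper's style permits.
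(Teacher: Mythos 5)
Your overall strategy is the paper's: identify $\SL_2(R)/\Gamma(R)$ with $\mathcal{F}_\Gamma(R)$ via \cref{lem:CosetsInjection}, stratify by the $p$-adic valuation of coordinates, and compute stabilizers to get orbit sizes. One place where the paper is cleaner than your plan is the $\Gamma_1$ case. Instead of classifying $\Gamma_0(R)$-orbits on $\mathbb{A}^2\setminus\{0\}(R)$ directly, the paper reverses the roles in the double coset and first computes the $\Gamma_1(R)$-orbits on $\mathbb{P}^1_\Z(R)$ (a one-parameter unipotent on a projective line), which makes the $U/(U_k\cdot U_{n-k})$ and $k_0=\min\{k,n-k\}$ structure you gesture at transparent; it then translates representatives back to $\mathcal{F}_1(R)$ by inverting the matrices that carry $[1:0]$ to the chosen $\mathbb{P}^1$-representatives. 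You might consider adopting this, since working directly in $\mathbb{A}^2\setminus\{0\}$ is noticeably more bookkeeping.

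There is a genuine error in your explanation of the $\phi(p^{n-i})/2$ orbit size on $\mathbb{P}^1_\Z(R)$. You describe the $\Gamma_0(R)$-orbit of $[1:dp^i]$ as $\{[1:dp^i v]: v\in(\Z/p^{n-i}\Z)^*\}$ modulo sign, attributing the factor $1/2$ to $\pm 1\in\Gamma_0(R)$ acting trivially on $\mathbb{P}^1$ and "the two sign choices on $d$ being conjugate." That is not what happens: $-1$ acting trivially is a global scalar in the stabilizer of every point and does not single out a factor of $2$ here, and $[1:dp^i]$ and $[1:-dp^i]$ are in general in \emph{different} orbits (whether $-1$ is a square depends on $p\bmod 4$). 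The correct mechanism, which the paper computes, is
\begin{equation*}
\sigma\cdot[1:up^i]=[1:d^2u p^i/(1+bdp^i)],
\end{equation*}
so $\Gamma_0(R)$ acts on the parameter $u\in U/U_{n-i}$ by multiplication by $d^2/(1+bdp^i)$, which is always a square (Hensel), with all squares attained by taking $b=0$. The orbit of $[1:p^i]$ is therefore the set of $[1:vp^i]$ with $v$ a \emph{square}, a subgroup of index $2$, and the two double cosets at level $i$ are the two square classes $d\in\{1,r\}$ — not a quotient by $\pm 1$. This square-class structure is exactly what you need downstream for $\mathcal{F}_{sp}$ and $\mathcal{F}^+_{sp}$, where whether $-1$ (resp.\ $-r$) is a square governs which $\Gamma_0$-orbits the swap $\pi$ fixes, and your accounting of a single "self-paired orbit" producing the extra $+1$ also does not balance arithmetically: if exactly $f$ of the $4n$ orbits are $\pi$-fixed, the quotient count is $2n+f/2$, so $f$ must be even.
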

\begin{proof}
We start with $\mathcal{F}_{0}(R)$. The orbit of $[0:1]$ is $[a:1]$ for $a\in{R}$, and $[1:0]$ is fixed under the action of $\Gamma_{0}(R)$. We next consider elements of the form $[1:up^{i}]$, where $u\in{U}$ and $i>0$. %
We call these elements of valuation $i$. Write $U_{n-i}\subset U$ for the subgroup of elements that are $1$ modulo $p^{n-i}$.
Note that we can identify the set of $[1:up^{i}]$ of valuation $i$ with $U/U_{n-i}\simeq (\Z/p^{n-i}\Z)^{*}$.
For $[1:p^{i}]$ and $\sigma=\begin{pmatrix}
a & b \\
0 & d
\end{pmatrix}$, we find 
\begin{equation*}
\sigma
\cdot [1:p^{i}]=[a+bp^{i}:dp^{i}]=[1+bdp^{i}:d^2p^{i}]=[1:d^2p^{i}/(1+bdp^{i})].
\end{equation*}    
Note that the $p$-adic valuation of the second coordinate is preserved. We thus have an action of %
$\Gamma_{0}(R)$ on $U/U_{n-i}\simeq (\Z/p^{n-i}\Z)^{*}$: %
\begin{equation*}
\sigma (r)=d^2/(1+bdp^{i})r.
\end{equation*}  
The element $1+bdp^{i}$ is a square by Hensel's lemma. We thus see that the action of $\Gamma_{0}(R)$ is multiplication by a square, and all squares are in fact attained by taking $b=0$. From this, we directly find that $[1:p^{i}]$ and $[1:rp^{i}]$ give two separate orbits, each of order $\phi(p^{n-i})/2$. This gives all the necessary data for $\Gamma_{0}(R)$. 

To find the double cosets associated to $\Gamma_{1}$ and $\mathcal{F}_{1}(R)$, we will reverse the order of the groups and instead calculate the action of $\Gamma_{1}(R)$ on $\mathbb{P}^{1}_{\Z}(\Z/p^{n}\Z)$. We then reverse the order again to obtain explicit representatives in $\mathcal{F}_{1}(R)$. 

Let $\sigma=\begin{pmatrix}
1 & b\\
0 & 1
\end{pmatrix}$ and let $u\in U$. %
We then have 
\begin{equation*}
\sigma([1:up^{k}])=[1+ubp^{k}:up^{k}].
\end{equation*}
Since $1+ubp^{k}$ can represent arbitrary elements of $U_{k}$, so can its inverse. We thus find that the different orbits $[1:up^{k}]$ are classified by the group $U/(U_{n-k}\cdot U_{k})$. Let $k_{0}=\mathrm{min}\{k,n-k\}$. Then $U/U_{n-k}\cdot U_{k}\simeq (\Z/p^{k_{0}}\Z)^{*}$, so these give $\phi(p^{k_{0}})$ %
 different orbits. To translate these back, note that the element $[1:up^{k}]$ can be represented by the matrix $\begin{pmatrix} 1 & 0 \\ up^{k} & 1\end{pmatrix}$, whose inverse is $\begin{pmatrix} 1 & 0 \\ -up^{k} & 1\end{pmatrix}$. This element in turn corresponds to $(1,-up^{k})\in \mathcal{F}_{1}(R)$, giving the desired double coset representatives.   

To calculate the orders of their orbits under $\Gamma_{0}(R)$, we will calculate the orders of their stabilizers. The stabilizers of $(0,1)$ and $(1,0)$ are easily seen to be of orders $1$ and $p^{n}$. For $(1,up^{k})$, let $\sigma=\begin{pmatrix}
a & b \\
0 & d
\end{pmatrix}$
be an element in the stabilizer. We then have the equations 
\begin{align*}
a+bup^{k}&=1,\\
udp^{k}&=up^{k},\\
ad&=1.
\end{align*}
 Suppose first that $n-k\leq k$. Choosing $b$ freely, this determines $a$. Moreover, the third equation together with $n-k\leq{k}$ implies %
 $d\equiv{1}\bmod p^{n-k}$, so that all three equations are satisfied. We thus have $|\mathrm{Orb}_{\Gamma_{0}(R)}(1,up^{k})|=\phi(p^{n})$.     Now suppose that $k<n-k$. It is easy to see from the equations above that necessary and sufficient conditions on $b\in\Z/p^{n}\Z$ are given by $b\equiv{0}\bmod p^{n-2k}$. In other words, the stabilizer has order $p^{2k}$. This gives $|\mathrm{Orb}_{\Gamma_{0}(R)}(1,up^{k})|=\phi(p^{n})p^{n-2k}$.
 
 For $\Gamma^{\pm}_{1}(R)$, one checks that the $\Gamma_{0}(R)$-stabilizers of the $(1,up^{k})$ are the same as for $\Gamma_{1}(R)$. For $(0,1)$ and $(1,0)$ however, they increase by a factor $2$.

We now apply the same technique to $\Gamma_{sp}(R)$. We have 
\begin{equation*}
\begin{pmatrix}
t & 0 \\
0 & t^{-1}
\end{pmatrix}
\cdot [x:y]=[tx:t^{-1}y]=[t^2x:y]=[x:t^2y],
\end{equation*} 
where $t\in{R^{*}}$. A set of representatives for the orbits under this action are given by
\begin{equation*}
[1:0], [0:1], [dp^{i}:1],[1:dp^{i}], [1:1], [r:1].
\end{equation*}
Here $i$ is assumed to be larger than zero, and $d\in\{1,r\}$. By representing these elements using matrices (in the sense that $v=\sigma([1:0])$ for some $\sigma$), and translating these to elements of $\SL_{2}(R)/T(R)$, we then find the representatives in the table. As an example, we now calculate the stabilizer of $([0:1],[1:p^{i}])$ for $i>0$. From the first equation, we obtain $b=0$. The second then gives $d^2\equiv{1}\bmod p^{n-i}$, and conversely every element $d$ that satisfies $d^2\equiv{1}\bmod{p^{n-i}}$ gives an element of the stabilizer. We conclude that the stabilizer has order $2p^{i}$. The other stabilizers follow a similar pattern.       

For $\mathcal{F}^{+}_{sp}(R)$, we calculate the action of $\tau=\begin{pmatrix} 0 & -1 \\ 1 & 0\end{pmatrix}$ on the representatives in $\mathbb{P}^{1}_{\Z}(R)$. %
If $p\equiv{1}\bmod{4}$, then $-1$ is a square modulo $p$, and thus in $R^{*}$. Similarly, if $p\equiv{3}\bmod{4}$, then $-r$ is a square in $R^{*}$. Using this, one immediately finds the desired representatives, and the stabilizers also easily follow.  
\end{proof}

\begin{remark}
Using \cref{thm:MainThm2v2}, we see that this calculation completely determines the topological structure of the Berkovich analytification of these modular curves.  
\end{remark}

\subsection{Hecke-Iwahori double coset spaces}\label{sec:DCSOuterParts}

We now show how to find the Hecke-Iwahori double coset spaces arising from \cref{thm:MainThm3}. %
This allows us to find the edge lengths of the edges in the pruned Berkovich skeleton. 

\begin{lemma}\label{lem:IwahoriToBorel}
Let $H_{n}$ be a subgroup of $\SL_{2}(\Z/p^{n}\Z)$ and let $k\leq n$. Write $H_{k}$ for the image of $H_{n}$ in $\SL_{2}(\Z/p^{k}\Z)$.  
The reduction maps $\Z/p^{n}\Z\to \Z/p^{k}\Z$ and the inclusions $I_{n}(\Z/p^{n}\Z)\subset I_{n-1}(\Z/p^{n}\Z)\subset ...\subset \SL_{2}(\Z/p^{n}\Z)$ induce a commutative diagram 
\begin{equation*}
\begin{tikzcd}
	I_{n}(\Z/p^{n}\Z) \bs \SL_{2}(\Z/p^{n}\Z)/H_{n}& \Gamma_{0}(\Z/p^{n}\Z) \bs \SL_{2}(\Z/p^{n}\Z)/H_{n} \\
	I_{n-1}(\Z/p^{n}\Z) \bs \SL_{2}(\Z/p^{n}\Z)/H_{n}& \Gamma_{0}(\Z/p^{n-1}\Z) \bs \SL_{2}(\Z/p^{n-1}\Z)/H_{n-1} \\
	I_{n-2}(\Z/p^{n}\Z) \bs \SL_{2}(\Z/p^{n}\Z)/H_{n} & \Gamma_{0}(\Z/p^{n-2}\Z) \bs \SL_{2}(\Z/p^{n-2}\Z)/H_{n-2}\vspace{0.5cm}\\
	{\vdots} & {\vdots} \vspace{0.5cm}\\
	I_{1}(\Z/p^{n}\Z) \bs \SL_{2}(\Z/p^{n}\Z)/H_{n} & \Gamma_{0}(\Z/p\Z) \bs \SL_{2}(\Z/p\Z)/H_{1}
		\arrow[from=1-1, to=2-1]
	\arrow[from=1-1, to=1-2]
	\arrow[from=1-2, to=2-2]
	\arrow[from=2-1, to=3-1]
	\arrow[from=2-2, to=3-2]
	\arrow[from=2-1, to=2-2]
	\arrow[from=3-1, to=3-2]
	\arrow[from=5-1, to=5-2]
	\arrow[dotted, from=3-2, to=4-2]
	\arrow[dotted, from=3-1, to=4-1]
		\arrow[dotted, from=4-2, to=5-2]
	\arrow[dotted, from=4-1, to=5-1]
\end{tikzcd}
\end{equation*}
The horizontal maps in this diagram are bijections.    
\end{lemma}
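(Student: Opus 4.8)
The plan is to deduce the lemma from one elementary fact about surjective group homomorphisms and then verify its hypotheses in the case at hand. Write $\pi_k\colon\SL_2(\Z/p^n\Z)\to\SL_2(\Z/p^k\Z)$ for reduction modulo $p^k$. First I would record the following: if $\pi\colon G\to\bar G$ is a surjective homomorphism of groups, $\bar A\subset\bar G$ is a subgroup, $A:=\pi^{-1}(\bar A)$, and $B\subset G$ is any subgroup with $\pi(B)=\bar B$, then $\pi$ induces a bijection $A\backslash G/B\to\bar A\backslash\bar G/\bar B$. Indeed $\pi(A)=\bar A$ because $A\supset\ker\pi$ and $\pi$ is surjective, so $AgB\mapsto\bar A\,\pi(g)\,\bar B$ is well defined and surjective; for injectivity, if $\pi(g)=\bar a\,\pi(g')\,\bar b$, pick $b\in B$ with $\pi(b)=\bar b$ and note that $gb^{-1}g'^{-1}\in\pi^{-1}(\bar A)=A$, whence $AgB=Ag'B$.

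Next I would check the two inputs for $G=\SL_2(\Z/p^n\Z)$, $\pi=\pi_k$, $\bar A=\Gamma_0(\Z/p^k\Z)$, $B=H_n$, $\bar B=H_k$. Surjectivity of $\pi_k$ is standard (smoothness of $\SL_2$ over $\Z$, or an explicit lift of a matrix). The essential point is the identification $\pi_k^{-1}(\Gamma_0(\Z/p^k\Z))=I_k(\Z/p^n\Z)$: a matrix in $\SL_2(\Z/p^n\Z)$ reduces into $\Gamma_0(\Z/p^k\Z)$ modulo $p^k$ precisely when its lower-left entry is divisible by $p^k$, which is the defining condition of $I_{k,p}(\Z/p^n\Z)$. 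Finally $\pi_k(H_n)=H_k$ holds by the definition of $H_k$ in the statement. Applying the displayed fact, the horizontal map $I_k(\Z/p^n\Z)\backslash\SL_2(\Z/p^n\Z)/H_n\to\Gamma_0(\Z/p^k\Z)\backslash\SL_2(\Z/p^k\Z)/H_k$ induced by $\pi_k$ is a bijection for all $k\le n$; for $k=n$ it is literally the identity, since $\pi_n=\mathrm{id}$ and $I_n(\Z/p^n\Z)=\Gamma_0(\Z/p^n\Z)$.

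It then remains to see that the diagram commutes, which is formal. The left vertical maps are the quotient maps coming from $I_k(\Z/p^n\Z)\subset I_{k-1}(\Z/p^n\Z)$ (divisibility by $p^k$ implies divisibility by $p^{k-1}$), and the right vertical maps are induced by the reductions $\SL_2(\Z/p^k\Z)\to\SL_2(\Z/p^{k-1}\Z)$, which send $\Gamma_0(\Z/p^k\Z)$ to $\Gamma_0(\Z/p^{k-1}\Z)$ and $H_k$ to $H_{k-1}$. Commutativity of each square is then the relation $\pi_{k-1}=\bigl(\text{reduction }\SL_2(\Z/p^k\Z)\to\SL_2(\Z/p^{k-1}\Z)\bigr)\circ\pi_k$ together with compatibility of the various left actions. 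I do not anticipate any real obstacle here: the only step that is not purely formal is the identification $\pi_k^{-1}(\Gamma_0(\Z/p^k\Z))=I_k(\Z/p^n\Z)$, and that is exactly what singles out the Hecke--Iwahori subgroups as the correct objects to place on the left of these double cosets.
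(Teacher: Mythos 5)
Your proof is correct, and it takes a slightly different route than the paper's. The paper deduces the bijection by identifying the left coset space $\SL_2(\Z/p^n\Z)/I_k(\Z/p^n\Z)$ with $\mathbb{P}^1_\Z(\Z/p^k\Z)$ via the orbit--stabilizer theorem (the stabilizer of $[1:0]$ is exactly $I_k(\Z/p^n\Z)$), then observes that $H_n$ acts on $\mathbb{P}^1_\Z(\Z/p^k\Z)$ through its image $H_k$, and finally translates $H_k$-orbits back into $\Gamma_0(\Z/p^k\Z)\backslash\SL_2(\Z/p^k\Z)/H_k$. You instead isolate and prove a clean general fact --- for a surjection $\pi\colon G\to\bar G$, a subgroup $\bar A\subset\bar G$ with preimage $A=\pi^{-1}(\bar A)$, and any $B\subset G$, one has $A\backslash G/B\cong\bar A\backslash\bar G/\pi(B)$ --- and then reduce the lemma to the observation $I_k(\Z/p^n\Z)=\pi_k^{-1}(\Gamma_0(\Z/p^k\Z))$. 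These two arguments are essentially dual packagings of the same idea (the paper's ``$I_k$ is the stabilizer of $[1:0]\in\mathbb{P}^1_\Z(\Z/p^k\Z)$'' is the orbit-stabilizer form of your ``$I_k$ is the preimage of $\Gamma_0$''), but your version is somewhat more self-contained and elementary, avoids the $\mathbb{P}^1_\Z$ detour, and isolates a reusable lemma; the paper's version fits naturally with the coset-scheme language it uses throughout Section 4. Your treatment of the commutativity of the squares (each $\pi_{k-1}$ factors through $\pi_k$) and the observation that the top horizontal map is the identity since $I_n=\Gamma_0$ are both correct and match the paper's reasoning.
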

\begin{proof}
Note that for any $0\leq k \leq n$,  the natural reduction map
\begin{equation*}
\mathrm{red}:\SL_{2}(\Z/p^{n}\Z)\to \SL_{2}(\Z/p^{k}\Z)
\end{equation*}
is a group homomorphism. The horizontal maps and the vertical maps on the right are then given by reducing a representative $\sigma$ of the double coset space. %
The fact that $\mathrm{red}(\cdot)$ is a group homomorphism easily shows that these maps are well defined, and the commutativity also easily follows.  
Let $k\leq n$ and consider the horizontal map 
\begin{equation}\label{eq:LinkIwahoriBorel}
I_{k}(\Z/p^{n}\Z) \bs \SL_{2}(\Z/p^{n}\Z)/H_{n}\to \Gamma_{0}(\Z/p^{k}\Z) \bs \SL_{2}(\Z/p^{k}\Z)/H_{k}.
\end{equation}
We first note that there is a bijection $\SL_{2}(\Z/p^{n}\Z)/I_{k}(\Z/p^{n}\Z)\to \mathbb{P}^{1}_{\Z}(\Z/p^{k}\Z)$. Indeed, the action of $\SL_{2}(\Z/p^{n}\Z)$ on $\mathbb{P}^{1}_{\Z}(\Z/p^{k}\Z)$ is still transitive, and the stabilizer of $[1:0]$ is exactly $I_{k}(\Z/p^{n}\Z)$.  The first set in \cref{eq:LinkIwahoriBorel} is thus the set of $H_{n}$-orbits of $\mathbb{P}^{1}_{\Z}(\Z/p^{k}\Z)$. But this is the set of $H_{k}$-orbits of $\mathbb{P}^{1}_{\Z}(\Z/p^{k}\Z)$, which in turn can be identified with $\Gamma_{0}(\Z/p^{k}\Z) \bs \SL_{2}(\Z/p^{k}\Z)/H_{k}$. One easily checks that this map is the same %
as the %
reduction map.   %
\end{proof}
\begin{remark}
We note here that the analogue of \cref{lem:IwahoriToBorel} for $\PSL_{2}(\Z/p^{n}\Z)$ also holds, and the same proof can be used. 
\end{remark}

We now return to an arbitrary $N=p^{n}M$ with $(M,p)=1$ and write $\Gamma$ for one of our functors. We will first assume $M\geq{3}$. %
To find the pruned skeleton of the curve corresponding to $\Gamma(\Z/N\Z)$, we first determine the pruned skeleton of the curve corresponding to $P(H_{M})$ for $H_{M}=\Gamma(\Z/M\Z)$ as in \cref{lem:ReconstructGeneralizedTree}. Write $\mathcal{T}_{P(H_{M}),can}$ for this tree. Over each edge, we consider the group $E(H_{M})$, corresponding to the full level $p^{n}M$-structure curve over $X_{P(H_{M})}$. We can also consider a smaller curve here with just full level $p^{n}$-structure, but this makes no difference in the end for our formulas. The covering $X_{H}\to X_{P(H_{M})}$ then corresponds to the inclusion of subgroups $H\subset E(H_{M})$, where $H=\Gamma(\Z/N\Z)$. We then have the equality of left coset spaces 
\begin{equation*}
\SL_{2}(\Z/p^{n}\Z)/\epsilon(H_{p})\to E(H_{M})/H.
\end{equation*}
We now note that $\epsilon(H_{p})=H_{p}$ for all of the functors under consideration. Indeed, if $\Gamma\neq\Gamma_{1}$, then $\pm H_{p}=H_{p}$. If $\Gamma=\Gamma_{1}$, then this is by definition. In other words, we can use the material from \cref{pro:BorelDCS1}. If $M=1,2$, then some caution has to be exercised. If $M=1$, then we introduce an auxiliary $M'\geq{3}$ and set $H_{M'}=\SL_{2}(\Z/M'\Z)$ in the formulas above. For $\Gamma\neq \Gamma_{1}$ this doesn't change anything, but for $\Gamma=\Gamma_{1}$ one finds $\epsilon(H_{p})=\Gamma^{\pm}_{1}(\Z/p^{n}\Z)$, which gives different orbit lengths and thus different edge lengths. For $M=2$ one similarly has to be careful, as $\PSL_{2}(\Z/2p^{n}\Z)\simeq \PSL_{2}(\Z/p^{n}\Z)\times \SL_{2}(\Z/2\Z)$. 

The data above completely determines the structure of the covering over the generalized Gauss vertex in $\mathcal{T}_{\Gamma(\Z/M\Z),can}$ (which is the pre-image of the Gauss vertex under the natural map to $X(1)^{\an}$). %
To determine the behavior over a supersingular edge in $\mathcal{T}_{\Gamma(\Z/M\Z),can}$, we then reduce these representatives %
modulo lower powers $p^{k}<p^{n}$. By \cref{lem:IwahoriToBorel}, this gives the double coset spaces over every supersingular edge. 

It is often useful to reverse this construction and start on the outside of a supersingular edge of $\mathcal{T}_{\Gamma(\Z/M\Z),can}$. We illustrate this for various subgroups here. %
For $\Gamma_{0}$, we start with a single vertex and two outgoing edges. In terms of \cref{lem:IwahoriToBorel}, we view these as corresponding to $[1:0]$ and $[0:1]$ in $\mathbb{P}^{1}_{\Z}(\F_{p})$. The edge corresponding to $[0:1]$ is stable, in the sense that there is no further bifurcation when moving to the central vertex. For the $[1:0]$-edge, we iteratively split the $[1:0]$-branch into three branches. At the first step for instance, these three branches correspond to the orbits of %
$[1:0]$, $[1:p]$ and $[1:rp]$ in $\mathbb{P}^{1}_{\Z}(\Z/p^{2}\Z)$. Here $[1:p]$ and $[1:rp]$ remain stable when moving to the central vertex, and $[1:0]$ keeps splitting into three branches until $\mathbb{P}^{1}_{\Z}(\Z/p^{n}\Z)$ is reached. These branches then connect to the pre-images of the corresponding generalized Gauss vertex.  The final result can be found in \cref{fig:X0(pn)}. 

For $\Gamma_{1}$, the structure is similar: %
we start with a single vertex and two outgoing edges, corresponding to $(1,0)$ and $(0,1)$. We then split $(1,0)$ at every step into $\phi(p^{k_{0}})+1$ branches: one for every $u\in U/U_{k}\cdot U_{n-k}$ and one for $(1,0)$. %

For $\Gamma_{sp}$, one easily sees that the local pruned skeleton consists of two copies of $\Gamma_{0}$. To obtain the skeleton for $\Gamma^{+}_{sp}$, we quotient the skeleton for $\Gamma_{sp}$ by $\pi$. The action of $\pi$ is free for $p\equiv{1}\bmod{4}$. For $p\equiv{3}\bmod{4}$, the edges corresponding to $([0:1],[1:r])$ and $([0:1],[1:0])$ are stabilized by $\pi$, so that we obtain one additional edge in the quotient compared to the case where $p\equiv{1}\bmod{4}$.    %

We record the result of this construction in a theorem.             

\begin{theorem}\label{thm:MainThm5}
The structure of the local pruned skeleton of the modular curves corresponding to the functors $\Gamma_{0}$, $\Gamma_{1}$, $\Gamma_{sp}$ and $\Gamma^{+}_{sp}$ is given by the procedure described above. %
 The global pruned skeleton is obtained by gluing $s(H_{M})$ copies of this pruned skeleton at the endpoints, where $s(H_{M})$ is the number of generalized supersingular $j$-invariants associated to $P(\Gamma(\Z/M\Z))$. The edge lengths are determined by \cref{rem:EdgeLengths}, \cref{pro:BorelDCS1} and \cref{lem:IwahoriToBorel}.   %
\end{theorem}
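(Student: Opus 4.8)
The plan is to assemble the ingredients already in place. By \cref{thm:MainThm3}, the metric graph $\mathcal{T}_{can,H}$ attached to $H=P(\Gamma(\Z/N\Z))$ by the labeling in \cref{tab:TableDecompositionGroups2} deformation retracts onto the pruned skeleton, so it suffices to read off $\mathcal{T}_{can,H}$ — both its combinatorics and its edge lengths — from that labeling. Since $\Gamma_{0},\Gamma_{1},\Gamma_{sp},\Gamma^{+}_{sp}$ are subgroup schemes of $\SL_{2}$, each $H$ is decomposable with respect to $p$ (by the lemma that $PH(\Z/N\Z)$ is decomposable for every subgroup scheme $H$), with $H_{p}=\Gamma(\Z/p^{n}\Z)$ and $H_{M}=\Gamma(\Z/M\Z)$. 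By \cref{rem:DecomposableGCoverings} I would factor $X(M)\to X_{H}\to X_{P(H_{M})}\to X(1)$ and work inside the relative Galois covering $X(M)\to X_{P(H_{M})}$ with group $E(H_{M})$, using $E(H_{M})/H\simeq \SL_{2}(\Z/p^{n}\Z)/\epsilon(H_{p})$ and the fact that $\epsilon(H_{p})=H_{p}$ for all four functors.

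First I would treat the tame layer. By \cref{lem:TameDecompositionGroups} and \cref{lem:ReconstructGeneralizedTree}, $\mathcal{T}_{P(H_{M}),can}$ has a central generalized Gauss vertex and exactly $|H_{j}\bs\PSL_{2}(\Z/M\Z)/P(H_{M})|$ leaves over each supersingular edge $e_{j}\subset\mathcal{T}_{can}$, hence $s(H_{M})$ leaves in all. By \cref{lem:ResidualTameness} the covering $X_{P(H_{M})}\to X(1)$ is residually tame, so it is completely split along the interiors of the supersingular edges; therefore over every leaf $e$ of $\mathcal{T}_{P(H_{M}),can}$ the decomposition groups of the residual $p$-adic covering $X_{H}\to X_{P(H_{M})}$ are controlled purely by the $p$-part of the labeling in \cref{tab:TableDecompositionGroups2}, uniformly in $e$ up to the $b(j)$-normalization when $j\in\{0,1728\}$.

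Next I would describe the local picture over a single leaf $e$. Reading along $e$ from the supersingular tip inward, the $p$-component of the decomposition group is all of $\SL_{2}(\Z_{p})$ at the tip (so the fiber is a single point and the edge terminates there without splitting), equals $I_{k}(\Z/p^{n}\Z)$ on the canonical locus of order $p^{k}$ after reduction to level $p^{n}$, and equals $\Gamma_{0}(\Z/p^{n}\Z)$ at the generalized Gauss vertex; the corresponding fibers are the double coset spaces $I_{k}(\Z/p^{n}\Z)\bs\SL_{2}(\Z/p^{n}\Z)/H_{p}$ and $\Gamma_{0}(\Z/p^{n}\Z)\bs\SL_{2}(\Z/p^{n}\Z)/H_{p}$. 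By \cref{lem:IwahoriToBorel} the former is in canonical bijection with the level-$p^{k}$ Borel double coset space $\Gamma_{0}(\Z/p^{k}\Z)\bs\SL_{2}(\Z/p^{k}\Z)/H_{p,k}$, so the branching of $\mathcal{T}_{can,H}$ as one moves toward $\zeta_{G}$ is precisely the refinement governed by the reduction maps $\mathbb{P}^{1}_{\Z}(\Z/p^{k}\Z)\to\mathbb{P}^{1}_{\Z}(\Z/p^{k-1}\Z)$. Feeding in the explicit representatives and orbit orders of \cref{pro:BorelDCS1} yields the stated patterns: for $\Gamma_{0}$ the $[0:1]$-edge stays one branch while the $[1:0]$-edge trifurcates at each step; for $\Gamma_{1}$ the $(1,0)$-edge splits into $\phi(p^{k_{0}})+1$ branches; for $\Gamma_{sp}$ one gets two copies of the $\Gamma_{0}$-picture from the two $\mathbb{P}^{1}$-factors; and for $\Gamma^{+}_{sp}$ one quotients that by $\pi$, which is free when $p\equiv 1\bmod 4$ and fixes the edges over $([0:1],[1:r])$ and $([0:1],[1:0])$ when $p\equiv 3\bmod 4$. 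For the metric, \cref{rem:EdgeLengths} gives that an edge over a piece $I_{k,j}\subset\mathcal{T}_{can}$ with coset representative $gH_{p}$ has length $\ell(I_{k,j})/|\mathrm{Orb}_{D_{x}}(gH_{p})|$, and these orbit orders are exactly the entries tabulated in \cref{pro:BorelDCS1}, applied at level $p^{k}$ via \cref{lem:IwahoriToBorel}.

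Finally I would assemble the global skeleton. Since $\mathcal{T}_{can}$ is a star with center $\zeta_{G}$ and $\mathcal{T}_{P(H_{M}),can}$ is its pullback with $s(H_{M})$ leaves, $\mathcal{T}_{can,H}$ is obtained by attaching the $s(H_{M})$ local branching trees to the common fiber over the generalized Gauss vertex along their inner ends — exactly the assertion that the global pruned skeleton is $s(H_{M})$ copies of the local one glued at the endpoints. The delicate point is that these copies glue without extra identifications or non-trivial twists, i.e. that the fiber over the generalized Gauss vertex is the same on the nose as seen from every leaf; this is guaranteed by \cref{lem:NormalizerBorel} (cf. \cref{rem:FiberwiseBehaviorCovering}), since the self-normalizing Borel $\Gamma_{0}(\Z/p^{n}\Z)$ pins down the decomposition groups there rather than merely their conjugacy classes. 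The cases $M\in\{1,2\}$ reduce to $M\geq 3$ by the auxiliary-level device of \cref{rem:DecomposableGCoverings}, the only change being that for $\Gamma_{1}$ one obtains $\epsilon(H_{p})=\Gamma^{\pm}_{1}(\Z/p^{n}\Z)$ and hence the orbit orders, and thus the edge lengths, dictated by the $\Gamma^{\pm}_{1}$ rows of \cref{pro:BorelDCS1}. I expect the main obstacle to be the metric bookkeeping of the last two steps: verifying that the orbit-order data coming from \cref{rem:EdgeLengths}, \cref{pro:BorelDCS1} and \cref{lem:IwahoriToBorel} assemble consistently along every path from a supersingular tip to the Gauss fiber, so that the result is a genuine finite metric graph.
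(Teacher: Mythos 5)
Your proposal is correct and follows essentially the same route as the paper: the theorem is stated as a record of the construction preceding it, and you correctly reassemble that construction from \cref{thm:MainThm3}, \cref{rem:DecomposableGCoverings}, \cref{lem:ReconstructGeneralizedTree}, \cref{pro:BorelDCS1}, \cref{lem:IwahoriToBorel}, \cref{rem:EdgeLengths}, with \cref{lem:NormalizerBorel} pinning down the untwisted gluing over the generalized Gauss vertex and the auxiliary-level device handling $M\in\{1,2\}$. The one worry you flag at the end — whether the orbit orders along a path assemble into a consistent metric graph — is not actually an obstacle, since the monodromy labeling is increasing toward $\zeta_{G}$ by construction and comes from an actual covering, so consistency is automatic.
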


\begin{remark}
We can compare these skeleta with the graphs for $X_{0}(p^{n})$ obtained in \cite{DR73}, \cite{Edixhoven90}, \cite{CM2010} and \cite{Tsushima2015} for $n\leq{4}$.  It is a pleasant exercise to see that our approach gives exactly the same graphs, with the same edge lengths (which are sometimes called thicknesses or intersection multiplicities in the literature). %
Note that our approach ignores most of the intermediate components, which correspond to subdivisions of our graphs.   
\end{remark}

\subsection{Component groups for $X_{0}(N)$}\label{sec:CompX0N}

We now use the material from the previous section to explicitly give the monodromy matrix with respect to $p$ of the modular curve $X_{0}(N)$. %
As before, $p$ will be a prime not equal to $2$ or $3$, and we will consider a fixed integer $N=p^{n}M$, where $(M,p)=1$. We write $\mathcal{T}_{0,M}=\mathcal{T}_{\Gamma_{0}(M),can}$ for the induced canonical metric tree associated to $X_{0}(M)^{\an}$. %
This tree has three different types of edges, corresponding to the local ramification behavior of the point in the covering $X_{0}(M)^{\an}\to X(1)^{\an}$. %
\begin{definition}\label{def:TypeOfSupersingularPoint}
Let $\mathcal{S}\subset \Q^{unr}_{p}\subset \C_{p}$ be a set of lifts of the supersingular $j$-invariants over $\overline{\F}_{p}$, which we view as $\C_{p}$-valued points of $X(1)$. %
Let $\mathcal{S}_{M}$ be the inverse image of $\mathcal{S}$ under the map $X_{0}(M)\to X(1)$, %
whose elements correspond to the edges of the canonical supersingular tree $\mathcal{T}_{0,M}$.  For $k\in\{1,2,3\}$, we say that $j\in\mathcal{S}_{M}$ is of type $k$ if the corresponding line segment in $\mathcal{T}_{0,M}$ has length $kp/(p+1)$ with respect to the normalization $v(p)=1$. We denote the number of edges of type $1$ in $\mathcal{T}_{0,M}$ by $u$. The number of edges of type $2$ are denoted by $r_{1728}$, and the number of edges of type $3$ are denoted by $r_{0}$. 
\end{definition}
\begin{remark}
Note that if a point in $\mathcal{S}_{M}$ lies over a point in $\mathcal{S}\bs \{0,1728\}$, %
then the corresponding edge is automatically of type $1$. For a point of $\mathcal{S}_{M}$ lying over $1728$, its corresponding edge is of type $1$ if and only if the map $X_{0}(M)\to X(1)$ is ramified at this point; it is of type $2$ if and only if this map is unramified at this point. %
The same interpretation also holds for $j=0$ by replacing type $2$ with type $3$.    
\end{remark}
\begin{remark}
For brevity, we will assume for the remainder of this section that $p>11$, so that there exists at least one supersingular $j$-invariant of type $1$ in $\mathcal{S}_{M}$. We leave the other cases, which follow a similar pattern, to the reader. We will also assume $n\geq{2}$ and refer to \cite[Appendix 1]{Mazur1977} for $n=1$ (although our techniques work in exactly the same way in this case). %
We will moreover normalize our valuation according to Krir's theorem. In other words, if $\pi$ is a uniformizer for $L_{n}$, then $v(\pi)=1$ and %
$v(p)=c_{n}$, where $c_{n}:=[L^{\un}_{n}:\Q^{\un}_{p}]=p^{2(n-2)}(p^2-1)$. %
\end{remark}

The main difficulty in computing the component group is finding a suitable basis of the homology of $\Sigma^{\pr}(X_{0}(N))$. We give a basis here that makes the intersection matrices almost tridiagonal, which greatly simplifies the calculations. Note that one can in principle always make these intersection matrices tridiagonal using Lanczos' algorithm.   %

\begin{definition}
Let $\mathcal{S}_{M}$ be the set of supersingular elliptic curves 
of $X_{0}(M)$ lying over $\mathcal{S}$, %
and let $j_{0}\in\mathcal{S}_{M}$ be a fixed supersingular $j$-invariant of type $1$ as in \cref{def:TypeOfSupersingularPoint}. For every $j_{i}\in\mathcal{S}_{M}\bs\{j_{0}\}$, we will construct a set of $2n-1$ paths in $\Sigma^{pr}(X_{0}(N))$. In the construction, we use points in $\mathbb{P}^{1}_{\Z}(\Z/p^{n}\Z)$ to mark the different points over the central vertex of $\Sigma^{pr}(X_{0}(N))$. 

Starting at $[0:1]$, there is a unique line segment in the graph over $j_{0}$ that connects $[0:1]$ to $[1:rp]$. Similarly, there is a unique line segment that connects $[1:rp]$ back to $[0:1]$ in the graph over $j_{i}$. This is the first path. For the second path, we start at $[1:rp]$ and connect this to the vertex $[1:p]$ using the two unique line segments over $j_{0}$ and $j_{i}$. For the third path, we start at $[1:p]$ and connect this to $[1:rp^2]$ using the two line segments over $j_{0}$ and $j_{i}$, and so on. We denote these paths by $\gamma_{k,i}$, where $k$ stands for the $k$-th path, and $i$ for the $j$-invariant $j_{i}$. Note that these paths $\gamma_{k,i}$ form a basis for $H_{1}(\Sigma^{pr}(X_{0}(N)))$. We call this the \emph{ladder basis} of $H_{1}(\Sigma^{pr}(X_{0}(N)))$, see \cref{fig:LadderPicture}.  %
\end{definition}

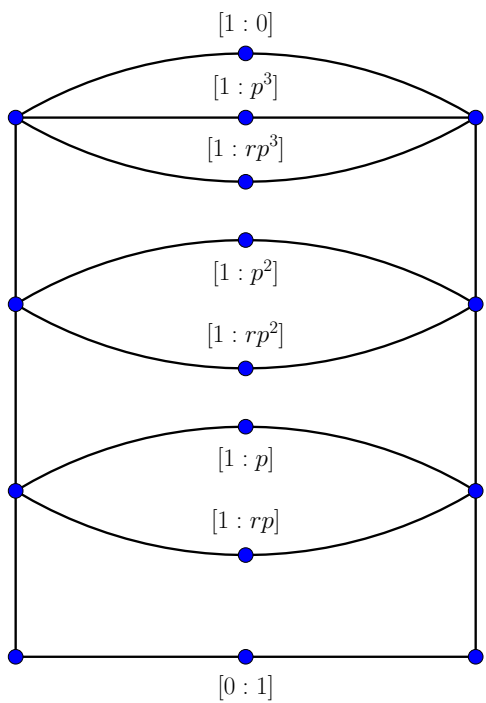
\begin{figure}[h]
\scalebox{0.55}{
\begin{tikzpicture}[line cap=round,line join=round,>=triangle 45,x=1cm,y=1cm]
\clip(-2,-3.5) rectangle (12.5,14.515199965509472);
\draw [line width=1.6pt] (0,-2)-- (11,-2);
\draw [line width=1.6pt] (0,-2)-- (0,2);
\draw [line width=1.6pt] (11,-2)-- (11,2);
\draw [shift={(5.5,-7)},line width=1.6pt]  plot[domain=1.0222469243443686:2.119345729245425,variable=\t]({1*10.547511554864494*cos(\t r)+0*10.547511554864494*sin(\t r)},{0*10.547511554864494*cos(\t r)+1*10.547511554864494*sin(\t r)});
\draw [shift={(5.5,11)},line width=1.6pt]  plot[domain=4.163839577934161:5.260938382835217,variable=\t]({1*10.547511554864494*cos(\t r)+0*10.547511554864494*sin(\t r)},{0*10.547511554864494*cos(\t r)+1*10.547511554864494*sin(\t r)});
\draw [line width=1.6pt] (0,2)-- (0,6.5);
\draw [line width=1.6pt] (11,2)-- (11,6.5);
\draw [shift={(5.5,-2.5)},line width=1.6pt]  plot[domain=1.0222469243443686:2.119345729245425,variable=\t]({1*10.547511554864494*cos(\t r)+0*10.547511554864494*sin(\t r)},{0*10.547511554864494*cos(\t r)+1*10.547511554864494*sin(\t r)});
\draw [line width=1.6pt] (0,11)-- (11,11);
\draw [shift={(5.5,2)},line width=1.6pt]  plot[domain=1.0222469243443686:2.119345729245425,variable=\t]({1*10.547511554864494*cos(\t r)+0*10.547511554864494*sin(\t r)},{0*10.547511554864494*cos(\t r)+1*10.547511554864494*sin(\t r)});
\draw [shift={(5.5,15.5)},line width=1.6pt]  plot[domain=4.163839577934161:5.260938382835217,variable=\t]({1*10.547511554864494*cos(\t r)+0*10.547511554864494*sin(\t r)},{0*10.547511554864494*cos(\t r)+1*10.547511554864494*sin(\t r)});
\draw [shift={(5.5,20)},line width=1.6pt]  plot[domain=4.163839577934161:5.260938382835217,variable=\t]({1*10.547511554864494*cos(\t r)+0*10.547511554864494*sin(\t r)},{0*10.547511554864494*cos(\t r)+1*10.547511554864494*sin(\t r)});
\draw [line width=1.6pt] (0,11)-- (0,6.5);
\draw [line width=1.6pt] (11,11)-- (11,6.5);
\begin{scriptsize}
\draw [fill=qqqqff] (0,-2) circle (5pt);
\draw [fill=qqqqff] (11,-2) circle (5pt);
\draw [fill=qqqqff] (0,2) circle (5pt);
\draw [fill=qqqqff] (11,2) circle (5pt);
\draw [fill=qqqqff] (0,6.5) circle (5pt);
\draw [fill=qqqqff] (11,6.5) circle (5pt);
\draw [fill=qqqqff] (0,11) circle (5pt);
\draw [fill=qqqqff] (11,11) circle (5pt);
\draw [fill=qqqqff] (5.5,3.5475115548644944) circle (5pt);
\draw[color=black] (5.5,2.75) node {\Large{$[1:p]$}};
\draw [fill=qqqqff] (5.5,0.4524884451355061) circle (5pt);
\draw[color=black] (5.5,1.25) node {\Large{$[1:rp]$}};
\draw [fill=qqqqff] (5.5,8.047511554864494) circle (5pt);
\draw[color=black] (5.5,7.25) node {\Large{$[1:p^2]$}};
\draw [fill=qqqqff] (5.5,4.952488445135507) circle (5pt);
\draw[color=black] (5.5,5.75) node {\Large{$[1:rp^2]$}};
\draw [fill=qqqqff] (5.5,9.452488445135508) circle (5pt);
\draw[color=black] (5.5,10.25) node {\Large{$[1:rp^3]$}};
\draw [fill=qqqqff] (5.5,11) circle (5pt);
\draw[color=black] (5.5,11.75) node {\Large{$[1:p^3]$}};
\draw [fill=qqqqff] (5.5,12.547511554864494) circle (5pt);
\draw[color=black] (5.5,13.25) node {\Large{$[1:0]$}};
\draw [fill=qqqqff] (5.5,-2) circle (5pt);
\draw[color=black] (5.5,-2.75) node {\Large{$[0:1]$}};
\end{scriptsize}
\end{tikzpicture}
}
\caption{\label{fig:LadderPicture}
The ladder-like picture for the pruned skeleton of $X_{0}(p^{n}M)$. Here we have depicted the part of the pruned skeleton lying over two supersingular $j$-invariants $j_{0}$ and $j_{i}$ for $n=4$. The other parts are connected to the central vertices in a similar fashion. 
}
\end{figure}

\begin{definition}

Let $j_{i}$ be a supersingular $j$-invariant of type $1$ not equal to $j_{0}$. The entries $a_{k,k'}$ of the $(2n-1)\times{(2n-1)}$ internal unramified monodromy matrix $A_{int}$ are $\langle \gamma_{k,i}, \gamma_{k',i}\rangle$. This is independent of our choice of $j_{i}$. 
\end{definition}
We can use this to build up the rest of the monodromy matrix as follows. 

\begin{lemma}\label{lem:LocalMonodromyFactors}
Set %
\begin{align*}
\mathcal{O}&=2p^{n-2},\\
\mathcal{B}&=(p-1)p^{n-2},\\
\mathcal{E}&=(p-1)^2p^{n-3},
\end{align*}
and
$$v=[2\mathcal{O}+2\mathcal{B},4\mathcal{O},4\mathcal{O}+2\mathcal{E},4\mathcal{O},4\mathcal{O}+2\mathcal{E},...,4\mathcal{O}+2\mathcal{E},4\mathcal{O},2\mathcal{O}+2\mathcal{B}].$$ The internal monodromy matrix $A_{int}$ is a tridiagonal matrix whose main diagonal is $v$, and whose off-diagonals are $[2\mathcal{O},....,2\mathcal{O}]$. %
The monodromy matrix $A$ is given by $(s-1)^2$ square block matrices, which we identify with pairs $(j_{1},j_{2})$ of supersingular $j$-invariants not equal to $j_{0}$. %
The block corresponding to $(j_{1},j_{2})$ for $j_{1}\neq{j_{2}}$ is $1/2A_{int}$. %
The diagonal blocks corresponding to $(j,j)$ are given by
\begin{itemize}
\item $A_{int}$ if $j$ is of type $1$, 
\item  
$3/2A_{int}$ if $j$ is of type $2$, 
\item $2A_{int}$ if $j$ is of type $3$. 
\end{itemize}
\end{lemma}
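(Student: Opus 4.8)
The plan is to compute the length pairing $\langle\gamma_{k,i},\gamma_{k',i}\rangle$ directly from the edge-length formula in \cref{rem:EdgeLengths} and \cref{pro:DCStoGraphs}, using the ladder basis. First I would record the edge lengths of $\Sigma^{pr}(X_0(N))$ over a single supersingular edge $e_{j_i}$. By \cref{thm:MainThm5} and \cref{lem:IwahoriToBorel}, the double cosets over the $k$-th level of the ladder are governed by $I_k(\Z/p^n\Z)\bs\SL_2(\Z/p^n\Z)/\Gamma_0(\Z/p^nM\Z)$, which by \cref{lem:IwahoriToBorel} is identified with the Borel double cosets $\Gamma_0(\Z/p^k\Z)\bs\SL_2(\Z/p^k\Z)/\Gamma_0(\Z/p^kM\Z)$ computed in \cref{tab:DCSpacesG0}. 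From the orbit orders in that table ($p^n$ for $[0:1]$, $1$ for $[1:0]$, $\phi(p^{n-i})/2$ for $[1:dp^i]$) and the edge-length rule $\ell(e)/|\mathrm{Orb}|$, the edge of the ladder joining consecutive marked vertices $[1:dp^{i}]$ and $[1:d'p^{i+1}]$ has a length proportional to $1/\phi(p^{n-i-1})$, normalized by Krir's constant $c_n = p^{2(n-2)}(p^2-1)$. I would tabulate the three relevant orbit sizes and name them $\mathcal{O}=2p^{n-2}$ (orbits of $[0:1]$-type contributions, i.e.\ order-$2p^{\bullet}$ stabilizers), $\mathcal{B}=(p-1)p^{n-2}$ (boundary/Borel contributions) and $\mathcal{E}=(p-1)^2p^{n-3}$ (interior contributions), matching the statement.

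Next I would set up the intersection computation. Each loop $\gamma_{k,i}$ is built from two line segments: one over $e_{j_0}$ and one over $e_{j_i}$, joining two adjacent marked vertices among $[0:1],[1:rp],[1:p],[1:rp^2],[1:p^2],\dots,[1:0]$. Two consecutive ladder loops $\gamma_{k,i}$ and $\gamma_{k+1,i}$ share exactly one marked vertex and hence a common segment structure but no common edge in the tree sense — so their pairing $\langle\gamma_{k,i},\gamma_{k+1,i}\rangle$ picks up only the contribution of that shared vertex's incident rungs; this gives the constant off-diagonal $2\mathcal{O}$. The diagonal entry $\langle\gamma_{k,i},\gamma_{k,i}\rangle$ is the total (normalized) length of the loop, which is the sum of the two rung segments plus the two "vertical" segments over $e_{j_0}$ and $e_{j_i}$; the two end loops $k=1$ and $k=2n-1$ involve the $[0:1]$- and $[1:0]$-branches and so acquire the extra $2\mathcal{B}$ term, while interior loops acquire $2\mathcal{E}$, giving $v=[2\mathcal{O}+2\mathcal{B},4\mathcal{O},4\mathcal{O}+2\mathcal{E},\dots,4\mathcal{O},2\mathcal{O}+2\mathcal{B}]$. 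Tridiagonality is automatic: $\gamma_{k,i}$ and $\gamma_{k',i}$ for $|k-k'|\ge 2$ share no vertex and no edge, so the pairing vanishes. Independence of the choice of $j_i$ among type-$1$ invariants follows because type-$1$ edges all have the same length $p/(p+1)$ and the same double-coset decomposition.

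For the global matrix $A$, I would observe that the pruned skeleton is obtained by gluing $s$ copies of the local picture at the two endpoint vertices (one over each supersingular $j$-invariant), with $j_0$'s copy serving as the "spine." A loop $\gamma_{k,i}$ lives over $e_{j_0}\cup e_{j_i}$. For $i_1\ne i_2$, the loops $\gamma_{k,i_1}$ and $\gamma_{k',i_2}$ share only the segment over $e_{j_0}$, contributing exactly half of a diagonal-type interaction, i.e.\ the block is $\tfrac12 A_{int}$; the sign/orientation works out because the $e_{j_0}$ segment is traversed oppositely in the two loop systems. For a diagonal block $(j,j)$ with $j$ of type $1$, both the $e_{j_0}$-part and the $e_{j}$-part are of equal length and we recover $A_{int}$; if $j$ is of type $2$ (resp.\ type $3$) the $e_j$-branch is $\tfrac32$ (resp.\ $2$) times as long by \cref{def:TypeOfSupersingularPoint}, and since the $e_{j_0}$-contribution is $\tfrac12 A_{int}$ and the $e_j$-contribution scales by $b(j)$, the block becomes $\tfrac12 A_{int} + \tfrac{b(j)}{2}A_{int}\cdot\tfrac{2}{2}$, i.e.\ $\tfrac32 A_{int}$ or $2A_{int}$ respectively. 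The hard part will be bookkeeping the normalization constant $c_n$ consistently across all three edge types and verifying that the half-integer coefficients $\tfrac12,\tfrac32,\tfrac32$ really come out of the orbit-size denominators rather than being artifacts — in particular checking that the factor-of-$2$ discrepancies between type-$1$, type-$2$, type-$3$ edges propagate correctly through $\mathcal{O},\mathcal{B},\mathcal{E}$. Once the local length table is pinned down, the rest is a direct, if somewhat tedious, assembly.
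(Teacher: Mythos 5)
Your overall plan is the same as the paper's: compute the edge lengths of $\Sigma^{pr}(X_0(N))$ from the orbit-size formula in \cref{rem:EdgeLengths} (using \cref{pro:BorelDCS1}, \cref{lem:IwahoriToBorel}, and Krir's normalization $p^{2(n-2)}(p^2-1)$), then read off the length pairing on the ladder basis. The global block analysis — the cross-block $\tfrac12 A_{int}$ coming from the shared $e_{j_0}$-half of each loop, and the diagonal block $\tfrac{1+b(j)}{2}A_{int}$ from adding back the $e_j$-half scaled by $b(j)$ — is correct.

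However, your derivation of the off-diagonal $2\mathcal{O}$ is self-contradictory as stated. You claim $\gamma_{k,i}$ and $\gamma_{k+1,i}$ ``share no common edge'' but then conclude they pair nontrivially; the length pairing $\langle\gamma,\gamma'\rangle$ is supported precisely on edges traversed by \emph{both} loops, so ``no common edge'' would force $\langle\gamma_{k,i},\gamma_{k+1,i}\rangle=0$. What actually happens is that $\gamma_{k,i}$ and $\gamma_{k+1,i}$ share the entire rung through the common marked vertex — one half-rung over $e_{j_0}$ and one over $e_{j_i}$, traversed in opposite directions — and the total length of that shared rung is $2\mathcal{O}$; this is exactly what the paper records. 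Relatedly, your ``$4\mathcal{O}$ base plus extras'' picture of the diagonal does not fit the end loops, which are $2\mathcal{O}+2\mathcal{B}$ rather than $4\mathcal{O}+2\mathcal{B}$: the point is that $\mathcal{B}$ is the \emph{full} path length from $[0:1]$ (resp.\ $[1:0]$) down through the root (resp.\ the top $3$-fold branching) to the nearest $4$-valent vertex, and already subsumes that side's half-rung, so the end loops carry only one $\mathcal{O}$-type rung. Neither issue is fatal to the strategy — the formulas you target are the correct ones — but the shared-edge reasoning needs to be fixed for the computation of the off-diagonals to actually go through.
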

\begin{proof}
This is a simple calculation using \cref{thm:MainThm5}, \cref{rem:EdgeLengths} and the material in \cref{pro:BorelDCS1}. For instance, the off-diagonals in $A_{int}$ can be calculated as follows. Each of these corresponds to twice the length of the line segment $e$ starting at 
$[1:p^{i}]$ or $[1:dp^{i}]$ as in \cref{fig:LadderPicture}, and going to either of the adjacent $4$-valent vertices. If we interpret the pruned skeleton as a ladder, then $e$ can be seen as half a rung. Multiplying the normalization factor from Krir's theorem $p^{2(n-2)}(p^2-1)$, the inverse of the local degree $\phi(p^{n-i})/2$, and the length $p^{1-i}/(p+1)$ of the image of $e$ in $X(1)^{\an}$, we then find  
\begin{equation*}
\ell(e)=\dfrac{2p^{1-i}p^{2(n-2)}(p^2-1)}{\phi(p^{n-i})(p+1)}=2p^{n-2}=
\mathcal{O}.
\end{equation*}
Similarly, we can calculate the length of a single side-rail $e$ (going up vertically) as follows. Assume first that it is not the bottom or top side-rail. The dilation factor is then $p^{n-i-1}$ and the length of the image of $e$ in $X(1)^{\an}$ is $(p-1)/(p^{i}(p+1))$. Multiplying these as before, we find     
\begin{equation*}
\ell(e)=\dfrac{p^{2(n-2)}(p^2-1)(p-1)}{p^{i}(p+1)p^{n-i-1}}=p^{n-3}(p-1)^{2}=\mathcal{E}.
\end{equation*}
Similarly, by calculating the lengths of half of the top and bottom side-rails, one finds $\mathcal{B}$. By adding these factors, one easily obtains the structure of the monodromy matrix as stated in the lemma.  
\end{proof}
To state the final result more succinctly, we introduce the following notation.
\begin{definition}
Let $b\geq{1}$ and $n\geq{2}$ be integers, and let $p$ be a prime number. %
We define the basic building block associated to $n$, $p$ and $b$ to be  %
\begin{equation*}
G_{b}=G_{n,p,b}=(\Z/bp^{n-2}\Z)^{2n-2}\times \Z/b(p^n-p^{n-2})\Z.
\end{equation*}
If $c$ is an integer strictly greater than zero then $G^{c}_{b}$ is a direct product of $c$ copies of $G_{b}$. If $c=0$, then $G^{0}_{b}=(1)$. 
\end{definition}
Let $N$ be a $\Z$-module and let $k$ be an integer. We write $N[1/k]$ for the induced $\Z[1/k]$-module obtained by localizing. 
The following theorem can be seen as a generalization of the results for $X_{0}(pM)$ in 
\cite[Appendix A, Section 2]{Mazur1977}. 

\begin{theorem}\label{thm:MainThmComponentGroup}
Let $p>11$ be a prime number and let $\Psi_{N}:=\Psi_{N}(\overline{\F}_{p})$ be the geometric group of connected components of the special fiber of the N\'{e}ron model of the Jacobian of the modular curve $X_{0}(p^{n}M)$ over Krir's extension $L_{n}\supset \Q^{un}_{p}$. If $u>1$\footnote{Note that $u>1$ as soon as $p>23$.}, then
\begin{equation*}
\Psi_{N}[1/2]\simeq G_{1}[1/2]\times G^{u-2}_{3}[1/2]\times G^{r_{1728}}_{5}[1/2]\times G^{r_{0}}_{7}[1/2].
\end{equation*}
If $u=1$, then 
\begin{equation*}
\Psi_{N}[1/2]\simeq G^{r_{1728}}_{5}[1/2]\times G^{r_{0}}_{7}[1/2].
\end{equation*}
\end{theorem}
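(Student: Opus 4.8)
The plan is to read off the monodromy matrix from \cref{lem:LocalMonodromyFactors} and compute its cokernel explicitly using the ladder structure, which makes the matrix almost tridiagonal. First I would set up the block decomposition: the monodromy matrix $A$ consists of $(s-1)^2$ blocks indexed by pairs of supersingular $j$-invariants distinct from $j_0$, with off-diagonal blocks $\tfrac12 A_{int}$ and diagonal blocks $A_{int}$, $\tfrac32 A_{int}$, or $2A_{int}$ according to the type of $j$. After inverting $2$, rescale by writing $\Psi_N[1/2]\simeq (\Z^{(2n-1)(s-1)}/\mathrm{im}(A))[1/2]$ and apply a $\Z[1/2]$-linear change of basis on the homology lattice to split off the contributions. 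The key observation is that the full matrix $A$ can be block-diagonalized over $\Z[1/2]$: subtracting a suitable $\Z[1/2]$-combination of rows/columns, the ``all blocks equal up to scalar'' structure means $A$ is $\Z[1/2]$-equivalent to a direct sum of one copy of $A_{int}$ (the ``outer'' cycle built from $j_0$), $u-2$ copies of $A_{int}$, $r_{1728}$ copies of $\tfrac32 A_{int}$ — but $3/2$ is a unit after inverting $2$? No: $3$ is not inverted, so these genuinely contribute a factor-$3$ scaling, giving $G_5$-type blocks, $\tfrac52 A_{int}$-equivalent — and $r_0$ copies giving the $G_7$-type. I would make this precise by diagonalizing the ``Laplacian-like'' coefficient matrix of the blocks (a rank-one-plus-diagonal perturbation): its Smith normal form over $\Z[1/2]$ yields the multiplicities and scalars $1, 3, 3, \dots$ appearing in $G_1, G_3, G_5, G_7$.

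Second, I would compute the Smith normal form (equivalently the cokernel) of the single tridiagonal block $c\cdot A_{int}$ for $c\in\{1,3,5,7\}/(\text{odd part})$. Using the values $\mathcal{O}=2p^{n-2}$, $\mathcal{B}=(p-1)p^{n-2}$, $\mathcal{E}=(p-1)^2p^{n-3}$, the matrix $A_{int}$ is tridiagonal with off-diagonal $2\mathcal{O}$ and main diagonal $v$. Here is where continuants enter: for a tridiagonal matrix, the Smith normal form is controlled by the sequence of leading principal minors, which satisfy a three-term recurrence and can be evaluated as continuants. I would compute $\det(A_{int})$ and the gcds of its $k\times k$ minors by expanding the continuant, exploiting the fact that the main diagonal is $4\mathcal{O}+2\mathcal{E}$ in the interior and $2\mathcal{O}+2\mathcal{B}$ at the two ends. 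After factoring out the common $p^{n-2}$ (respectively $p^{n-3}$ for the $\mathcal{E}$-terms), the recurrence telescopes and one recovers that $\Z^{2n-1}/\mathrm{im}(A_{int})\simeq (\Z/p^{n-2}\Z)^{2n-2}\times \Z/(p^n-p^{n-2})\Z=G_{n,p,1}$, and $c\cdot A_{int}$ gives $G_{n,p,c}$. This is the heart of the computation and will require care tracking the boundary rows, since the end entries $2\mathcal{O}+2\mathcal{B}=2p^{n-2}(p+1)$ differ from the bulk; the extra factor $(p+1)$ combines with $p^{n-2}$ to produce the last cyclic factor $\Z/b(p^n-p^{n-2})\Z$.

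Third, for the case $u=1$ I would note that the ``outer'' $j_0$-cycle is of type $1$, but there is now no second type-$1$ vertex to pair with it; tracing through the block-diagonalization, the $G_1$ and the would-be $G_3$ factors both disappear, leaving only $G_5^{r_{1728}}[1/2]\times G_7^{r_0}[1/2]$. More carefully, when $u=1$ the rank-one perturbation in the block Laplacian absorbs exactly the two type-$1$ contributions (the self-pairing $(j_0,j_0)$ is not a cycle and the unique other type-$1$ vertex pairs only against types $2$ and $3$), so the Smith form over $\Z[1/2]$ picks up multiplicities $0$ for the scalar-$1$ and scalar-$3$ blocks. I would verify the total rank bookkeeping: $\beta_1=(s-1)(b_p(H)-1)=(s-1)(2n-1)$ by \cref{thm:MainThm2v2} with $b_p=2n$ from \cref{pro:BorelDCS1}, matching $1+(u-2)+r_{1728}+r_0 = s-1$ blocks of size $2n-1$ each (using $u+r_{1728}+r_0=s$), which is a useful consistency check when $u>1$.

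The main obstacle will be the continuant computation of the Smith normal form of the tridiagonal block with its modified end entries: getting the exact cyclic structure $(\Z/p^{n-2}\Z)^{2n-2}\times\Z/(p^n-p^{n-2})\Z$ rather than just the determinant requires understanding all the elementary divisors, not only the product, and the asymmetry between the $2\mathcal{O}$ off-diagonals, the $4\mathcal{O}+2\mathcal{E}$ bulk diagonal, and the $2\mathcal{O}+2\mathcal{B}$ endpoints must be handled simultaneously. I expect the cleanest route is to perform explicit row and column operations over $\Z$ (clearing the off-diagonals from top to bottom, which is legitimate since each pivot $2\mathcal{O}=4p^{n-2}$ divides the relevant neighbours up to units in $\Z[1/2]$) and to track how the accumulated denominators force the single large cyclic factor at the end; the continuant identities then serve mainly to package the induction on $n$.
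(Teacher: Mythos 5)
Your overall strategy is the same as the paper's: factor the monodromy matrix as (block-coefficient matrix)$\,\otimes\,A_{int}$, reduce the coefficient matrix over $\Z[1/2]$, then compute the Smith normal form of $A_{int}$ by continuants. The two substantive steps you identify match the proof in the paper, and your rank bookkeeping check is a nice addition. However, the crucial middle step — extracting the scalars $1,3,5,7$ — is where your write-up is genuinely muddled, and this is precisely where the content of the theorem lives.

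Concretely, the paper's diagonal blocks are $A_{int}$, $\tfrac32 A_{int}$, $2A_{int}$ for types $1,2,3$, with off-diagonal blocks $\tfrac12 A_{int}$. After clearing the first row/column (using the block attached to a type-$1$ index as pivot), the surviving diagonal scalars are $c_j-\tfrac14$, i.e.\ $\tfrac34,\tfrac54,\tfrac74$, which is where $3,5,7$ (odd parts over $\Z[1/2]$) come from. Your proposal instead says ``$u-2$ copies of $A_{int}$'' (which would give $G_1$, not $G_3$), then ``$r_{1728}$ copies of $\tfrac32 A_{int}$'' (which would give $G_3$, not $G_5$), and then in the same sentence ``factor-$3$ scaling, giving $G_5$-type blocks, $\tfrac52 A_{int}$-equivalent''. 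These three statements contradict each other and none matches the paper's $\tfrac34,\tfrac54,\tfrac74$. Since a wrong scalar here changes which $G_b$ appears, this is not a cosmetic slip: as written the block reduction would produce the wrong answer. You should pin down the coefficient matrix explicitly as $C=\tfrac12 J_{s-1}+\mathrm{diag}(c_j-\tfrac12)$ with $c_j\in\{1,\tfrac32,2\}$ and carry out the $\Z[1/2]$-reduction honestly rather than gesturing at ``a rank-one-plus-diagonal perturbation''; note in particular that one step of elimination leaves nonzero off-diagonal entries of size $\tfrac14$ which still need to be cleared, so the claim that $A$ is $\Z[1/2]$-equivalent to a \emph{direct sum} of scaled copies of $A_{int}$ is not automatic and must be established.

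Two smaller points. First, on the $A_{int}$ side: your suggested ``clearing the off-diagonals from top to bottom'' using $2\mathcal{O}$ as pivot does not work directly, because the first diagonal entry is $2\mathcal{O}+2\mathcal{B}$, and $(2\mathcal{O})/(2\mathcal{O}+2\mathcal{B})=2/(p+1)\notin\Z[1/2]$. The paper gets around this by first swapping rows $1$ and $2$ so the pivot is the subdiagonal $2\mathcal{O}$, after which the multiplier is $(2\mathcal{O}+2\mathcal{B})/(2\mathcal{O})=(p+1)/2\in\Z[1/2]$, and it iterates this swap-then-eliminate step; this is the mechanism by which the continuants $K_m$ control the diagonal entries of the resulting upper-triangular form. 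Second, in the $u=1$ discussion you refer to ``the unique other type-$1$ vertex'' — when $u=1$ there is no such vertex, $j_0$ being the only one; the conclusion is still that no $G_1$ or $G_3$ factor appears, but the sentence as written doesn't parse.
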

\begin{proof}
We start by row reducing the monodromy matrix $A$ with respect to the ladder basis over the principal ideal domain $\Z[1/2]$ to obtain a matrix with multiples of $A_{int}$ on the diagonal. %
Consider the case where $u>1$. %
We then obtain one copy of $A_{int}$ and $u-2$ copies of $3/4A_{int}$ on the diagonal from the supersingular $j$-invariants of type $1$, see the formulas in \cref{lem:LocalMonodromyFactors}. If $p\equiv{5}\bmod{12}$, then
we obtain an additional $r_{1728}$ copies of $5/4A_{int}$. 
If $p\equiv{7}\bmod{12}$, then we obtain 
an additional $r_{0}$ copies of $7/4A_{int}$. 
If $p\equiv{11}\bmod{12}$, then we obtain all of the above.  
If $u=1$, then there are no copies of $A_{int}$ or $3/4A_{int}$, so that everything reduces to $5/4A_{int}$ or $7/4A_{int}$.  

It now suffices to row-reduce the tridiagonal matrix $A_{int}$ over $\Z[1/2]$ to an upper-triangular matrix. We first determine several determinants. Let $h(p)=(p+1)^2/p$ and $B_{int}=A_{int}/\mathcal{O}$. Let $K_{m}$ be the determinant of the $m\times{m}$-tridiagonal matrix obtained by removing the last $2n-1-m$ rows and columns of $B_{int}$.     
The recursion formula for generalized continuants %
 gives %
\begin{equation*}
K_{m}=a_{m}K_{m-1}-4K_{m-2},
\end{equation*}  
where $a_{1}=p+1=a_{2n-1}$, $a_{2k}=4$ for $1\leq{k}\leq n-1$,  and $a_{2k+1}=h(p)$ for $1\leq k < n-1$. Note that this recursion formula follows from two cofactor expansions. %
A simple exercise in induction shows that  %
\begin{align*}
K_{2k}&=2^{2k}p^{k} \,\text{ for }1\leq{k}\leq{n-1},\\
K_{2k+1}&=2^{2k}(p^{k+1}+p^{k}) \,\text{ for }1\leq{k}<n-1.
\end{align*}
We now calculate the last continuant $K_{2n-1}$, which gives the determinant of $B_{int}$. From the last recursion we obtain %
\begin{equation*}
K_{2n-1}=(p+1)K_{2n-2}-4K_{2n-3}.
\end{equation*} 
Using the formulas above, we then find
\begin{align*}
K_{2n-1}&=(p+1)K_{2n-2}-4K_{2n-3}\\
&=2^{2n-2}p^{n}+2^{2n-2}p^{n-1}-2^{2n-2}(p^{n-1}+p^{n-2})\\
&=2^{2n-2}(p^{n}-p^{n-2}).
\end{align*}
The determinants of the corresponding submatrices in %
$A_{int}$ are then given by $\mathcal{O}^{m}K_{m}$.

We now row-reduce $A_{int}$ over $\Q$ using the following general pattern: we first swap rows $1$ and $2$. We then subtract a suitable multiple of row $1$ from row $2$. We then swap rows $2$ and $3$, subtract a suitable multiple of row $2$ from $3$, and so on. Note that this automatically makes the matrix upper-triangular, with diagonal entries $2\mathcal{O}$ up till the last diagonal entry. Using the determinantal formulas found above, we directly find that the last diagonal entry is %
$\pm{}\mathcal{O}(p^{n}-p^{n-2})$. %

To determine the structure over $\Z[1/2]$, we now show that the multiples used in the row reductions are defined over $\Z[1/2]$. We first note that the determinants $\pm\mathcal{O}^{m}K_{m}$ of the $m\times{m}$-submatrices in $A_{int}$ are unchanged by the row reductions up to the part where rows $m$ and $m+1$ are swapped. This implies that the last diagonal term of this reduced $m\times{m}$-submatrix is
$\pm 2^{t}K_{m}\mathcal{O}$ for some $t$ (the factor $2^{t}$ comes from the remaining diagonal factors $2\mathcal{O}$). Since these are a multiple of $2\mathcal{O}$ in $\Z[1/2]$, we conclude. 
\end{proof}

We can also deduce a general formula for %
the geometric Tamagawa numbers of $X_{0}(N)$ from the proof of \cref{thm:MainThmComponentGroup}. This includes the $2$-adic factors.  %

\begin{corollary}\label{cor:GeometricTamagawaNumbers}
Let $\Psi_{N}$ and $p$ be as in \cref{thm:MainThmComponentGroup} with %
$$c:=2^{4n-3}(p^{n}-p^{n-2})p^{(n-2)(2n-1)}=|\mathrm{det}(A_{int})|,$$ and let $s=|\mathcal{S}_{M}|=u+r_{0}+r_{1728}$ be the number of supersingular $j$-invariants of $X_{0}(M)$ over $\mathcal{S}$. If $u>1$, then 
\begin{equation*}
|\Psi_{N}|=c^{s-1}\cdot (3/4)^{(u-2)(2n-1)}\cdot (5/4)^{r_{1728}(2n-1)}\cdot (7/4)^{r_{0}(2n-1)}.
\end{equation*}
If $u=1$, then 
\begin{equation*}
|\Psi_{N}|=c^{s-1}\cdot (5/4)^{r_{1728}(2n-1)}\cdot (7/4)^{r_{0}(2n-1)}.
\end{equation*}
\end{corollary}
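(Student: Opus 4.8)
The plan is to derive \cref{cor:GeometricTamagawaNumbers} directly from the computations assembled in the proof of \cref{thm:MainThmComponentGroup}, rather than re-deriving the structure of $\Psi_N$. The point is that $|\Psi_N|$ equals the product of the elementary divisors of the monodromy matrix $A$ written in the ladder basis, which by $\Z[1/2]$-row-reduction is the same as the product of the diagonal entries of the block-diagonal form obtained there. So the key input is: after row reduction over $\Z[1/2]$, the matrix $A$ is block-diagonal with blocks $A_{int}$, and scalar multiples $\tfrac34 A_{int}$, $\tfrac54 A_{int}$, $\tfrac74 A_{int}$ according to the type of supersingular $j$-invariant, together with enough copies of $A_{int}$ from the off-diagonal blocks to account for $s-1$ total blocks. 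Here I would first observe that the monodromy matrix $A$ from \cref{lem:LocalMonodromyFactors} is itself built from $(s-1)^2$ blocks each a rational multiple of $A_{int}$, and that the row reduction in the proof of \cref{thm:MainThmComponentGroup} replaces these by a block-diagonal form with one copy of $A_{int}$ and $u-2$ copies of $\tfrac34 A_{int}$, $r_{1728}$ copies of $\tfrac54 A_{int}$, and $r_0$ copies of $\tfrac74 A_{int}$ when $u>1$ (and just the latter two families when $u=1$).

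The second step is to compute $|\det(A_{int})|$. This is already done inside the proof of \cref{thm:MainThmComponentGroup} via continuants: the determinant of $B_{int} = A_{int}/\mathcal{O}$ is $K_{2n-1} = 2^{2n-2}(p^n - p^{n-2})$, and $A_{int}$ is a $(2n-1)\times(2n-1)$ matrix obtained from $B_{int}$ by multiplying every entry by $\mathcal{O} = 2p^{n-2}$, so $|\det(A_{int})| = \mathcal{O}^{2n-1}\cdot 2^{2n-2}(p^n-p^{n-2}) = (2p^{n-2})^{2n-1}2^{2n-2}(p^n-p^{n-2})$. Collecting powers of $2$ gives $2^{(2n-1)+(2n-2)} = 2^{4n-3}$ and $p^{(n-2)(2n-1)}$, hence $|\det(A_{int})| = 2^{4n-3}(p^n-p^{n-2})p^{(n-2)(2n-1)} = c$, matching the displayed definition of $c$. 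I would state this computation explicitly as the first displayed equation of the proof.

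The third step is bookkeeping: since the determinant is multiplicative over block-diagonal matrices, and scaling a $(2n-1)\times(2n-1)$ block by a scalar $\lambda$ multiplies its determinant by $\lambda^{2n-1}$,
\begin{equation*}
|\Psi_N| = |\det A| = |\det A_{int}|\cdot\prod_{\text{blocks}}\lambda_{\text{block}}^{2n-1},
\end{equation*}
where for $u>1$ the scalars contributing nontrivially are $\tfrac34$ with multiplicity $u-2$, $\tfrac54$ with multiplicity $r_{1728}$, and $\tfrac74$ with multiplicity $r_0$, and there are $s-1$ blocks in total each contributing a factor $|\det A_{int}| = c$ from its ``$A_{int}$ part.'' This yields $|\Psi_N| = c^{s-1}(3/4)^{(u-2)(2n-1)}(5/4)^{r_{1728}(2n-1)}(7/4)^{r_0(2n-1)}$, and the $u=1$ case drops the $(3/4)$ factor and uses that there is no type-$1$ block producing a plain $A_{int}$, exactly as in the statement. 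One must be slightly careful that $s - 1 = u + r_0 + r_{1728} - 1 = (u-2) + r_0 + r_{1728} + 1$, i.e. the ``$+1$'' accounts for the single plain copy of $A_{int}$; when $u = 1$ one has $s - 1 = r_0 + r_{1728}$ with no extra copy, which is consistent.

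The main obstacle I anticipate is justifying that the row-reduction over $\Z[1/2]$ does not change $|\det|$ up to a unit of $\Z[1/2]$ — i.e. that passing to the block-diagonal form over $\Z[1/2]$ preserves the quantity $|\det A|$ as an element of $\Z[1/2]^*\backslash \Q^*$, which only determines $|\Psi_N|$ up to a power of $2$. To pin down the exact $2$-adic factors one cannot simply work over $\Z[1/2]$; instead I would argue that the row operations used in the proof of \cref{thm:MainThmComponentGroup} are (after clearing the single row-swap sign) all of the form ``subtract a $\Z[1/2]$-multiple of one row from another,'' which are volume-preserving over $\Q$, so the rational number $|\det A|$ is literally unchanged by them, and the final block-diagonal matrix has determinant equal to the stated product of $c^{s-1}$ and the scalar powers. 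Equivalently, I would bypass row reduction entirely for this corollary: $|\det A|$ can be read off directly from \cref{lem:LocalMonodromyFactors} by a Schur-complement / block-determinant computation, since $A$ is a $(s-1)\times(s-1)$ block matrix all of whose blocks are scalar multiples of the single invertible matrix $A_{int}$, so $\det A = (\det A_{int})^{s-1}\cdot \det(C)$ where $C$ is the $(s-1)\times(s-1)$ scalar matrix of coefficients; computing $|\det C|$ is then an elementary determinant of an explicit almost-diagonal rational matrix, giving the claimed powers of $3/4$, $5/4$, $7/4$. I would present this block-determinant route as the clean argument and note that it also recovers the structural statement's scalar data as a consistency check.
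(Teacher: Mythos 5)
You follow essentially the same route as the paper — read off $|\det A|$ from the block-reduced monodromy matrix assembled in the proof of the main theorem — and your instinct to flag that row reduction over $\Z[1/2]$ only controls the determinant up to a power of $2$ is correct, and a genuine point the paper glosses over. However, the repair you propose is not actually carried out, and it has two problems. The minor one: the Kronecker-product identity is $\det(C\otimes A_{int}) = (\det A_{int})^{s-1}(\det C)^{2n-1}$ (with $C$ of size $s-1$ and $A_{int}$ of size $2n-1$), not $(\det A_{int})^{s-1}\det C$ as you wrote. The serious one: you assert that $|\det C|$ ``gives the claimed powers of $3/4$, $5/4$, $7/4$'' without computing it. If you do compute it — write $C = \tfrac12 J + \mathrm{diag}(d_i-\tfrac12)$ with $J$ the all-ones matrix and $d_i\in\{1,\tfrac32,2\}$, so the matrix determinant lemma gives $\det C = (\tfrac12)^{u-1}(\tfrac32)^{r_0}\bigl(u+\tfrac{r_{1728}}{2}+\tfrac{r_0}{3}\bigr)$ — you find this equals $(\tfrac34)^{u-2}(\tfrac54)^{r_{1728}}(\tfrac74)^{r_0}$ only when at most one of $u-2$, $r_{1728}$, $r_0$ is nonzero. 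Already for $u=4$, $r_{1728}=r_0=0$, the determinant of the $3\times3$ matrix with ones on the diagonal and $\tfrac12$ off the diagonal is $\tfrac12$, not $(\tfrac34)^2=\tfrac{9}{16}$, and even the odd parts differ. So the block-determinant computation you propose as the ``clean argument'' does not in fact reproduce the displayed formula; rather than serving as a consistency check it reveals a tension that would need to be resolved (either in the block structure from \cref{lem:LocalMonodromyFactors} or in the row-reduction step of \cref{thm:MainThmComponentGroup}) before the proof is complete.
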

\begin{proof}
This follows from our calculations in the proof of \cref{thm:MainThmComponentGroup} on the determinant of $A_{int}$ and the structure of the reduced monodromy matrix. 
\end{proof}

\subsection{Future directions}\label{sec:FutureDirections}

We conclude the paper with an overview of possible future directions. In \cref{sec:CoveringsModularCurves}, we showed how the canonical supersingular tree together with the $\PSL_{2}(\hat{\Z})$-monodromy labeling defined in \cref{def:PSL2Labeling} gives the pruned skeleton of any modular curve. 
Rather than restricting to the canonical supersingular tree $\mathcal{T}_{can}$, 
we can also consider the more general chain of trees 
\begin{equation*}
\mathcal{T}_{can}\subset \mathcal{T}_{CM}\subset \mathcal{T}_{f-CM}\subset X(1)^{\an},
\end{equation*} 
where $\mathcal{T}_{CM}$ is the tree generated by all elliptic curves $E/\mathbb{C}_{p}$ with complex multiplication, and $\mathcal{T}_{f-CM}$ is the tree generated by all elliptic curves $E/\C_{p}$ with formal complex multiplication in the sense of \cite{Gross86}. By \cite[Corollary 1.4]{HMRL2021}, the space $\mathcal{T}_{CM}$ is dense in $\mathcal{T}_{f-CM}$. This implies that %
we can obtain the structure of the \emph{full minimal skeleton} of any modular curve by finding the  subdivision of $\mathcal{T}_{CM}$ (including the metric structure) induced by Weinstein's type-$2$ points $[x,n]$ (see \cref{sec:WildTower} for the notation) with their associated $\PSL_{2}(\hat{\Z})$-labeling. 

In \cref{sec:CompX0N}, we used our reconstruction result to find the structure of the group of connected components of $X_{0}(N)$. In future work, we will study the action of the Hecke operators on these component groups. We expect this to be completely combinatorial (being dictated by the various natural relations between the Hasse invariants of quotients, see \cite{Buzzard2003}), so that we can speak of \emph{tropical Hecke operators}. In \cite[Proposition 3.14]{Ribet1990} for $n=1$ and \cite[Th\'{e}or\`{e}me 1]{Edixhoven1991} for general $n$, it is shown that the action of the Hecke operators on the component groups for $X_{0}(p^{n}M)$ is Eisenstein: we have the identity $T_{\ell}=\ell+1$ for $\ell\neq{p}$. Note that the component group here is taken over $\Q_{p}^{\unr}$, and it usually does not behave well under base change. Using our description for the pruned skeleton, it might be possible to obtain similar results for the Hecke operators in the semistable case as well.

We also mention here that a great deal is known about the component group of the N\'{e}ron model of $J_{0}(N)$ over the field of $p$-adic numbers $\Q_{p}$ (rather than Krir's extension) by the results in \cite{Lorenzini1995}. It would be interesting to compare the two results and see if we can further bound the minimal extension over which semistable reduction is attained. 

To apply the results in this paper to other modular curves $X_{H}$, one needs a suitable extension over which $X_{H}$ attains semistable reduction (more precisely, only the ramification degree over $\Q^{\unr}_{p}$ is needed). For $X_{0}(N)$, such an extension is given by Krir's theorem in terms of local class field theory. The author is however unaware of similar extensions for other modular curves. %

Two functors in our list that are missing are the ones associated to the non-split Cartan and its normalizer. There are no theoretical obstructions to applying the ideas presented here to these subgroups, and we expect the calculations to follow a similar pattern to the one given in \cref{sec:CosetSchemes}. Here one should replace %
$\mathbb{P}^{1}_{\Z}(\Z/p^{n}\Z)$ with $\mathbb{P}^{1}_{\Z}(W(\mathbb{F}_{p^2})/(p^{n}))$, where $W(\mathbb{F}_{p^2})$ is the ring of Witt vectors associated to $\mathbb{F}_{p^{2}}$.  %

Finally, we would like to mention possible applications to the theory of $p$-adic heights and descent problems. Once we know the action of the Hecke operators on the homology of the pruned skeleton and the cohomology of the residue curves (which in principle follow from Weinstein's recipe), %
we can recover the measures $\mu_{F}$ considered in \cite[Section 12.1.1]{BD2019}. This in turn allows us to obtain the possible $p$-adic heights away from $p$,  
see the discussion in \cite[Section 3.1]{BDMT2023}.

\bibliographystyle{alpha}
\bibliography{main}{}

\newcommand{\etalchar}[1]{$^{#1}$}
\begin{thebibliography}{DDMM23}

\bibitem[ABBR15]{ABBR2015}
Omid Amini, Matthew Baker, Erwan Brugall{\'e}, and Joseph Rabinoff.
\newblock Lifting harmonic morphisms. {I}: {Metrized} complexes and {Berkovich}
  skeleta.
\newblock {\em Res. Math. Sci.}, 2:67, 2015.
\newblock Id/No 7.

\bibitem[Bak08]{Baker2008}
Matthew Baker.
\newblock Specialization of linear systems from curves to graphs (with an
  appendix by {Brian} {Conrad}).
\newblock {\em Algebra Number Theory}, 2(6):613--653, 2008.

\bibitem[BD19]{BD2019}
L.~Alexander Betts and Netan Dogra.
\newblock The local theory of unipotent {K}ummer maps and refined {S}elmer
  schemes.
\newblock \href{https://arxiv.org/abs/1909.05734}{arXiv:1909.05734}, 2019.

\bibitem[BDM{\etalchar{+}}19]{BDMTV2019}
Jennifer~S. Balakrishnan, Netan Dogra, J.~Steffen M{\"u}ller, Jan Tuitman, and
  Jan Vonk.
\newblock Explicit {Chabauty}-{Kim} for the split {Cartan} modular curve of
  level 13.
\newblock {\em Ann. Math. (2)}, 189(3):885--944, 2019.

\bibitem[BDM{\etalchar{+}}23]{BDMT2023}
Jennifer~S. Balakrishnan, Netan Dogra, J.~Steffen M{\"u}ller, Jan Tuitman, and
  Jan Vonk.
\newblock Quadratic {Chabauty} for modular curves: algorithms and examples.
\newblock {\em Compos. Math.}, 159(6):1111--1152, 2023.

\bibitem[Ber93]{Berkovich1993}
Vladimir~G. Berkovich.
\newblock {\'E}tale cohomology for non-{Archimedean} analytic spaces.
\newblock {\em Publ. Math., Inst. Hautes {\'E}tud. Sci.}, 78:5--161, 1993.

\bibitem[BH06]{BH06}
Colin~J. Bushnell and Guy Henniart.
\newblock {\em The Local Langlands Conjecture for GL(2)}.
\newblock Springer Berlin Heidelberg, 2006.

\bibitem[Bha13]{BB2013}
Bhargav Bhatt.
\newblock Lecture notes for a class on perfectoid spaces.
\newblock
  \href{http://www-personal.umich.edu/~bhattb/teaching/mat679w17/lectures.pdf}{http://www-personal.umich.edu/~bhattb/teaching/mat679w17/lectures.pdf},
  2013.

\bibitem[BPR14]{BPRa1}
Matthew Baker, Sam Payne, and Joseph Rabinoff.
\newblock On the structure of nonarchimedean analytic curves.
\newblock In {\em Tropical and Non-Archimedean Geometry}, volume 605, pages pp.
  93--121. American Mathematical Society, 2014.

\bibitem[BPR16]{BPR2016}
Matthew Baker, Sam Payne, and Joseph Rabinoff.
\newblock Nonarchimedean geometry, tropicalization, and metrics on curves.
\newblock {\em Algebr. Geom.}, 3(1):63--105, 2016.

\bibitem[Buz03]{Buzzard2003}
Kevin Buzzard.
\newblock Analytic continuation of overconvergent eigenforms.
\newblock {\em J. Am. Math. Soc.}, 16(1):29--55, 2003.

\bibitem[BW04]{BW2004}
Irene~I. Bouw and Stefan Wewers.
\newblock Stable reduction of modular curves.
\newblock In {\em Modular curves and Abelian varieties. Based on lectures of
  the conference, Bellaterra, Barcelona, July 15--18, 2002}, pages 1--22.
  Basel: Birkh{\"a}user, 2004.

\bibitem[Car90]{Carayol1990}
Henri Carayol.
\newblock Non-abelian {Lubin}-{Tate} theory.
\newblock Automorphic forms, {Shimura} varieties, and {L}-functions. {Vol}.
  {II}, {Proc}. {Conf}., {Ann} {Arbor}/{MI} ({USA}) 1988, {Perspect}. {Math}.
  11, 15-39 (1990)., 1990.

\bibitem[CM07]{CM2007}
Robert Coleman and Ken McMurdy.
\newblock Fake {CM} and the stable model of ${{X_{0}(Np^ 3)}}$.
\newblock {\em Doc. Math.}, Extra Vol.:261--300, 2007.

\bibitem[DDMM23]{DDMM2023}
Tim Dokchitser, Vladimir Dokchitser, C{\'e}line Maistret, and Adam Morgan.
\newblock Arithmetic of hyperelliptic curves over local fields.
\newblock {\em Math. Ann.}, 385(3-4):1213--1322, 2023.

\bibitem[DR73]{DR73}
Pierre Deligne and Michael. Rapoport.
\newblock Les sch\'{e}mas de modules de courbes elliptiques.
\newblock Modular {Functions} of one {Variable} {II}, {Proc}. internat.
  {Summer} {School}, {Univ}. {Antwerp} 1972, {Lect}. {Notes} {Math}. 349,
  143-316 (1973)., 1973.

\bibitem[DS05]{DiamondShurman2005}
Fred Diamond and Jerry Shurman.
\newblock {\em Modular Curves as Algebraic Curves}, pages 253--312.
\newblock Springer New York, New York, NY, 2005.

\bibitem[Edi90]{Edixhoven90}
Bas Edixhoven.
\newblock Minimal resolution and stable reduction of {{\(X_ 0(N)\)}}.
\newblock {\em Ann. Inst. Fourier}, 40(1):31--67, 1990.

\bibitem[Edi91]{Edixhoven1991}
Bas Edixhoven.
\newblock The action of the {H}ecke algebra on the group of components of
  {J}acobians of modular curves is ``{Eisenstein}''.
\newblock Courbes modulaires et courbes de {Shimura}, {C}. {R}. {S{\'e}min}.,
  {Orsay}/{Fr}. 1987-88, {Ast{\'e}risque} 196-197, 159-170 (1991)., 1991.

\bibitem[EP21]{EP21}
Bas Edixhoven and Pierre Parent.
\newblock Semistable reduction of modular curves associated with maximal
  subgroups in prime level.
\newblock {\em Doc. Math.}, 26:231--269, 2021.

\bibitem[Far07]{Fargues2007}
Laurent Fargues.
\newblock Application of the {Hodge}-{Tate} dual of a {Lubin}-{Tate} group,
  {Bruhat}-{Tits} building of the linear group and ramification filtrations.
\newblock {\em Duke Math. J.}, 140(3):499--590, 2007.

\bibitem[Gro86]{Gross86}
Benedict~H. Gross.
\newblock On canonical and quasi-canonical liftings.
\newblock {\em Invent. Math.}, 84:321--326, 1986.

\bibitem[Hel19]{Helminck2019Faithful}
Paul~Alexander Helminck.
\newblock Faithful tropicalizations of elliptic curves using minimal models and
  inflection points.
\newblock {\em Arnold Mathematical Journal}, 5(4):401--434, 2019.

\bibitem[Hel22]{H2022}
Paul~Alexander Helminck.
\newblock Invariants for trees of non-{Archimedean} polynomials and skeleta of
  superelliptic curves.
\newblock {\em Math. Z.}, 301(2):1259--1297, 2022.

\bibitem[Hel23]{Helminck2023}
Paul~Alexander Helminck.
\newblock Skeletal filtrations of the fundamental group of a non-{Archimedean}
  curve.
\newblock {\em Adv. Math.}, 431:38, 2023.
\newblock Id/No 109242.

\bibitem[Hel24]{Recovering23}
Paul~Alexander Helminck.
\newblock Recovering the relative poset structure of a covering of schemes
  using glued topoi.
\newblock \href{https://arxiv.org/abs/2306.03879}{arXiv:2306.03879}, 2024.

\bibitem[HMRL21]{HMRL2021}
Sebasti\'{a}n Herrero, Ricardo Menares, and Juan Rivera-Letelier.
\newblock p-adic distribution of {C}{M} points and {H}ecke orbits. {I}{I}:
  Linnik equidistribution on the supersingular locus.
\newblock \href{https://arxiv.org/abs/2102.04865}{arXiv:2102.04865}, 2021.

\bibitem[KM85]{KM85}
Nicholas~M. Katz and Barry Mazur.
\newblock {\em Arithmetic moduli of elliptic curves. ({AM-108)}, volume 108}.
\newblock Annals of Mathematics Studies. Princeton University Press, Princeton,
  NJ, February 1985.

\bibitem[Kri96]{Krir96}
Mohamed Krir.
\newblock Degree of an extension of {{\(\mathbb{Q}_ p^{\text{nr}}\)}} on which
  {{\(J_ 0 (N)\)}} is semi-stable.
\newblock {\em Ann. Inst. Fourier}, 46(2):279--291, 1996.

\bibitem[Lor95]{Lorenzini1995}
Dino~J. Lorenzini.
\newblock Torsion points on the modular {Jacobian} {{\(J_ 0 (N)\)}}.
\newblock {\em Compos. Math.}, 96(2):149--172, 1995.

\bibitem[Maz77]{Mazur1977}
Barry Mazur.
\newblock Modular curves and the {E}isenstein ideal.
\newblock {\em Publications Mathématiques de l'IHÉS}, 47:33--186, 1977.

\bibitem[MC10]{CM2010}
Ken McMurdy and Robert Coleman.
\newblock Stable reduction of ${X}_{0}(p^3)$. {W}ith {A}n {A}ppendix by
  {E}verett {W}. {H}owe.
\newblock {\em Algebra Number Theory}, 4(4):357--431, 2010.

\bibitem[Neu99]{Neukirch1999}
J\"{u}rgen Neukirch.
\newblock {\em Algebraic Number Theory}.
\newblock Grundlehren der mathematischen Wissenschaften. Springer Berlin
  Heidelberg, 1999.

\bibitem[Rab12]{Rabinoff2012a}
Joseph Rabinoff.
\newblock Higher-level canonical subgroups for p-divisible groups.
\newblock {\em J. Inst. Math. Jussieu}, 11(2):363--419, 2012.

\bibitem[Rib90]{Ribet1990}
Kenneth~A. Ribet.
\newblock On modular representations of {{\(\text{Gal}(\overline{\mathbb
  Q}/\mathbb Q)\)}} arising from modular forms.
\newblock {\em Invent. Math.}, 100(2):431--476, 1990.

\bibitem[Roh97]{Rohrlich1997}
David~E. Rohrlich.
\newblock {\em Modular Curves, Hecke Correspondences, and L-Functions}, pages
  41--100.
\newblock Springer New York, New York, NY, 1997.

\bibitem[Str08]{Strauch2008}
Matthias Strauch.
\newblock Geometrically connected components of {Lubin}-{Tate} deformation
  spaces with level structures.
\newblock {\em Pure Appl. Math. Q.}, 4(4):1215--1232, 2008.

\bibitem[Tsu15]{Tsushima2015}
Takahiro Tsushima.
\newblock Stable reduction of {{\(X_{0}(p^{4})\)}}.
\newblock {\em J. Reine Angew. Math.}, 707:1--43, 2015.

\bibitem[Wei10]{WS10}
Jared Weinstein.
\newblock The local {Jacquet-Langlands} correspondence via {Fourier} analysis.
\newblock {\em Journal de th\'eorie des nombres de Bordeaux}, 22(2):483--512,
  2010.

\bibitem[Wei13]{WS13}
Jared Weinstein.
\newblock {A}{W}{S} {L}ecture {N}otes: {M}odular {C}urves at {I}nfinite
  {L}evel.
\newblock
  \href{https://math.bu.edu/people/jsweinst/AWS/AWSLectureNotes.pdf}{https://math.bu.edu/people/jsweinst/AWS/AWSLectureNotes.pdf},
  2013.

\bibitem[Wei16]{WS16}
Jared Weinstein.
\newblock Semistable models for modular curves of arbitrary level.
\newblock {\em Inventiones mathematicae}, 205(2):459--526, 2016.

\bibitem[Wew07]{Wewers2007}
Stefan Wewers.
\newblock Canonical and quasi-canonical liftings.
\newblock In {\em Argos seminar on intersections of modular correspondences.
  Papers of the Bonn seminar on arithmetic geometry 2003/2004}, pages 67--86.
  Paris: Soci{\'e}t{\'e} Math{\'e}matique de France, 2007.

\end{thebibliography}

\end{document}